\documentclass[11pt,reqno]{amsart}
\usepackage{amsmath}
\usepackage{amsfonts}
\usepackage{amssymb, amsthm, epsfig}
\usepackage{mathrsfs}
\usepackage{hyperref}
 \usepackage{yhmath}
\usepackage{cite}
\usepackage[utf8]{inputenc}
\usepackage[T1]{fontenc}
\usepackage{geometry}\geometry{a4paper}
\usepackage{graphicx}
\usepackage{verbatim}
\usepackage{color}


\setlength{\oddsidemargin}{0cm} \setlength{\evensidemargin}{0cm}
\setlength{\topmargin}{-0.5cm} \setlength{\footskip}{1cm}
\setlength{\textheight}{24cm} \setlength{\textwidth}{16cm}



\theoremstyle{plain}
\newtheorem{thm}{Theorem}[section]

\newtheorem{lem}[thm]{Lemma}
\newtheorem{prop}[thm]{Proposition}

\theoremstyle{definition}
\newtheorem{defn}{Definition}[section]
\theoremstyle{remark}
\newtheorem{rem}{Remark}[section]

\numberwithin{equation}{subsection}

\allowdisplaybreaks




\DeclareMathOperator*{\dist}{dist}


\DeclareSymbolFont{lettersA}{U}{pxmia}{m}{it}
\DeclareMathSymbol{\piup}{\mathord}{lettersA}{"19}

\makeatletter

\newcommand{\Rmnum}[1]{\expandafter\@slowromancap\romannumeral#1@}
\makeatother






\begin{document}
\title[Interaction of fan-shock-fan composite waves]
{On the expansion of a wedge of van der Waals gas into vacuum III: interaction of fan-shock-fan composite waves}

\author{Geng Lai}

\address{Department of Mathematics, Shanghai University,
Shanghai, 200444, P.R. China;  Newtouch Center for Mathematics, Shanghai University, Shanghai, 20044, P.R. China.}
\email{\tt laigeng@shu.edu.cn}

\keywords{Expansion in vacuum; van der Waals gas; fan-shock-fan composite wave; wave interaction;
hodograph transformation; characteristic decomposition. }
\date{\today}

\begin{abstract}
This paper studies the expansion into vacuum of a wedge of gas at rest.
This problem catches several important classes of wave interactions in the context of 2D Riemann problems.
When the gas at rest is a nonideal gas (e.g., van der Waals gas), the
gas away from the sharp corner of the wedge may expand into the vacuum as two symmetrical planar rarefaction fan
waves, shock-fan composite waves, or fan-shock-fan composite waves. Then the expansion in vacuum problem can be reduced to the interactions of these elementary waves. Global existences of classical solutions to the interaction of the fan
waves and the interaction of  the  shock-fan composite waves were obtained by the author in \cite{Lai1,Lai2}.
In the present paper we study the third case: interaction of fan-shock-fan composite waves.
In contrast to the first two cases, the third case involves shock waves in the interaction region and is actually a shock free boundary problem.
Differing from the transonic shock free boundary problems arising in 2D Riemann problems for ideal gases, the type of the shocks for this shock free boundary problem is also a priori unknown. This results in the fact that the formulation of the boundary conditions on the shocks is also a priori unknown.
By calculating the curvatures of the shocks  and  using the Liu's extended entropy condition,
we prove that the shocks in the interaction region must be post-sonic (in the sense of the flow velocity relative to the shock front). We also prove that the shocks
are envelopes of one out of the two families of wave characteristics of the flow behind them, and not characteristics.
This leads to the fact that the flow behind the shocks is not $C^1$ smooth up to the shock boundaries.
By virtue of the hodograph transformation method and the characteristic decomposition method, we construct a global-in-time piecewise smooth solution to the expansion in vacuum problem for the third case.  
 The techniques and ideas developed here may be applied for solving some other 2D Riemann problems of the compressible Euler equations with a non-convex equation of state.
\end{abstract}

\maketitle
\tableofcontents

\section{Introduction}
\subsection{Expansion into vacuum of a wedge of gas}
The two-dimensional (2D) isentropic Euler system has the following form:
\begin{equation}
\left\{
            \begin{array}{ll}
            \rho_t+(\rho u)_x+(\rho v)_y=0, \\[4pt]
            (\rho u)_t+(\rho u^{2}+p)_x+(\rho uv)_y=0,  \\[4pt]
            (\rho v)_t+(\rho uv)_x+(\rho v^{2}+p)_y=0,
            \end{array}
       \right.                       \label{Euler}
\end{equation}
where
$(u, v)$ is the velocity, $\rho$ is the density, and
 $p=p(\rho)$ is the pressure.

\begin{figure}[htbp]
\begin{center}
\includegraphics[scale=0.53]{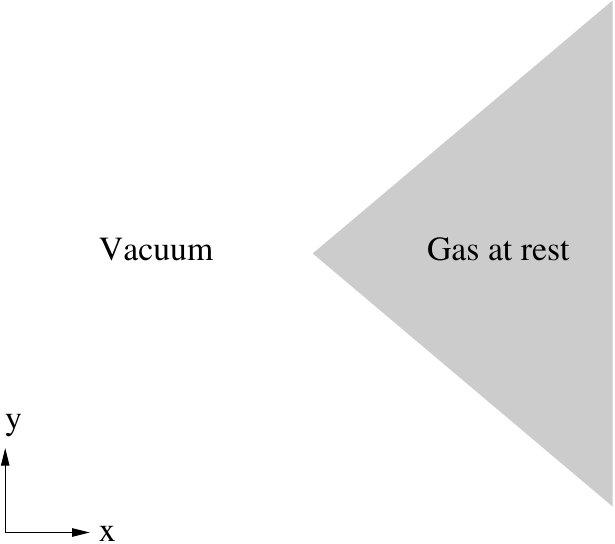}
\caption{\footnotesize Initial data.}
\label{Fig1}
\end{center}
\end{figure}

In this paper, we
study the problem of
the expansion into vacuum of a wedge of gas at rest. This problem can be formulated as a Cauchy problem for
 (\ref{Euler}) with the initial data
\begin{equation}\label{initial}
(u,v,\rho)(x,y,0)=\left\{
              \begin{array}{ll}
                (0,0,\rho_0), & \hbox{$x>0$, $|y|< x\tan\theta$;} \\[2pt]
               \mbox{vacuum}, & \hbox{otherwise,}
              \end{array}
            \right.
\end{equation}
where $\theta\in(0, \pi/2)$ represents the half-angle of the wedge and $\rho_0$ is a positive constant; see Figure \ref{Fig1}.

The gas expansion in vacuum problem (\ref{Euler}, \ref{initial}) has been an interesting problem for a long time.
It catches several important classes of wave interactions in the context of 2D Riemann problems. It has also been interpreted hydraulically as the collapse of a wedge-shaped dam containing water initially with a uniform velocity; see Levine \cite{Levine}.
When the gas at rest is a polytropic ideal gas, e.g.,
$p=\rho^{\gamma}$ where $\gamma$ is an
adiabatic constant, the problem can be reduced to the interaction of two symmetric planar rarefaction fan waves in the self-similar plane $(\xi, \eta)=(x/t, y/t)$.
As early as 1963, Suchkow \cite{Suc} studied the problem and obtained an explicit classical solution for $\theta=\arctan\sqrt{\frac{3-\gamma}{\gamma+1}}$.
The problem for more general $\theta\in (0, \pi/2)$ had been open for a long time.
In 2000, Dai and Zhang \cite{Dai} studied the problem for the pressure gradient system which can be seen as a simplified model for the Euler system. They obtained the global existence of classical solutions for general $\theta$ by the method of characteristics. Subsequently, Li \cite{Li1} studied the problem for the Euler system. By virtue of the hodograph transformation, he obtained a solution of the problem in the hodograph plane. 
In 2006, Li, Zhang and Zheng \cite{Li-Zhang-Zheng} introduced a characteristic decomposition method to study rarefaction waves of the 2D Euler system.
Subsequently, Li and Zhang \cite{Li3} applied the characteristic decomposition method and the phase plane analysis to show that the hodograph transformation of the expansion in vacuum problem in \cite{Li1} is one-to-one, and consequently established the global existence of classical solutions for all $\theta\in(0, \pi/2)$.
For further results on the expansion in vacuum problem for polytropic ideal gases, we refer the reader to \cite{Chen1,Hu,Li-Yang-Zheng,Zhao}.
The rarefaction wave interaction solution constructed in \cite{Li3} was also used as a building block of the solution to a 2D Riemann problem with four rarefaction waves; see Li and Zheng \cite{Li6}.

For a polytropic ideal gas, isentropes are always concave-up in the $(\tau, p)$-plane.
So, a natural and interesting question is to determine whether the above result
can be extended to more general gases, for example, gases with nonconvex equations of state.
When the gas at rest is a nonconvex gas, there will be some other interesting and important wave interactions in the expansion in vacuum problem.
The possibility that isentropes are concave-down within a certain region in the $(\tau, p)$-plane was first explored independently by Bethe \cite{Bethe},  Zel'dovich \cite{ZE}, and  Thompson and Lambrakis \cite{Thompson2} for van der Waals gases.
Fluids with nonconvex equation of state may significantly differ from the polytropic ideal gases. For example, physically admissible rarefaction shocks may occur in gases with nonconvex equation of state; see e.g., \cite{BBKN,Cra,NSMGC,ZGC}.
In 1972, Wendroff \cite{Wen1,Wen2} studied the 1D Riemann problem for the Euler equations with nonconvex equation of state and introduced composite waves that are composed of shock and rarefaction waves.
Liu \cite{Liu1,Liu2} introduced an extended entropy condition and established the existence and uniqueness of the entropy solutions to the 1D Riemann problem for the Euler equations with nonconvex equation of state.
We also refer the reader to the survey paper by Menikoff and Plohr
 \cite{MP} and  the  monograph by LeFloch and Thanh \cite{LeFloch} for the 1D Riemann problem for more general gases.


\begin{figure}[htbp]
\begin{center}
\includegraphics[scale=0.55]{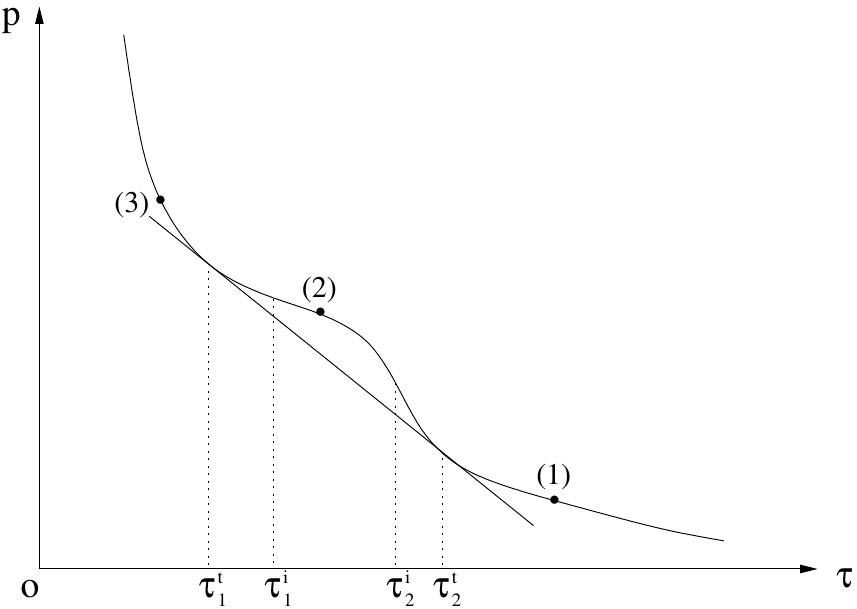}
\caption{\footnotesize An isentrope of a van der Waals gas.}
\label{Fig2}
\end{center}
\end{figure}

In this paper,
we consider
a polytropic van der Waals gas with the equation of state
\begin{equation}
p=\frac{\mathcal{S}}{(\tau-1)^{\gamma}}-\frac{1}{\tau^2}\quad (\tau>1),\label{van}
\end{equation}
where $\tau=1/\rho$ is the specific volume, $\mathcal{S}$ is a positive constant in corresponding with the entropy, and $\gamma$ is an
adiabatic constant between $1$ and $2$; see, e.g., Callen \cite{Cal}.
When $\mathcal{S}$ lies in some interval,
the equation of state (\ref{van}) has the following property.
\begin{description}
  \item[(P)] There exist $\tau_1^{i}$ and $\tau_2^{i}$ ($1<\tau_1^{i}<\tau_2^{i}$) such that
\begin{equation}\label{81601}
p'(\tau)<0~\mbox{for}~\tau>1;\quad
 p''(\tau)>0~ \mbox{for}~ \tau\in(1, \tau_1^i)\cup(\tau_2^i, +\infty);
 \quad p''(\tau)<0~ \mbox{for}~ \tau_1^i<\tau<\tau_2^i;
\end{equation}
\end{description}
see Figure \ref{Fig2}.
Under property ($\mathbf{P}$), there exist constants $\tau_1^{t}$ and $\tau_2^{t}$ ($1<\tau_1^{t}<\tau_1^i<\tau_2^i<{\tau}_2^{t}$) such that
$$
p'(\tau_1^{t})=p'(\tau_2^{t})=\frac{p(\tau_1^{t})-p(\tau_2^{t})}{\tau_1^{t}-\tau_2^{t}}.
$$

We
consider
the expansion in vacuum problem (\ref{Euler}, \ref{initial}) for the polytropic van der Waals gas. By the result of the 1D Riemann problem for the van der Waals gas (see \cite{Wen1}), we know that the problem can be divided into the following three cases.
\begin{description}
  \item[(1)] When $\tau_0\geq \tau_2^i$, the
gas away from the sharp corner of the wedge expands into the vacuum as two symmetrical planar rarefaction fan
waves; see Figure \ref{Fig3} (1).
  \item[(2)] When $\tau_1^t<\tau_0< \tau_2^i$,  the
gas away from the sharp corner of the wedge expands into the vacuum as two symmetrical planar shock-fan composite
waves; see Figure \ref{Fig3} (2).
  \item[(3)] When $1<\tau_0< \tau_1^t$,  the
gas away from the sharp corner of the wedge expands into the vacuum as two symmetrical planar fan-shock-fan composite
waves; see Figure \ref{Fig3} (3).
   \end{description}
Therefore, the problem can be reduced to the interaction of these elementary waves in the self-similar $(\xi, \eta)$-plane.
In \cite{Lai1} and \cite{Lai2},
the author constructed global shock-free solutions for the interaction of the fan waves and the interaction of the shock-fan composite waves, respectively, provided that $\frac{p'(\tau)}{\tau p''(\tau)}$ is monotonic and $\theta$ satisfies some restrictions.
Later, the restriction on the monotonicity  was removed by the author in \cite{Lai6}.
In the present paper we study the third case.
In contrast to the first two cases, the third case involves shock waves in the interaction region and is actually a shock free boundary problem.
For a nonconvex gas, physically admissible shocks may be transonic, post-sonic, pre-sonic, or double-sonic in the sense of the flow velocity relative to the shock front; see, e.g., \cite{VKG1,VKG2}.
Differing from the transonic shock free boundary problems arising in 2D Riemann problems for polytropic ideal gases (see \cite{BCF1,BCF2,Canic,CCHLQ,CX,CF1,CF2,CFX,CSX,Elling1,ELiu2,Serre1,Zheng2}), the type of the shocks
 to this shock free boundary problem is also a priori unknown. 
To the best of our knowledge, there are few results on this type of shock free boundary problems.

\begin{figure}[htbp]
\begin{center}
\includegraphics[scale=0.39]{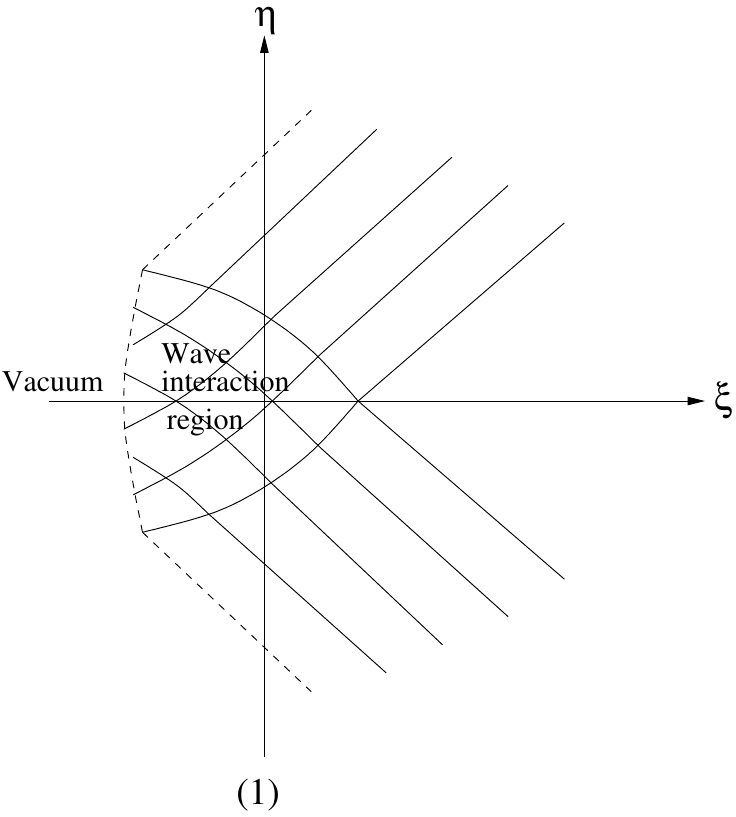}\quad\includegraphics[scale=0.39]{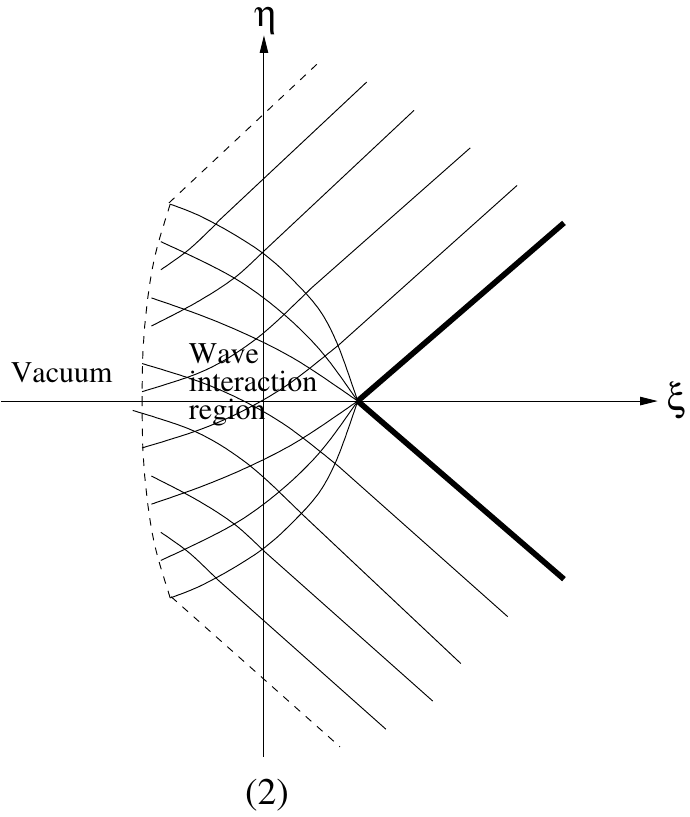}
\quad\includegraphics[scale=0.39]{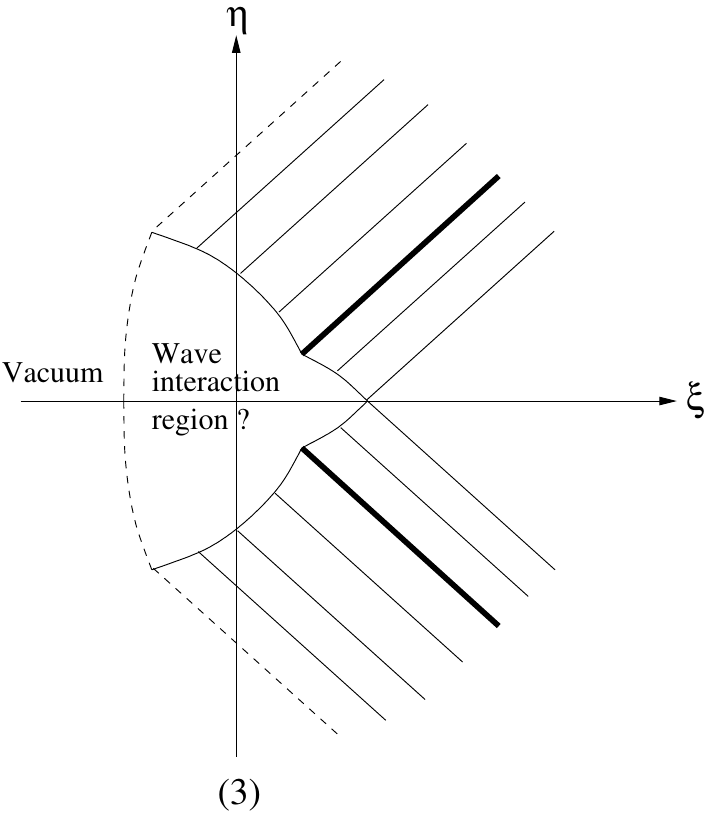}
\caption{\footnotesize Wave interactions in the self-similar $(\xi,\eta)$-plane: (1) interaction of fan waves; (2) interaction of shock-fan composite waves; (3) interaction of fan-shock-fan composite waves. Here, the solid lines represent characteristic lines; the thick lines represent shock waves; the dashed lines represent vacuum boundaries.}
\label{Fig3}
\end{center}
\end{figure}

\subsection{Systems for potential flow}
 We purpose to construct a global-in-time piecewise smooth solution to the expansion in vacuum problem for the third case.
As a first stage, we study the problem for the potential flow equation.
The two-dimensional (2D) compressible Euler equations for potential flow consist of the conservation law of mass and the Bernoulli law for the density and velocity potential $(\rho, \Phi)$:
\begin{equation}\label{Potential}
\left\{
            \begin{array}{ll}
            \rho_t+(\rho \Phi_x)_x+(\rho \Phi_y)_y=0, \\[4pt]
         \displaystyle \Phi_t+\frac{1}{2}\big(\Phi_x^2+\Phi_y^2\big)+h(\tau)=0;
            \end{array}
       \right.
\end{equation}
see Chen and Feldman \cite{CF2}.
Here, $\Phi$ is the velocity potential, i.e., $(\Phi_x, \Phi_y)=(u, v)$,  and
\begin{equation}\label{5801}
h'(\tau)=\tau p'(\tau).
\end{equation}

We look for self-similar solutions with the following form:
\begin{equation}
\rho=\rho(\xi, \eta)\quad \mbox{and}\quad  \Phi=t\phi(\xi, \eta)\quad \mbox{for}\quad(\xi, \eta)=\big(\frac{x}{t}, \frac{y}{t}\big).
\end{equation}
Then the pseudo-potential function $\varphi=\phi-\frac{1}{2}(\xi^2+\eta^2)$ satisfies the following self-similar Euler equations:
\begin{equation}\label{SE}
\left\{
  \begin{array}{ll}
   (\rho \varphi_\xi)_\xi+(\rho \varphi_\eta)_\eta+2\rho=0,\\[4pt]
  \displaystyle\frac{1}{2}(\varphi_\xi^2+\varphi_\eta^2)+h+\varphi =0.
  \end{array}
\right.
\end{equation}

Let $(U, V)=(u-\xi, v-\eta)$ be the pseudo-velocity. Then we have
$$
\varphi_\xi=U\quad \mbox{and} \quad \varphi_\eta=V.
$$
Hence, the 2D self-similar Euler equations for potential flow can be written as
\begin{equation}\label{42501}
\left\{
  \begin{array}{ll}
   (\rho U)_\xi+(\rho V)_\eta+2\rho=0,\\[4pt]
  \displaystyle  U_\eta-V_\xi=0
  \end{array}
\right.
\end{equation}
supplemented by the pseudo-Bernoulli law
\begin{equation}\label{5802}
\frac{1}{2}(U^2+V^2)+h+\varphi =0.
\end{equation}

The first equation of (\ref{42501}) can by (\ref{5801}) and (\ref{5802}) be written in the form
\begin{equation}
  (c^{2}-U^{2})u_{\xi}-UV(u_{\eta}+v_{\xi})+(c^{2}-V^{2})v_{\eta}=0,\label{mass}
\end{equation}
where $c=\sqrt{-\tau^2 p'(\tau)}$ is the speed of sound.
So, for smooth flow, system (\ref{42501}) can be written in the following matrix form:
\begin{equation} \label{matrix1}
\begin{array}{rcl}
\left(
 \begin{array}{cc}
c^{2}-U^{2} & -UV \\
  0 & -1\\
  \end{array}
  \right)\left(
           \begin{array}{c}
             u \\
             v \\
           \end{array}
         \right)_{\xi}+\left(
                         \begin{array}{ccccc}
                          -UV &  c^{2}-V^{2}\\
                           1 & 0 \\
                         \end{array}
                       \right)\left(
                                \begin{array}{c}
                                  u \\
                                  v \\
                                \end{array}
                              \right)_{\eta}=\left(
                                               \begin{array}{c}
                                                 0 \\
                                                0 \\
                                               \end{array}
                                             \right).
                                             \end{array}
\end{equation}

The eigenvalues of (\ref{matrix1}) are determined
by
\begin{equation}\label{21}
(V-\lambda U)^{2}-c^{2}(1+\lambda^{2})=0,
\end{equation}
which yields
\begin{equation}\label{23}
\lambda=\lambda_{\pm}=\frac{UV\pm
c\sqrt{U^{2}+V^{2}-c^{2}}}{U^{2}-c^{2}}.
\end{equation}
So, system (\ref{42501}) is a mixed type system and the type in each point is determined by the local pseudo-Mach number $q/c$ where $q=\sqrt{U^2+V^2}$. It is hyperbolic if and only if $q/c>1$ (supersonic) and elliptic if and only if $q/c<1$ (subsonic).
For supersonic flow, system (\ref{42501}) has two families of wave characteristic curves defined by
\begin{equation}\label{24}
C_{\pm}:~\frac{{\rm d}\eta}{{\rm d}\xi}=\lambda_\pm.
\end{equation}



\subsection{Oblique shocks in 2D pseudo-steady potential flows of non-convex gases}
We assume that the equation of state (\ref{van}) has the property ($\mathbf{P}$).
Then
there exist $\tau_{1}^a$ and $\tau_{2}^a$ where $\tau_1^a<\tau_{1}^{i}<\tau_{2}^{i}<\tau_{2}^a$ such that
\begin{equation}\label{5901}
p'(\tau_{1}^a)=p'(\tau_{2}^{i})\quad \mbox{and}\quad p'(\tau_{1}^{i})=p'(\tau_{2}^a);
 \end{equation}
see Figure \ref{Fig4} (right). For any $\tau\in[\tau_1^a, \tau_1^i]$, we let $a(\tau)\in [\tau_2^i, \tau_2^a]$ be determined by  $p'(\tau)=p'(a(\tau))$. From (\ref{5901}) one has
\begin{equation}\label{101701}
a(\tau_1^a)=\tau_2^i\quad
\mbox{and} \quad a(\tau_1^i)=\tau_2^a.
\end{equation}

\begin{figure}[htbp]
\begin{center}
\includegraphics[scale=0.66]{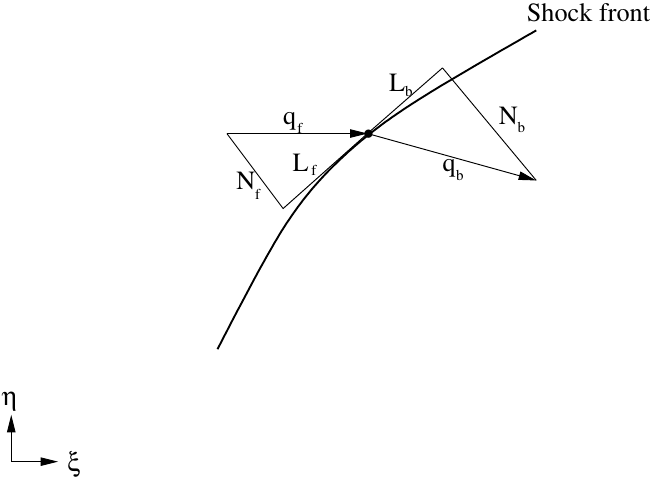}
\caption{\footnotesize A 2D self-similar shock curve.}
\label{Fig5}
\end{center}
\end{figure}

We now discuss oblique shocks for the system (\ref{42501})--(\ref{5802}) for the van der Waals gas.
Let $\chi$ be the inclination angle of the shock front. We denote by
$$
N=U\sin\chi-V\cos\chi\quad \mbox{and}\quad L=U\cos\chi+V\sin\chi
$$
the components of $(U, V)$ normal and tangential to the shock direction, respectively.
We use subscripts `f' and `b' to denote the front side and the backside states of the
shock front; see Figure \ref{Fig5}. Then
the Rankine-Hugoniot conditions can be written in the following form:
\begin{equation}\label{RH}
\left\{
  \begin{array}{ll}
    m:=\rho_f N_f=\rho_b N_b, \\[4pt]
    L_{f}=L_b,\\[4pt]
   N_f^2+2h(\tau_f)= N_b^2+2h(\tau_b).
  \end{array}
\right.
\end{equation}

Since the gas considered here is nonconvex, the oblique shocks are required to satisfy the
Liu's extended entropy condition (\cite{Liu1}):
\begin{equation}\label{43001}
-\frac{2h(\tau_f)-2h(\tau_b)}{\tau_f^2-\tau_b^2}\geq -\frac{2h(\tau_f)-2h(\tau)}{\tau_f^2-\tau^2}
\end{equation}
for all $\tau\in(\min\{\tau_f,\tau_b\}, \max\{\tau_f,\tau_b\})$.

\begin{figure}[htbp]
\begin{center}
\includegraphics[scale=0.48]{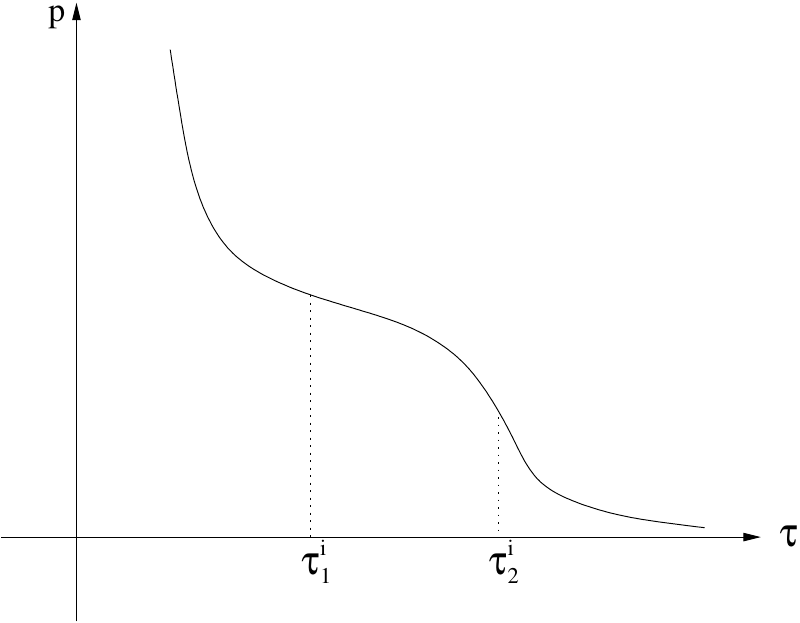}\qquad\includegraphics[scale=0.468]{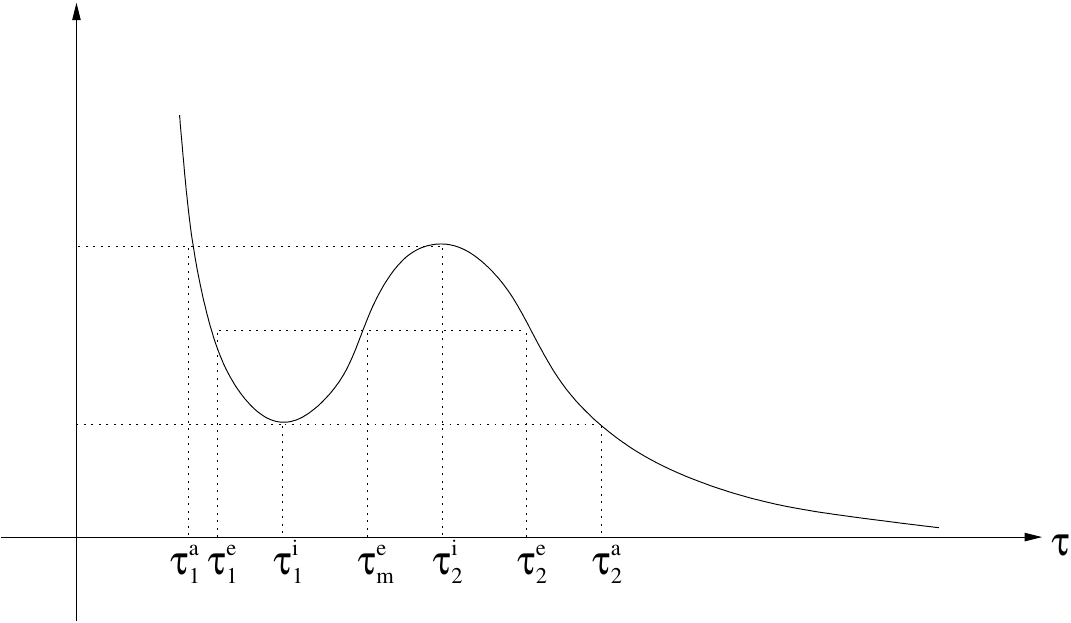}
\caption{\footnotesize Left: the isentrope $p=p(\tau)$; right: the graph of $-p'(\tau)$.}
\label{Fig4}
\end{center}
\end{figure}

\begin{prop}\label{5602}
({\bf Double-sonic})
There exists a unique pair $\tau_1^e$ and $\tau_2^e$ where $1<\tau_1^e<\tau_2^e$, such that
\begin{equation}\label{42301}
p'(\tau_1^e)=p'(\tau_2^e)=\frac{2h(\tau_1^e)-2h(\tau_2^e)}{(\tau_1^e)^2-(\tau_2^e)^2},
\end{equation}
and
\begin{equation}\label{42301aa}
-\frac{2h(\tau_1^e)-2h(\tau_2^e)}
{(\tau_1^e)^2-(\tau_2^e)^2}>-\frac{2h(\tau_1^e)-2h(\tau)}{(\tau_1^e)^2-\tau^2}
\end{equation}
for all $\tau\in(\tau_1^e, \tau_2^e)$.
Moreover, $\tau_1^e\in (\tau_1^a, \tau_1^i)$ and $\tau_2^e\in (\tau_2^i, \tau_2^a)$.
\end{prop}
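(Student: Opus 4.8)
The plan is to recognize \eqref{42301}--\eqref{42301aa} as a Maxwell double-tangent construction once one passes to the variable $s=\tau^2$. First I would set $H(s):=h(\sqrt s)$ for $s>1$ and note, from \eqref{5801},
\begin{equation*}
H'(s)=\tfrac12\,p'(\sqrt s),\qquad H''(s)=\frac{p''(\sqrt s)}{4\sqrt s},
\end{equation*}
so $H''$ has the sign of $p''$. By property $(\mathbf P)$ and \eqref{81601}, $H$ is smooth on $(1,\infty)$, strictly convex on $(1,s_1^i)$, strictly concave on $(s_1^i,s_2^i)$, and strictly convex on $(s_2^i,\infty)$, where $s_j^i:=(\tau_j^i)^2$. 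Writing $s_j=(\tau_j^e)^2$, conditions \eqref{42301}--\eqref{42301aa} are equivalent to: there is a (unique) pair $s_1<s_2$ in $(1,\infty)$ such that the chord of the graph of $H$ over $[s_1,s_2]$ is tangent to that graph at both endpoints and the graph lies strictly above the chord on $(s_1,s_2)$.

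Second I would record the shape consequences. $H'$ is strictly increasing on $(1,s_1^i)$, strictly decreasing on $(s_1^i,s_2^i)$, strictly increasing on $(s_2^i,\infty)$, with local maximum $\mu_1:=\tfrac12 p'(\tau_1^i)$ at $s_1^i$ and local minimum $\mu_2:=\tfrac12 p'(\tau_2^i)$ at $s_2^i$; since $p'<0$ and $p''<0$ on $(\tau_1^i,\tau_2^i)$ one has $\mu_2<\mu_1<0$. Hence for $\mu\in(\mu_2,\mu_1)$ the equation $H'(s)=\mu$ has exactly three transversal roots $s_1(\mu)<\sigma(\mu)<s_3(\mu)$ lying respectively in $(1,s_1^i)$, $(s_1^i,s_2^i)$, $(s_2^i,\infty)$, all smooth in $\mu$. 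With $b_j(\mu):=H(s_j(\mu))-\mu\,s_j(\mu)$ for $j=1,3$ (the intercepts of the tangent lines at $s_j(\mu)$) and $F(\mu):=b_1(\mu)-b_3(\mu)$, a bitangent with tangency points on the outer convex branches is exactly a zero of $F$. Using $H'(s_j(\mu))=\mu$ one gets $F'(\mu)=s_3(\mu)-s_1(\mu)>0$, so $F$ is strictly increasing: this gives uniqueness of the slope, and—after noting that a chord between $s_1(\mu)$ and $\sigma(\mu)$ has slope $>\mu$ while one between $\sigma(\mu)$ and $s_3(\mu)$ has slope $<\mu$, and that for $\mu\notin(\mu_2,\mu_1)$ there are not two points with $H'=\mu$ forming such a chord—uniqueness of the pair $(\tau_1^e,\tau_2^e)$.

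The main obstacle is existence, i.e.\ showing $F$ vanishes on $(\mu_2,\mu_1)$, and this is where the van der Waals structure enters through \eqref{5901}. As $\mu\uparrow\mu_1$ one has $s_1(\mu)\to s_1^i$ and, since $p'(\tau_1^i)=p'(\tau_2^a)$, $s_3(\mu)\to(\tau_2^a)^2$; I would then check that $G(s):=H(s)-\mu_1 s$ satisfies $G'=H'-\mu_1\le0$ on all of $[1,(\tau_2^a)^2]$, vanishing only at $s_1^i$ and $(\tau_2^a)^2$, hence $G$ is strictly decreasing there, so $b_1(\mu_1)=G(s_1^i)>G((\tau_2^a)^2)=b_3(\mu_1)$, i.e.\ $F(\mu_1)>0$. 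Symmetrically, as $\mu\downarrow\mu_2$ one has $s_3(\mu)\to s_2^i$ and, since $p'(\tau_1^a)=p'(\tau_2^i)$, $s_1(\mu)\to(\tau_1^a)^2$; the function $H(s)-\mu_2 s$ is strictly decreasing on $[1,(\tau_1^a)^2]$ and strictly increasing on $[(\tau_1^a)^2,s_2^i]$, giving $b_1(\mu_2)<b_3(\mu_2)$, i.e.\ $F(\mu_2)<0$. By continuity and the intermediate value theorem there is a unique $\mu^e\in(\mu_2,\mu_1)$ with $F(\mu^e)=0$; set $\tau_1^e:=\sqrt{s_1(\mu^e)}$ and $\tau_2^e:=\sqrt{s_3(\mu^e)}$, so that \eqref{42301} holds. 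The inclusions $\tau_1^e\in(\tau_1^a,\tau_1^i)$ and $\tau_2^e\in(\tau_2^i,\tau_2^a)$ are then immediate, since on $(1,s_1^i)$ the function $H'$ increases with $H'((\tau_1^a)^2)=\tfrac12 p'(\tau_2^i)=\mu_2<\mu^e<\mu_1=H'(s_1^i)$, and on $(s_2^i,\infty)$ it increases with $H'((\tau_2^a)^2)=\tfrac12 p'(\tau_1^i)=\mu_1>\mu^e$.

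Finally, for \eqref{42301aa} I would set $s_j^e:=(\tau_j^e)^2$ and $G(s):=H(s)-\mu^e s-b^e$ with $b^e:=b_1(\mu^e)=b_3(\mu^e)$, so that $G(s_1^e)=G'(s_1^e)=G(s_2^e)=G'(s_2^e)=0$. Because $G'=H'-\mu^e$ and $\mu^e\in(\mu_2,\mu_1)$, $G'$ has exactly the three roots $s_1^e<\sigma<s_2^e$, and $H'$ crosses $\mu^e$ upward at $s_1^e$, downward at $\sigma$, upward at $s_2^e$, so $G'>0$ on $(s_1^e,\sigma)$ and $G'<0$ on $(\sigma,s_2^e)$. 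Hence $G$ strictly increases then strictly decreases on $[s_1^e,s_2^e]$ while vanishing at both ends, so $G>0$ on $(s_1^e,s_2^e)$; translating $H(s)>H(s_1^e)+\mu^e(s-s_1^e)$ back into the $h,\tau^2$ variables (using $\mu^e=\frac{h(\tau_1^e)-h(\tau_2^e)}{(\tau_1^e)^2-(\tau_2^e)^2}$, which holds since $F(\mu^e)=0$) yields exactly \eqref{42301aa}. The only genuinely non-formal step is the endpoint sign analysis of $F$—equivalently, producing the bitangent line—everything else being bookkeeping around the convex--concave--convex profile of $H$.
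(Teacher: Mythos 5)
Your proposal is correct, and for the uniqueness part it takes a genuinely different route from the paper. The paper parametrizes candidate pairs by the left tangency point $\tau\in[\tau_1^a,\tau_1^i]$ via the conjugate map $a(\tau)$ (equal slopes) and applies the intermediate value theorem to $g(\tau)=-p'(\tau)+\frac{2h(\tau)-2h(a(\tau))}{\tau^2-a^2(\tau)}$, whereas you parametrize by the slope $\mu$ and compare the intercepts $b_1(\mu)$, $b_3(\mu)$ of the tangent lines on the two outer convex branches of $H(s)=h(\sqrt{s})$. For existence these are dual versions of the same argument: your endpoint signs $F(\mu_1)>0$ and $F(\mu_2)<0$ use precisely the configurations $(\tau_1^i,\tau_2^a)$ and $(\tau_1^a,\tau_2^i)$ from \eqref{5901} that give $g(\tau_1^i)<0<g(\tau_1^a)$ in the paper's Step 1. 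Where you genuinely diverge is uniqueness: the identity $b_j'(\mu)=-s_j(\mu)$ yields $F'(\mu)=s_3(\mu)-s_1(\mu)>0$ in one line, while the paper's Steps 2--3 route uniqueness through the ODE/continuity construction of the post-sonic companion $s_{po}(\tau)$ and a contradiction; your version is shorter and more transparent for Proposition \ref{5602} itself, though the paper's Step 2 is reused verbatim in Proposition \ref{51003}, so the global economy is comparable. Your proof of \eqref{42301aa} coincides with the paper's Step 4 after the substitution $s=\tau^2$ (indeed $\hat g(\tau)=2G(\tau^2)$, and the sign analysis of $\hat g'(\tau)=2\tau\bigl(p'(\tau)-p'(\tau_1^e)\bigr)$ is exactly your sign analysis of $G'=H'-\mu^e$). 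Two points worth making explicit in a write-up: the three transversal roots of $H'(s)=\mu$ for $\mu\in(\mu_2,\mu_1)$ exist precisely because \eqref{5901} guarantees $H'$ attains $\mu_2$ on the left convex branch and $\mu_1$ on the right one; and your chord-slope remarks (chord slope $>\mu$ between $s_1$ and $\sigma$, $<\mu$ between $\sigma$ and $s_3$) are needed, and do suffice, to exclude bitangents with a tangency point on the middle concave branch --- the step the paper disposes of in its opening Cauchy-mean-value-theorem remark.
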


\begin{proof}
 Firstly, by Property (P) and the Cauchy mean value theorem we know that if there exists $\tau_1^e$ and $\tau_2^e$ such that (\ref{42301}) holds then
$\tau_1^e\in (\tau_1^a, \tau_1^i)$ and $\tau_2^e=a(\tau_1^e)\in (\tau_2^i, \tau_2^a)$.
The rest of proof proceeds in four steps.
\vskip 2pt

\noindent
{\bf 1.} We first prove the existence of a $\tau_1^e$.
Set
$$
g(\tau)=-p'(\tau)+\frac{2h(\tau)-2h(a(\tau))}{\tau^2-a^2(\tau)}, \quad \tau\in [\tau_1^a, \tau_1^i].
$$
By virtue of the mean value theorem and recalling (\ref{5801}) and (\ref{101701}), one has
$$
g(\tau_1^a)=-p'(\tau_1^a)+\frac{2h(\tau_1^a)-2h(\tau_2^i)}{(\tau_1^a)^2-(\tau_2^i)^2}>0
$$
and
$$
g(\tau_1^i)=-p'(\tau_1^i)+\frac{2h(\tau_1^i)-2h(\tau_2^a)}{(\tau_1^i)^2-(\tau_2^a)^2}<0.
$$
Thus, there exists a $\tau_1^e\in (\tau_1^a, \tau_1^i)$ such that (\ref{42301}) holds for $\tau_2^e=a(\tau_1^e)$.

\vskip 4pt

\noindent
{\bf 2.}
Assume $\tau_1^e\in (\tau_1^a, \tau_1^i)$ satisfies (\ref{42301}).
 We shall show that for any $\tau\in(\tau_1^e, \tau_2^i)$, there exists one and only one $s_{po}(\tau)\in(\tau,+\infty)$ such that
\begin{equation}\label{5904}
p'(s_{po}(\tau))=\frac{2h(s_{po}(\tau))-2h(\tau)}{s_{po}^2(\tau)-\tau^2}<p'(\tau).
\end{equation}

Differentiating the equation
$p'(s_{po}(\tau))(s_{po}^2(\tau)-\tau^2)=2h(s_{po}(\tau))-2h(\tau)$ on both sides
with respect to $\tau$ and recalling (\ref{5801}), one gets
\begin{equation}\label{5902}
 -p''(s_{po}(\tau))(\tau^2-s_{po}^2(\tau))s_{po}'(\tau)=2\tau(p'(s_{po}(\tau))-p'(\tau)).
\end{equation}
To prove the existence,
we consider (\ref{5902}) with data
\begin{equation}\label{59022}
    s_{po}(\tau_1^e)=\tau_2^e.
\end{equation}

From $\tau_1^e\in (\tau_1^a, \tau_1^i)$ we have  $\tau_2^e>\tau_2^i$.
Then the initial value problem (\ref{5902}, \ref{59022}) admits a local solution and $s_{po}'(\tau_1^e)=0$.
We are going to show that the initial value problem (\ref{5902}, \ref{59022}) admits a solution $s_{po}(\tau)$ for $\tau\in(\tau_1^e, \tau_2^i)$. Moreover, the solution satisfies
\begin{equation}\label{5906}
p'(s_{po}(\tau))<p'(\tau), \quad  s_{po}'(\tau)<0,\quad \mbox{and}\quad \tau_2^i<s_{po}(\tau)<\tau_2^e\quad \mbox{for}~  \tau\in (\tau_1^e, \tau_2^i). 
\end{equation}

We differentiate (\ref{5902}) with respect to $\tau$ to yield
\begin{equation}\label{5903}
\begin{aligned}
&-p''(s_{po}(\tau))(\tau^2-s_{po}^2(\tau))s_{po}''(\tau)
-p'''(s_{po}(\tau))(\tau^2-s_{po}^2(\tau))(s_{po}'((\tau))^2\\&\qquad-p''(s_{po}(\tau))(2\tau-2s_{po}
(\tau)s_{po}'(\tau))s_{po}'(\tau)
\\~=~&2(p'(s_{po}(\tau))-p'(\tau))+2\tau(p''(s_{po}(\tau))s_{po}'(\tau)-p''(\tau)).
\end{aligned}
\end{equation}
So, by $ s_{po}(\tau_1^e)=\tau_2^e$ and $s_{po}'(\tau_1^e)=0$ we obtain $$s_{po}''(\tau_1^e)=\frac{2\tau_1^e p''(\tau_1^e)}{((\tau_1^e )^2-(\tau_2^e)^2)p''(\tau_2^e)}<0.$$
This implies that $s_{po}'(\tau)<0$ and $\tau_2^i<s_{po}(\tau)<\tau_2^e$
 as $\tau-\tau_1^e>0$ is small.
 Furthermore, by (\ref{81601}) we know that
the first inequality in (\ref{5906}) holds when $\tau-\tau_1^e>0$ is small.

Next, we shall use the argument of continuity to prove (\ref{5906}).
We assert that for any $\tau_{m}\in (\tau_1^e, \tau_2^i)$, if
the inequalities in (\ref{5906}) hold for $\tau\in (\tau_1^e, \tau_m)$
then they also hold at $\tau_m$.

From $s_{po}'(\tau)<0$ for $\tau\in (\tau_1^e, \tau_m)$, we have $s_{po}(\tau_m)<\tau_2^e$.
 Since $p'(\tau_2^i)<p'(\tau)$ for $\tau\in (\tau_m, \tau_2^i)$, by virtue of the Cauchy mean value theorem we have
 $$
 p'(\tau_2^i)<\frac{2h(\tau_2^i)-2h(\tau_m)}{(\tau_2^i)^2-\tau_m^2}.
 $$
 So, $s_{po}(\tau_m)\neq\tau_2^i$.
This implies $s_{po}(\tau_m)>\tau_2^i$.

Suppose $p'(s_{po}(\tau_m))=p'(\tau_m)$. Then by $\tau_2^i <s_{po}(\tau_m)<\tau_2^e$ we have $\tau_m>\tau_1^i$. While,  by the Cauchy mean value theorem
we have
$$
\frac{2h(s_{po}(\tau_m))-2h(\tau_m)}{s_{po}^2(\tau_m)-\tau_m^2}<p'(s_{po}(\tau_m)),
$$
since $p'(\tau)<p'(s_{po}(\tau_m))$ for $\tau\in (\tau_m, s_{po}(\tau_m))$. This leads to a contradiction.
So, we obtain $p'(s_{po}(\tau_m))<p'(\tau_m)$.

 Using (\ref{5902}) and recalling $p'(s_{po}(\tau_m))<p'(\tau_m)$ and $s_{po}(\tau_m)>\tau_2^i$, one immediately has $s_{po}'(\tau_m)<0$.
This completes the proof of the assertion.
Therefore by the argument of continuity we know that
the initial value problem (\ref{5902}) admits a solution $s_{po}(\tau)$ for $\tau\in(\tau_1^e, \tau_2^i)$ and the solution satisfies (\ref{5906}).

By the Cauchy mean value theorem,
we also have $s_{po}(\tau)\rightarrow \tau_2^i$ as $\tau\rightarrow\tau_2^i-0$.

\vskip 4pt

We now prove the uniqueness of $s_{po}(\tau)\in(\tau,+\infty)$.
For any $\tau\in (\tau_1^e, \tau_2^i)$,
we let
\begin{equation}\label{5907}
r(s)=p'(s)(s^2-\tau^2)-2h(s)+2h(\tau), \quad s\geq\tau.
\end{equation}
We divide the discussion into the following two cases: (1) $\tau_1^e<\tau< \tau_1^i$; (2) $\tau_1^i\leq \tau<\tau_2^i$.

(1) $\tau_1^e<\tau< \tau_1^i$.
Then by the Cauchy mean value theorem we have
$$
r(\tau_1^i)=\big((\tau_1^i)^2-\tau^2\big)\left(p'(\tau_1^i)-\frac{2h(\tau_1^i)-2h(\tau)}{(\tau_1^i)^2-\tau^2}\right)>0
$$
and
\begin{equation}\label{81604}
r(\tau_2^i)=\big((\tau_2^i)^2-\tau^2\big)\left(p'(\tau_2^i)-\frac{2h(\tau_2^i)-2h(\tau)}{(\tau_2^i)^2-\tau^2}\right)<0.
\end{equation}
Furthermore, by $r'(s)=p''(s)(s^2-\tau^2)$ one knows that the equation $r(s)=0$ only has two roots $s_{po}(\tau)$ and $\bar{s}_{po}(\tau)$ in $(\tau, +\infty)$, where $s_{po}(\tau)\in(\tau_2^i, \tau_2^e)$ is already obtained in the proof of existence and $\bar{s}_{po}(\tau)\in (\tau_1^i, \tau_2^i)$.

Since $\tau_1^e<\tau< \tau_1^i$, there exists one and only one $\tau'\in (\tau_1^i, \tau_2^i)$ such that $p'(\tau)=p'(\tau')$.
Thus by the Cauchy mean value theorem one has
$\bar{s}_{po}(\tau)<\tau'$.
Combining this with $\bar{s}_{po}(\tau)>\tau_1^i$ and (\ref{81601}) we have
 $p'(\bar{s}_{po}(\tau))>p'(\tau)$ which violates the right inequality of (\ref{5904}).
So we get the uniqueness of $s_{po}(\tau)$ for $\tau\in (\tau_1^e, \tau_1^i)$.

(2) $\tau_1^i\leq \tau< \tau_2^i$. Using $r(\tau)=0$, (\ref{81604}), and $r'(s)=p''(s)(s^2-\tau^2)$ one knows that the equation $r(s)=0$ only has only one root $s_{po}(\tau)$ which is already obtained in the proof of existence. So we get the uniqueness of $s_{po}(\tau)$ for $\tau\in [\tau_1^i, \tau_2^i)$.

\vskip 4pt

\noindent
{\bf 3.} We now prove the uniqueness of $\tau_1^e$.
Assume that $\tau_1^e\in (\tau_1^a, \tau_1^i)$ satisfies (\ref{42301}).
Assume as well that there exists a $\tau_3^e\in (\tau_1^e, \tau_1^i)$ such that
$$
p'(\tau_3^e)=p'(\tau_4^e)=\frac{2h(\tau_3^e)-2h(\tau_4^e)}{(\tau_3^e)^2-(\tau_4^e)^2},
$$
where $\tau_4^e=a(\tau_3^e)>\tau_2^e$. While, by the result of the previous step we have
$$
p'(s)(s^2-(\tau_3^e)^2)-2h(s)+2h(\tau_3^e)>0\quad \mbox{for}\quad s>\tau_2^e.
$$
 This leads to a contradiction.

\vskip 4pt

\noindent
{\bf 4.}
Set $\hat{g}(\tau)=2h(\tau)-2h(\tau_1^e)-p'(\tau_1^e)\tau^2+p'(\tau_1^e)(\tau_1^e)^2$.
We have
\begin{equation}\label{43002}
\hat{g}(\tau_1^e)=\hat{g}(\tau_2^e)=0
\end{equation}
and
$
\hat{g}'(\tau)=2\tau(p'(\tau)-p'(\tau_1^e))
$.
Since $\tau_1^a<\tau_1^e<\tau_1^i$, there exists a $\tau_m^e\in(\tau_1^i, \tau_2^i)$ such that $p'(\tau_m^e)=p'(\tau_1^e)$. Consequently, we have
$$
\hat{g}'(\tau)>0\quad \mbox{for}\quad \tau_1^e<\tau<\tau_m^e; \quad \hat{g}'(\tau)<0\quad \mbox{for}\quad \tau_m^e<\tau<\tau_2^e.
$$
Combining this with (\ref{43002}) we have $\hat{g}(\tau)>0$ for $\tau\in(\tau_1^e, \tau_2^e)$.
This immediately implies  that (\ref{42301aa}) holds for all $\tau\in(\tau_1^e, \tau_2^e)$.

This completes the proof Proposition \ref{5602}.
\end{proof}


\begin{prop}\label{51003}
({\bf Post-sonic}) For any $\tau\in(\tau_1^e, \tau_2^i)$, there exists one and only one $s_{po}(\tau)\in(\tau,+\infty)$ such that
\begin{equation}\label{43003}
p'(s_{po}(\tau))=\frac{2h(s_{po}(\tau))-2h(\tau)}{s_{po}^2(\tau)-\tau^2}<p'(\tau),
\end{equation}
and
\begin{equation}\label{43004}
-\frac{2h(s_{po}(\tau))-2h(\tau)}{s_{po}^2(\tau)-\tau^2}>-\frac{2h(s)-2h(\tau)}{s^2-\tau^2}
\end{equation}
for all $s\in (\tau, s_{po}(\tau))$.
Moreover, $s_{po}(\tau)$ satisfies (\ref{5906}) and
\begin{equation}\label{43005}
s_{po}(\tau)\rightarrow \tau_2^i\quad \mbox{as}\quad  \tau\rightarrow\tau_2^i-0.
\end{equation}
\end{prop}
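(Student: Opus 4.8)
The plan is to notice that almost everything in the statement is already contained in Step~2 of the proof of Proposition~\ref{5602}: the existence and uniqueness of $s_{po}(\tau)\in(\tau,+\infty)$ satisfying (\ref{43003}) is precisely (\ref{5904}), and the properties (\ref{5906}) together with the limit (\ref{43005}) were established there as well. Hence the only genuinely new content is the extended entropy inequality (\ref{43004}), and after fixing $\tau\in(\tau_1^e,\tau_2^i)$ the whole proof reduces to studying the secant function
\[
\psi(s):=\frac{2h(s)-2h(\tau)}{s^2-\tau^2},\qquad s>\tau .
\]
Since $-\psi(s_{po}(\tau))>-\psi(s)$ is the same as $\psi(s)>\psi(s_{po}(\tau))$, the claim (\ref{43004}) says exactly that $s_{po}(\tau)$ is the strict minimiser of $\psi$ over $(\tau,s_{po}(\tau)]$.

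First I would record the elementary facts about $\psi$. By (\ref{5801}) and l'H\^opital's rule, $\psi$ extends continuously to $s=\tau$ with value $p'(\tau)$. Differentiating $\psi(s)(s^2-\tau^2)=2h(s)-2h(\tau)$ and using $h'(s)=sp'(s)$ gives $\psi'(s)(s^2-\tau^2)^2=2s\,r(s)$, where $r(s)=p'(s)(s^2-\tau^2)-2h(s)+2h(\tau)$ is exactly the function defined in (\ref{5907}), with $r(\tau)=0$ and $r'(s)=p''(s)(s^2-\tau^2)$. Thus $\sgn \psi'=\sgn r$ on $(\tau,+\infty)$, and the critical points of $\psi$ are precisely the solutions of $p'(s)=\psi(s)$, i.e. the zeros of $r$.

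Then I would split into the two cases already used in the uniqueness argument of Step~2 of Proposition~\ref{5602}. When $\tau_1^i\le\tau<\tau_2^i$, the sign analysis of $r$ carried out there ($r(\tau)=0$, $r'<0$ on $(\tau,\tau_2^i)$, $r(\tau_2^i)<0$ by (\ref{81604}), and $r'>0$ with a single zero $s_{po}(\tau)$ on $(\tau_2^i,+\infty)$) shows $r<0$, hence $\psi'<0$, on the whole of $(\tau,s_{po}(\tau))$; so $\psi$ is strictly decreasing there and (\ref{43004}) is immediate. When $\tau_1^e<\tau<\tau_1^i$, $r$ has exactly the two zeros $\bar s_{po}(\tau)\in(\tau_1^i,\tau_2^i)$ and $s_{po}(\tau)\in(\tau_2^i,\tau_2^e)$ found there, with $r>0$ on $(\tau,\bar s_{po}(\tau))$, $r<0$ on $(\bar s_{po}(\tau),s_{po}(\tau))$ and $r>0$ afterwards; so $\psi$ increases, then decreases, then increases, with a local maximum at $\bar s_{po}(\tau)$ and a local minimum at $s_{po}(\tau)$. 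On the decreasing piece $(\bar s_{po}(\tau),s_{po}(\tau))$ monotonicity gives $\psi(s)>\psi(s_{po}(\tau))$, and on $(\tau,\bar s_{po}(\tau)]$ the strictly increasing $\psi$ stays above its left endpoint value, so $\psi(s)>p'(\tau)>p'(s_{po}(\tau))=\psi(s_{po}(\tau))$, the last inequality being the strict inequality in (\ref{43003}). Combining the two sub-intervals yields (\ref{43004}), and the proposition follows.

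The main obstacle is the case $\tau_1^e<\tau<\tau_1^i$, where $\psi$ is genuinely non-monotone on $(\tau,s_{po}(\tau))$; one cannot conclude from monotonicity alone and must combine the sharp bound $\psi>p'(\tau)$ on the initial increasing arc with the strict inequality $p'(s_{po}(\tau))<p'(\tau)$. It is exactly this strict inequality in (\ref{43003}) --- ultimately a consequence of the double-sonic normalisation (\ref{42301}) of $\tau_1^e$ in Proposition~\ref{5602} --- that makes the ``hump'' of $\psi$ harmless. Everything else is a routine sign chase for $r$, reusing the case distinctions and estimates already established in the proof of Proposition~\ref{5602}.
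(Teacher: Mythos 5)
Your proposal is correct, and for the only genuinely new claim, the entropy inequality (\ref{43004}), it takes a route that is a close cousin of the paper's but not identical. The paper introduces $\check{g}(s)=2h(s)-2h(\tau)-p'(s_{po}(\tau))(s^2-\tau^2)$, which vanishes at $s=\tau$ and at $s=s_{po}(\tau)$, and observes that $\check{g}'(s)=2s\bigl(p'(s)-p'(s_{po}(\tau))\bigr)$ compares $p'$ to a \emph{constant} level; given the shape of $p'$ this derivative changes sign exactly once on $(\tau,s_{po}(\tau))$ (at a point $\tau_m$ with $p'(\tau_m)=p'(s_{po}(\tau))$), so $\check{g}>0$ in between with no case distinction. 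Since $\check{g}(s)=(s^2-\tau^2)\bigl(\psi(s)-\psi(s_{po}(\tau))\bigr)$ in your notation, the two arguments prove literally the same positivity statement; the difference is that you differentiate $\psi$ instead, whose sign is governed by $r(s)$ from (\ref{5907}) --- a comparison of $p'(s)$ with the \emph{secant} $\psi(s)$ --- and $r$ can change sign twice on $(\tau,s_{po}(\tau))$, which forces your split into the monotone case $\tau\in[\tau_1^i,\tau_2^i)$ and the non-monotone case $\tau\in(\tau_1^e,\tau_1^i)$, the latter patched by combining $\psi>p'(\tau)$ on the increasing arc with the strict inequality $p'(s_{po}(\tau))<p'(\tau)$ from (\ref{43003}). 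What your version buys is maximal reuse of the sign chart of $r$ already established in Step 2 of Proposition \ref{5602} (including the intermediate zero $\bar{s}_{po}(\tau)$); what the paper's version buys is a single uniform monotonicity argument. Both treatments of existence, uniqueness, (\ref{5906}) and (\ref{43005}) are identical: they are simply quoted from Step 2 of Proposition \ref{5602}, exactly as the paper does.
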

\begin{proof}
By  the result of the second step in the proof of Proposition \ref{5602}, one has that for any $\tau\in(\tau_1^e, \tau_2^i)$, there exists one and only one $s_{po}(\tau)\in(\tau,+\infty)$ such that (\ref{43003}) holds, and $s_{po}(\tau)$ satisfies (\ref{5906}) and
(\ref{43005}).

Set $\check{g}(s)=2h(s)-2h(\tau)-p'(s_{po}(\tau))(s^2-\tau^2)$.
We have
\begin{equation}\label{43006}
\check{g}(\tau)=\check{g}(s_{po}(\tau))=0
\end{equation}
and
$\check{g}'(s)=2s(p'(s)-p'(s_{po}(\tau)))$.
Since $\tau_2^i<s_{po}(\tau)<\tau_2^e$, there exists a $\tau_m\in(\tau_m^e, \tau_2^i)$ such that $p'(\tau_m)=p'(s_{po}(\tau))$. Consequently, we have
$$
\check{g}'(s)>0\quad \mbox{for}\quad \tau<s<\tau_m; \quad \check{g}'(s)<0\quad \mbox{for}\quad \tau_m<\tau<s_{po}(\tau).
$$
Combining this with (\ref{43006}) we have $\check{g}(s)>0$ for $s\in(\tau, s_{po}(\tau))$.
This immediately implies  that (\ref{43004}) holds for all $s\in (\tau, s_{po}(\tau))$.
\end{proof}

\begin{prop}\label{pre}
({\bf Pre-sonic})
For any $\tau\in (\tau_1^e, \tau_1^i)$, there exists one and only one $s_{pr}(\tau)\in (\tau, +\infty)$ such that
\begin{equation}\label{5701}
p'(\tau)=\frac{2h(s_{pr}(\tau))-2h(\tau)}{s_{pr}^2(\tau)-\tau^2}>p'(s_{pr}(\tau))
\end{equation}
and
\begin{equation}\label{43007}
-\frac{2h(s_{pr}(\tau))-2h(\tau)}{s_{pr}^2(\tau)-\tau^2}>-\frac{2h(\tau)-2h(s)}{\tau^2-s^2}
\end{equation}
for all $s\in (\tau, s_{pr}(\tau))$.
Moreover, one has $s_{pr}(\tau)\in (\tau_1^i, \tau_2^e)$ for $\tau\in (\tau_1^e, \tau_1^i)$.
\end{prop}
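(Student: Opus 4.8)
The plan is to reduce both parts of the statement to the behaviour of a single function. Fix $\tau\in(\tau_1^e,\tau_1^i)$ and set
$$
w(s):=2h(s)-2h(\tau)-p'(\tau)\,(s^{2}-\tau^{2}),\qquad s\ge \tau .
$$
Using $h'(\tau)=\tau p'(\tau)$ one gets $w(\tau)=0$ and $w'(s)=2s\,(p'(s)-p'(\tau))$, so $w'(\tau)=0$ as well. For $s>\tau$ the first identity in (\ref{5701}) is exactly $w(s)=0$, the requirement $p'(\tau)>p'(s)$ is $p'(s)<p'(\tau)$, and — once $s_{pr}(\tau)$ has been located through $w=0$ — multiplying out the two chords shows that (\ref{43007}) is precisely $w(s)>0$ for $s\in(\tau,s_{pr}(\tau))$. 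Hence everything is controlled by the sign of $p'(s)-p'(\tau)$.

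Next I would catalogue the roots of $p'(s)=p'(\tau)$, $s\ge \tau$. Since $\tau\in(\tau_1^e,\tau_1^i)\subset(\tau_1^a,\tau_1^i)$ and $p'$ is increasing on $(1,\tau_1^i)$, we have $p'(\tau_2^i)=p'(\tau_1^a)<p'(\tau)<p'(\tau_1^i)$, and $p'(\tau)<0$; so by Property (P) this equation has precisely the three roots $s=\tau$, $s=\tau'\in(\tau_1^i,\tau_2^i)$, and $s=a(\tau)\in(\tau_2^i,\tau_2^a)$, where $a(\cdot)$ is increasing and $a(\tau_1^e)=\tau_2^e$ by (\ref{42301}). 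Reading off the sign of $p'(s)-p'(\tau)$ on $(\tau,\tau_1^i)$, $(\tau_1^i,\tau_2^i)$ and $(\tau_2^i,+\infty)$ gives $w'>0$ on $(\tau,\tau')$, $w'<0$ on $(\tau',a(\tau))$, and $w'>0$ on $(a(\tau),+\infty)$; in particular $w$ increases from $w(\tau)=0$ to the positive value $w(\tau')$ and then strictly decreases on $(\tau',a(\tau))$. Also $p'(s)<p'(\tau)$ throughout $(\tau',a(\tau))$, while $p'(s)\ge p'(\tau)$ on $[a(\tau),+\infty)$.

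The one genuinely new ingredient — and the step I expect to be the main obstacle — is to show $w(a(\tau))<0$, i.e. that $w$ really does dip below zero in its trough. I would deduce this from the function $g$ used in Step 1 of the proof of Proposition \ref{5602}: clearing denominators shows $w(a(\tau))<0$ is equivalent to $g(\tau)<0$, and the argument in Step 3 of that proof shows that $g$ has no zero in $(\tau_1^e,\tau_1^i)$, while $g(\tau_1^i)<0$; by continuity $g<0$, hence $w(a(\tau))<0$, on all of $(\tau_1^e,\tau_1^i)$. Granting this, $w>0$ on $(\tau,\tau']$, and on $(\tau',a(\tau))$ the strictly decreasing function $w$ passes from $w(\tau')>0$ to $w(a(\tau))<0$, giving a single zero $s_1$, at which $p'(s_1)<p'(\tau)$; and on $[a(\tau),+\infty)$ we have $p'(s)\ge p'(\tau)$, so no point there can satisfy both $w=0$ and $p'(s)<p'(\tau)$. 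Therefore $s_{pr}(\tau):=s_1$ is the unique point of $(\tau,+\infty)$ fulfilling (\ref{5701}), and it lies in $(\tau',a(\tau))\subset(\tau_1^i,\tau_2^a)$. Inequality (\ref{43007}) follows at once: $w>0$ on $(\tau,\tau']$ and $w>0$ on $(\tau',s_1)$ (where $w$ decreases from $w(\tau')>0$ to $w(s_1)=0$), so $w>0$ on $(\tau,s_{pr}(\tau))$.

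It remains to sharpen the location to $s_{pr}(\tau)\in(\tau_1^i,\tau_2^e)$. The lower bound is free since $s_{pr}(\tau)=s_1>\tau'>\tau_1^i$. For the upper bound, note that $\tau_2^e\in(\tau',a(\tau))$ (indeed $\tau'<\tau_2^i<\tau_2^e$ and $a(\tau)>a(\tau_1^e)=\tau_2^e$ because $a$ is increasing), so, $w$ being decreasing on that interval, $s_1<\tau_2^e$ is equivalent to $w(\tau_2^e)<0$. To obtain the latter I would set $\vartheta(\tau):=2h(\tau)+p'(\tau)\big((\tau_2^e)^2-\tau^2\big)$; then $\vartheta(\tau_1^e)=2h(\tau_2^e)$ by (\ref{42301}), and $\vartheta'(\tau)=p''(\tau)\big((\tau_2^e)^2-\tau^2\big)>0$ on $(\tau_1^e,\tau_1^i)$ (since $p''(\tau)>0$ and $\tau_2^e>\tau_2^i>\tau_1^i>\tau$), whence $\vartheta(\tau)>2h(\tau_2^e)$, i.e. $w(\tau_2^e)=2h(\tau_2^e)-\vartheta(\tau)<0$, for all $\tau\in(\tau_1^e,\tau_1^i)$. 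This completes the proof.
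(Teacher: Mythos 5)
Your proof is correct, and it takes a genuinely different route from the paper's. The paper obtains existence and the location $s_{pr}(\tau)\in(\tau_1^i,\tau_2^e)$ \emph{dynamically}: it differentiates the defining relation in $\tau$ to get an ODE for $\tau\mapsto s_{pr}(\tau)$ with data $s_{pr}(\tau_1^e)=\tau_2^e$, and propagates the properties $s_{pr}>\tau_1^i$, $p'(s_{pr})<p'(\tau)$, $s_{pr}'<0$ by a continuation argument (the monotone decrease from $\tau_2^e$ is what delivers the upper bound $s_{pr}(\tau)<\tau_2^e$); uniqueness and the chord inequality are then treated separately with the static functions $\hat r$ and $\breve g$. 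You instead work at fixed $\tau$ with the single function $w=\breve g=-\hat r$ and reduce the entire proposition to two sign facts: $w(a(\tau))<0$, which you correctly identify with $g<0$ on $(\tau_1^e,\tau_1^i)$ (this does follow from Steps 1 and 3 of the proof of Proposition \ref{5602} together with the intermediate value theorem, since $g(\tau_1^i)<0$ and $g$ is continuous and zero-free there), and $w(\tau_2^e)<0$, which your auxiliary monotone function $\vartheta$ supplies cleanly via (\ref{42301}). I checked the root count for $p'(s)=p'(\tau)$, the resulting sign pattern of $w'$, the equivalence of (\ref{43007}) with $w>0$ on $(\tau,s_{pr}(\tau))$, and the exclusion of spurious roots on $[a(\tau),+\infty)$ by the strict inequality in (\ref{5701}); all are sound. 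What your approach buys is a self-contained, purely static argument in which existence, uniqueness, the chord inequality, and both location bounds all come from one shape analysis of $w$ — in particular your uniqueness step is more explicit than the paper's rather terse Step 2 — at the cost of importing the sign of $g$ from Proposition \ref{5602} and adding the extra lemma about $\vartheta$, whereas the paper's ODE continuation yields the bound $s_{pr}<\tau_2^e$ for free from $s_{pr}'<0$.
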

\begin{proof}
The proof proceeds in three steps.

\vskip 2pt
\noindent
{\bf 1.}
We first prove the existence.
Differentiating $p'(\tau)(s_{pr}^2(\tau)-\tau^2)=2h(s_{pr}(\tau))-2h(\tau)$ with respect to $\tau$, one gets
\begin{equation}\label{5702}
2s_{pr}\big(p'(s_{pr})-p'(\tau)\big){\rm d}s_{pr}=p''(\tau)(s_{pr}^2-\tau^2){\rm d}\tau.
\end{equation}
We consider (\ref{5702}) with data
\begin{equation}\label{5703}
s_{pr}\big|_{\tau=\tau_1^e}=\tau_2^e.
\end{equation}

A direct computation yields
$$
\frac{{\rm d}\tau}{{\rm d}s_{pr}}\bigg|_{s_{pr}=\tau_2^e}=0\quad \mbox{and}\quad \frac{{\rm d^2}\tau}{{{\rm d}s_{pr}^2}}\bigg|_{s_{pr}=\tau_2^e}>0.
$$
So, there exists a small $\varepsilon>0$ such that the initial value problem (\ref{5702}, \ref{5703}) admits a solution with the properties
\begin{equation}\label{51001}
s_{pr}>\tau_1^i, \quad p'(s_{pr})<p'(\tau), \quad \mbox{and}\quad \frac{{\rm d}s_{pr}}{{\rm d}\tau}<0
\end{equation}
for $\tau\in (\tau_1^e, \tau_1^e+\varepsilon)$.

We next prove the following assertion: for any $\tau_m\in (\tau_1^e, \tau_1^i)$, if the solution satisfies (\ref{51001}) for $\tau\in  (\tau_1^e, \tau_m)$ then it also satisfies (\ref{51001}) at $\tau_m$.

Suppose $p'(s_{pr}(\tau_m))=p'(\tau_m)$. Then we have $s_{_{pr}}(\tau_m)\in (\tau_1^i, \tau_2^i)$ and by the Cauchy mean value theorem one has $\frac{2h(s_{pr}(\tau_m))-2h(\tau_m)}{s_{pr}^2(\tau_m)-\tau_m^2}>p'(\tau_m)$. This leads to a contradiction. So, $p'(s_{pr}(\tau_m))<p'(\tau_m)$. From this one immediately has  $s_{pr}'(\tau_m)<0$.
 By the Cauchy mean value theorem we also have $s_{pr}(\tau_m)>\tau_1^i$.
 This completes the proof of the assertion.

  From the assertion we know that the initial value problem (\ref{5702}, \ref{5703}) admits a solution for $\tau\in (\tau_1^e, \tau_1^i)$ such that (\ref{51001}) hold. Moreover, by the Cauchy mean value theorem one has $s_{pr}(\tau)\rightarrow \tau_1^i$ as $\tau\rightarrow \tau_1^i$.

\noindent
{\bf 2.}
We next prove the uniqueness.
For any fixed $\tau\in(\tau_1^e, \tau_1^i)$, we let
$$
\hat{r}(s)=p'(\tau)(s^2-\tau^2)-2h(s)+2h(\tau).
$$
From $\hat{r}'(s)=2s(p'(\tau)-p'(s))$ one can see that
 equation $\hat{r}(s)=0$ has at least one root for $s>\tau$ such that  $p'(s)<p'(\tau)$.

\noindent
{\bf 3.}
Set $\breve{g}(s)=2h(s)-2h(\tau)-p'(\tau)(s^2-\tau^2)$.
We have
\begin{equation}\label{43010}
\breve{g}(\tau)=\breve{g}(s_{pr}(\tau))=0
\end{equation}
and
$\breve{g}'(s)=2s(p'(s)-p'(\tau)).$
Since $\tau_1^e<\tau<\tau_1^i$, there exists a $\tau_m\in(\tau_1^i, \tau_m^e)$ such that $p'(\tau_m)=p'(\tau)$. Consequently, we have
$$
\breve{g}'(s)>0\quad \mbox{for}\quad \tau<s<\tau_m; \quad \breve{g}'(s)<0\quad \mbox{for}\quad \tau_m<\tau<s_{pr}(\tau).
$$
Combining this with (\ref{43010}) we have $\breve{g}(s)>0$ for $s\in(\tau, s_{pr}(\tau))$.
This immediately implies  that (\ref{43007}) holds for all $s\in (\tau, s_{pr}(\tau))$.

This completes the proof of the proposition.
\end{proof}

\begin{prop}
For $\tau\in (\tau_1^e, \tau_1^i)$, there holds $s_{po}(\tau)>s_{pr}(\tau)$.
\end{prop}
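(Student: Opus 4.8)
The plan is to express every defining relation in terms of the single scalar quantity
\[
P(s):=\frac{2h(s)-2h(\tau)}{s^2-\tau^2}\qquad(s>\tau),
\]
for a fixed $\tau\in(\tau_1^e,\tau_1^i)$, and then to read off the conclusion from the entropy-type inequalities already established for $s_{pr}(\tau)$ and $s_{po}(\tau)$. First I would record the elementary facts: since $h'=\tau p'$, the Cauchy mean value theorem gives $P(s)\to p'(\tau)$ as $s\to\tau+0$; and since $p'<0$, the map $h$ is decreasing, so for $s>\tau$ one has $\frac{2h(\tau)-2h(s)}{\tau^2-s^2}=P(s)$. With this notation, \eqref{5701} reads $P(s_{pr}(\tau))=p'(\tau)$, \eqref{43003} reads $P(s_{po}(\tau))=p'(s_{po}(\tau))<p'(\tau)$, and \eqref{43007} reads $P(s_{pr}(\tau))<P(s)$ for every $s\in(\tau,s_{pr}(\tau))$.

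Next I would combine \eqref{5701} and \eqref{43003} to obtain $P(s_{po}(\tau))<p'(\tau)=P(s_{pr}(\tau))$; in particular $s_{po}(\tau)\neq s_{pr}(\tau)$. Arguing by contradiction, suppose $s_{po}(\tau)<s_{pr}(\tau)$. Since $s_{po}(\tau)>\tau$ (part of Proposition \ref{51003}), this puts $s_{po}(\tau)$ into the interval $(\tau,s_{pr}(\tau))$, so \eqref{43007} applied with $s=s_{po}(\tau)$ forces $P(s_{pr}(\tau))<P(s_{po}(\tau))$, contradicting the displayed inequality. Hence $s_{po}(\tau)>s_{pr}(\tau)$. (Geometrically this is transparent: $\bar s_{po}(\tau)$ and $s_{po}(\tau)$ are precisely the two roots on $(\tau,\infty)$ of the function $r(s)=p'(s)(s^2-\tau^2)-2h(s)+2h(\tau)$ from the proof of Proposition \ref{5602}, hence the only critical points of $P$; since $P$ is increasing immediately to the right of $\tau$ — because $p''(\tau)>0$ — it increases on $(\tau,\bar s_{po})$, decreases on $(\bar s_{po},s_{po})$ and increases on $(s_{po},\infty)$, and \eqref{43007} picks out $s_{pr}(\tau)$ as the point on the decreasing branch where $P$ returns to the value $p'(\tau)$, which lies to the left of the minimum $s_{po}(\tau)$. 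The contradiction argument above is just a streamlined form of this observation.)

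No new property of the van der Waals equation of state \eqref{van} is needed. The only steps that require a little care are the sign verifications in the first paragraph — identifying $\frac{2h(\tau)-2h(s)}{\tau^2-s^2}$ with $P(s)$ and the one-sided limit $P(\tau+0)=p'(\tau)$ — and quoting the correct facts about $s_{po}(\tau)$, namely $p'(s_{po}(\tau))<p'(\tau)$ and $s_{po}(\tau)>\tau$, from Propositions \ref{5602} and \ref{51003}. Beyond that I expect no genuine obstacle: once the defining relations are rewritten in terms of $P$, the statement follows from a one-line comparison.
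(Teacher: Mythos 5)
Your argument is correct and is essentially the paper's own proof: both use $p'(s_{po}(\tau))<p'(\tau)=P(s_{pr}(\tau))$ together with the entropy inequality (\ref{43007}) evaluated at $s=s_{po}(\tau)$ to rule out $s_{pr}(\tau)\geq s_{po}(\tau)$ by contradiction. The reformulation in terms of $P(s)$ and the geometric aside about the monotonicity branches of $P$ are a clean packaging but not a different route.
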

\begin{proof}
From $s_{po}(\tau)\in (\tau_2^i, \tau_2^e)$ and $\tau\in (\tau_1^e, \tau_1^i)$ we have $p'(s_{po}(\tau))<p'(\tau)$.

Suppose $s_{pr}(\tau)\geq s_{po}(\tau)$. Then by (\ref{43003}) and (\ref{43007}) we have
$p'(s_{po}(\tau))\geq p'(\tau)$. This leads to a contradiction.
Then we have $s_{po}(\tau)>s_{pr}(\tau)$ for $\tau\in (\tau_1^e, \tau_1^i)$.
\end{proof}

\begin{prop}\label{tran}
({\bf Transonic}) Assume $\tau_f\in (\tau_1^e, \tau_1^i)$. Then,
\begin{equation}\label{43012}
p'(\tau_b)<\frac{2h(\tau_b)-2h(\tau_f)}{\tau_b^2-\tau_f^2}<p'(\tau_f)\quad \mbox{iff}\quad \tau_b\in (1, \tau_f)\cup(s_{pr}(\tau_f), s_{po}(\tau_f)).
\end{equation}
Moreover,
\begin{equation}\label{43013}
-\frac{2h(\tau_b)-2h(\tau_f)}{\tau_b^2-\tau_f^2}>-\frac{2h(s)-2h(\tau_f)}{s^2-\tau_f^2}
\end{equation}
for all $s\in (\min\{\tau_f, \tau_b\}, \max\{\tau_f, \tau_b\})$.
\end{prop}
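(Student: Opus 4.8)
The plan is to reduce both assertions to the monotonicity of the secant-slope function of the potential $h$ based at $\tau_f$. Fix $\tau_f\in(\tau_1^e,\tau_1^i)$ and set
$$F(s):=\frac{2h(s)-2h(\tau_f)}{s^2-\tau_f^2},\qquad s\in(1,\infty)\setminus\{\tau_f\};$$
this chain of inequalities is, via the Rankine--Hugoniot relations, exactly the condition that the relative flow be supersonic ahead of and subsonic behind the shock, i.e.\ that the shock be transonic. Since $h$ is smooth and $h'(s)=sp'(s)$ by (\ref{5801}), $F$ extends smoothly across $s=\tau_f$ with $F(\tau_f)=h'(\tau_f)/\tau_f=p'(\tau_f)$, and differentiation gives
$$F'(s)=\frac{2s\,r(s)}{(s^2-\tau_f^2)^2},\qquad r(s):=p'(s)(s^2-\tau_f^2)-2h(s)+2h(\tau_f),$$
where $r$ is the function appearing in the proof of Proposition \ref{5602} (with $\tau=\tau_f$), so $r(\tau_f)=0$, $r'(s)=p''(s)(s^2-\tau_f^2)$, and on $(1,\infty)$ the sign of $F'$ equals the sign of $r$. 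Also $F-p'=-r/(s^2-\tau_f^2)$, so the left inequality of (\ref{43012}) amounts to $r(\tau_b)(\tau_b^2-\tau_f^2)<0$.

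The first step is to fix the sign of $r$. From $r(\tau_f)=0$, $r'(s)=p''(s)(s^2-\tau_f^2)$ and Property (P) one gets $r>0$ on $(1,\tau_f)$; and by the proof of Proposition \ref{5602} the only zeros of $r$ in $(\tau_f,\infty)$ are $\bar s_{po}(\tau_f)\in(\tau_1^i,\tau_2^i)$ and $s_{po}(\tau_f)\in(\tau_2^i,\tau_2^e)$, with $r>0$ on $(\tau_f,\bar s_{po}(\tau_f))\cup(s_{po}(\tau_f),\infty)$ and $r<0$ on $(\bar s_{po}(\tau_f),s_{po}(\tau_f))$. Hence $F$ is strictly increasing on $(1,\bar s_{po}(\tau_f))$, strictly decreasing on $(\bar s_{po}(\tau_f),s_{po}(\tau_f))$, and strictly increasing on $(s_{po}(\tau_f),\infty)$. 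At each of these zeros the secant slope equals the tangent slope, so $F(\bar s_{po}(\tau_f))=p'(\bar s_{po}(\tau_f))$ and $F(s_{po}(\tau_f))=p'(s_{po}(\tau_f))$, and moreover $p'(\bar s_{po}(\tau_f))>p'(\tau_f)$ (again from the proof of Proposition \ref{5602}) while $p'(s_{po}(\tau_f))<p'(\tau_f)$ by (\ref{5906}).

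Now (\ref{43012}) follows. On $(1,\tau_f)$ we have $F<F(\tau_f)=p'(\tau_f)$ by monotonicity and $F>p'$ since $r>0$ and $s^2-\tau_f^2<0$, so every point of $(1,\tau_f)$ is transonic. On $(\tau_f,\bar s_{po}(\tau_f)]\cup[s_{po}(\tau_f),\infty)$ we have $r\ge0$ and $s^2-\tau_f^2>0$, hence $F\le p'$ and the left inequality fails. On $(\bar s_{po}(\tau_f),s_{po}(\tau_f))$ the inequality $F>p'$ holds ($r<0$), while $F$ decreases strictly from $p'(\bar s_{po}(\tau_f))>p'(\tau_f)$ to $p'(s_{po}(\tau_f))<p'(\tau_f)$; thus $F<p'(\tau_f)$ precisely on $(\sigma,s_{po}(\tau_f))$, where $\sigma$ is the unique point of this interval with $F(\sigma)=p'(\tau_f)$. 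Since $\sigma$ also satisfies $p'(\sigma)<F(\sigma)=p'(\tau_f)$, it fulfils (\ref{5701}), so by the uniqueness in Proposition \ref{pre} we have $\sigma=s_{pr}(\tau_f)$. Consequently the transonic set equals $(1,\tau_f)\cup(s_{pr}(\tau_f),s_{po}(\tau_f))$, which in particular re-derives $\bar s_{po}(\tau_f)<s_{pr}(\tau_f)<s_{po}(\tau_f)$.

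For (\ref{43013}), which asks that $F(s)>F(\tau_b)$ for every $s$ strictly between $\tau_f$ and $\tau_b$, I would argue on the two pieces of the transonic set. If $\tau_b\in(1,\tau_f)$ then $s\in(\tau_b,\tau_f)$, where $F$ is strictly increasing, so $F(\tau_b)<F(s)$. If $\tau_b\in(s_{pr}(\tau_f),s_{po}(\tau_f))$ then $s\in(\tau_f,\tau_b)$: for $s\in(\tau_f,s_{pr}(\tau_f)]$ one has $F(s)\ge p'(\tau_f)>F(\tau_b)$ (the first inequality because $F$ increases from $p'(\tau_f)$ on $(\tau_f,\bar s_{po}(\tau_f))$ and then decreases back to $p'(\tau_f)$ on $(\bar s_{po}(\tau_f),s_{pr}(\tau_f)]$, the second because $\tau_b$ is transonic), and for $s\in(s_{pr}(\tau_f),\tau_b)$ both $s$ and $\tau_b$ lie in $(\bar s_{po}(\tau_f),s_{po}(\tau_f))$ where $F$ is strictly decreasing, so $F(s)>F(\tau_b)$. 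I expect the only delicate part to be reading off from the proof of Proposition \ref{5602} the exact sign pattern of $r$ together with $p'(\bar s_{po}(\tau_f))>p'(\tau_f)$, and then the identification $\sigma=s_{pr}(\tau_f)$ via the uniqueness in Proposition \ref{pre}; the remainder is elementary monotonicity.
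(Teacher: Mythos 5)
Your proof is correct. It is organized differently from the paper's: you work with the single secant--slope function $F(s)=\frac{2h(s)-2h(\tau_f)}{s^2-\tau_f^2}$ and read everything off its global monotonicity profile (increasing on $(1,\bar s_{po})$, decreasing on $(\bar s_{po},s_{po})$, increasing beyond, with $F=p'$ exactly at the turning points), whereas the paper juggles three auxiliary functions: $\hat r(\tau_b)=p'(\tau_f)(\tau_b^2-\tau_f^2)-2h(\tau_b)+2h(\tau_f)$ for the right inequality of (\ref{43012}), $r(\tau_b)=p'(\tau_b)(\tau_b^2-\tau_f^2)-2h(\tau_b)+2h(\tau_f)$ for the left one, and $\bar g(s)=(2h(s)-2h(\tau_f))(\tau_b^2-\tau_f^2)-(2h(\tau_b)-2h(\tau_f))(s^2-\tau_f^2)$ for (\ref{43013}). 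The underlying algebra is identical --- your $r$ is the paper's $r$, $\hat r=(s^2-\tau_f^2)\bigl(p'(\tau_f)-F\bigr)$, and $\bar g=(s^2-\tau_f^2)(\tau_b^2-\tau_f^2)\bigl(F(s)-F(\tau_b)\bigr)$ --- but the mechanisms diverge at two points. First, for (\ref{43013}) the paper argues by contradiction (an interior zero of $\bar g$ would force three solutions of $p'(\tau_k)=\frac{2h(\tau_b)-2h(\tau_f)}{\tau_b^2-\tau_f^2}$ in $(\tau_1^e,\tau_2^e)$, which is impossible), while you get it directly from the monotonicity of $F$ on each piece; your version is more transparent and handles the case $\tau_b\in(1,\tau_f)$ explicitly rather than by ``similarly.'' Second, you \emph{derive} the left endpoint of the transonic window as the unique crossing $\sigma$ of $F$ with the level $p'(\tau_f)$ on $(\bar s_{po},s_{po})$ and then identify $\sigma=s_{pr}(\tau_f)$ via the uniqueness clause of Proposition \ref{pre}, which as a by-product re-proves $s_{pr}(\tau_f)<s_{po}(\tau_f)$; the paper instead imports the ordering from its separate proposition and uses $\hat r(s_{pr}(\tau_f))=0$ as given. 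Both routes lean on the same facts extracted from the proof of Proposition \ref{5602} (the two zeros $\bar s_{po},s_{po}$ of $r$ with their locations and $p'(\bar s_{po}(\tau_f))>p'(\tau_f)$), so nothing new is assumed; your presentation simply unifies the four steps of the paper into one picture.
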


\begin{proof}
The proof proceeds in four steps.

\vskip 2pt
\noindent
{\bf 1.} Since $\tau_f\in(\tau_1^e, \tau_1^i)$, one has $p''(\tau)>0$ for $\tau\in (1, \tau_f)$. This implies
$p'(\tau_b)<\frac{2h(\tau_b)-2h(\tau_f)}{\tau_b^2-\tau_f^2}<p'(\tau_f)$ for $\tau_b\in (1, \tau_f)$.

\vskip 2pt
\noindent
{\bf 2.}
Let $\hat{r}(\tau_b)=p'(\tau_f)(\tau_b^2-\tau_f^2)-2h(\tau_b)+2h(\tau_f)$. Then we have $\hat{r}(s_{pr}(\tau_f))=0$
and $\hat{r}'(\tau_b)=2\tau_b(p'(\tau_f)-p'(\tau_b))$.
By the definition of $s_{po}(\tau_f)$ and $s_{pr}(\tau_f)$ we know that
$p'(\tau_b)<p'(\tau_f)$ for $\tau_b\in (s_{pr}(\tau_f), s_{po}(\tau_f))$.
Thus we have $\hat{r}(\tau_b)>0$ for $\tau_b\in (s_{pr}(\tau_f), s_{po}(\tau_f))$. This implies
\begin{equation}\label{250501}
\frac{2h(\tau_b)-2h(\tau_f)}{s^2-\tau^2}<p'(\tau_f)\quad \mbox{for}\quad  \tau_b\in [s_{pr}(\tau_f), s_{po}(\tau_f)].
\end{equation}

We next prove $\hat{r}(\tau_b)<0$ for $\tau_b\in (\tau_f, s_{pr}(\tau_f))$. Suppose there exists a $\tau_*\in (\tau_f, s_{pr}(\tau_f))$ such that $\hat{r}(\tau_*)=0$. Then by $\hat{r}(\tau_f)=0$ and $\hat{r}(s_{pr}(\tau_f))=0$ we know that $\hat{r}'(\tau_b)=0$
has two roots in $(\tau_f, s_{pr}(\tau_f))$. This is impossible, since $s_{pr}(\tau_f)<\tau_2^e$.  So we have $\hat{r}(\tau_b)<0$ for $\tau_b\in (\tau_f, s_{pr}(\tau_f))$.

\vskip 2pt
\noindent
{\bf 3.}
Let ${r}(\tau_b)=p'(\tau_b)(\tau_b^2-\tau_f^2)-2h(\tau_b)+2h(\tau_f)$. Then we have
\begin{equation}\label{250502}
{r}(s_{po}(\tau_f))=0\quad \mbox{and}\quad {r}'(s_{po}(\tau_f))=p''(s_{po}(\tau_f))(s_{po}^2(\tau_f)-\tau_f^2)>0.
\end{equation}
Combining (\ref{250501}), (\ref{250502}) the uniqueness of $s_{pr}(\tau_f)$ we have $p'(\tau_b)<\frac{2h(\tau_b)-2h(\tau_f)}{\tau_b^2-\tau_f^2}$ for $\tau_b\in (s_{pr}(\tau_f), s_{po}(\tau_f))$. From (\ref{250502}) we also have $p'(\tau_b)>\frac{2h(\tau_b)-2h(\tau_f)}{\tau_b^2-\tau_f^2}$ for $\tau_b>s_{po}(\tau_f)$.

\vskip 2pt
\noindent
{\bf 4.} 
Let
$\bar{g}(s)=(2h(s)-2h(\tau_f))(\tau_b^2-\tau_f^2)-(2h(\tau_b)-2h(\tau_f))(s^2-\tau_f^2)$.
Then we have $\bar{g}(\tau_f)=\bar{g}(\tau_b)=0$.
We compute
$$
\bar{g}'(s)=2s(\tau_b^2-\tau_f^2)\Big(p'(s)-\frac{2h(\tau_b)-2h(\tau_f)}{\tau_b^2-\tau_f^2}\Big).
$$
By (\ref{43012}) we have
\begin{equation}\label{50501}
\hat{g}'(\tau_f)>0\quad \mbox{and}\quad  \hat{g}'(\tau_b)<0.
\end{equation}
Suppose there exits a point $\tau_m\in (\tau_f, \tau_b)$ such that $\hat{g}(\tau_m)=0$. Then by (\ref{50501}) we know that there exist at least three points $\tau_{k}$ ($k=1, 3, 3$) in $(\tau_f, \tau_b)$ such that
$p'(\tau_k)=\frac{2h(\tau_b)-2h(\tau_f)}{\tau_b^2-\tau_f^2}$  ($k=1, 3, 3$).
This is impossible, since $\tau_1^e<\tau_f<\tau_b<\tau_2^e$.
So, we get $\bar{g}(s)>0$ for $s\in (\tau_f, \tau_b)$. Hence, the inequality (\ref{43013}) holds for all $s\in (\tau_f, \tau_b)$.
The proof of (\ref{43013}) for $\tau_b\in (1, \tau_f)$ is similarly. We omit the details.

This completes the proof.
\end{proof}

From the first and the third relations of (\ref{RH}), we have
$$
m^2=-\frac{2h(\tau_f)-2h(\tau_b)}{\tau_f^2-\tau_b^2}.
$$
So, we have the following classifications about the oblique shocks in 2D pseudo-steady potential flows of the polytropic van der Waals gases:
\begin{itemize}
  \item for oblique double-sonic shocks, i.e., $\tau_f=\tau_1^e$ and $\tau_b=\tau_2^e$, one has
$$
N_f=c_f:=\sqrt{-\tau_f^2p'(\tau_f)} \quad \mbox{and}\quad N_b=c_b:=\sqrt{-\tau_b^2p'(\tau_b)};
$$
  \item for oblique post-sonic shocks, i.e., $\tau_f\in (\tau_1^e, \tau_2^i)$ and $\tau_b=s_{po}(\tau_f)$, one has
$$
N_f>c_f\quad \mbox{and}\quad N_b=c_b;
$$
\item  for oblique pre-sonic shocks, i.e., $\tau_f\in (\tau_1^e, \tau_1^i)$ and $\tau_b=s_{pr}(\tau_f)$, one has
$$
N_f=c_f\quad \mbox{and}\quad N_b<c_b;
$$
\item for oblique transonic shocks,  one has
$$
N_f>c_f\quad \mbox{and}\quad N_b<c_b.
$$
\end{itemize}

\subsection{Interaction of fan-shock-fan composite waves}
To study the expansion into vacuum of a wedge of gas for potential flow,
 we consider (\ref{Potential}) with data
 \begin{equation}\label{initial1}
(\rho, \Phi)(x,y,0)=\left\{
              \begin{array}{ll}
                (\rho_0, 0), & \hbox{$x>0$, $|y|< x\tan\theta$;} \\[2pt]
               \mbox{vacuum}, & \hbox{otherwise.}
              \end{array}
            \right.
 \end{equation}

Let $\tau_0=1/\rho_0$.
Then when $1<\tau_0<\tau_1^e$, the gas away from the sharp corner expands into the vacuum as two symmetric planar fan-shock-fan composite
waves composed of an expansion fan, a shock, and another expansion fan. In order to construct the planar fan-shock-fan composite waves of the gas expansion in vacuum problem,
we consider (\ref{Potential}) with data
\begin{equation}\label{1D}
(\rho, \Phi)(x, y, 0)=\left\{
                        \begin{array}{ll}
                          (\rho_0, 0), & \hbox{$n_1x+n_2y>0$;} \\[4pt]
                          \mbox{vacuum}, & \hbox{$n_1x+n_2y<0$;}
                        \end{array}
                      \right.
\end{equation}
see Figure \ref{Fig7} (left).

\begin{figure}[htbp]
\begin{center}
\qquad\quad\includegraphics[scale=0.48]{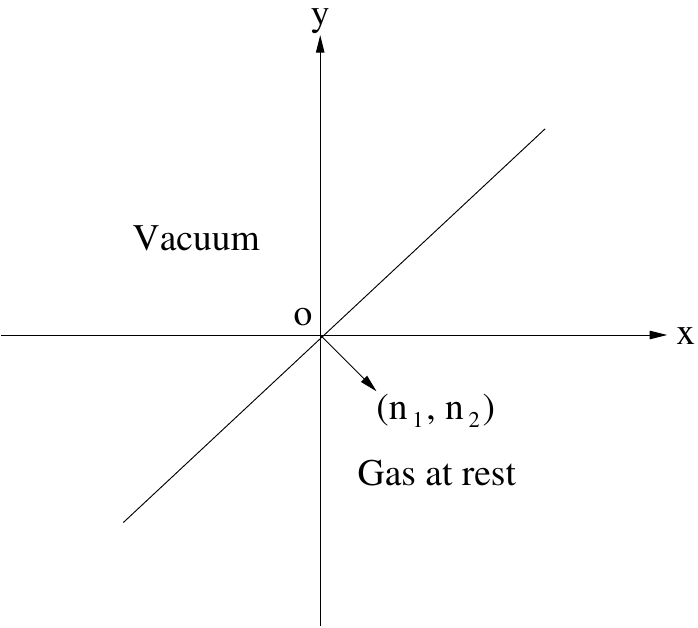}\qquad\qquad\includegraphics[scale=0.48]{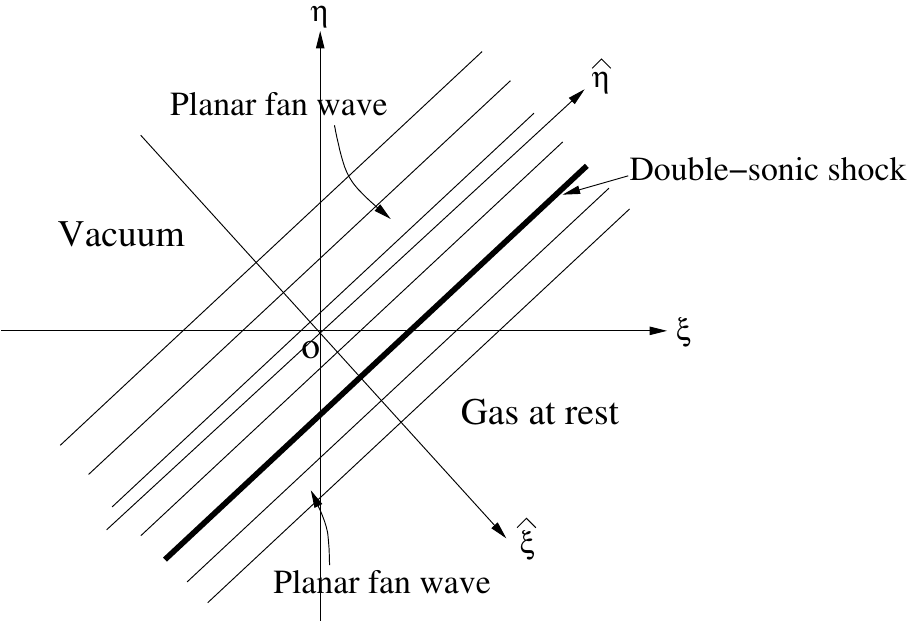}
\caption{\footnotesize A planar fan-shock-fan composite wave in the $(\xi,\eta)$-plane.}
\label{Fig7}
\end{center}
\end{figure}

The problem (\ref{Potential}, \ref{1D}) is actually a 1D  Riemann problem,
if we make the coordinate transformation
$\hat{x}=n_1 x+n_2 y$ and $\hat{y}=-n_2x+n_1y$. Let $\hat{u}=n_1 u+n_2 v$ and $\hat{v}=-n_2 u+n_1v$.
Then by solving a 1D Riemann problem for non-convex gases (see, e.g., \cite{Wen1}),  the self-similar solution to the problem (\ref{Potential}, \ref{1D}) takes the following form:
\begin{equation}\label{planar}
(\hat{u}, \hat{v}, \tau)(\hat{\xi}, \hat{\eta})=\left\{
                          \begin{array}{ll}
(0, 0, \tau_0), & \hbox{$\hat{\xi}>\hat{\xi}_0$;} \\[4pt]
                            \big(\hat{u}_r(\hat{\xi}), 0, \hat{\tau}_r(\hat{\xi})\big), & \hbox{$\hat{\xi}_1<\hat{\xi}<\hat{\xi}_0$;} \\[4pt]
                            \big(\hat{u}_l(\hat{\xi}), 0, \hat{\tau}_l(\hat{\xi})\big), & \hbox{$\hat{\xi}_2<\hat{\xi}<\hat{\xi}_1$;} \\[2pt]
                            \mbox{vacuum}, & \hbox{$\hat{\xi}<\hat{\xi}_2$.}
                          \end{array}
                        \right.
\end{equation}
In (\ref{planar}), $(\hat{\xi}, \hat{\eta})=(\frac{\hat{x}}{t}, \frac{\hat{y}}{t})$;  $\hat{\xi}_0=c_0:=\sqrt{-\tau_0^2p'(\tau_0)}$;  $\hat{u}_r(\hat{\xi})$ and $\hat{\tau}_r(\hat{\xi})$ are determined by $$
\hat{\tau}_r(\hat{\xi}_0)=\tau_0, \quad \hat{u}_r(\hat{\xi})+\int_{\tau_0}^{\hat{\tau}_r(\hat{\xi})}\sqrt{-p'(\tau)}{\rm d}\tau=0\quad \mbox{and}\quad \hat{\xi}=\hat{u}_r(\hat{\xi})+\sqrt{-\hat{\tau}_r^2(\hat{\xi})p'(\hat{\tau}_r(\hat{\xi}))};
$$
$\hat{\xi}_1$ is determined by $\hat{\tau}_r(\hat{\xi}_1)=\tau_1^e$;
$\hat{u}_1=\hat{u}_r(\hat{\xi}_1)$; $\hat{u}_2=\frac{\tau_2^e}{\tau_1^e}(\hat{u}_1-\hat{\xi}_1)+\hat{\xi}_1$; $\hat{\xi}_2=\hat{u}_2-\int_{\tau_2^e}^{+\infty}\sqrt{-p'(\tau)}{\rm d}\tau$;
$\hat{u}_l(\hat{\xi})$ and $\hat{\tau}_l(\hat{\xi})$ are determined by $$
\hat{\tau}_l(\hat{\xi}_1)=\tau_2^e, \quad \hat{u}_l(\hat{\xi})+\int_{\tau_2^e}^{\hat{\tau}_l(\hat{\xi})}\sqrt{-p'(\tau)}{\rm d}\tau=\hat{u}_2\quad \mbox{and}\quad \hat{\xi}=\hat{u}_l(\hat{\xi})+\sqrt{-\hat{\tau}_l^2(\hat{\xi})p'(\hat{\tau}_l(\hat{\xi}))}.
$$
The formula (\ref{planar}) represents a fan-shock-fan composite wave solution of (\ref{42501}), and the straight line $\hat{\xi}=\hat{\xi}_1$ is the double-sonic shock line; see Figure \ref{Fig7} (right).

\begin{figure}[htbp]
\begin{center}
\includegraphics[scale=0.35]{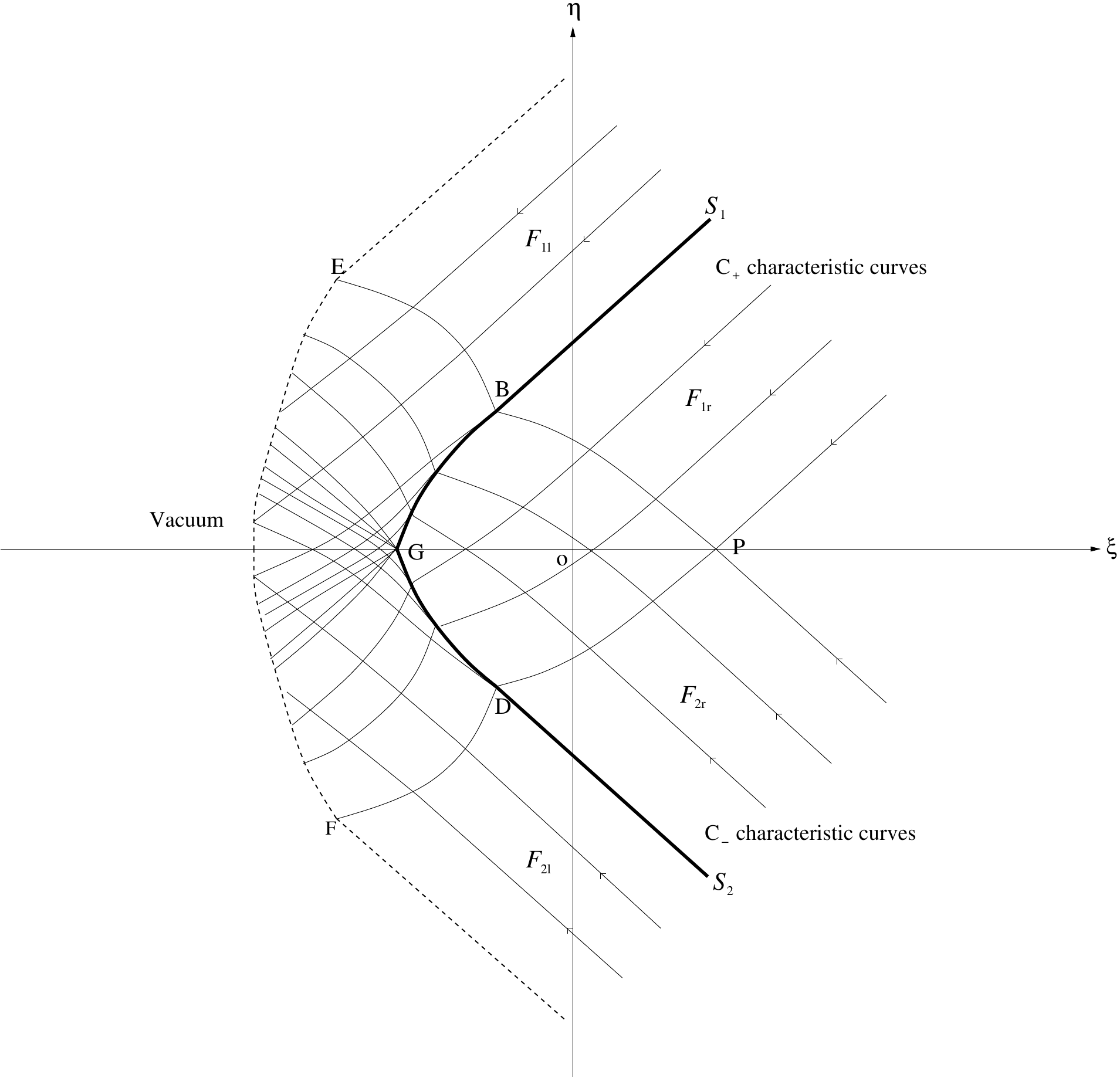}
\caption{\footnotesize Wave structure of the interaction of the planar fan-shock-fan composite waves in the $(\xi,\eta)$-plane. Here, the solid lines represent characteristic lines; the thick lines represent shock waves; the dashed lines represent vacuum boundaries.}
\label{Fig8}
\end{center}
\end{figure}

For the gas expansion in vacuum problem (\ref{Potential}, \ref{initial1}), the gas away from the sharp corner expands to the vacuum as two symmetric planar fan-shock-fan composite
waves ${\it FSF}_1$ and ${\it FSF}_2$ which are expressed in (\ref{planar}) with $(n_1, n_2)=(\sin\theta, -\cos\theta)$ and
$(n_1, n_2)=(\sin\theta, \cos\theta)$, respectively.
The fan-shock-fan composite waves ${\it FSF}_i$ ($i=1, 2$) are composed of an expansion fan wave
${\it F}_{ir}$,
 a double-sonic shock wave
${\it S}_i$,  and another expansion fan wave ${\it F}_{il}$; see Figure \ref{Fig8}.
The two composite waves start to interact with each other at the point $\mbox{P}=\big(\hat{\xi}_0/\sin\theta, 0\big)$ due to the presence of the sharp corner.

From the point $\mbox{P}$,
we draw a $C_{-}$ and a $C_{+}$ cross characteristic curves in ${\it F}_{1r}$ and ${\it F}_{2r}$, respectively.
The $C_{-}$ and $C_{+}$ cross characteristic curves intersect
${\it S}_{1}$ and ${\it S}_{2}$ at some points $\mbox{B}$ and $\mbox{D}$, respectively.
We then draw a $C_{-}$ and a $C_{+}$ cross characteristic curves from $\mbox{B}$ and $\mbox{D}$, respectively.
The $C_{-}$ and $C_{+}$ characteristic curve will cross the whole fan waves ${\it F}_{1l}$ and ${\it F}_{2l}$ and end up at some points $\mbox{E}=(\xi_{_\mathrm{E}}, \eta_{_\mathrm{E}})$ and $\mbox{F}=(\xi_{_\mathrm{F}}, \eta_{_\mathrm{F}})$, respectivly, where $\xi_{_\mathrm{E}}=\xi_{_\mathrm{F}}=\hat{u}_l(\hat{\xi}_2)\sin\theta$ and $\eta_{_\mathrm{E}}=-\eta_{_\mathrm{F}}=-\hat{u}_l(\hat{\xi}_2)\cos\theta$.
The existence of the cross characteristic curves $\wideparen{\mbox{PB}}$, $\wideparen{\mbox{PD}}$, $\wideparen{\mbox{BE}}$ and $\wideparen{\mbox{DF}}$ will be given in Section 4.1. Let $\Sigma$ be a domain bounded by the $C_{+}$ characteristic curve $\wideparen{\mbox{PB}}\cup\wideparen{\mbox{BE}}$,
 the $C_{-}$ characteristic curve $\wideparen{\mbox{PD}}\cup\wideparen{\mbox{DF}}$, and a vacuum boundary $\wideparen{\mbox{EF}}$ connecting $\mathrm{E}$ and $\mathrm{F}$. The vacuum boundary $\wideparen{\mbox{EF}}$ is yet to be determined. The domain $\Sigma$ is actually the interaction region of the composite waves ${\it FSF}_1$ and  ${\it FSF}_2$.
The flow outside the domain $\Sigma$ is know. In order to determine the flow inside the interaction region $\Sigma$, we consider (\ref{42501}) with the following boundary conditions:
\begin{equation}\label{426011}
(u, v, \tau)=\left\{
            \begin{array}{ll}
               \big(\sin\theta \hat{u}_r, -\cos\theta \hat{u}_r, \hat{\tau}_r\big)(\xi\sin\theta-\eta\cos\theta), & \hbox{$(\xi, \eta)\in\wideparen{\mbox{PB}}$;} \\[4pt]
 \big(\sin\theta \hat{u}_r, \cos\theta\hat{u}_r, \hat{\tau}_r\big)(\xi\sin\theta+\eta\cos\theta), & \hbox{$(\xi, \eta)\in\wideparen{\mbox{PD}}$;}  \\[4pt]
             \big(\sin\theta\hat{u}_l, -\cos\theta\hat{u}_l, \hat{\tau}_l\big)(\xi\sin\theta-\eta\cos\theta), & \hbox{$(\xi, \eta)\in\wideparen{\mbox{BE}}$;} \\[4pt]
             \big(\sin\theta\hat{u}_l, \cos\theta \hat{u}_l, \hat{\tau}_l\big)(\xi\sin\theta+\eta\cos\theta), & \hbox{$(\xi, \eta)\in\wideparen{\mbox{DF}}$.}
            \end{array}
          \right.
\end{equation}
Problem (\ref{42501}, \ref{426011}) is a discontinuous characteristic boundary value problem with ``large" boundary data.

\subsection{Main theorem}
We construct a global piecewise smooth solution in $\Sigma$ to the boundary value problem (\ref{42501}, \ref{426011}).
Before stating the main theorem of the paper, we define the following constants:
\begin{equation}\label{62903}
\qquad\boxed{
\begin{aligned}
&c_1^e:=\tau_1^e\sqrt{-p'(\tau_1^e)},\quad c_2^e:=\tau_2^e\sqrt{-p'(\tau_2^e)}, \quad u_{*}:=c_1^e\Big(1-\frac{\tau_2^e}{\tau_1^e}\Big)=c_1^e-c_2^e<0,\\&
\sigma_{*}:=\arctan\left(\frac{u_*\cos\theta}{u_*\sin\theta-c_1^e\csc\theta}\right),\quad
q_{*}:=\sqrt{\big(u_*\sin\theta-c_1^e\csc\theta\big)^2+\big(u_*\cos\theta\big)^2},~~\\
&q_{_{lim}}:=\bigg(q_*^2-2\int_{\tau_2^e}^{+\infty}\tau p'(\tau){\rm d}\tau\bigg)^{\frac{1}{2}},\quad\mbox{and}\quad
\sigma_{\infty}:=\int_{q_*}^{q_{_{lim}}}\frac{\sqrt{q^2-c^2(\bar{\tau}(q))}}{qc(\bar{\tau}(q))}~{\rm d}q.
\end{aligned}}
\end{equation}
In (\ref{62903}), the function $\bar{\tau}(q)$ ($q>q_*$) is determined by
$$
\frac{q^2}{2}+\int_{\tau_2^e}^{\bar{\tau}(q)}\tau p'(\tau){\rm d}\tau =\frac{q_*^2}{2},
$$
and $c(\bar{\tau}(q))=\bar{\tau}(q)\sqrt{-p'(\bar{\tau}(q))}$.

If $\sigma_*<\sigma_{\infty}$, there exists a $q_{d}>q_*$ such that
$$
\int_{q_*}^{q_{d}}\frac{\sqrt{q^2-c^2(\bar{\tau}(q))}}{qc(\bar{\tau}(q))}{\rm d}q=\sigma_*.
$$
We then define
\begin{equation}\label{101801}
A_{d}:=\arcsin \left(\frac{c(\bar{\tau}(q_{d}))}{q_{d}}\right).
\end{equation}

We also
define the following variables:
\begin{equation}\label{111901}
\begin{aligned}
&\quad \kappa(\tau):=-\frac{2p'(\tau)}{2p'(\tau)+\tau p''(\tau)}, \quad \mu(\tau):=\frac{2p'(\tau)+\tau p''(\tau)}{\tau p''(\tau)}, \\&
\varpi(\tau):=-\frac{4p'(\tau)+\tau p''(\tau)}{\tau p''(\tau)},\quad \mbox{and}\quad \bar{A}(\tau):=\arctan\sqrt{\varpi(\tau)}.
\end{aligned}
\end{equation}

\begin{rem}
For polytropic ideal gases $p=\rho^{\gamma}$ ($1<\gamma<3$), one has $\kappa\equiv\frac{2}{\gamma-1}$, $\mu\equiv\frac{\gamma-1}{\gamma+1}$, and $\varpi\equiv\frac{3-\gamma}{\gamma+1}$.
\end{rem}

In order to obtain a global solution to the boundary value problem (\ref{42501}, \ref{426011}),
we make the following assumptions:
\begin{description}
  \item[(A1)] $\varpi(\tau)>0$ and $\varpi'(\tau)\leq 0$ for $\tau\geq\tau_2^e$;
  \vskip 2pt
  \item[(A2)] $\theta+2\sigma_*<\frac{\pi}{2}$;
  \vskip 2pt
  \item[(A3)] $\frac{\pi}{2}+\min\{A_{d}, \theta\}>2\bar{A}(\tau_2^e)$.
\end{description}
The main result of the paper is stated as the following theorem.
\begin{thm}\label{thm2}
({\bf Main theorem})
Suppose that assumptions ($\mathbf{A}1$)--($\mathbf{A}3$) hold. Then when $\tau_1^e-\tau_0$ is sufficiently small, the boundary value problem (\ref{42501}, \ref{426011}) admits a piecewise smooth  solution in  $\Sigma$. The vacuum boundary  $\wideparen{\mathrm{EF}}$ can be represented by a Lipschitz continuous function $\xi=\mathcal{V}(\eta)$, $\eta\in[\eta_{_\mathrm{F}}, \eta_{_\mathrm{E}}]$ which satisfies
$$
\sup \limits _{\eta_1\neq\eta_2}\frac{\big|\mathcal{V}(\eta_1)-\mathcal{V}(\eta_2)\big|}{\big|\eta_1-\eta_1\big|}~
\leq~\sqrt{\frac{\gamma+1}{3-\gamma}}.
$$
Moreover, the solution satisfies
$$
\rho>0 \quad \mbox{in}\quad \Sigma\setminus \wideparen{\mathrm{EF}} \quad \mbox{and}\quad   \rho=0\quad \mbox{on}\quad \wideparen{\mathrm{EF}}.
$$
\end{thm}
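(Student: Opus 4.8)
The plan is to build the solution by a hodograph transformation together with the characteristic decomposition, treating the curved continuations of the shocks $S_1$ and $S_2$ inside $\Sigma$ as free boundaries whose very type must be discovered along the way. First I would record that the flow in $\Sigma$ is everywhere supersonic ($q/c>1$), since it comes from an expansion toward vacuum; hence (\ref{42501}) is hyperbolic on $\Sigma$, the characteristics $C_\pm$ of (\ref{24}) are well defined, and the curved shocks partition $\Sigma$ into subregions on each of which the flow is $C^1$: a region in front of each shock continuing the incoming planar fan, a simple-wave layer immediately behind each shock, and a genuinely two-dimensional core touching the vacuum curve $\wideparen{\mathrm{EF}}$. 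The hodograph map will degenerate on the simple-wave layers and be regular on the core.

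The crucial step is determining the type of the shocks. Along the curved shock issuing from $B$, the front state $\tau_f$ is a small perturbation of the double-sonic value $\tau_1^e$, while the backside state $\tau_b$ is to be selected; combining the Rankine--Hugoniot relations (\ref{RH}) with Liu's extended entropy condition (\ref{43001}) leaves, a priori, the four possibilities listed after (\ref{RH}) and governed by Propositions \ref{5602}, \ref{51003}, \ref{pre} and \ref{tran}. I would argue that only the post-sonic branch $\tau_b=s_{po}(\tau_f)$ of Proposition \ref{51003} survives: it is continuous at $B$, since $s_{po}(\tau_1^e)=\tau_2^e$ reproduces the incoming double-sonic jump, and the other branches are excluded by computing, from (\ref{RH}) and (\ref{5801}), the curvature of the shock curve and testing its compatibility with (\ref{43001}) and with the simple-wave data carried on the front side. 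Post-sonicity forces $N_b=c_b$, i.e. the backside flow is sonic in the direction normal to the shock; geometrically, the shock is then tangent at each point to one of the two characteristic fields $C_\mp$ of the flow just behind it. Comparing the shock curvature with the curvature of those characteristics (obtained from (\ref{23})) would show that the shock is not an integral curve of that field but its envelope, so the backside characteristics focus on the shock and the backside flow is only continuous --- not $C^1$ --- up to $S_1$ and $S_2$. Accordingly the shock condition for the interior problem has to be imposed in the post-sonic form $N_b=c_b$, $N_f>c_f$, and the backside solution must be sought in a class admitting an envelope-type singularity at the boundary rather than in $C^1$.

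Next I would pass to the hodograph plane, taking as new unknowns the inclination angles of $C_+$ and $C_-$ (equivalently, the pseudo-velocity angle and $q$), so that (\ref{42501}), (\ref{5802}) becomes a system for these two angles with coefficients governed by $\kappa,\mu,\varpi,\bar A$ of (\ref{111901}); the characteristic decomposition then gives closed ordinary differential equations for the angles along $C_+$ and along $C_-$. Assumption ($\mathbf{A}1$) ($\varpi>0$, $\varpi'\le0$ for $\tau\ge\tau_2^e$) supplies the sign that keeps these variables monotone and uniformly bounded as $\tau$ runs from $\tau_2^e$ to $+\infty$, while assumptions ($\mathbf{A}2$)--($\mathbf{A}3$), via the quantities $\sigma_*,\sigma_\infty,A_d$ of (\ref{62903}) and (\ref{101801}), are precisely what makes the characteristic arcs $\wideparen{\mathrm{PB}},\wideparen{\mathrm{PD}},\wideparen{\mathrm{BE}},\wideparen{\mathrm{DF}}$ reach their prescribed endpoints and the two bounding characteristics of $\Sigma$ actually meet the vacuum line without premature focusing. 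Solving the resulting Goursat-type characteristic boundary value problem, with the post-sonic conditions as interior free boundaries and in the spirit of \cite{Li-Zhang-Zheng,Li3,Lai2}, would produce a solution on the hodograph image of $\Sigma$. The smallness of $\tau_1^e-\tau_0$ enters here: it makes the incoming fans $F_{ir}$ thin and the curved shocks inside $\Sigma$ short, so that $\tau_f$ stays in the range $(\tau_1^e,\tau_2^i)$ of validity of Proposition \ref{51003} and the estimates on the angle variables and on the free boundaries close, the limit $\tau_0\to\tau_1^e$ formally degenerating into a shock-fan configuration of the kind handled in \cite{Lai2}.

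Finally, the uniform $C^1$ bounds from the previous step let me estimate the Jacobian of the hodograph map and show that it does not vanish, so the map is one-to-one on $\Sigma$ and the hodograph solution pulls back to a piecewise smooth solution of (\ref{42501}, \ref{426011}) in the $(\xi,\eta)$-plane, the curved shocks $S_1,S_2$ now located by the post-sonic conditions. The image of the degenerate edge of the hodograph domain is the vacuum curve $\wideparen{\mathrm{EF}}$; along it $c\to0$ and the two characteristic slopes $\lambda_\pm$ of (\ref{23}) merge, and since (\ref{van}) is asymptotically polytropic ideal as $\tau\to+\infty$ --- whence $\varpi(\tau)\to\frac{3-\gamma}{\gamma+1}$ --- the limiting slope is controlled by $\sqrt{\frac{\gamma+1}{3-\gamma}}$, which is the claimed Lipschitz bound for $\mathcal V$; single-valuedness of $\mathcal V$ follows from the monotonicity of the angle variables. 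Since $\tau$ stays finite on $\Sigma\setminus\wideparen{\mathrm{EF}}$ one gets $\rho>0$ there, and $\rho=0$ on $\wideparen{\mathrm{EF}}$ by construction, which proves Theorem \ref{thm2}. The hardest part will be the shock-type step and its feedback: since neither the type of the shocks nor the form of the boundary conditions on them is known in advance, the free boundary problem cannot even be formulated until the post-sonic property is established, and since the backside flow is only continuous (not $C^1$) up to the shocks, the iteration used for ordinary $C^1$ characteristic free boundary problems must be modified to cope with the envelope singularity --- which is exactly why the curvature computations and Liu's extended entropy condition are indispensable here.
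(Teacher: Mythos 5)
Your sketch reproduces several of the paper's ideas (the a priori unknown shock type, the curvature-plus-Liu-entropy argument forcing post-sonicity, the envelope singularity behind the shocks, the hodograph transformation, and the origin of the Lipschitz constant from $\varpi(\tau)\to\frac{3-\gamma}{\gamma+1}$), but it has two genuine gaps. First, you assert at the outset that the flow in $\Sigma$ is everywhere supersonic ``since it comes from an expansion toward vacuum.'' This is precisely what cannot be assumed: the paper emphasizes that no general hyperbolicity principle is available for this system, and that sonic curves can appear inside rarefaction interaction regions. Establishing $0<A<\frac{\pi}{2}$ uniformly is the content of the invariant-region argument (the sets $\Theta(s)$ in Lemma \ref{51708}, fed by the gradient bounds on $\frac{\rho^{n}\bar{\partial}_{\pm}\rho}{\sin^{2}A}$ from Proposition \ref{pnew} and the monotonicity $\bar{\partial}_{\pm}\rho<0$ propagated from the boundary data), and it is exactly where assumptions ($\mathbf{A}1$)--($\mathbf{A}3$) are consumed. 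Without this step the characteristic construction, the non-vanishing of the hodograph Jacobian, and the Lipschitz bound on $\mathcal{V}$ all collapse, so it cannot be taken as a starting hypothesis.

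Second, your decomposition of $\Sigma$ omits the collision of the two curved post-sonic shocks. They are shown to remain inside the first interaction region $\Sigma^{1}$ and to meet at a point $\mathrm{G}$ on the axis of symmetry; their interaction generates two centered waves issuing from $\mathrm{G}$, and resolving this requires a discontinuous Goursat-type problem whose data are genuinely discontinuous at $\mathrm{G}$ (since $v_2(\mathrm{G})=-v_3(\mathrm{G})\neq 0$). The principal parts of these centered waves must be constructed as epicycloid arcs in the hodograph plane, and the bounds they satisfy (Lemma \ref{62901}) are an essential input to the invariant region $\Theta(s)$ used downstream. A related, smaller issue: you propose to run the hodograph method over essentially all of $\Sigma$, whereas the paper uses it only locally (behind the shocks, where the envelope singularity makes the physical-plane Cauchy problem ill-posed in $C^{1}$, and near $\mathrm{G}$), and solves the large remaining region up to the vacuum in the physical plane by characteristics with level curves of $\tau$ as Cauchy supports. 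A global hodograph approach would force you to prove injectivity across weak discontinuities and the centered-wave fans simultaneously, which your sketch does not address.
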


The conditions ($\mathbf{A}1$)--($\mathbf{A}3$) are sufficient but not necessary for the existence of a global solution.
We shall show that assumptions ($\mathbf{A}1$)--($\mathbf{A}3$) can be satisfied in some cases.
By the definitions of $q_*$ and $\sigma_*$ we know
\begin{equation}\label{62904}
\sigma_*=\arcsin\Big(\frac{c_2^e}{q_*}\Big)-\theta.
\end{equation}
This can be also illustrated geometrically in Figure \ref{Fig6}.
By the definitions of $\sigma_*$ and $A_{d}$, we have
\begin{equation}\label{81801}
\sigma_*\rightarrow 0\quad \mbox{and}\quad A_{d}\rightarrow \frac{\pi}{2}\quad \mbox{as}\quad \theta\rightarrow \frac{\pi}{2}.
\end{equation}
So,  when $\theta$ is close to $\frac{\pi}{2}$, there hold $\sigma_*<\sigma_{\infty}$ and $\frac{\pi}{2}+\min\{A_{d}, \theta\}>2\bar{A}(\tau_2^e)$.

\begin{figure}[htbp]
\begin{center}
\qquad\quad\includegraphics[scale=0.6]{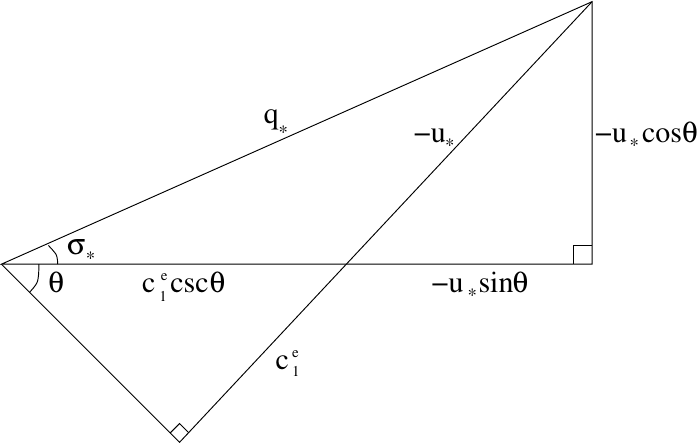}
\caption{\footnotesize A geometrical explanation for $\theta+\sigma_*=\arcsin\Big(\frac{c_2^e}{q_*}\Big)$.}
\label{Fig6}
\end{center}
\end{figure}

By a direct computation one has
$$
\frac{{\rm d}(\theta+2\sigma_*)}{{\rm d}\theta}\bigg|_{\theta=\frac{\pi}{2}}=~2\frac{\tau_1^e}{\tau_2^e}-1.
$$
Combining this with (\ref{81801}) we obtain that if $\frac{\tau_1^e}{\tau_2^e}>\frac{1}{2}$ and $\theta$ is close to $\frac{\pi}{2}$ then $\theta+2\sigma_*<\frac{\pi}{2}$.

In the appendix we shall show that for the polytropic van der Waals gas (\ref{van}),  when $s\in (\frac{8}{27}, \frac{81}{256})$ and $\gamma$ is close to $1$, assumption ($\mathbf{A}$1) holds and $\frac{\tau_1^e}{\tau_2^e}>\frac{1}{2}$; see Proposition \ref{92001}.
For a nonconvex equation of state with two inflection points, if  $p=\rho^\gamma$ for $\tau>\tau_2^e$ then
the assumption ($\mathbf{A}$1) is also hold.

\subsection{Overview}
One main difficulty for the global existence is that the system (\ref{42501}) is a mixed type system and the type in each point is determined by the state of the flow at that point.
Elling and Liu \cite{ELiu1} established an ellipticity principle of the system (\ref{42501}). This principle guarantees that if $q/c\leq 1$ on the boundary of a bounded region
then the equation is uniformly elliptic in any interior area of this region, and is important in constructing solutions of elliptic boundary value problems for (\ref{42501}). However, one does not have a general hyperbolicity principle for hyperbolic boundary value problems for (\ref{42501}). Actually, sonic curve may appear as a part of the boundary of the  rarefaction wave interaction region in some cases; see Zheng \cite{Zheng3}.
The other difficulty is that there are shocks in the interaction region and the type (double-sonic, pre-sonic, post-sonic, or transonic) of the shocks is also a priori unknown.  This results in the the fact that the formulation of the boundary conditions on the shocks is also a priori unknown.
Let us briefly describe the main processes to construct a solution to the boundary value problem  (\ref{42501}, \ref{426011}).
\vskip 2pt

\noindent
{\it Step 1.} We first show that the forward $C_{+}$ and $C_{-}$ characteristics issued from $\mathrm{P}$ can cross the whole fan-shock-fan composite waves  ${\it FSF}_1$ and ${\it FSF}_2$ and end up at the points $\mathrm{F}$ and $\mathrm{E}$, respectively; see Figure \ref{Fig8}.
\vskip 2pt

\noindent
{\it Step 2.}
 We solve a Goursat  problem for (\ref{42501}) with $\wideparen{\mbox{PB}}$ and $\wideparen{\mbox{PD}}$
as the characteristic boundaries in a curved quadrilateral domain $\Sigma^1$ closed by $\wideparen{\mbox{PB}}$, $\wideparen{\mbox{PD}}$, a forward $C_{-}$ characteristic curve issued from $\mbox{B}$, and a forward $C_{+}$ characteristic curve issued from $\mbox{D}$; see Figure \ref{Fig13} (left).

\begin{figure}[htbp]
\begin{center}
\includegraphics[scale=0.38]{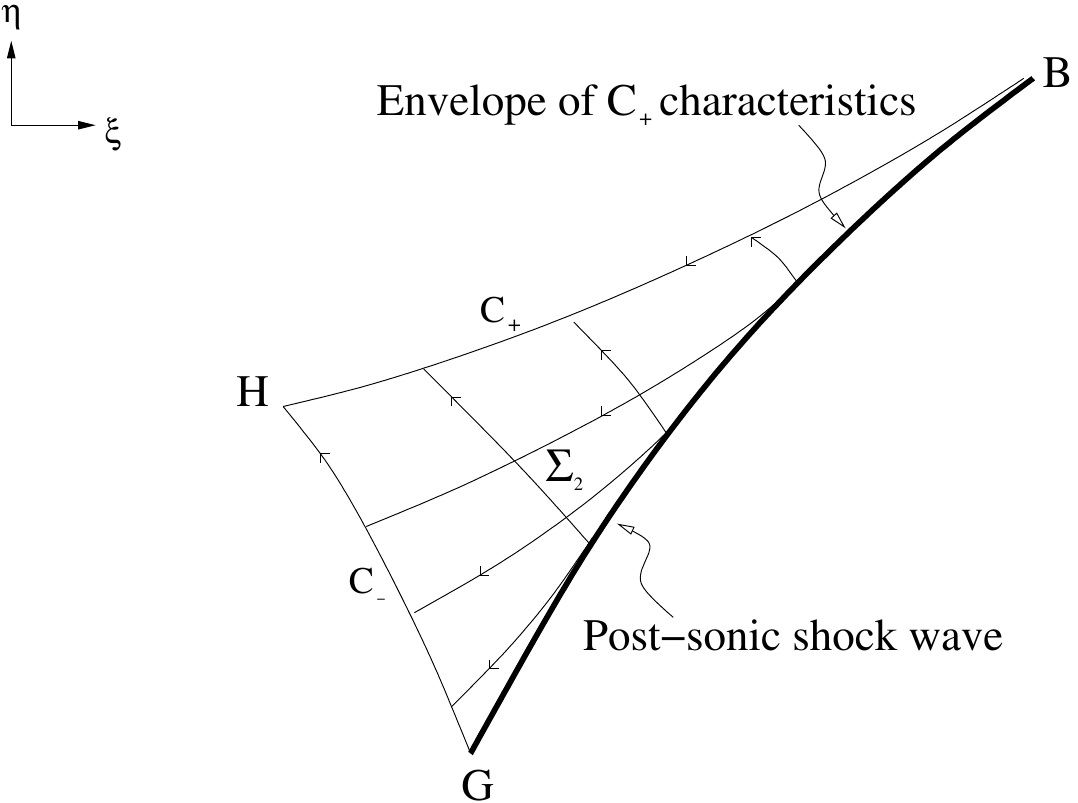}
\caption{\footnotesize  A post-sonic shock wave and the $C_{\pm}$ characteristic curves behind it.}
\label{Fig9}
\end{center}
\end{figure}

\noindent
{\it Step 3.}
The double-sonic shocks ${\it S}_1$ and ${\it S}_2$ will go into the interaction region from $\mathrm{B}$ and $\mathrm{D}$, respectively.
By calculating the curvatures of the shocks  and  using the Liu's extended entropy condition,
we find that the shocks must be post-sonic
and are always stay in $\Sigma^1$ until they intersect with each other
at some point $\mathrm{G}$ in $\Sigma^1$; see Figure \ref{Fig13} (left).
We denote by $\wideparen{\mbox{BG}}$ and $\wideparen{\mbox{DG}}$ the
post-sonic shocks from $\mbox{B}$ and $\mbox{D}$, respectively.
The post-sonic shocks $\wideparen{\mbox{BG}}$ and $\wideparen{\mbox{DG}}$ and the backside states of them can then be determined by solving an initial value problem for an ordinary differential equations derived from the Rankine-Hugoniot relations.

\noindent
{\it Step 4.}
Since the shock $\wideparen{\mbox{BG}}$ is post-sonic,
the shock curve $\wideparen{\mathrm{BG}}$ and the $C_{+}$ characteristic have the same direction at each point on $\wideparen{\mathrm{BG}}$.
 However, by calculating the direction derivatives of the backside states in the shock direction we find that the corresponding characteristic equation do not hold along them.
This implies that $\wideparen{\mathrm{BG}}$ is not characteristic of the flow behind it.
Moreover, the flow behind the shock $\wideparen{\mathrm{BG}}$ is not $C^1$ smooth up to $\wideparen{\mathrm{BG}}$.
 In order to overcome the difficulty cased by the singularity, we use the hodograph transformation method to find a solution near the backside of them.
  We solve a Cauchy problem for a linearly degenerate hyperbolic system in the hodograph plane and
 prove  that the hodograph transformation is one-to-one.
By doing this, we obtain a solution in a triangle region $\Sigma_2$ bounded by $\wideparen{\mbox{BG}}$, $\wideparen{\mbox{BH}}$, and $\wideparen{\mbox{GH}}$, where $\wideparen{\mbox{BH}}$ is a forward $C_{+}$ characteristic issued from $\mbox{B}$ and $\wideparen{\mbox{GH}}$ is a forward $C_{-}$ characteristic issued from $\mbox{G}$; see Figure \ref{Fig9}. We find that the shock curve $\wideparen{\mbox{BG}}$ is an envelope of the $C_{+}$ characteristic curves in $\Sigma_2$, and the directional derivatives of the unknown functions in the $C_{-}$ characteristic direction are infinity at each point on $\wideparen{\mbox{BG}}$. By symmetry, we obtain the flow in a triangle region $\Sigma_3$ bounded by $\wideparen{\mbox{DG}}$, $\wideparen{\mbox{GI}}$, and $\wideparen{\mbox{DI}}$, where $\wideparen{\mbox{GI}}$  a forward $C_{+}$ characteristic curve issued from $\mbox{G}$ and $\wideparen{\mbox{DI}}$  a forward $C_{-}$ characteristic curve issued from $\mbox{D}$; see Figure \ref{Fig10}.

\begin{figure}[htbp]
\begin{center}
\includegraphics[scale=0.32]{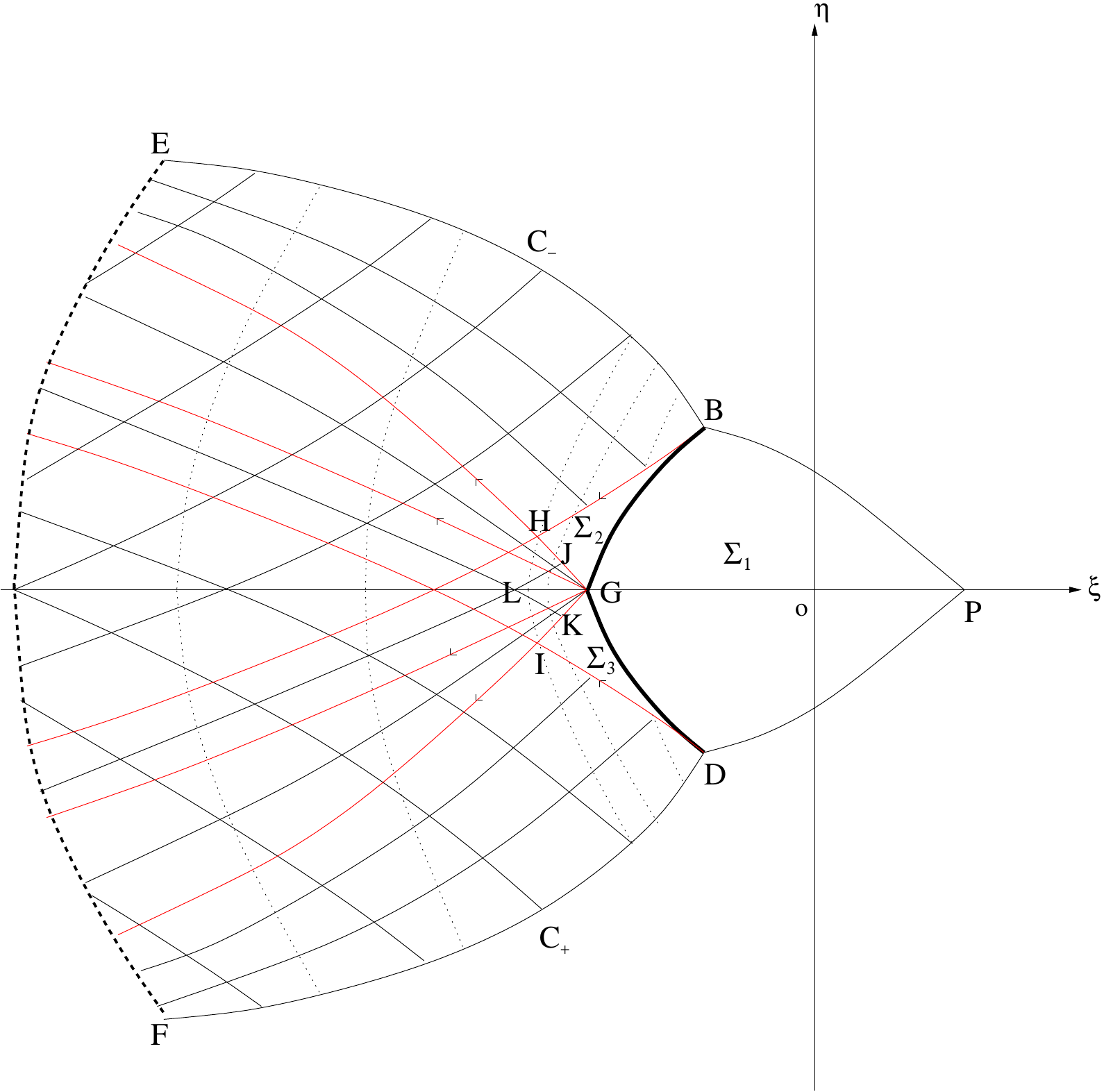}
\caption{\footnotesize Fan-shock-fan composite wave interaction region. Here, the solid lines represent characteristic lines; the thick lines represent shock waves; the red lines represent  weak discontinuity curves; the dotted lines represent the level curves of $\tau$. }
\label{Fig10}
\end{center}
\end{figure}

\noindent
{\it Step 5.}
The post-sonic shocks  $\wideparen{\mbox{BG}}$ and $\wideparen{\mbox{DG}}$  interact with each other at the point $\mbox{G}$. By the classical phase plane analysis method, we find that this shock wave interaction generates two centered waves issued from the point $\mbox{G}$.
So, in order solve the shock wave interaction, we consider a characteristic boundary value problem for (\ref{42501}) with $\wideparen{\mbox{GH}}$ and $\wideparen{\mbox{GI}}$ as the characteristic boundaries.
We still use the hodograph transformation to find a local classical solution with two centered waves to this characteristic boundary value problem.
 The result in this step can be stated as follows. Let $\mbox{J}$ and $\mbox{K}$ be two points on $\wideparen{\mbox{GH}}$ and $\wideparen{\mbox{GI}}$, respectively.
 When the points $\mbox{J}$ and $\mbox{K}$ are sufficiently close to $\mbox{G}$, this characteristic boundary value problem admits a classical solution in a curved quadrilateral domain closed by characteristic curves $\wideparen{\mbox{GJ}}$, $\wideparen{\mbox{GK}}$, $\wideparen{\mbox{JL}}$ and $\wideparen{\mbox{KL}}$, where $\wideparen{\mbox{JL}}$ is a forward $C_{+}$ characteristic curve from $\mbox{J}$ and $\wideparen{\mbox{KL}}$ is a forward $C_{-}$ characteristic curve from $\mbox{K}$; see Figure \ref{Fig10}.

\noindent
{\it Step 6.}
In order to obtain a solution in the remaining part of the domain $\Sigma$, we consider a characteristic boundary value problem for (\ref{42501}) with $\wideparen{\mbox{BE}}$, $\wideparen{\mbox{BH}}$, $\wideparen{\mbox{JH}}$, $\wideparen{\mbox{JL}}$, $\wideparen{\mbox{KL}}$, $\wideparen{\mbox{KI}}$,
$\wideparen{\mbox{DI}}$, and $\wideparen{\mbox{DF}}$ as the characteristic boundaries.
We apply the method of characteristics to obtain a global classical solution in a ``determinate" region bounded by these characteristic boundaries and a vacuum boundary $\wideparen{\mathrm{EF}}$ connecting $\mbox{E}$ and $\mbox{F}$.
The main difficulty for the global existence is to establish the hyperbolicity of (\ref{42501}) and the a priori estimate for the $C^1$ norm of the solution.
We establish a ``maximum principle" to prove that the system (\ref{42501}) is uniformly hyperbolic in the determinate region and use the characteristic decomposition method to establish some uniform  a priori estimates for the derivatives of the solution.
 The reason to make assumptions ($\mathbf{A}1$)--($\mathbf{A}3$) is to control the hyperbolicity of the system (\ref{42501}) in $\Sigma$.

\vskip 2pt
The rest of the paper is organized as follows.

In Section 2 we present the characteristics of the system (\ref{42501}).
The concepts of the characteristic directions and the characteristic angles $\alpha$ and $\beta$
are given in Section 2.1. In Section 2.1 we also derive a group of characteristic equations for the variables $\alpha$, $\beta$, and $c$.
These characteristic equations will be widely used to establish the a priori estimate for the hyperbolicity of the system (\ref{42501}).
In Section 2.2 we derive several groups of characteristic decompositions for the variables $\rho$ and $c$. These characteristic decompositions will be widely used to control the bounds of the derivatives of the solution.

In Section 3 we present the hodograph transformation for the system (\ref{42501}).
 The concept of the hodograph transformation is presented in Section 3.1.
 The systems in the hodograph plane are derived in Section 3.2. We derive a group of characteristic decompositions for the system (\ref{42501}) in the hodograph plane in Section 3.3. These characteristic decompositions will be used to establish an important monotonicity of the hodograph transformation. This monotonicity will deduce
 that the hodograph transformation is one-to-one.

In Section 4 we solve the interaction of the fan-shock-fan composite waves. In Section 4.1 we construct the characteristic boundaries of the interaction region. In Section 4.2 we solve the interaction of the fan waves ${\it F}_{1r}$ and ${\it F}_{2r}$. In Section 4.3 we construct the post-sonic shocks $\wideparen{\mbox{BG}}$ and $\wideparen{\mbox{DG}}$ and the states on the backside of them.  In Section 4.4 we construct the solution near the backside of the post-sonic shocks. In Section 4.5 we find a local solution to the interaction of the post-sonic shocks. In Section 4.6 we obtain the solution in the remaining part of the fan-shock-fan composite wave interaction region.

\vskip 8pt
\section{Characteristic and characteristic decomposition}
\subsection{Characteristic directions and characteristic angles}
In what follows, we shall confine ourselves to supersonic flow. The direction of the wave
characteristics is defined as the tangent direction that forms an
acute angle $A$ with the pseudoflow velocity
$(U, V)$. By computation, we have
\begin{equation}
c^{2}=q^{2}\sin^{2}A,\label{210cqo}
\end{equation}
where $q^{2}=U^{2}+V^{2}$. The angle $A$ is called the
 pseudo-Mach angle. The quantity
$
q/c
$
is called the pseudo-Mach number of the flow. For pseudosonic flow,
for which $q/c=1$, we have that the direction of the
pseudoflow velocity
 is perpendicular to the wave characteristic directions.

By a direct computation, we have that
the $C_+$
characteristic direction forms with the pseudoflow velocity
$(U, V)$ the angle $A$ from $C_{+}$ to  $(U, V)$ in
the clockwise direction, and the $C_-$ characteristic direction forms with the
pseudoflow direction the angle $A$ from $C_{-}$ to $(U, V)$
in the counterclockwise direction; see Figure \ref{Fig11}.


From (\ref{21}) we have
\begin{equation}\label{c}
c=\frac{|(U,V)\cdot(\lambda,-1)|}{|(\lambda,-1)|},
\end{equation}
which implies that the component of the pseudoflow velocity normal to the
direction of a characteristic $C_{+}$ ($C_{-}$) is equal to the sound speed.
 Equivalently, it can be stated as that the tangent line of a $C_{+}$ ($C_{-}$)
characteristic at a point is tangent to the sonic circle of the
state at that point. Furthermore, a $C_{+}$ ($C_{-}$) characteristic must be straight
if $(u, v, c)$ is constant along it.

 The $C_{+}$ ($C_{-}$) characteristic angle is defined as the counterclockwise angle from the positive $\xi$-axis to  the $C_{+}$ ($C_{-}$) characteristic direction.
We denote by $\alpha$ and
 $\beta$ the counterclockwise angle from the positive $\xi$-axis to $C_{+}$ and $C_{-}$ characteristic directions, respectively, i.e.,
\begin{equation}\label{6803}
\lambda_{+}=\tan\alpha\quad \mbox{and}\quad \lambda_{-}=\tan\beta.
\end{equation}

\begin{figure}[htbp]
\begin{center}
\includegraphics[scale=0.6]{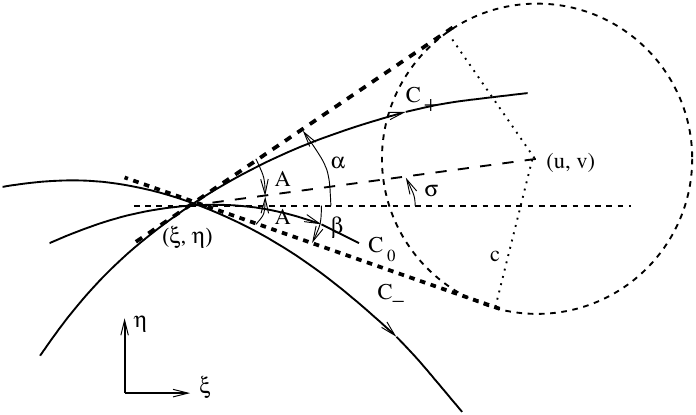}\qquad\qquad\includegraphics[scale=0.42]{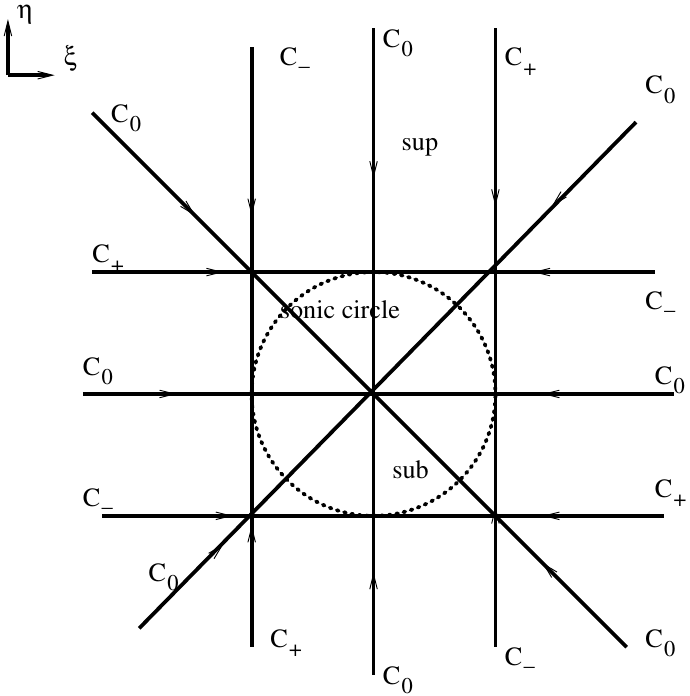}
\caption{ \footnotesize Left: characteristic directions and characteristic angles; right: $C_{\pm}$ and $C_{0}$ characteristic lines for  a constant state.}
\label{Fig11}
\end{center}
\end{figure}

 Let $\sigma$ be the counterclockwise  angle from the positive $\xi$-axis to the pseudoflow velocity.
Then we have
\begin{equation}
\alpha=\sigma+A,\quad \beta=\sigma-A, \quad\sigma=\frac{\alpha+\beta}{2},\quad A=\frac{\alpha-\beta}{2}. \label{tau}
\end{equation}
Therefore, the relations between $(U, V, c)$ and $(\alpha, \beta, c)$ are
\begin{equation} \label{U}
U=c\frac{\cos\sigma}{\sin A}=q\cos\sigma,\quad V=c\frac{\sin\sigma}{\sin A}=q\sin\sigma.
\end{equation}
The mapping $(U, V, c)\rightarrow (\alpha, \beta, c)$ is bijective as long as system (\ref{42501}) is hyperbolic.


Multiplying (\ref{matrix1}) on the left by
$(1,\mp c\sqrt{U^{2}+V^{2}-c^{2}})$, we get the characteristic equations of the system (\ref{42501}):
\begin{equation}
\left\{
  \begin{array}{ll}
  \displaystyle \bar{\partial}_{+}u+\lambda_{-}\bar{\partial}_{+}v
  =0,  \\[10pt]
    \displaystyle  \bar{\partial}_{-}u+\lambda_{+}\bar{\partial}_{-}v=0,
  \end{array}
\right.\label{form}
\end{equation}
where
\begin{equation}\label{3303}
\bar{\partial}_{+}:=\cos\alpha\partial_{\xi}+\sin\alpha\partial_{\eta}\quad \mbox{and}\quad \bar{\partial}_{-}:=\cos\beta\partial_{\xi}+\sin\beta\partial_{\eta}.
\end{equation}

\subsection{Characteristic equations and characteristic decompositions}

From (\ref{U}) we have
\begin{equation}
\bar{\partial}_{\pm}u=\cos(\sigma\pm A)+\frac{\cos\sigma}{\sin A}\bar{\partial}_{\pm}c+\frac{c\cos\alpha\bar{\partial}_{\pm}\beta
-c\cos\beta\bar{\partial}_{\pm}\alpha}{2\sin^{2}A},\label{1}
\end{equation}
\begin{equation}
\bar{\partial}_{\pm}v=\sin(\sigma\pm A)+\frac{\sin\sigma}{\sin A}\bar{\partial}_{\pm}c
+\frac{c\sin\alpha\bar{\partial}_{\pm}\beta
-c\sin\beta\bar{\partial}_{\pm}\alpha}{2\sin^{2}A}.\label{2}
\end{equation}
We insert (\ref{1}) and (\ref{2}) into (\ref{form}) to  obtain
\begin{equation}\label{3}
\bar{\partial}_{+}c=-\frac{\cos2A}{\cot A}+\frac{c}{\sin2A}
(\bar{\partial}_{+}\alpha-\cos2A\bar{\partial}_{+}\beta),
\end{equation}
\begin{equation}
\bar{\partial}_{-}c=-\frac{\cos2A}{\cot A}+\frac{c}{\sin2A}
(\cos2A\bar{\partial}_{-}\alpha-\bar{\partial}_{-}\beta).\label{4}
\end{equation}

Differentiating the pseudo-Bernoulli law (\ref{5802}) in the $C_{\pm}$ characteristic directions and
using (\ref{1}) and (\ref{2}), we obtain
\begin{equation}
\left(\frac{1}{\sin^{2}A}+\kappa\right)\bar{\partial}_{+}c=
\frac{c\cos A}{2\sin^{3}A}(\bar{\partial}_{+}\alpha-\bar{\partial}_{+}\beta)-\cot A\label{bB1}
\end{equation}
and
\begin{equation}
\left(\frac{1}{\sin^{2}A}+\kappa\right)\bar{\partial}_{-}c=
\frac{c\cos A}{2\sin^{3}A}(\bar{\partial}_{-}\alpha-\bar{\partial}_{-}\beta)-\cot A,\label{bB2}
\end{equation}
where the variable $\kappa=\kappa(\tau)$ is defined in (\ref{111901}).

Inserting (\ref{bB1}) into (\ref{3}), we obtain
\begin{equation}\label{6}
c\bar{\partial}_{+}\alpha=\Omega\cos^{2}A(c\bar{\partial}_{+}\beta+2\sin^{2}A),
\end{equation}
where
$$
\Omega:=~\varpi(\tau)-\tan^2A
$$
and the variable $\varpi(\tau)$ is defined in (\ref{111901}).
Similarly, we insert (\ref{bB2}) into (\ref{4}) to obtain
\begin{equation}\label{5}
c\bar{\partial}_{-}\beta=\Omega\cos^{2}A(c\bar{\partial}_{-}\alpha-2\sin^{2}A).
\end{equation}

Combining (\ref{3}) with (\ref{6}), we have
\begin{equation}
c\bar{\partial}_{+}\beta=-\frac{\tan A}{\mu}\bar{\partial}_{+}c-2\sin^{2} A=-\frac{p''(\tau)}{2c\rho^4}\tan A\bar{\partial}_{+}\rho-2\sin^{2} A,\label{7}
\end{equation}
\begin{equation}
c\bar{\partial}_{+}\alpha=-\left(\frac{1+\kappa}{2}\right)\Omega\sin2 A\bar{\partial}_{+}c=-\frac{p''(\tau)}{4c\rho^4}\Omega\sin2 A\bar{\partial}_{+}\rho.\label{10}
\end{equation}
Combining (\ref{4}) with (\ref{5}), we have
\begin{equation}\label{82301}
c\bar{\partial}_{-}\alpha=\frac{\tan A}{\mu}\bar{\partial}_{-}c+2\sin^{2} A=\frac{p''(\tau)}{2c\rho^4}\tan A\bar{\partial}_{-}\rho+2\sin^{2} A,
\end{equation}
\begin{equation}
c\bar{\partial}_{-}\beta=\left(\frac{1+\kappa}{2}\right) \Omega\sin2 A\bar{\partial}_{-}c=\frac{p''(\tau)}{4c\rho^4}\Omega\sin2 A\bar{\partial}_{-}\rho.\label{8}
\end{equation}


From (\ref{1}), (\ref{2}), and (\ref{7})--(\ref{8}) we have
\begin{equation}\label{11}
\bar{\partial}_{+}u=\kappa\sin\beta\bar{\partial}_{+}c=c\tau\sin\beta\bar{\partial}_{+}\rho,\quad\bar{\partial}_{-}u=-\kappa\sin\alpha\bar{\partial}_{-}c=-c\tau\sin\alpha\bar{\partial}_{-}\rho,
\end{equation}
\begin{equation}\label{72804}
\bar{\partial}_{+}v=-\kappa\cos\beta\bar{\partial}_{+}c=-c\tau\cos\beta\bar{\partial}_{+}\rho,
\quad\bar{\partial}_{-}v=\kappa\cos\alpha\bar{\partial}_{-}c=c\tau\cos\alpha\bar{\partial}_{-}\rho.
\end{equation}

\begin{prop}
(Commutator relation)  We have
\begin{equation}
\bar{\partial}_{-} \bar{\partial}_{+}- \bar{\partial}_{+} \bar{\partial}_{-}=
\frac{1}{\sin2 A}\Big[(\cos2 A \bar{\partial}_{+}\beta- \bar{\partial}_{-}\alpha) \bar{\partial}_{-}-
(\bar{\partial}_{+}\beta-\cos2 A \bar{\partial}_{-}\alpha) \bar{\partial}_{+}\Big].
\label{comm}
\end{equation}
\end{prop}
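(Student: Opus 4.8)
The plan is to read $\bar\partial_+$ and $\bar\partial_-$ as the variable-coefficient vector fields $\cos\alpha\,\partial_\xi+\sin\alpha\,\partial_\eta$ and $\cos\beta\,\partial_\xi+\sin\beta\,\partial_\eta$ of (\ref{3303}) and to compute their Lie bracket $\bar\partial_-\bar\partial_+-\bar\partial_+\bar\partial_-$ directly; note that this is a purely kinematic identity that does not use the Euler system (\ref{42501}). First I would invoke the elementary fact that for any two first-order operators $X=a\partial_\xi+b\partial_\eta$ and $Y=c\partial_\xi+d\partial_\eta$ with $C^1$ coefficients, the second-order terms in $YX-XY$ cancel (because $ad+bc=cb+da$), leaving the first-order operator $(Ya-Xc)\partial_\xi+(Yb-Xd)\partial_\eta$. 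Taking $Y=\bar\partial_-$, $X=\bar\partial_+$ and using $\bar\partial_-\cos\alpha=-\sin\alpha\,\bar\partial_-\alpha$, $\bar\partial_+\cos\beta=-\sin\beta\,\bar\partial_+\beta$, together with the analogous relations for the sines, one obtains
$$\bar\partial_-\bar\partial_+-\bar\partial_+\bar\partial_-=(\sin\beta\,\bar\partial_+\beta-\sin\alpha\,\bar\partial_-\alpha)\,\partial_\xi+(\cos\alpha\,\bar\partial_-\alpha-\cos\beta\,\bar\partial_+\beta)\,\partial_\eta.$$

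Next I would invert the characteristic frame. Since $\alpha-\beta=2A$ by (\ref{tau}), the matrix sending $(\partial_\xi,\partial_\eta)$ to $(\bar\partial_+,\bar\partial_-)$ has determinant $\cos\alpha\sin\beta-\sin\alpha\cos\beta=\sin(\beta-\alpha)=-\sin 2A$, which is nonzero in the supersonic region, where $A\in(0,\pi/2)$. Cramer's rule then gives $\partial_\xi=(\sin\alpha\,\bar\partial_--\sin\beta\,\bar\partial_+)/\sin 2A$ and $\partial_\eta=(\cos\beta\,\bar\partial_+-\cos\alpha\,\bar\partial_-)/\sin 2A$. Substituting these into the identity above and collecting the coefficients of $\bar\partial_+$ and $\bar\partial_-$, the mixed terms collapse through the Pythagorean identity and $\cos\alpha\cos\beta+\sin\alpha\sin\beta=\cos(\alpha-\beta)=\cos 2A$; one is left with the coefficient $-(\bar\partial_+\beta-\cos 2A\,\bar\partial_-\alpha)/\sin 2A$ in front of $\bar\partial_+$ and $(\cos 2A\,\bar\partial_+\beta-\bar\partial_-\alpha)/\sin 2A$ in front of $\bar\partial_-$, which is exactly (\ref{comm}).

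There is no conceptual obstacle here: the argument is a one-shot computation once the second-order cancellation is observed. The only points requiring care are the sign of the Jacobian (it equals $-\sin 2A$, not $+\sin 2A$, because $\beta<\alpha$) and the consistent replacement of the angle difference $\alpha-\beta$ by $2A$ throughout the trigonometric bookkeeping. The hyperbolicity of (\ref{42501}) on the domains of interest ensures $\sin 2A\neq 0$, so the inversion of the frame is legitimate and the commutator identity holds pointwise.
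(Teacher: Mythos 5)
Your proposal is correct and follows essentially the same route as the paper: the paper's proof simply records the inversion formulas $\partial_{\xi}=-\frac{\sin\beta\bar{\partial}_{+}-\sin\alpha\bar{\partial}_{-}}{\sin 2A}$, $\partial_{\eta}=\frac{\cos\beta\bar{\partial}_{+}-\cos\alpha\bar{\partial}_{-}}{\sin 2A}$ and declares the rest a "direct computation," which is precisely the Lie-bracket calculation you carry out (second-order cancellation, then substitution of the inverted frame). Your sign bookkeeping, including $\det=-\sin 2A$ and the collapse via $\cos\alpha\cos\beta+\sin\alpha\sin\beta=\cos 2A$, checks out and reproduces (\ref{comm}) exactly.
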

\begin{proof}
From (\ref{3303}) we have
\begin{equation}\label{61513}
\partial_{\xi}=-\frac{\sin\beta\bar{\partial}_{+}-\sin\alpha\bar{\partial}_{-}}{\sin2A}\quad \mbox{and}\quad
\partial_{\eta}=\frac{\cos\beta\bar{\partial}_{+}-\cos\alpha\bar{\partial}_{-}}{\sin2A}.
\end{equation}
Then the commutator relation can be obtained by a direct computation. This commutator relation was first derived by Li, Zhang and Zheng in \cite{Li-Zhang-Zheng}.
\end{proof}

\begin{prop}\label{51501}
For the variable $c$, we have the following characteristic decompositions:
\begin{equation}\label{cd}
\left\{
  \begin{array}{ll}
    \displaystyle c\bar{\partial}_{+}\bar{\partial}_{-}c=\bar{\partial}_{-}c\left(\sin2 A+\frac{ \bar{\partial}_{-}c}{2\mu\cos^2 A}
+
\Big(\frac{1-\frac{\kappa\sin^2 2A}{\kappa+1}}{2\mu\cos^2 A}-\frac{c}{\kappa}\cdot\frac{{\rm d}\kappa}{{\rm d}c}\Big)\bar{\partial}_{+}c\right), \\[18pt]
   \displaystyle c\bar{\partial}_{-}\bar{\partial}_{+}c=\bar{\partial}_{+}c\left(\sin2 A+\frac{ \bar{\partial}_{+}c}{2\mu\cos^2 A}+
\Big(\frac{1-\frac{\kappa\sin^2 2A}{\kappa+1}}{2\mu\cos^2 A}-\frac{c}{\kappa}\cdot\frac{{\rm d}\kappa}{{\rm d}c}\Big)\bar{\partial}_{-}c\right),
  \end{array}
\right.
\end{equation}
where  the variable $\mu=\mu(\tau)$ is defined in (\ref{111901}).
\end{prop}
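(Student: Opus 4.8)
The plan is to obtain both identities in (\ref{cd}) by cross-differentiating the first-order relations for $\bar\partial_\pm\beta$, eliminating all derivatives of $\alpha$ and $\beta$ by the first-order characteristic relations, and closing the resulting $2\times 2$ system with the commutator relation (\ref{comm}); it will suffice to prove the first identity, the second following by the same argument with the two characteristic families interchanged. As preliminaries I would first note that $c=\tau\sqrt{-p'(\tau)}$ is strictly monotone in $\tau$ wherever $2p'(\tau)+\tau p''(\tau)\ne0$, so $\kappa,\mu,\varpi$ (hence $\Omega=\varpi-\tan^2 A$) may be viewed as functions of $c$ with $\bar\partial_\pm(\kappa(\tau))=\tfrac{{\rm d}\kappa}{{\rm d}c}\bar\partial_\pm c$, and that the definitions (\ref{111901}) yield the algebraic relations $\mu=\tfrac{1}{1+\kappa}$ and $\varpi=1-2\mu$ (so that ${\rm d}\mu/{\rm d}c$ and ${\rm d}\varpi/{\rm d}c$ are themselves multiples of ${\rm d}\kappa/{\rm d}c$), together with the identity $\tfrac{\tan A}{\mu}+\big(\tfrac{1+\kappa}{2}\big)\Omega\sin 2A=\kappa\sin 2A$. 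Using $A=\tfrac{\alpha-\beta}{2}$ I would then solve (\ref{bB1})--(\ref{bB2}) for $\bar\partial_\pm A$, obtaining $c\,\bar\partial_\pm A=(1+\kappa\sin^2 A)\tan A\,\bar\partial_\pm c+\sin^2 A$; combined with (\ref{7}), (\ref{8}), (\ref{82301}), (\ref{10}) this renders every first derivative of $\alpha,\beta,A$ an explicit function of $(A,c)$ and $\bar\partial_\pm c$, to be substituted systematically below.

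The main step is to apply $\bar\partial_-$ to (\ref{7}) and $\bar\partial_+$ to (\ref{8}). After inserting the first-order substitutions above, each of these produces a polynomial of degree at most two in $\bar\partial_\pm c$, a mixed second derivative of $\beta$, and exactly one second-order term in $c$ — namely $-\tfrac{\tan A}{\mu}\bar\partial_-\bar\partial_+ c$ from $\bar\partial_-$(\ref{7}) and $\big(\tfrac{1+\kappa}{2}\big)\Omega\sin 2A\,\bar\partial_+\bar\partial_- c$ from $\bar\partial_+$(\ref{8}). Subtracting and using the commutator relation (\ref{comm}) applied to the scalar $\beta$ to eliminate $\bar\partial_-\bar\partial_+\beta-\bar\partial_+\bar\partial_-\beta$ in favour of first derivatives, and then replacing $\bar\partial_+\beta,\bar\partial_-\beta,\bar\partial_-\alpha$ on the right by (\ref{7}), (\ref{8}), (\ref{82301}), I arrive at a single scalar identity of the form $\tfrac{\tan A}{\mu}\bar\partial_-\bar\partial_+ c+\big(\tfrac{1+\kappa}{2}\big)\Omega\sin 2A\,\bar\partial_+\bar\partial_- c=\mathcal{Q}(A,c;\bar\partial_+ c,\bar\partial_- c)$, where $\mathcal{Q}$ is an explicit polynomial of degree at most two in $\bar\partial_\pm c$.

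To close the system I would invoke the commutator relation (\ref{comm}) applied to $c$ itself, with $\bar\partial_+\beta$ and $\bar\partial_-\alpha$ replaced by (\ref{7}) and (\ref{82301}); this furnishes a second scalar identity $\bar\partial_-\bar\partial_+ c-\bar\partial_+\bar\partial_- c=\widetilde{\mathcal{Q}}(A,c;\bar\partial_+ c,\bar\partial_- c)$. The resulting $2\times 2$ linear system for $(\bar\partial_-\bar\partial_+ c,\bar\partial_+\bar\partial_- c)$ has determinant $-\big(\tfrac{\tan A}{\mu}+\big(\tfrac{1+\kappa}{2}\big)\Omega\sin 2A\big)=-\kappa\sin 2A$ by the identity recorded above, which is nonzero since $p'(\tau)<0$ forces $\kappa\ne0$ and $0<A<\tfrac{\pi}{2}$ on supersonic flow, so the system is uniquely solvable. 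Solving for $\bar\partial_+\bar\partial_- c$ (and, by the same computation, $\bar\partial_-\bar\partial_+ c$) and simplifying with $\mu=\tfrac{1}{1+\kappa}$, $\varpi=1-2\mu$, the elementary identities $\tfrac{\sin 2A}{2\cos^2 A}=\tan A$ and $\tfrac{2\sin^2 A}{\sin 2A}=\tan A$, and the expression for $\bar\partial_\pm A$, I expect the inhomogeneous ($\cot A$-type) terms and the $(\bar\partial_+ c)^2$ term to cancel, leaving the whole right-hand side proportional to $\bar\partial_- c$ — precisely the form (\ref{cd}); the single surviving ${\rm d}\kappa/{\rm d}c$-term originates from differentiating $\kappa(\tau)$ inside the coefficients $\tfrac{\tan A}{\mu}$ and $\big(\tfrac{1+\kappa}{2}\big)\Omega\sin 2A$.

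The only real obstacle is computational: the bookkeeping of the two cross-differentiation steps, in particular differentiating the $(A,c)$-dependent coefficients correctly, tracking the $\bar\partial_\pm A$ substitutions and the ${\rm d}\kappa/{\rm d}c$ contributions, and then verifying the rather delicate cancellation that strips away every term not proportional to $\bar\partial_- c$. That cancellation is exactly what is forced by the algebraic relations among $\kappa,\mu,\varpi$ coming from (\ref{111901}), and is the structural signature of a genuine characteristic decomposition; a convenient sanity check is that specializing to a polytropic ideal gas (so $\kappa$ is constant, ${\rm d}\kappa/{\rm d}c=0$, and $\mu,\varpi$ constant) must recover the classical characteristic decomposition of Li, Zhang and Zheng \cite{Li-Zhang-Zheng}.
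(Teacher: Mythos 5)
Your proposal is correct and follows essentially the same route as the paper's proof: cross-differentiate a first-order characteristic relation, use the commutator relation (\ref{comm}) to eliminate the resulting mixed second derivatives of the auxiliary quantity, and close the system with the commutator applied to $c$ itself. The only (cosmetic) difference is that you cross-differentiate the $\beta$-relations (\ref{7}) and (\ref{8}), yielding a $2\times 2$ system with determinant $-\kappa\sin 2A$, whereas the paper cross-differentiates the $u$-relations (\ref{11}), yielding determinant $-(\sin\alpha+\sin\beta)$; your preliminary identities $\mu=\tfrac{1}{1+\kappa}$, $\varpi=1-2\mu$, $\tfrac{\tan A}{\mu}+\tfrac{1+\kappa}{2}\Omega\sin 2A=\kappa\sin 2A$, and $c\bar{\partial}_{\pm}A=(1+\kappa\sin^2A)\tan A\,\bar{\partial}_{\pm}c+\sin^2A$ all check out against (\ref{111901}) and (\ref{7})--(\ref{8}).
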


\begin{proof}
The characteristic decompositions (\ref{cd}) was first derived in Lai \cite{Lai1}. For the sake of completeness we sketch the proof.
It follows from (\ref{11}) and (\ref{comm}) that
\begin{equation}
\begin{array}{rcl}
&&\bar{\partial}_{-} \bar{\partial}_{+}u- \bar{\partial}_{+} \bar{\partial}_{-}u\\[6pt]&=&
\bar{\partial}_{+}\big(\kappa\sin\alpha\bar{\partial}_{-}c\big)
\bar{\partial}_{-}\big(\kappa\sin\beta\bar{\partial}_{+}c\big)\\[4pt]
&=&-\displaystyle\frac{1}{\sin2 A}\Big((\bar{\partial}_{+}\beta-\cos2 A \bar{\partial}_{-}\alpha)\kappa\sin\beta \bar{\partial}_{+}c-( \bar{\partial}_{-}\alpha- \cos2 A\bar{\partial}_{+}\beta)\kappa\sin\alpha \bar{\partial}_{-}c\Big).
\end{array}
\end{equation}
Hence we have
\begin{equation}
\begin{array}{lll}
\displaystyle&&\sin\alpha\bar{\partial}_{+}\bar{\partial}_{-}c+
\sin\beta\bar{\partial}_{-}\bar{\partial}_{+}c+\displaystyle\frac{1}{\kappa}\cdot\frac{{\rm d}\kappa}{{\rm d}c}(\sin\alpha+\sin\beta)\bar{\partial}_{+}c\bar{\partial}_{-}c
\\[4pt]&=&-\displaystyle\frac{1}{\sin2 A}
\Big((\sin\beta \bar{\partial}_{+}\beta-\sin\beta \cos2 A \bar{\partial}_{-}\alpha+\cos\beta\sin2 A\bar{\partial}_{-}\beta) \bar{\partial}_{+}c\\[8pt]&&\qquad\displaystyle-( \sin\alpha\bar{\partial}_{-}\alpha- \sin\alpha\cos2 A\bar{\partial}_{+}\beta-\cos\alpha\sin2 A\bar{\partial}_{+}\alpha) \bar{\partial}_{-}c\Big).
\end{array}\label{12a}
\end{equation}

Applying the commutator relation (\ref{comm}) for $c$, we get
\begin{equation}
\bar{\partial}_{-} \bar{\partial}_{+}c- \bar{\partial}_{+} \bar{\partial}_{-}c=
\frac{1}{\sin2 A}\Big((\cos2 A \bar{\partial}_{+}\beta- \bar{\partial}_{-}\alpha) \bar{\partial}_{-}c-
(\bar{\partial}_{+}\beta-\cos2 A \bar{\partial}_{-}\alpha) \bar{\partial}_{+}c\Big).\label{13a}
\end{equation}
Inserting (\ref{13a}) into (\ref{12a}) we get
$$
\begin{array}{rcl}
&&\displaystyle(\sin\alpha+\sin\beta)\bar{\partial}_{+}\bar{\partial}_{-}c
\\[4pt]&=&-\displaystyle\frac{1}{\sin2 A}
\Big(\cos\beta\sin2 A\bar{\partial}_{-}\beta\bar{\partial}_{+}c+( \sin\alpha+\sin\beta)\cos2 A\bar{\partial}_{+}\beta\bar{\partial}_{-}c\\[6pt]&&
\qquad-(\sin\alpha+\sin\beta)\bar{\partial}_{-}\alpha\bar{\partial}_{-}c+\cos\alpha\sin2 A\bar{\partial}_{+}
\alpha\bar{\partial}_{-}c\Big)-\displaystyle\frac{1}{\kappa}\cdot\frac{{\rm d}\kappa}{{\rm d}c}(\sin\alpha+\sin\beta)\bar{\partial}_{+}c\bar{\partial}_{-}c.
\end{array}
$$
Thus, by (\ref{7})--(\ref{8}) we get
 the first equation of (\ref{cd}).

The proof for the other is similar. This completes the proof.
\end{proof}

\begin{prop} We have the following characteristic decompositions:
\begin{equation}
\left\{
  \begin{array}{ll}
\begin{aligned}
c\bar{\partial}_{+}\left(\frac{\bar{\partial}_{-}c}{\sin^{2} A}\right)
   &=\bar{\partial}_{-}c\Bigg[\frac{1}{2\mu\cos^{2} A}\Big(\frac{\bar{\partial}_{-}c}{\sin^2A}-\frac{\bar{\partial}_{+}c}{\sin^2A}\Big)
\\&\qquad\qquad\qquad+\left(\frac{1}{\mu\cos^{2} A}-4\kappa\sin^{2} A-2-\frac{c}{\kappa}\cdot\frac{{\rm d}\kappa}{{\rm d}c}\right)
\frac{\bar{\partial}_{+}c}{\sin^{2} A}\Bigg],
\end{aligned}
\\[32pt]
\begin{aligned}
c\bar{\partial}_{-}\left(\frac{\bar{\partial}_{+}c}{\sin^{2} A}\right)
   &=\bar{\partial}_{+}c\Bigg[\frac{1}{2\mu\cos^{2} A}\Big(\frac{\bar{\partial}_{+}c}{\sin^2A}-\frac{\bar{\partial}_{-}c}{\sin^2A}\Big)
\\&\qquad\qquad\qquad+\left(\frac{1}{\mu\cos^{2} A}-4\kappa\sin^{2} A-2-\frac{c}{\kappa}\cdot\frac{{\rm d}\kappa}{{\rm d}c}\right)
\frac{\bar{\partial}_{-}c}{\sin^{2} A}\Bigg].
\end{aligned}
\end{array}
\right.\label{cd1}
\end{equation}
\end{prop}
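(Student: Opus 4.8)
\emph{Proof idea.}
The plan is to obtain (\ref{cd1}) as a formal consequence of the decomposition (\ref{cd}) for $c$ together with the first-order relations (\ref{7}) and (\ref{10}), by differentiating the factor $\sin^{-2}A$ along $C_{+}$. Since $A=\tfrac12(\alpha-\beta)$, one has $\bar{\partial}_{+}(\sin^{2}A)=\sin 2A\,\bar{\partial}_{+}A$, so the Leibniz rule gives
$$
c\,\bar{\partial}_{+}\!\left(\frac{\bar{\partial}_{-}c}{\sin^{2}A}\right)
=\frac{c\,\bar{\partial}_{+}\bar{\partial}_{-}c}{\sin^{2}A}
-\frac{\bar{\partial}_{-}c}{\sin^{4}A}\,\sin 2A\,\bigl(c\,\bar{\partial}_{+}A\bigr).
$$
For the first term on the right I substitute the first line of (\ref{cd}); for the second I use $c\,\bar{\partial}_{+}A=\tfrac12\bigl(c\,\bar{\partial}_{+}\alpha-c\,\bar{\partial}_{+}\beta\bigr)$ together with (\ref{7}) and (\ref{10}), which writes $c\,\bar{\partial}_{+}A$ as an affine function of $\bar{\partial}_{+}c$ with coefficients depending only on $A$, $\kappa$, $\mu$, $\varpi$. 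After the substitution the two copies of the inhomogeneous term $\sin 2A$ cancel, and one is left with a bilinear form in $\bar{\partial}_{+}c$ and $\bar{\partial}_{-}c$.

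The remaining task is to simplify the coefficients of this bilinear form to the shape stated in (\ref{cd1}). For this I would use the elementary identities $1+\kappa=1/\mu$ and $\varpi=1-2\mu$, both read off directly from the definitions (\ref{111901}), the relation $\Omega=\varpi-\tan^{2}A$, and the trigonometric simplifications $\sin^{2}2A=4\sin^{2}A\cos^{2}A$, $\sin 2A\,\tan A=2\sin^{2}A$ and $\cos 2A=1-2\sin^{2}A$. With these, the term $\frac{c}{\kappa}\cdot\frac{{\rm d}\kappa}{{\rm d}c}$ carried over from (\ref{cd}) appears unchanged and reproduces the corresponding term of (\ref{cd1}); the coefficient of $\bar{\partial}_{-}c$ collapses to $\frac{1}{2\mu\cos^{2}A}$; and, after rewriting $\frac{\kappa\sin^{2}2A}{(\kappa+1)\,2\mu\cos^{2}A}$ as $2\kappa\sin^{2}A$ (using $1+\kappa=1/\mu$) and simplifying $(1+\kappa)\Omega\cos^{2}A$ (using $\varpi=1-2\mu$), the coefficient of $\bar{\partial}_{+}c$ becomes exactly $\frac{1}{2\mu\cos^{2}A}-4\kappa\sin^{2}A-2-\frac{c}{\kappa}\cdot\frac{{\rm d}\kappa}{{\rm d}c}$, which is the coefficient appearing on the right of the first equation of (\ref{cd1}) once its two summands are combined. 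The second equation of (\ref{cd1}) then follows from the first by the $C_{+}\leftrightarrow C_{-}$ symmetry of (\ref{42501}) (interchange $\alpha\leftrightarrow\beta$ and $\bar{\partial}_{+}\leftrightarrow\bar{\partial}_{-}$), exactly as in the proof of Proposition \ref{51501}.

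I expect the only real obstacle to be the bookkeeping in this last simplification: one must keep $\sin^{2}A$ and $\sin^{2}2A$ distinct, track the sign introduced by $\Omega=\varpi-\tan^{2}A$, and verify the residual identity $2\sin^{2}A\,(1-1/\mu)=-2\kappa\sin^{2}A$, which is again just $1+\kappa=1/\mu$. No geometric or analytic input beyond (\ref{cd}), (\ref{7}) and (\ref{10}) is needed; (\ref{cd1}) is purely an algebraic rearrangement of those relations.
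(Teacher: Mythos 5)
Your proposal is correct and follows essentially the same route as the paper: the paper's proof also just computes $c\bar{\partial}_{\pm}A$ from (\ref{7})--(\ref{8}) (its equation (\ref{6802})) and combines it with (\ref{cd}) via the Leibniz rule, leaving the algebraic simplification implicit. Your identities $1+\kappa=1/\mu$ and $\varpi=1-2\mu$ do close the computation as claimed.
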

\begin{proof}
From (\ref{7})--(\ref{8}) we have
\begin{equation}\label{6802}
c\bar{\partial}_{-}A=\frac{\sin^2 A}{2}\left(2+\frac{\sin A}{\mu}\left(\frac{\sin^2 A}{\cos A}+\frac{1}{\cos A}-\varpi(\tau)\cos A\right)\frac{\bar{\partial}_{-}c}{\sin^2 A}\right).
\end{equation}
Combining this with (\ref{cd}) one get (\ref{cd1}).
\end{proof}

\begin{prop}
For the variable $\rho$, we have the following characteristic decompositions:
\begin{equation}\label{81104}
\left\{
  \begin{array}{ll}
   \displaystyle c\bar{\partial}_{+}\bar{\partial}_{-}\rho=\sin2A\bar{\partial}_{-}\rho+
    \frac{\tau^4p''(\tau)}{4c\cos^{2}A }\Big[(\bar{\partial}_{-}\rho)^2+(f-1)\bar{\partial}_{-}\rho\bar{\partial}_{+}\rho\Big],\\[12pt]
  \displaystyle c\bar{\partial}_{-}\bar{\partial}_{+}\rho=\sin2A\bar{\partial}_{+}\rho+
    \frac{\tau^4p''(\tau)}{4c\cos^{2}A }\Big[(\bar{\partial}_{+}\rho)^2+(f-1)\bar{\partial}_{-}\rho\bar{\partial}_{+}\rho\Big],
  \end{array}
\right.
\end{equation}
where
\begin{equation}
f=2\sin^2A-\frac{8p'(\tau)\cos^4A}{\tau p''(\tau)}>0\quad \mbox{for}\quad p''>0\quad \mbox{and}\quad p'<0.
\end{equation}
\end{prop}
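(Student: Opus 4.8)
The plan is to derive (\ref{81104}) from the already-proven $c$-decompositions (\ref{cd}) of Proposition~\ref{51501}, exploiting that for potential flow the sound speed is a function of the density alone. Since $p=p(\tau)$ is the fixed function (\ref{van}) and $\tau=1/\rho$, the quantity $c=\sqrt{-\tau^2p'(\tau)}$ is a smooth function $c=c(\rho)$ with $\frac{{\rm d}c}{{\rm d}\rho}\neq 0$; comparing (\ref{3}) with (\ref{7}) (equivalently (\ref{bB1}) with (\ref{6})) we already have $\bar{\partial}_{+}c=\frac{\mu(\tau)p''(\tau)}{2c\rho^4}\bar{\partial}_{+}\rho$, so that
\[
\frac{{\rm d}c}{{\rm d}\rho}=\frac{\mu(\tau)p''(\tau)\tau^4}{2c}=\frac{(2p'+\tau p'')\tau^3}{2c},
\]
and one further differentiation gives $\frac{{\rm d}^2c}{{\rm d}\rho^2}$ explicitly in terms of $p'(\tau)$, $p''(\tau)$, $p'''(\tau)$.

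First I would apply the chain rule: $\bar{\partial}_{\pm}c=\frac{{\rm d}c}{{\rm d}\rho}\bar{\partial}_{\pm}\rho$ and $\bar{\partial}_{+}\bar{\partial}_{-}c=\frac{{\rm d}c}{{\rm d}\rho}\bar{\partial}_{+}\bar{\partial}_{-}\rho+\frac{{\rm d}^2c}{{\rm d}\rho^2}\bar{\partial}_{+}\rho\,\bar{\partial}_{-}\rho$, and substitute into the first identity of (\ref{cd}). Dividing by $\frac{{\rm d}c}{{\rm d}\rho}$ and clearing $c$ produces an identity of the form $c\bar{\partial}_{+}\bar{\partial}_{-}\rho=\sin 2A\,\bar{\partial}_{-}\rho+P(\bar{\partial}_{-}\rho)^2+Q\,\bar{\partial}_{-}\rho\,\bar{\partial}_{+}\rho$ with
\[
P=\frac{1}{2\mu\cos^2 A}\,\frac{{\rm d}c}{{\rm d}\rho},\qquad
Q=\Big(\frac{1-\frac{\kappa\sin^2 2A}{\kappa+1}}{2\mu\cos^2 A}-\frac{c}{\kappa}\frac{{\rm d}\kappa}{{\rm d}c}\Big)\frac{{\rm d}c}{{\rm d}\rho}-c\big({\rm d}^2c/{\rm d}\rho^2\big)\big/\big({\rm d}c/{\rm d}\rho\big).
\]
Plugging $\frac{{\rm d}c}{{\rm d}\rho}=\frac{\mu p''\tau^4}{2c}$ into $P$ gives at once $P=\frac{\tau^4p''(\tau)}{4c\cos^2 A}$, the asserted leading coefficient, so the claim reduces to the single identity $Q=P(f-1)$ with $f=2\sin^2A-\frac{8p'\cos^4A}{\tau p''}$. (Alternatively the same two identities can be obtained directly by applying the commutator relation (\ref{comm}) to $u$ and to $v$ and using (\ref{11}), (\ref{72804}), (\ref{7})--(\ref{8}); this route is equivalent but more bookkeeping.)

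The identity $Q=P(f-1)$ is the only real obstacle, and the subtlety is that both $\frac{{\rm d}^2c}{{\rm d}\rho^2}$ and $\frac{{\rm d}\kappa}{{\rm d}c}=\kappa'(\tau)/c'(\tau)$ carry a $p'''(\tau)$ term, whereas $f$ does not; one must check that these contributions cancel. I would organize the computation using $\kappa=-\frac{2p'}{2p'+\tau p''}$, $\mu=\frac{2p'+\tau p''}{\tau p''}$, $c'(\tau)=-\frac{\tau(2p'+\tau p'')}{2c}$ and $\sin^2A=c^2/q^2$ from (\ref{210cqo}), observing that $p'''$ enters $\frac{{\rm d}^2c}{{\rm d}\rho^2}$ only through $\frac{{\rm d}}{{\rm d}\tau}(2p'+\tau p'')=3p''+\tau p'''$ and enters $\kappa'(\tau)$ through the same combination, so that after multiplying out by $(2p'+\tau p'')^2$ the $p'''$ parts of $Q$ cancel identically; the surviving $p'''$-free part is then a routine trigonometric--algebraic reduction to $P(f-1)$, with $\varpi=-\frac{4p'+\tau p''}{\tau p''}$ absorbing the $\mu,\kappa$ combinations. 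The second identity of (\ref{81104}) follows the same way from the second identity of (\ref{cd}), or simply by the $C_{+}\leftrightarrow C_{-}$ symmetry of the system. Finally $f>0$ when $p''>0>p'$ is immediate: on supersonic flow $A\in(0,\pi/2)$, so $\sin^2A>0$ and $\cos^4A>0$, while $-\frac{8p'\cos^4A}{\tau p''}>0$ since $p'<0$, $p''>0$, $\tau>0$; hence $f$ is a sum of positive terms.
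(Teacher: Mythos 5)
Your argument is correct, but it is not the route the paper takes. The paper proves (\ref{81104}) directly: it applies the commutator relation (\ref{comm}) to $u$, substitutes $\bar{\partial}_{+}u=c\tau\sin\beta\,\bar{\partial}_{+}\rho$ and $\bar{\partial}_{-}u=-c\tau\sin\alpha\,\bar{\partial}_{-}\rho$ from (\ref{11}), combines the result with the commutator relation applied to $\rho$ itself, and then eliminates $\bar{\partial}_{\pm}\alpha$, $\bar{\partial}_{\pm}\beta$ via (\ref{7})--(\ref{8}); this exactly parallels the proof of Proposition \ref{51501} and never touches $c''(\rho)$ or $p'''$. You instead transport the already-proven $c$-decomposition (\ref{cd}) through the chain rule $c=c(\rho)$, which collapses the whole proposition to the single pointwise identity $Q=P(f-1)$. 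That identity does hold: the $p'''$ contributions to $-c\,c''(\rho)/c'(\rho)$ and to $-\frac{c}{\kappa}\frac{{\rm d}\kappa}{{\rm d}c}\,c'(\rho)=-\frac{c}{\kappa}\frac{{\rm d}\kappa}{{\rm d}\rho}$ are respectively $+\frac{c\tau^{3}p'''}{2p'+\tau p''}$ and $-\frac{c\tau^{3}p'''}{2p'+\tau p''}$, so they cancel exactly as you assert, and the surviving $p'''$-free part does reduce to $P(f-1)$ (for instance, in the polytropic case both sides equal $\frac{(\gamma+1)(1-2\cos^{2}A)+8\cos^{4}A}{4\cos^{2}A}\cdot\frac{c}{\rho}$, using $\sin^{2}2A=4(1-\cos^{2}A)\cos^{2}A$). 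What your route buys is economy: Proposition \ref{51501} does all the commutator work once and the remainder is pointwise algebra. What it costs is (i) the $p'''$ bookkeeping, which you correctly flag but leave as an asserted cancellation rather than a displayed one, and (ii) the division by $c'(\rho)\propto 2p'+\tau p''$, so strictly your identity is first obtained where $2p'+\tau p''\neq 0$ and must be extended by continuity, whereas the paper's direct derivation of (\ref{81104}) carries no such restriction. The positivity of $f$ for $p''>0$, $p'<0$ is immediate in both treatments.
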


\begin{proof}
This decomposition was first obtained in \cite{Lai3}. For the sake of completeness, we sketch the proof.

It follows from (\ref{11}) and (\ref{comm}) that
\begin{equation}
\begin{array}{rcl}
&&\bar{\partial}_{+}\big[c\tau\sin\alpha\bar{\partial}_{-}\rho\big]+
\bar{\partial}_{-}\big[c\tau\sin\beta\bar{\partial}_{+}\rho\big]
\\[6pt]&=&-\displaystyle\frac{1}{\sin2 A}\big[(\bar{\partial}_{+}\beta-\cos2 A \bar{\partial}_{-}\alpha)c\tau\sin\beta \bar{\partial}_{+}\rho-( \bar{\partial}_{-}\alpha- \cos2 A\bar{\partial}_{+}\beta)c\tau\sin\alpha \bar{\partial}_{-}\rho\big].
\end{array}
\end{equation}
Hence, we have
\begin{equation}
\begin{array}{rcl}
&&\displaystyle(\sin\alpha+\sin\beta)\frac{1}{c\tau }\frac{{\rm d}(c\tau)}{{\rm d}\rho }\bar{\partial}_{+}\rho\bar{\partial}_{-}\rho+\sin\alpha\bar{\partial}_{+}\bar{\partial}_{-}\rho+
\sin\beta\bar{\partial}_{-}\bar{\partial}_{+}\rho\\[8pt]&=&-\displaystyle\frac{1}{\sin2 A}
\Big[(\sin\beta \bar{\partial}_{+}\beta-\sin\beta \cos2 A \bar{\partial}_{-}\alpha+\cos\beta\sin2 A\bar{\partial}_{-}\beta) \bar{\partial}_{+}\rho\\[4pt]&&\qquad\qquad\qquad\qquad\displaystyle-( \sin\alpha\bar{\partial}_{-}\alpha- \sin\alpha\cos2 A\bar{\partial}_{+}\beta-\cos\alpha\sin2 A\bar{\partial}_{+}\alpha) \bar{\partial}_{-}\rho\Big].
\end{array}\label{12}
\end{equation}

Applying the commutator relation (\ref{comm}) for $\rho$, we get
\begin{equation}
\bar{\partial}_{-} \bar{\partial}_{+}\rho- \bar{\partial}_{+} \bar{\partial}_{-}\rho=
\frac{1}{\sin2 A}\big[(\cos2 A \bar{\partial}_{+}\beta- \bar{\partial}_{-}\alpha) \bar{\partial}_{-}\rho-
(\bar{\partial}_{+}\beta-\cos2 A \bar{\partial}_{-}\alpha) \bar{\partial}_{+}\rho\big].\label{13}
\end{equation}
Inserting this into (\ref{12}), we get
\begin{equation}
\begin{array}{rcl}
&&\displaystyle(\sin\alpha+\sin\beta)\frac{1}{c\tau }\frac{{\rm d}(c\tau)}{{\rm d}\rho }\bar{\partial}_{+}\rho\bar{\partial}_{-}\rho+(\sin\alpha+\sin\beta)\bar{\partial}_{+}\bar{\partial}_{-}\rho\\[6pt]
&=&-\displaystyle\frac{1}{\sin2 A}
\Big[\cos\beta\sin2 A\bar{\partial}_{-}\beta\bar{\partial}_{+}\rho+( \sin\alpha+\sin\beta)\cos2 A\bar{\partial}_{+}\beta\bar{\partial}_{-}\rho\\[6pt]&&
\qquad\qquad\qquad\qquad-( \sin\alpha+\sin\beta)\bar{\partial}_{-}\alpha\bar{\partial}_{-}\rho+\cos\alpha\sin2 A\bar{\partial}_{+}
\alpha\bar{\partial}_{-}\rho\Big].
\end{array}\label{14}
\end{equation}
Thus, by (\ref{7})--(\ref{8}) we get
\begin{equation}
 c\bar{\partial}_{+}\bar{\partial}_{-}\rho=\sin2A\bar{\partial}_{-}\rho+
    \frac{\tau^4p''(\tau)}{4c\cos^{2}A }\Big[(\bar{\partial}_{-}\rho)^2+(f-1)\bar{\partial}_{-}\rho\bar{\partial}_{+}\rho\Big].
\end{equation}

The proof for the other is similar. This completes the proof.
\end{proof}

\begin{prop}\label{pnew}
For any positive integer $n$, we have the following characteristic decompositions:
\begin{equation}\label{new}
\left\{
  \begin{array}{ll}
   \displaystyle c\bar{\partial}_{+}\Big(\frac{\rho^n\bar{\partial}_{-}\rho}{\sin^2 A}\Big)=
    \frac{\tau^4p''(\tau)\bar{\partial}_{-}\rho}{4c\cos^{2}A }\Bigg[\frac{\rho^n\bar{\partial}_{-}\rho}{\sin^2 A}+\mathcal{H}\frac{\rho^n\bar{\partial}_{+}\rho}{\sin^2 A}\Bigg],\\[16pt]
  \displaystyle c\bar{\partial}_{-}\Big(\frac{\rho^n\bar{\partial}_{+}\rho}{\sin^2 A}\Big)=
    \frac{\tau^4p''(\tau)\bar{\partial}_{+}\rho}{4c\cos^{2}A }\Bigg[\frac{\rho^n\bar{\partial}_{+}\rho}{\sin^2 A}+\mathcal{H}\frac{\rho^n\bar{\partial}_{-}\rho}{\sin^2 A}\Bigg],
  \end{array}
\right.
\end{equation}
where
\begin{equation}\label{82803}
\mathcal{H}=2\sin^2A-\frac{8p'(\tau)\cos^4A}{\tau p''(\tau)}-\frac{4np'(\tau)\cos^{2}A }{\tau p''(\tau)}-2\cos^2 A+2\Omega \cos^4 A-1.
\end{equation}
\end{prop}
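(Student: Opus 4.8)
The plan is to derive (\ref{new}) directly from the already-established characteristic decompositions (\ref{81104}) for $\rho$ by applying $\bar{\partial}_{+}$ to the composite quantity $\rho^n\bar{\partial}_{-}\rho/\sin^2A$ and carefully collecting terms. First I would expand by the Leibniz rule,
\[
c\bar{\partial}_{+}\Big(\frac{\rho^n\bar{\partial}_{-}\rho}{\sin^2 A}\Big)=\frac{cn\rho^{n-1}\bar{\partial}_{+}\rho\,\bar{\partial}_{-}\rho}{\sin^2 A}+\frac{\rho^n}{\sin^2 A}\,c\bar{\partial}_{+}\bar{\partial}_{-}\rho-\frac{2\rho^n\cos A\,\bar{\partial}_{-}\rho}{\sin^3 A}\,c\bar{\partial}_{+}A,
\]
so that the three ingredients needed are: the mixed second derivative $c\bar{\partial}_{+}\bar{\partial}_{-}\rho$, supplied verbatim by the first line of (\ref{81104}); the quantity $c\bar{\partial}_{+}A=\tfrac12(c\bar{\partial}_{+}\alpha-c\bar{\partial}_{+}\beta)$, which by (\ref{10}) and (\ref{7}) (equivalently, by the $\bar{\partial}_{+}$ analogue of (\ref{6802})) equals $\sin^2A+\frac{p''(\tau)\tan A}{4c\rho^4}(1-\Omega\cos^2A)\bar{\partial}_{+}\rho$; and the elementary identities $\rho^{n-1}=\tau\rho^n$ and $c^2=-\tau^2p'(\tau)$, used to rewrite the $n$-dependent term with the common prefactor $\tau^4p''(\tau)/(4c\cos^2A)$.

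Substituting these in and collecting, I would observe that the degree-one terms cancel: the $\sin2A\,\bar{\partial}_{-}\rho$ coming out of (\ref{81104}) yields $2\cot A\,\rho^n\bar{\partial}_{-}\rho$ after dividing by $\sin^2A$, and this is exactly killed by the $-2\cot A\,\rho^n\bar{\partial}_{-}\rho$ produced by the $\sin^2A$ part of $c\bar{\partial}_{+}A$. What remains is a quadratic form in $(\bar{\partial}_{-}\rho,\bar{\partial}_{+}\rho)$ with prefactor $\frac{\tau^4p''(\tau)\bar{\partial}_{-}\rho}{4c\cos^2A}$: the coefficient of $\rho^n\bar{\partial}_{-}\rho/\sin^2A$ is $1$, while the coefficient of $\rho^n\bar{\partial}_{+}\rho/\sin^2A$ is the sum of three contributions — $-\frac{4np'(\tau)\cos^2A}{\tau p''(\tau)}$ from the $n\rho^{n-1}$ term, $f-1$ from the $(f-1)$ term of (\ref{81104}), and $-2\cos^2A+2\Omega\cos^4A$ from the remaining part of $c\bar{\partial}_{+}A$ (using $\cos A\tan A/\sin^3A=1/\sin^2A$). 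Since $f=2\sin^2A-\frac{8p'(\tau)\cos^4A}{\tau p''(\tau)}$, the sum of these three contributions is precisely $\mathcal{H}$ as in (\ref{82803}), which establishes the first identity; the second follows by interchanging the roles of the labels $+$ and $-$.

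The hard part will be purely the bookkeeping: keeping track of the three separate sources of $\bar{\partial}_{+}\rho\,\bar{\partial}_{-}\rho$ terms, confirming the cancellation of the $\cot A$ terms, and applying the trigonometric reductions ($\sin2A/\sin^2A=2\cot A$, $\cos A\tan A/\sin^3A=1/\sin^2A$, $\cos A/\sin A=\cot A$) consistently when factoring out the prefactor $\tau^4p''(\tau)/(4c\cos^2A)$. No new idea beyond (\ref{81104}) and the formula for $c\bar{\partial}_{+}A$ is required, so I expect this to be a routine, if somewhat lengthy, verification.
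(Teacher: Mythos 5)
Your proposal is correct and follows exactly the route the paper indicates: the paper's proof of this proposition simply states that (\ref{new}) follows from (\ref{7})--(\ref{8}) and (\ref{81104}) and omits the details, which are precisely the Leibniz expansion, the formula for $c\bar{\partial}_{+}A$, and the bookkeeping you describe. I verified your identification of the three contributions to $\mathcal{H}$ (namely $-\frac{4np'\cos^2A}{\tau p''}$ from the $n\rho^{n-1}$ term via $c^2=-\tau^2p'$, $f-1$ from (\ref{81104}), and $-2\cos^2A+2\Omega\cos^4A$ from the non-$\sin^2A$ part of $c\bar{\partial}_{+}A$) and the cancellation of the $2\cot A\,\rho^n\bar{\partial}_{-}\rho$ terms; all are correct.
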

\begin{proof}
The decompositions (\ref{new}) can be obtained by using (\ref{7})--(\ref{8}) and (\ref{81104}); we omit the details.
\end{proof}

\section{Hodograph transformation}

\subsection{Concept of hodograph transformation}
Let the hodograph transformation be
$$
T:  (\xi, \eta)\rightarrow (u, v)
$$
for (\ref{42501}).
If the Jacobian
$$
j(u, v;\xi, \eta)=\frac{\partial(u, v)}{\partial(\xi, \eta)}=u_{\xi}v_{\eta}-u_{\eta}v_{\xi}\neq 0
$$
for a solution $(u, v)(\xi, \eta)$ of (\ref{42501}), we may consider $\xi$ and $\eta$ as functions of $u$ and $v$. From
\begin{equation}
u_{\xi}=j\eta_{v}, \quad u_{\eta}=-j\xi_{v}, \quad v_{\xi}=-j\eta_{u}, \quad v_{\eta}=j\xi_{u},
\end{equation}
and regard $h$ as a function of $u$ and $v$,
we then see that $\xi(u, v)$ and $\eta(u, v)$ satisfy the equations
\begin{equation}\label{HT}
\left\{
  \begin{array}{ll}
  (c^{2}-U^{2})\eta_{v}+UV(\xi_{v}+\eta_{u})+(c^{2}-V^{2})\xi_{u}=0, \\[2pt]
  \xi_{v}-\eta_{u}=0.
  \end{array}
\right.
\end{equation}


The eigenvalues of (\ref{HT}) are
\begin{equation}
\Lambda=\Lambda_{\pm}=\frac{UV\pm
c\sqrt{U^{2}+V^{2}-c^{2}}}{c^{2}-V^{2}}.
\end{equation}
The characteristic curves of (\ref{HT}) in the $(u, v)$-plane are defined as the integral curves
\begin{equation}\label{823301}
\Gamma_{\pm}:~\frac{{\rm d}v}{{\rm d}u}=\Lambda_\pm.
\end{equation}
It is easy to see
\begin{equation}\label{52204}
\Lambda_{\pm}=-\frac{1}{\lambda_{\mp}},
\end{equation}
which implies that the directions of the $C$-characteristic of one kind are perpendicular to the $\Gamma$-characteristic of the other kind. Or, more precisely, the directions of $C_{+}$ and $\Gamma_{-}$ and of $C_{-}$ and $\Gamma_{+}$ through corresponding points $(\xi,\eta)$ and $(u, v)$ are perpendicular.

We introduce the normalized directional derivatives along $\Gamma$-characteristics,
\begin{equation}\label{52503}
\hat{\partial}_{+}=-\sin\beta\partial_{u}+\cos\beta\partial_{v}, \quad
\hat{\partial}_{-}=\sin\alpha\partial_{u}-\cos\alpha\partial_{v}.
\end{equation}

By (\ref{6803}) and (\ref{HT}) we have
\begin{equation}\label{38}
\hat{\partial}_{+}\eta=\tan \alpha \hat{\partial}_{+}\xi, \quad \hat{\partial}_{-}\eta=\tan \beta \hat{\partial}_{-}\xi.
\end{equation}
This implies a correspondence between $C_{\pm}$ and $\Gamma_{\pm}$. 

\subsection{Systems in the holograph plane}
From the pseudo-Bernoulli's law (\ref{5802}), one has
\begin{equation}\label{52201}
h_u=\xi-u=-U, \quad h_{v}=\eta-v=-V.
\end{equation}
We obtain the following first order system for $(U, V, h)$
\begin{equation}\label{52301}
\left(
  \begin{array}{ccc}
    c^2-V^2 & UV & 0 \\
    0 & 1 & 0 \\
    0 & 0 & 1 \\
  \end{array}
\right)\left(
         \begin{array}{c}
           U \\
           V \\
           h \\
         \end{array}
       \right)_u+ \left(
  \begin{array}{ccc}
    UV & c^2-U^2 & 0 \\
    -1 & 0 & 0 \\
    0 & 0 & 0 \\
  \end{array}
\right)\left(
         \begin{array}{c}
           U \\
           V \\
           h \\
         \end{array}
       \right)_v=\left(
                   \begin{array}{c}
                     2c^2-U^2-V^2 \\
                     0 \\
                     -U \\
                   \end{array}
                 \right).
\end{equation}
The system has three eigenvalues
\begin{equation}
\Lambda_0=0, \quad \Lambda_{\pm}=\frac{UV\pm
c\sqrt{U^{2}+V^{2}-c^{2}}}{c^{2}-V^{2}}.
\end{equation}

Multiplying (\ref{52301}) on the left by the matrix $(l_{+}, l_{-}, l_{0})^{T}$ where
$$
l_0=(0, 0, 1)\quad \mbox{and} \quad l_{\pm}=(1, \pm c\sqrt{U^{2}+V^{2}-c^{2}}, 0),
$$
 we obtain
\begin{equation}\label{52303}
\left\{
  \begin{array}{ll}
    \displaystyle\hat{\partial}_{+}U+\Lambda_{-}\hat{\partial}_{+}V=-\frac{2\cos 2A}{\sin\alpha}, \\[8pt]
   \displaystyle \hat{\partial}_{-}U+\Lambda_{+}\hat{\partial}_{-}V=\frac{2\cos 2A}{\sin\beta},\\[8pt]
    h_u=-U,
  \end{array}
\right.
\end{equation}

By (\ref{U}) and (\ref{52201}) we have
\begin{equation}\label{52501}
\hat{\partial}_{\pm}c=-\frac{1}{\kappa}.
\end{equation}
From (\ref{U}) we have
\begin{equation}\label{6601}
\hat{\partial}_{\pm}U=\frac{\cos\sigma \hat{\partial}_{\pm}c}{\sin A}-\frac{c\sin\sigma}{2\sin A}(\hat{\partial}_{\pm}\alpha+\hat{\partial}_{\pm}\beta)-\frac{c\cos\sigma\cos A}{2\sin^2 A}(\hat{\partial}_{\pm}\alpha-\hat{\partial}_{\pm}\beta)
\end{equation}
and
\begin{equation}\label{6602}
\hat{\partial}_{\pm}V=\frac{\sin\sigma \hat{\partial}_{\pm}c}{\sin A}+\frac{c\cos\sigma}{2\sin A}(\hat{\partial}_{\pm}\alpha+\hat{\partial}_{\pm}\beta)-\frac{c\sin\sigma\cos A}{2\sin^2 A}(\hat{\partial}_{\pm}\alpha-\hat{\partial}_{\pm}\beta).
\end{equation}

Inserting (\ref{52501})--(\ref{6602}) into (\ref{52303}) one gets
\begin{equation}\label{52305}
\left\{
  \begin{array}{ll}
    \displaystyle\hat{\partial}_{+}\alpha=-\frac{\tau p''(\tau)}{4cp'(\tau)}\Big[\varpi(\tau)-\tan^2A\Big]\sin 2A,  \\[10pt]
    \displaystyle\hat{\partial}_{-}\beta=\frac{\tau p''(\tau)}{4cp'(\tau)}\Big[\varpi(\tau)-\tan^2A\Big]\sin 2A,  \\[12pt]
   \displaystyle \tau_u=\frac{\tau\cos\sigma}{c \sin A}.
  \end{array}
\right.
\end{equation}
Actually, system (\ref{52305}) is linearly degenerate in the sense of Lax \cite{Lax}.

\subsection{Characteristic decompositions in the hodograph plane}

\begin{prop}
(Commutator relation of $\hat{\partial}_{\pm}$) We have
\begin{equation}\label{52401}
\hat{\partial}_{+}\hat{\partial}_{-}-\hat{\partial}_{-}\hat{\partial}_{+}=\tan A\hat{\partial}_{+}\alpha(\hat{\partial}_{+}-\hat{\partial}_{-}).
\end{equation}
\end{prop}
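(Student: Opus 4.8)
The plan is to treat $\hat{\partial}_{+}$ and $\hat{\partial}_{-}$ from (\ref{52503}) as first order differential operators in the hodograph variables $(u,v)$ with coefficients built from $\alpha$ and $\beta$, and to compute their commutator directly, exactly as the $(\xi,\eta)$-plane commutator (\ref{comm}) was obtained. For vector fields $X=a\,\partial_u+b\,\partial_v$ and $Y=\tilde a\,\partial_u+\tilde b\,\partial_v$ one has $[X,Y]=(X\tilde a-Ya)\,\partial_u+(X\tilde b-Yb)\,\partial_v$; applying this with $(a,b)=(-\sin\beta,\cos\beta)$ and $(\tilde a,\tilde b)=(\sin\alpha,-\cos\alpha)$, and using $\hat{\partial}_{\pm}\sin\alpha=\cos\alpha\,\hat{\partial}_{\pm}\alpha$, $\hat{\partial}_{\pm}\cos\alpha=-\sin\alpha\,\hat{\partial}_{\pm}\alpha$ together with the analogous identities for $\beta$, I would obtain
\[
\hat{\partial}_{+}\hat{\partial}_{-}-\hat{\partial}_{-}\hat{\partial}_{+}=\big(\cos\alpha\,\hat{\partial}_{+}\alpha+\cos\beta\,\hat{\partial}_{-}\beta\big)\partial_u+\big(\sin\alpha\,\hat{\partial}_{+}\alpha+\sin\beta\,\hat{\partial}_{-}\beta\big)\partial_v .
\]

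The next step is to eliminate $\partial_u,\partial_v$ in favour of $\hat{\partial}_{\pm}$. Solving the $2\times 2$ linear system (\ref{52503}) — whose determinant equals $\sin\beta\cos\alpha-\sin\alpha\cos\beta=-\sin 2A$, since $\alpha-\beta=2A$ by (\ref{tau}) — gives $\partial_u=\tfrac{1}{\sin 2A}(\cos\alpha\,\hat{\partial}_{+}+\cos\beta\,\hat{\partial}_{-})$ and $\partial_v=\tfrac{1}{\sin 2A}(\sin\alpha\,\hat{\partial}_{+}+\sin\beta\,\hat{\partial}_{-})$. Substituting these and collecting the coefficients of $\hat{\partial}_{+}$ and $\hat{\partial}_{-}$, with repeated use of $\cos\alpha\cos\beta+\sin\alpha\sin\beta=\cos 2A$, produces the hodograph analogue of (\ref{comm}),
\[
\hat{\partial}_{+}\hat{\partial}_{-}-\hat{\partial}_{-}\hat{\partial}_{+}=\frac{1}{\sin 2A}\Big[\big(\hat{\partial}_{+}\alpha+\cos 2A\,\hat{\partial}_{-}\beta\big)\hat{\partial}_{+}+\big(\cos 2A\,\hat{\partial}_{+}\alpha+\hat{\partial}_{-}\beta\big)\hat{\partial}_{-}\Big],
\]
which is valid for any solution and does not yet use the particular structure of the equations in the hodograph plane.

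Finally I would invoke system (\ref{52305}): its first two equations are negatives of one another, so $\hat{\partial}_{-}\beta=-\hat{\partial}_{+}\alpha$. Substituting this relation, the coefficient of $\hat{\partial}_{+}$ collapses to $(1-\cos 2A)\hat{\partial}_{+}\alpha$ and that of $\hat{\partial}_{-}$ to $-(1-\cos 2A)\hat{\partial}_{+}\alpha$, and the half-angle identity $\tfrac{1-\cos 2A}{\sin 2A}=\tan A$ yields exactly $\hat{\partial}_{+}\hat{\partial}_{-}-\hat{\partial}_{-}\hat{\partial}_{+}=\tan A\,\hat{\partial}_{+}\alpha\,(\hat{\partial}_{+}-\hat{\partial}_{-})$. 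I do not expect a genuine obstacle here: the computation is elementary, and the only points demanding care are the sign of the determinant $-\sin 2A$ in the inversion and the observation that it is precisely the relation $\hat{\partial}_{-}\beta=-\hat{\partial}_{+}\alpha$ — a consequence of the linear degeneracy of (\ref{52305}) — that reduces the generic two-parameter commutator to a pure directional derivative along $\hat{\partial}_{+}-\hat{\partial}_{-}$ with scalar factor $\tan A\,\hat{\partial}_{+}\alpha$.
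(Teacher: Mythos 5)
Your proof is correct and follows essentially the same route as the paper: invert (\ref{52503}) to get (\ref{317}) and carry out the direct computation of the Lie bracket. The one point worth noting is that you make explicit what the paper leaves buried in ``a direct computation'', namely that the generic commutator $\frac{1}{\sin 2A}\big[(\hat{\partial}_{+}\alpha+\cos 2A\,\hat{\partial}_{-}\beta)\hat{\partial}_{+}+(\cos 2A\,\hat{\partial}_{+}\alpha+\hat{\partial}_{-}\beta)\hat{\partial}_{-}\big]$ collapses to the stated form only after inserting $\hat{\partial}_{-}\beta=-\hat{\partial}_{+}\alpha$ from (\ref{52305}); this is a genuine ingredient, not a cosmetic simplification, and your identification of it is accurate.
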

\begin{proof}
From (\ref{52503}) we have
\begin{equation}\label{317}
\partial_u=\frac{\cos\alpha\hat{\partial}_{+}+\cos\beta\hat{\partial}_{-}}{\sin 2A}, \quad \partial_v=\frac{\sin\alpha\hat{\partial}_{+}+\sin\beta\hat{\partial}_{-}}{\sin 2A}.
\end{equation}
By a direct computation one can obtain (\ref{52401}).
\end{proof}

Using the commutator relation, we easily derive:
\begin{prop}
For the variables $\alpha$ and $\beta$, we have the characteristic decompositions:
\begin{equation}\label{52502}
\left\{
  \begin{array}{ll}
    \hat{\partial}_{+}\hat{\partial}_{-}\alpha+\mathcal{W}(A, c)\hat{\partial}_{-}\alpha=\mathcal{Q}(A, c),  \\[4pt]
 \hat{\partial}_{-}\hat{\partial}_{+}\beta+\mathcal{W}(A, c)\hat{\partial}_{+}\beta=-\mathcal{Q}(A, c),
  \end{array}
\right.
\end{equation}
where
$$
\mathcal{W}(A, c)=-\frac{\tau p''(\tau)}{4cp'(\tau)}\Big[\big(\varpi(\tau)-\tan^2A\big)(4\sin^2 A-1)+2\tan^2A\Big],
$$
$$
\mathcal{Q}(A, c)=\left(\frac{\tau p''(\tau)}{4cp'(\tau)}\right)^2\big(\varpi(\tau)-\tan^2A\big)(1-3\tan^2A)\sin 2A-\frac{\tan A}{2\tau p'(\tau)}\cdot\frac{{\rm d}}{{\rm d}\tau}\left(\frac{\tau p''(\tau)}{p'(\tau)}\right).
$$
\end{prop}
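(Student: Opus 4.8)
The plan is to obtain the first identity of (\ref{52502}) by differentiating the first-order relations (\ref{52305}) along a $\Gamma_{-}$ characteristic and then eliminating the mixed derivative $\hat{\partial}_{-}\hat{\partial}_{+}\alpha$ through the commutator relation (\ref{52401}); the second identity will then follow by symmetry. Write $\hat{\partial}_{+}\alpha = F(A,c)$ with
\[
F(A,c) := -\frac{\tau p''(\tau)}{4cp'(\tau)}\big(\varpi(\tau)-\tan^{2}A\big)\sin 2A,
\]
where $\tau = \tau(c)$ is the inverse function of $c = \tau\sqrt{-p'(\tau)}$, so that the coefficient $\tau p''(\tau)/(4cp'(\tau))$ and $\varpi(\tau)$ are functions of $c$ alone. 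By (\ref{52305}) one also has $\hat{\partial}_{-}\beta = -F(A,c)$, hence, since $A = (\alpha-\beta)/2$,
\[
\hat{\partial}_{-}A = \tfrac12\big(\hat{\partial}_{-}\alpha - \hat{\partial}_{-}\beta\big) = \tfrac12\hat{\partial}_{-}\alpha + \tfrac12 F(A,c).
\]
Differentiating $\hat{\partial}_{+}\alpha = F(A,c)$ along $\Gamma_{-}$ by the chain rule and using $\hat{\partial}_{-}c = -1/\kappa$ from (\ref{52501}) gives
\[
\hat{\partial}_{-}\hat{\partial}_{+}\alpha = F_{A}\Big(\tfrac12\hat{\partial}_{-}\alpha + \tfrac12 F\Big) - \frac{F_{c}}{\kappa}.
\]

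Next I would apply (\ref{52401}) to $\alpha$, i.e. $\hat{\partial}_{+}\hat{\partial}_{-}\alpha = \hat{\partial}_{-}\hat{\partial}_{+}\alpha + \tan A\,\hat{\partial}_{+}\alpha\big(\hat{\partial}_{+}\alpha - \hat{\partial}_{-}\alpha\big)$, substitute $\hat{\partial}_{+}\alpha = F$ and the expression for $\hat{\partial}_{-}\hat{\partial}_{+}\alpha$ just found, and collect the terms proportional to $\hat{\partial}_{-}\alpha$. This yields an identity of the form
\[
\hat{\partial}_{+}\hat{\partial}_{-}\alpha + \Big(\tan A\, F - \tfrac12 F_{A}\Big)\hat{\partial}_{-}\alpha = \tfrac12 F F_{A} - \frac{F_{c}}{\kappa} + \tan A\, F^{2},
\]
so it remains only to check the two algebraic identities $\mathcal{W}(A,c) = \tan A\, F - \tfrac12 F_{A}$ and $\mathcal{Q}(A,c) = \tfrac12 F F_{A} - F_{c}/\kappa + \tan A\, F^{2}$.

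For $\mathcal{W}$ I would compute $F_{A}$ using $\frac{{\rm d}}{{\rm d}A}\tan^{2}A = 2\tan A/\cos^{2}A$ together with the simplifications $\sin 2A/\cos^{2}A = 2\tan A$ and $\tan A\sin 2A = 2\sin^{2}A$, and then use $\cos 2A - 2\sin^{2}A = 1 - 4\sin^{2}A$ to recognize the stated form of $\mathcal{W}(A,c)$. For $\mathcal{Q}$, the part $\tfrac12 F F_{A} + \tan A\, F^{2}$ collapses, via $\cos 2A + \tan A\sin 2A = 1$, to $\big(\tfrac{\tau p''}{4cp'}\big)^{2}(\varpi-\tan^{2}A)(\varpi - 3\tan^{2}A)\sin 2A$; the term $-F_{c}/\kappa$, after converting the $c$-derivative in $F_{c}$ to a $\tau$-derivative by means of $c = \tau\sqrt{-p'(\tau)}$ and using $\kappa = -2p'/(2p'+\tau p'')$, must then supply both the discrepancy that turns $\varpi - 3\tan^{2}A$ into $1 - 3\tan^{2}A$ (which uses that $\varpi - 1$ is a multiple of $2p'+\tau p''$) and precisely the term $-\frac{\tan A}{2\tau p'}\frac{{\rm d}}{{\rm d}\tau}\big(\tau p''/p'\big)$ coming from differentiating the coefficient of $F$. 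Finally the second identity of (\ref{52502}) follows by the involution $(\alpha,\beta)\mapsto(-\beta,-\alpha)$, $\hat{\partial}_{+}\leftrightarrow\hat{\partial}_{-}$: this leaves $A$ and $c$ unchanged, maps (\ref{52401}) and (\ref{52305}) to themselves, and reverses the sign of the right-hand side; alternatively one repeats the computation starting from $\hat{\partial}_{-}\beta = -F(A,c)$.

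The main obstacle is the last step for $\mathcal{Q}$: the careful bookkeeping of which quantities ($\kappa$, $\mu$, $\varpi$, and $\tau p''/(4cp')$) depend on $\tau$ only, the conversion of $c$-derivatives into $\tau$-derivatives with the correct factor ${\rm d}\tau/{\rm d}c$, and the verification that the various trigonometric polynomials in $\tan^{2}A$ recombine exactly into $(\varpi-\tan^{2}A)(1-3\tan^{2}A)$. Everything preceding it is a routine substitution into the commutator relation.
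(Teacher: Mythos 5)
Your proposal is correct and follows essentially the same route as the paper: apply the commutator relation (\ref{52401}) to $\alpha$, differentiate the first-order relation $\hat{\partial}_{+}\alpha=F(A,c)$ from (\ref{52305}) along $\Gamma_{-}$ using $\hat{\partial}_{-}\beta=-F$ and $\hat{\partial}_{-}c=-1/\kappa$, and collect terms; the algebraic identities you reduce $\mathcal{W}$ and $\mathcal{Q}$ to do check out (in particular, $-F_c/\kappa$ indeed supplies both the factor correction $\varpi\mapsto 1$ and the $\frac{{\rm d}}{{\rm d}\tau}(\tau p''/p')$ term). The paper's own proof is just a terser statement of the same computation.
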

\begin{proof}
We apply the commutator relation (\ref{52401}) to obtain
$$
\hat{\partial}_{+}\hat{\partial}_{-}\alpha-\hat{\partial}_{-}\hat{\partial}_{+}\alpha=\tan A\hat{\partial}_{+}\alpha(\hat{\partial}_{+}\alpha-\hat{\partial}_{-}\alpha).
$$
Using the expressions of $\hat{\partial}_{+}\alpha$ and $\hat{\partial}_{-}\beta$ in (\ref{52305}) and recalling (\ref{52501}), we directly obtain the first equation of (\ref{52502}). The second equation of (\ref{52502}) can be proved similarly.
\end{proof}

We need to show that the Jacobian $j^{-1}(u, v; \xi, \eta)$ does not vanish:
$$
j^{-1}(u, v; \xi, \eta)=\xi_u\eta_v-\xi_v\eta_u\neq0.
$$
By computation we have
$$
j^{-1}(u, v; \xi, \eta)=\frac{\sin^2 2A}{\cos\alpha\cos\beta}\hat{\partial}_{-}\xi\cdot\hat{\partial}_{+}\xi.
$$

A direct computation yields
\begin{equation}\label{61312}
\hat{\partial}_{+}\xi=-\hat{\partial}_{+}U-\sin\beta\quad \mbox{and}\quad \hat{\partial}_{-}\xi=-\hat{\partial}_{-}U+\sin\alpha.
\end{equation}
We compute
\begin{equation}\label{61311}
\begin{aligned}
\hat{\partial}_{+}U+\sin\beta&=\frac{\cos\sigma \hat{\partial}_{+}c}{\sin A}-\frac{c\sin\sigma}{2\sin A}(\hat{\partial}_{+}\alpha+\hat{\partial}_{+}\beta)-\frac{c\cos\sigma\cos A}{2\sin^2 A}(\hat{\partial}_{+}\alpha-\hat{\partial}_{+}\beta)+\sin\beta\\&=
\frac{\tau p''(\tau)}{2p'(\tau)}\cdot\frac{\cos\alpha}{\sin 2A}+\frac{c\cos\alpha}{2\sin^2 A}\hat{\partial}_{+}\beta
\\&=\frac{c\cos\alpha}{2\sin^2 A}\left(\hat{\partial}_{+}\beta+\frac{\tau p''(\tau)\tan A}{2cp'(\tau)}\right)
\end{aligned}
\end{equation}
and
\begin{equation}\label{61319}
\begin{aligned}
\hat{\partial}_{-}U-\sin\alpha&=\frac{\cos\sigma \hat{\partial}_{-}c}{\sin A}-\frac{c\sin\sigma}{2\sin A}(\hat{\partial}_{-}\alpha+\hat{\partial}_{-}\beta)-\frac{c\cos\sigma\cos A}{2\sin^2 A}(\hat{\partial}_{-}\alpha-\hat{\partial}_{-}\beta)-\sin\alpha\\&=
\frac{\tau p''(\tau)}{2p'(\tau)}\cdot\frac{\cos\beta}{\sin 2A}-\frac{c\cos\beta}{2\sin^2 A}\hat{\partial}_{-}\alpha
\\&=-\frac{c\cos\beta}{2\sin^2 A}\left(\hat{\partial}_{-}\alpha-\frac{\tau p''(\tau)\tan A}{2cp'(\tau)}\right).
\end{aligned}
\end{equation}
Therefore, we have
\begin{equation}\label{6805}
j^{-1}(u, v; \xi, \eta)=-(c\cot A)^2 \mathcal{Z}_{+}\mathcal{Z}_{-},
\end{equation}
where
\begin{equation}\label{62701}
\mathcal{Z}_{+}=\hat{\partial}_{+}\beta+\frac{\tau p''(\tau)\tan A}{2cp'(\tau)}\quad \mbox{and} \quad
\mathcal{Z}_{-}=\hat{\partial}_{-}\alpha-\frac{\tau p''(\tau)\tan A}{2cp'(\tau)}.
\end{equation}
So, in order that $j^{-1}$ does not vanish we need to obtain $\mathcal{Z}_{\pm}\neq0$, $c\neq0$, and $A\neq\frac{\pi}{2}$.

In order to estimate $\mathcal{Z}_{\pm}$, we use (\ref{52502}) to derive the following characteristic equations:
\begin{prop}
For the variables $\mathcal{Z}_{\pm}$, we have
\begin{equation}\label{322}
\left\{
  \begin{array}{ll}
   \hat{\partial}_{+}\mathcal{Z}_{-}+\mathcal{W}\mathcal{Z}_{-}=\displaystyle\frac{\tau p''(\tau)}{4cp'(\tau)}(\tan^2 A+1)\mathcal{Z}_{+}, \\[10pt]
\hat{\partial}_{-}\mathcal{Z}_{+}+\mathcal{W}\mathcal{Z}_{+}=\displaystyle\frac{\tau p''(\tau)}{4cp'(\tau)}(\tan^2 A+1)\mathcal{Z}_{-}.
  \end{array}
\right.
\end{equation}
\end{prop}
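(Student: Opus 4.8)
The plan is to reduce the two identities in \eqref{322} to a single pointwise algebraic relation and then to deduce one from the other. I will carry out the first identity in detail. Write $\Theta:=\frac{\tau p''(\tau)\tan A}{2cp'(\tau)}$, so that by \eqref{62701} we have $\mathcal{Z}_{-}=\hat{\partial}_{-}\alpha-\Theta$ and $\mathcal{Z}_{+}=\hat{\partial}_{+}\beta+\Theta$. First I would apply $\hat{\partial}_{+}$ to $\mathcal{Z}_{-}$, add $\mathcal{W}\mathcal{Z}_{-}$, and use the first equation of \eqref{52502} to eliminate the mixed second derivative $\hat{\partial}_{+}\hat{\partial}_{-}\alpha$. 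This gives
\[
\hat{\partial}_{+}\mathcal{Z}_{-}+\mathcal{W}\mathcal{Z}_{-}
=\big(\hat{\partial}_{+}\hat{\partial}_{-}\alpha+\mathcal{W}\hat{\partial}_{-}\alpha\big)-\hat{\partial}_{+}\Theta-\mathcal{W}\Theta
=\mathcal{Q}-\mathcal{W}\Theta-\hat{\partial}_{+}\Theta ,
\]
so the claim becomes the pointwise identity
\[
\mathcal{Q}-\mathcal{W}\Theta-\hat{\partial}_{+}\Theta
=\frac{\tau p''(\tau)}{4cp'(\tau)}\,(\tan^{2}A+1)\,\big(\hat{\partial}_{+}\beta+\Theta\big),
\]
which is to be verified from the explicit formulas for $\mathcal{W}$, $\mathcal{Q}$, and for $\hat{\partial}_{+}\alpha$ given in \eqref{52305}.

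The key observation in evaluating $\hat{\partial}_{+}\Theta$ is that $\Theta$ depends on $(\xi,\eta)$ only through $\tau$ and $A$. Since $c=c(\tau)$ and $\hat{\partial}_{\pm}c=-1/\kappa$ by \eqref{52501}, the derivative $\hat{\partial}_{\pm}\tau$ is a known function of $\tau$ alone — in fact $\hat{\partial}_{\pm}\tau=-c/(\tau p'(\tau))$ — so $\hat{\partial}_{+}$ of any function of $\tau$ is explicit. Writing $\Theta=F(\tau)\tan A$ with $F(\tau)=\frac{\tau p''(\tau)}{2cp'(\tau)}$ and using $\hat{\partial}_{+}A=\tfrac12(\hat{\partial}_{+}\alpha-\hat{\partial}_{+}\beta)$ together with the value of $\hat{\partial}_{+}\alpha$ from \eqref{52305}, I obtain
\[
\hat{\partial}_{+}\Theta=F'(\tau)\,(\hat{\partial}_{+}\tau)\,\tan A+F(\tau)\sec^{2}A\cdot\tfrac12\big(\hat{\partial}_{+}\alpha-\hat{\partial}_{+}\beta\big).
\]
Thus the only term containing $\hat{\partial}_{+}\beta$ on the left-hand side of the identity above is $\tfrac12 F(\tau)\sec^{2}A\,\hat{\partial}_{+}\beta=\frac{\tau p''(\tau)}{4cp'(\tau)}(\tan^{2}A+1)\,\hat{\partial}_{+}\beta$, and it cancels exactly the $\hat{\partial}_{+}\beta$-term on the right; this built-in cancellation is what forces the coefficient $\frac{\tau p''(\tau)}{4cp'(\tau)}(\tan^{2}A+1)$ on the right-hand side of \eqref{322}. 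After removing these terms, both sides are explicit functions of $\tau$ and $A$, and what remains is a relation among $p'(\tau)$, $p''(\tau)$, $p'''(\tau)$ — the last entering through $\frac{{\rm d}}{{\rm d}\tau}\big(\frac{\tau p''(\tau)}{p'(\tau)}\big)$ in $\mathcal{Q}$ and through $F'(\tau)$ — and powers of $\tan A$.

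I expect the main obstacle to be exactly this final verification: it is routine but delicate bookkeeping. I would organize it by clearing the common factor $\frac{\tau p''(\tau)}{4cp'(\tau)}$, substituting $\sec^{2}A=1+\tan^{2}A$ and $\sin 2A=2\sin^{2}A\cot A$ everywhere so that each term becomes a rational function of $\tan A$ with $\tau$-dependent coefficients, and then matching the coefficients of each power of $\tan A$; the cancellations involving $\varpi(\tau)$ and the factors $(1-3\tan^{2}A)$ and $(4\sin^{2}A-1)$ in $\mathcal{Q}$ and $\mathcal{W}$ are where an error is most likely to slip in. The second identity of \eqref{322} is then equivalent to the first: running the same manipulation with $\hat{\partial}_{-}$, $\mathcal{Z}_{+}$ and the second equation of \eqref{52502} reduces it, after the analogous cancellation, to the negative of the relation just established — one uses $\hat{\partial}_{+}\tau=\hat{\partial}_{-}\tau$ and $\hat{\partial}_{+}\alpha=-\hat{\partial}_{-}\beta$, both immediate from \eqref{52501} and \eqref{52305}.
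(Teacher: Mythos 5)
Your proposal is correct and follows exactly the route the paper intends (the paper states the proposition without a written proof, saying only that it is derived from (\ref{52502})): substitute $\mathcal{Z}_{-}=\hat{\partial}_{-}\alpha-\Theta$, use (\ref{52502}) for the mixed derivative, and note that the $\tfrac12 F\sec^2A\,\hat{\partial}_{+}\beta$ term in $\hat{\partial}_{+}\Theta$ produces precisely the coefficient $\frac{\tau p''(\tau)}{4cp'(\tau)}(\tan^2A+1)$ of $\hat{\partial}_{+}\beta$ on the right. The residual algebraic identity you defer does check out — with $\hat{\partial}_{\pm}\tau=-c/(\tau p')$ the $\frac{{\rm d}}{{\rm d}\tau}\big(\frac{\tau p''}{p'}\big)$ terms cancel, the $(1-3\tan^2A)$ part of $\mathcal{Q}$ cancels against the $(4\sin^2A-1)$ part of $\mathcal{W}\Theta$, and the remainder collapses to $2P^2\tan A(1+\tan^2A)=\frac{\tau p''}{4cp'}(1+\tan^2A)\,\Theta$ as required.
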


 We compute
$$
\hat{\partial}_{+}c=c_{\xi}\hat{\partial}_{+}\xi+c_{\eta}\hat{\partial}_{+}\eta
=(c_{\xi}+c_{\eta}\tan\alpha)\hat{\partial}_{+}\xi=\sec\alpha\bar{\partial}_{+}c \hat{\partial}_{+}\xi.
$$
Thus, we have
$$
\bar{\partial}_{+}c=\frac{2\sin^2 A}{c\kappa\mathcal{Z}_{+}}.
$$
Combining this with $\frac{{\rm d}c}{{\rm d}\tau}=-\frac{c}{\tau \kappa}$, we get
\begin{equation}\label{6807}
\bar{\partial}_{+}\tau=-\frac{2\tau\sin^2 A}{c^2\mathcal{Z}_{+}}.
\end{equation}
Similarly, one has
\begin{equation}\label{6808}
\bar{\partial}_{-}\tau=\frac{2\tau\sin^2 A}{c^2\mathcal{Z}_{-}}.
\end{equation}

\section{Interaction of fan-shock-fan composite waves}
\subsection{Characteristic boundaries of the interaction region}
In this part we shall construct the cross characteristic curves $\wideparen{\mbox{PB}}$, $\wideparen{\mathrm{BE}}$, $\wideparen{\mathrm{PD}}$, and $\wideparen{\mathrm{DF}}$.

Set $\hat{\xi}=\xi\sin\theta-\eta\cos\theta$, $\hat{\eta}=\xi\cos\theta+\eta\sin\theta$. Then, by the result of Section 1.5 we know that
\begin{equation}\label{42601}
(u, v, \tau)=\left\{
            \begin{array}{ll}
\big(0, 0, \tau_0\big), & \hbox{$\hat{\xi}>\hat{\xi}_0$,} \\[4pt]
               \big( \hat{u}_r(\hat{\xi})\sin\theta, -\hat{u}_r(\hat{\xi})\cos\theta, \hat{\tau}_r(\hat{\xi})\big), & \hbox{$\hat{\xi}_1<\hat{\xi}<\hat{\xi}_0$;} \\[4pt]
             \big(\hat{u}_l(\hat{\xi})\sin\theta , -\hat{u}_l(\hat{\xi})\cos\theta, \hat{\tau}_l(\hat{\xi})\big), & \hbox{$\hat{\xi}_2<\hat{\xi}<\hat{\xi}_1$;} \\[4pt]
\mbox{vacuum}, & \hbox{$\hat{\xi}<\hat{\xi}_2$}
            \end{array}
          \right.
\end{equation}
is a fan-shock-fan composite wave solution to (\ref{42501}). Moreover, for any fixed $\hat{\xi}\in(\hat{\xi}_2, \hat{\xi}_0]$, the half line $\xi\sin\theta-\eta\cos\theta=\hat{\xi}$, $\xi\cos\theta+\eta\sin\theta>0$ is a $C_{+}$ characteristic line with characteristic angle $\alpha=\pi+\theta$.
So, we have
\begin{equation}\label{62001}
\alpha\equiv\pi+\theta\quad \mbox{on}\quad \wideparen{\mathrm{PB}}\cup\wideparen{\mathrm{BE}},
\end{equation}
provided that the $C_{-}$ characteristic curves $\wideparen{\mathrm{PB}}$ and $\wideparen{\mathrm{BE}}$ exist; see Figure \ref{Fig12}.

\begin{figure}[htbp]
\begin{center}
\includegraphics[scale=0.48]{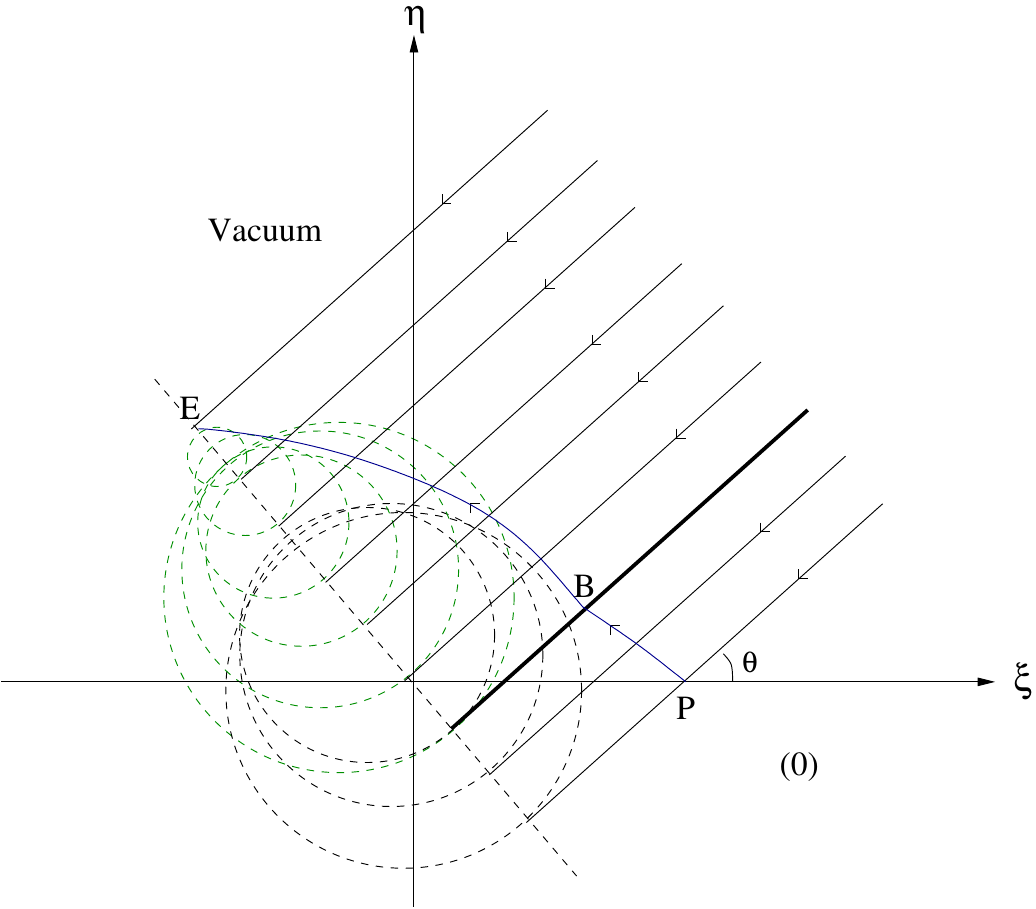}
\caption{\footnotesize Fan-shock-fan composite wave $FSF_1$ with straight $C_{+}$ characteristic lines and the cross $C_{-}$ characteristic curve $\wideparen{\mathrm{PB}}\cup\wideparen{\mathrm{BE}}$.}
\label{Fig12}
\end{center}
\end{figure}

We describe the $C_{-}$ characteristic curve $\wideparen{\mathrm{PB}}$ by the parametric form
$\xi=\xi_{-}^{r}(\hat{\xi})$, $\eta=\eta_{-}^{r}(\hat{\xi})
$, $\hat{\xi}_1\leq\hat{\xi}\leq\hat{\xi}_0$.
The values of $u$, $v$, and $\tau$ along $\wideparen{\mathrm{PB}}$ are $u=\hat{u}_r(\hat{\xi})\sin\theta$, $v=-\hat{u}_r(\hat{\xi})\cos\theta$, and $\tau=\hat{\tau}_r(\hat{\xi})$. We denote the values of the pseudo-Mach number along $\wideparen{\mathrm{PB}}$ by $A=A_{-}^{r}(\hat{\xi})$.
Then by $\xi=u-U=u-\frac{c\cos\sigma}{\sin A}$ and $\eta=v-V=v-\frac{c\sin\sigma}{\sin A}$ we have
$$
\left\{
    \begin{array}{ll}
 \xi_{-}^{r}(\hat{\xi})=\hat{u}_r(\hat{\xi})\sin\theta-\hat{c}_r(\hat{\xi})
\csc(A_{-}^{r}(\hat{\xi}))\cos\big(\pi+\theta-A_{-}^{r}(\hat{\xi})\big),\\[8pt]
\eta_{-}^{r}(\hat{\xi})=-\hat{u}_r(\hat{\xi})\cos\theta-\hat{c}_r(\hat{\xi})\csc(A_{-}^{r}(\hat{\xi}))\sin\big(\pi+\theta-A_{-}^{r}(\hat{\xi})\big),
    \end{array}
  \right.
$$
where $\hat{c}_r(\hat{\xi})=c(\hat{\tau}_r(\hat{\xi}))$.
In addition,
by (\ref{tau}), (\ref{8}) and (\ref{62001}) we know that $A_{-}^{r}(\hat{\xi})$ can be determined by
$$
\left\{
  \begin{array}{ll}
   \displaystyle \frac{{\rm d}A_{-}^{r}(\hat{\xi})}{{\rm d}\hat{\xi}}=\frac{\hat{\tau}_r^2(\hat{\xi})p''(\hat{\tau}_r(\hat{\xi}))}
{8\hat{c}_r^2(\hat{\xi})}\Big[\varpi(\hat{\tau}_r(\hat{\xi}))-\tan^2(A_{-}^{r}(\hat{\xi}))\Big]\sin (2A_{-}^{r}(\hat{\xi}))\hat{\tau}_r'(\hat{\xi}), & \hbox{$\hat{\xi}_1\leq\hat{\xi}\leq\hat{\xi}_0$;} \\[12pt]
    A_{-}^{r}(\hat{\xi}_0)=\theta.
  \end{array}
\right.
$$
Here, the condition $A_{-}^{r}(\hat{\xi}_0)=\theta$ is determined by $\big(\xi_{-}^{r}, \eta_{-}^{r}\big)(\hat{\xi}_0)=\big(\frac{\hat{\xi}_0}{\sin\theta}, 0\big)$ and $A\big(\frac{\hat{\xi}_0}{\sin\theta}, 0\big)=\theta$.

We denote by $(\xi_{_\mathrm{B}}, \eta_{_\mathrm{B}})$ the coordinate of the point $\mathrm{B}$. Then we have
$
(\xi_{_\mathrm{B}}, \eta_{_\mathrm{B}})=(\xi_{-}^{r}, \eta_{-}^{r})(\hat{\xi}_1)
$.
Hence, the value of the pseudo-Mach angle of the flow behind the double-sonic shock ${\it S}_1:\xi\sin\theta-\eta\cos\theta=\hat{\xi}_1$ at the point $\mathrm{B}$ is
$$
A=\hat{A}_b:=\arcsin \left(\frac{c_2^e}{\sqrt{(\xi_{_\mathrm{B}}-\hat{u}_2\sin\theta)^2+(\eta_{_\mathrm{B}}+\hat{u}_2\cos\theta)^2}}\right),
$$
where the constant $\hat{u}_2$ is already defined in Section 1.5.
\vskip 2pt

We describe the $C_{-}$ characteristic curve $\wideparen{\mathrm{BE}}$ by the parametric form
$\xi=\xi_{-}^{l}(\hat{\xi})$, $\eta=\eta_{-}^{l}(\hat{\xi})
$, $\hat{\xi}_2\leq\hat{\xi}\leq\hat{\xi}_1$.
Then
$$
\left\{
  \begin{array}{ll}
    \xi_{-}^{l}(\hat{\xi})=\hat{u}_l(\hat{\xi})\sin\theta-\hat{c}_l(\hat{\xi})
\csc(A_{-}^{l}(\hat{\xi}))\cos\big(\pi+\theta-A_{-}^{l}(\hat{\xi})\big),\\[8pt]
\eta_{-}^{l}(\hat{\xi})=-\hat{u}_r(\hat{\xi})\cos\theta-\hat{c}_r(\hat{\xi})\csc (A_{-}^{l}(\hat{\xi}))\sin\big(\pi+\theta-A_{-}^{l}(\hat{\xi})\big),
    \end{array}
  \right.
$$
where $\hat{c}_l(\hat{\xi})=c(\hat{\tau}_l(\hat{\xi}))$  and $A_{-}^{l}(\hat{\xi})$ is determined by
$$
\left\{
  \begin{array}{ll}
   \displaystyle \frac{{\rm d}A_{-}^{l}(\hat{\xi})}{{\rm d}\hat{\xi}}=\frac{\hat{\tau}_l^2(\hat{\xi})p''(\hat{\tau}_l(\hat{\xi}))}{8c_l^2(\hat{\xi})}
\Big[\varpi(\hat{\tau}_l(\hat{\xi}))-\tan^2(A_{-}^{l}(\hat{\xi}))\Big]\sin (2A_{-}^{l}(\hat{\xi}))\hat{\tau}_l'(\hat{\xi}), & \hbox{$\hat{\xi}_2\leq\hat{\xi}\leq\hat{\xi}_1$;} \\[12pt]
   A_{-}^{l}(\hat{\xi}_1)=\hat{A}_b.
  \end{array}
\right.
$$

In section 4.6 we will show $0<A_{-}^{r}(\hat{\xi})<\frac{\pi}{2}$ for $\hat{\xi}_1\leq\hat{\xi}\leq\hat{\xi}_0$ and
$0<A_{-}^{l}(\hat{\xi})<\frac{\pi}{2}$ for $\hat{\xi}_2\leq\hat{\xi}\leq\hat{\xi}_1$. This implies that the forward $C_{-}$ characteristic curve issued from $\mathrm{P}$ can cross the whole fan-shock-fan composite wave ${\it FSF}_1$ and ends up at the point $\mathrm{E}=(\hat{u}_l(\hat{\xi}_2)\sin\theta, -\hat{u}_l(\hat{\xi}_2)\cos\theta)$.

From (\ref{82301}) and (\ref{62001}) we also have
\begin{equation}\label{62501}
\bar{\partial}_{-}\rho=-\frac{2c\sin 2A}{\tau^4 p''(\tau)}<0\quad \mbox{along}\quad \wideparen{\mbox{PB}}\cup\wideparen{\mbox{BE}}.
\end{equation}

By the symmetry, the forward $C_{+}$ characteristic issued $\mathrm{P}$ can cross the whole fan-shock-fan composite wave ${\it FSF}_2$ and ends up at the point $\mathrm{F}=(\hat{u}_l(\hat{\xi}_2)\sin\theta, \hat{u}_l(\hat{\xi}_2)\cos\theta)$.
Moreover,
\begin{equation}\label{62601}
\beta\equiv\pi-\theta\quad \mbox{on}\quad \wideparen{\mathrm{PD}}\cup\wideparen{\mathrm{DF}}.
\end{equation}
Combining this with (\ref{7}) we have
\begin{equation}\label{62502}
\bar{\partial}_{+}\rho=-\frac{2c\sin 2A}{\tau^4p''(\tau)}<0\quad \mbox{along}\quad \wideparen{\mathrm{PD}}\cup\wideparen{\mathrm{DF}}.
\end{equation}
We need to mention that
the monotonicity conditions (\ref{62501}) and  (\ref{62502}) are crucial in constructing a global piecewise smooth solution to the problem (\ref{42501}, \ref{426011}).

In the following subsections, we are going to construct a solution in the fan-shock-fan composite wave interaction region $\Sigma$ piece-by-piece.

\subsection{Interaction of the fan waves}
We first consider the interaction of the fan waves ${\it F}_{1r}$ and ${\it F}_{2r}$.
So, we
consider (\ref{42501}) with the following boundary conditions:
\begin{equation}\label{6809}
(u, v, \tau)=\left\{
            \begin{array}{ll}
               \big(\sin\theta \hat{u}_r, -\cos\theta u_r, \hat{\tau}_r\big)(\xi\sin\theta-\eta\cos\theta) & \hbox{on $\wideparen{\mathrm{PB}}$;} \\[4pt]
             \big(\sin\theta \hat{u}_r, \cos\theta \hat{u}_r, \hat{\tau}_r\big)(\xi\sin\theta+\eta\cos\theta) & \hbox{on $\wideparen{\mathrm{PD}}$.}  \\[4pt]
            \end{array}
          \right.
\end{equation}

Problem  (\ref{42501}, \ref{6809}) is a standard Goursat problem, since $\wideparen{\mathrm{PB}}$ is a $C_{-}$ characteristic curve and $\wideparen{\mathrm{PD}}$ is a $C_{+}$ characteristic curve and the corresponding characteristic equations are hold along them.
\begin{lem}
When $\tau_1^e-\tau_0$ is sufficiently small, the Goursat problem  (\ref{42501}, \ref{6809}) admits a classical solution
 in a curved quadrilateral domain $\Sigma^1$ closed by characteristic curves $\wideparen{\mathrm{PB}}$, $\wideparen{\mathrm{PD}}$, $\wideparen{\mathrm{BG_1}}$, and $\wideparen{\mathrm{DG_1}}$, where $\wideparen{\mathrm{BG_1}}$ is a forward $C_{+}$ characteristic curve issued from $\mathrm{B}$ and $\wideparen{\mathrm{DG_1}}$ is a forward $C_{-}$ characteristic curve issued from $\mathrm{D}$; see Figure \ref{Fig13}. Moreover, the solution satisfies
\begin{equation}\label{6811}
\Big\|\big(\tau-\tau_1^e, \alpha-\pi-\theta, \beta-\pi+\theta, \bar{\partial}_{\pm}\rho+\mathcal{L}\big)\Big\|_{_{0; \Sigma^1}}\rightarrow 0\quad \mbox{as} \quad \tau_1^e-\tau_0\rightarrow 0,
\end{equation}
where $$\mathcal{L}:=\frac{2c_1^e\sin 2\theta}{(\tau_1^e)^4p''(\tau_1^e)}.$$
\end{lem}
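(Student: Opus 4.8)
The plan is to combine the classical theory of Goursat problems for $2\times2$ quasilinear hyperbolic systems with the characteristic decompositions of Section 2, exploiting the fact that the fan waves $\mathit{F}_{1r}$ and $\mathit{F}_{2r}$ degenerate to points as $\tau_1^e-\tau_0\to0$, so that the curved quadrilateral $\Sigma^1$ shrinks to the single point $\mathrm{P}$.

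\emph{Step 1: local solution near $\mathrm{P}$.} By Section 4.1 the curves $\wideparen{\mathrm{PB}}$ and $\wideparen{\mathrm{PD}}$ are a $C_-$ and a $C_+$ characteristic carrying the traces of the explicit planar fan-shock-fan solution \eqref{42601}; on them $\alpha\equiv\pi+\theta$, resp. $\beta\equiv\pi-\theta$, and by \eqref{62501}--\eqref{62502} one has $\bar\partial_-\rho<0$ on $\wideparen{\mathrm{PB}}$ and $\bar\partial_+\rho<0$ on $\wideparen{\mathrm{PD}}$. The fan region is supersonic — along the $C_+$ line $\xi\sin\theta-\eta\cos\theta=\hat\xi$ one computes $q^2=c^2+\hat\eta^2$ with $\hat\eta=\xi\cos\theta+\eta\sin\theta$ bounded away from $0$ on $\wideparen{\mathrm{PB}}\cup\wideparen{\mathrm{PD}}$ — so \eqref{42501} is strictly hyperbolic along the data, and the data are $C^1$ and characteristic-compatible at the corner $\mathrm{P}$ (both traces reduce to $(0,0,\tau_0)$ there). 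Hence the standard theory of Goursat problems produces a local $C^1$ solution in a neighbourhood of $\mathrm{P}$.

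\emph{Step 2: uniform a priori estimates.} The key is to bound the solution in $C^1$ uniformly in $\tau_1^e-\tau_0\ll1$. First choose an integer $n$ so large that the coefficient $\mathcal{H}$ of \eqref{82803} is positive on a fixed $\tau$-neighbourhood of $\tau_1^e$; this is possible because $p'<0<p''$ near $\tau_1^e$ (Property (P)) makes the term $-4np'(\tau)\cos^2A/(\tau p''(\tau))$ in \eqref{82803} dominate for $n$ large. With this $n$, the characteristic decompositions \eqref{new} form a closed, coupled pair of transport equations: $\rho^n\bar\partial_-\rho/\sin^2A$ is carried along $C_+$ starting from $\wideparen{\mathrm{PB}}$, where by \eqref{62501} it equals a fixed negative constant close to $-\mathcal{L}\rho^n/\sin^2\theta$, and $\rho^n\bar\partial_+\rho/\sin^2A$ along $C_-$ starting from $\wideparen{\mathrm{PD}}$, similarly by \eqref{62502}. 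A Gronwall/bootstrap argument, using $p'<0<p''$ and the positivity of $\mathcal{H}$, confines $\bar\partial_\pm\rho$ to a fixed bounded interval and shows that they vary by $o(1)$ along each (vanishingly short) characteristic, so $\bar\partial_\pm\rho\to-\mathcal{L}$ uniformly on $\Sigma^1$. Inserting these bounds into \eqref{7}, \eqref{8}, \eqref{10}, \eqref{82301} and $\frac{{\rm d}c}{{\rm d}\tau}=-\frac{c}{\tau\kappa}$ bounds $\bar\partial_\pm\alpha,\bar\partial_\pm\beta,\bar\partial_\pm c$; integrating these along characteristics from $\wideparen{\mathrm{PB}}$ (where $\alpha\equiv\pi+\theta$) and $\wideparen{\mathrm{PD}}$ (where $\beta\equiv\pi-\theta$), together with $\mathrm{diam}(\Sigma^1)\to0$, gives $\alpha\to\pi+\theta$, $\beta\to\pi-\theta$, $\tau\to\tau_1^e$ uniformly on $\Sigma^1$. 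In particular $A=(\alpha-\beta)/2\to\theta\in(0,\pi/2)$, so \eqref{42501} stays strictly hyperbolic with constants independent of $\tau_1^e-\tau_0$.

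\emph{Step 3: continuation to $\Sigma^1$; the main obstacle.} From $\hat\xi_1\to\hat\xi_0$ and the formulas of Section 4.1 for $\xi_-^r,\eta_-^r$ one gets $\mathrm{B},\mathrm{D}\to\mathrm{P}$; since the characteristic directions stay near $\pi\pm\theta$, the forward $C_+$ characteristic from $\mathrm{B}$ and the forward $C_-$ characteristic from $\mathrm{D}$ still meet at a point $\mathrm{G_1}$ near $\mathrm{P}$, so $\Sigma^1$ is a genuine curved quadrilateral. A classical $C^1$ solution extends as long as its $C^1$ norm is bounded and \eqref{42501} is strictly hyperbolic; the uniform estimates of Step 2 guarantee both up to $\partial\Sigma^1$, so the local solution of Step 1 extends to all of $\Sigma^1$, and the estimates of Step 2 are precisely the content of \eqref{6811}. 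I expect the main obstacle to be the uniform control of $\bar\partial_\pm\rho$ — ruling out blow-up and characteristic focusing inside the shrinking quadrilateral — which rests on the positivity of $\mathcal{H}$ for large $n$ and on the boundary monotonicity \eqref{62501}--\eqref{62502}; a secondary difficulty is closing the bootstrap, since the estimates on $\bar\partial_\pm\rho$ need the coefficients $\tau,A,\kappa,\mu$ in a good range, which itself follows only once the $C^0$ bounds and the smallness of $\Sigma^1$ are in hand.
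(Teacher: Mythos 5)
Your proposal is correct and follows essentially the same route as the paper: the paper's own proof simply observes that the existence follows routinely from the local Goursat theory of Li and Yu (viewing the shrinking quadrilateral $\Sigma^1$ as lying inside the domain of a local solution of a ``bigger'' Goursat problem), and that the estimate (\ref{6811}) follows from (\ref{62001})--(\ref{62502}) together with the characteristic decompositions (\ref{81104}). Your Step 2 is a fuller write-up of exactly that argument (the paper integrates (\ref{81104}) directly rather than the weighted version (\ref{new}), but this is the same mechanism), so there is no substantive difference.
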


\begin{proof}
When $\tau_1^e-\tau_0$ is sufficiently small, the existence of $C^1$ solution in $\Sigma^1$ follows routinely from the idea of Li and Yu \cite{{Li-Yu}} (Chap. 2). It can be seen as a local solution of a ``bigger" Gorusat problem.
The estimate (\ref{6811}) can be obtained directly by (\ref{62001})--(\ref{62502}) and the characteristic decompositions (\ref{81104}).
\end{proof}

From (\ref{6811}) we immediately have
\begin{equation}\label{72201}
\bar{\partial}_{\pm}\rho<0\quad \mbox{in}\quad \Sigma^1,
\end{equation}
as $\tau_1^e-\tau_0$ is sufficiently small.

We denote the solution of the Goursat problem  (\ref{42501}, \ref{6809}) by $(u,v,\tau)=(u_1, v_1, \tau_1)(\xi, \eta)$.


\subsection{Shock waves in the interaction region}
The double-sonic shocks ${\it S}_1$ and ${\it S}_2$ will go into the interaction region from $\mathrm{B}$ and $\mathrm{D}$, respectively. For convenience, we denote the shocks from $\mathrm{B}$ and $\mathrm{D}$ by ${\it S}_{_\mathrm{B}}$ and ${\it S}_{_\mathrm{D}}$, respectively.

In what follows, we are going to discuss the shock wave issued from $\mathrm{B}$. We assume that the shock curve ${\it S}_{_\mathrm{B}}$ is smooth.
We use subscripts `f' and `b' to denote the forward and the backside states (including $u$, $v$, $\tau$, $c$, $\alpha$, $\beta$, $\sigma$, and $A$) of the shock ${\it S}_{_\mathrm{B}}$, respectively.
We denote by $\chi$ the inclination angle of the shock curve ${\it S}_{_\mathrm{B}}$. 
Let
\begin{equation}\label{62604}
L=U\cos \chi+V\sin \chi, \quad N=U \sin \chi- V\cos \chi.
\end{equation}
Then by the Rankine-Hugoniot relations we have that on ${\it S}_{_\mathrm{B}}$,
\begin{equation}\label{RH1}
\left\{
  \begin{array}{ll}
    \rho_f N_f=\rho_b N_b=m,  \\[4pt]
L_f=L_b,\\[4pt]
   N_f^2+2h(\tau_f)=N_b^2+2h(\tau_b).
  \end{array}
\right.
\end{equation}
By the first and the third relations of (\ref{RH1}) one has
\begin{equation}\label{250101}
m^2=-\frac{2h(\tau_f)-2h(\tau_b)}{\tau_f^2-\tau_b^2}.
\end{equation}

We compute
$$
N=U\sin\chi-V\cos\chi=q\cos\sigma\sin\chi-q\sin\sigma\cos\chi=q\sin(\chi-\sigma).
$$
Thus, we have
\begin{equation}\label{61112}
\chi=\sigma+\arcsin\Big(\frac{N}{q}\Big)\quad \mbox{on}\quad{\it S}_{_\mathrm{B}}.
\end{equation}

Since $\tau_f(\mathrm{B})=\tau_1^e$ and $\tau_b(\mathrm{B})=\tau_2^e$,
we have
\begin{equation}\label{250401}
N_b=c_b=q_b\sin A_b\quad \mbox{and}\quad \alpha_b=\chi=\pi+\theta\quad  \mbox{at}\quad \mathrm{B};
\end{equation}
\begin{equation}\label{102002}
N_f=c_f=q_f\sin A_f\quad \mbox{and}\quad \alpha_f=\chi=\pi+\theta\quad \mbox{at}\quad \mathrm{B}.
\end{equation}

\subsubsection{\bf Derivatives along the shock ${\it S}_{_\mathrm{B}}$}
We define the directional derivative:
$$
\bar{\partial}_s:=\cos\chi\partial_{\xi}+\sin\chi\partial_{\eta}.
$$

From (\ref{250101}) we have
\begin{equation}\label{250104}
\begin{aligned}
2m \bar{\partial}_sm ~=~&\frac{2\tau_f}{\tau_f^2-\tau_b^2}\left(-p'(\tau_f)+\frac{2h(\tau_f)-2h(\tau_b)}{\tau_f^2-\tau_b^2}\right)
\bar{\partial}_s\tau_f\\&
-\frac{2\tau_b}{\tau_f^2-\tau_b^2}\left(-p'(\tau_b)+\frac{2h(\tau_f)-2h(\tau_b)}{\tau_f^2-\tau_b^2}\right)
\bar{\partial}_s\tau_b\quad \mbox{on}\quad {\it S}_{_\mathrm{B}}.
\end{aligned}
\end{equation}
Combining this with $\tau_f(\mathrm{B})=\tau_1^e$ and $\tau_b(\mathrm{B})=\tau_2^e$,  we have
\begin{equation}\label{250402}
\bar{\partial}_sm=0\quad \mbox{at}\quad \mathrm{B}.
\end{equation}

From (\ref{U}) and (\ref{62604}) we have
$$
m=\rho  N =\rho  q  (\cos\sigma \sin\chi-\sin\sigma \cos\chi).
$$
Thus,
\begin{equation}\label{4250103}
\bar{\partial}_s m=\frac{N }{q }\partial_{s}(\rho q )-\rho L \bar{\partial}_s\sigma +\rho L \bar{\partial}_s \chi\quad \mbox{on}\quad {\it S}_{_\mathrm{B}}.
\end{equation}

From the pseudo-Bernoulli law (\ref{5802}) we have
$$
q \bar{\partial}_s q +\tau  p'(\tau )\bar{\partial}_s \tau +L  =0\quad \mbox{on}\quad {\it S}_{_\mathrm{B}}.
$$
This yields
$$
\bar{\partial}_s (\rho q )=\frac{1}{q }(q ^2+\tau ^2 p'(\tau ))\bar{\partial}_s\rho -\frac{\rho L }{q }\quad \mbox{on}\quad {\it S}_{_\mathrm{B}}.
$$
Hence, we get
\begin{equation}\label{250102}
\bar{\partial}_s \chi=\frac{\bar{\partial}_s m}{\rho L }-\frac{N }{\rho L q ^2}(q ^2+\tau ^2 p'(\tau ))\bar{\partial}_s\rho +\frac{N }{q ^2}+\bar{\partial}_s\sigma \quad \mbox{on}\quad {\it S}_{_\mathrm{B}}.
\end{equation}

From the pseudo-Bernoulli law (\ref{5802}), we have
\begin{equation}\label{42803a}
\bar{\partial}_s q =\frac{\tau ^3 p'(\tau )\bar{\partial}_s \rho }{q }-\frac{L }{q }=-\tau q \sin^2 A \bar{\partial}_s \rho -\frac{L }{q }\quad \mbox{on}\quad {\it S}_{_\mathrm{B}}.
\end{equation}
From (\ref{250102}) we have
\begin{equation}\label{42804a}
\bar{\partial}_s \sigma =\bar{\partial}_s \chi-\frac{\bar{\partial}_s m}{\rho L }+\frac{N }{\rho L q ^2}(q ^2+\tau ^2 p'(\tau ))\bar{\partial}_s\rho -\frac{N }{q ^2}\quad \mbox{on}\quad {\it S}_{_\mathrm{B}}.
\end{equation}

We also have
\begin{equation}\label{50701}
\bar{\partial}_su=\bar{\partial}_s(U+\xi)=\bar{\partial}_s(q\cos\sigma)+\cos\chi=\cos\sigma\bar{\partial}_sq-q\sin\sigma\bar{\partial}_s\sigma+\cos\chi,
\end{equation}
\begin{equation}\label{50702}
\bar{\partial}_sv=\bar{\partial}_s(V+\eta)=\bar{\partial}_s(q\sin\sigma)+\sin\chi=\sin\sigma\bar{\partial}_sq
+q\cos\sigma\bar{\partial}_s\sigma+\sin\chi.
\end{equation}

\subsubsection{\bf Existence of post-sonic shocks}

We now assume that shocks issued from $\mathrm{B}$ and $\mathrm{D}$ are post-sonic.
We shall show that the post-sonic shocks issued from $\mathrm{B}$ and $\mathrm{D}$ will stay in $\Sigma^1$ until they intersect with each other at some point $\mathrm{G}$ on the $\xi$-axis; see Figure \ref{Fig13}.
 We denote by $\wideparen{\mathrm{BG}}$ and $\wideparen{\mathrm{DG}}$ the post-sonic shocks issued from $\mathrm{B}$ and $\mathrm{D}$, respectively.


\begin{figure}[htbp]
\begin{center}
\includegraphics[scale=0.46]{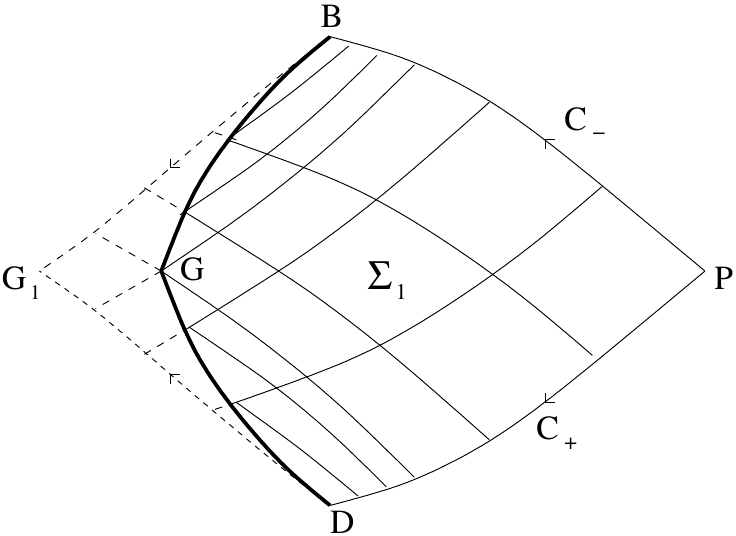}\qquad \qquad \qquad \includegraphics[scale=0.43]{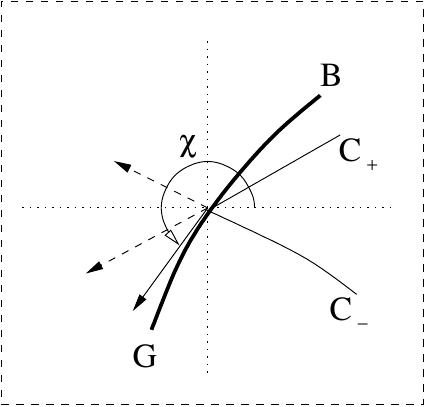}
\caption{\footnotesize Domain $\Sigma_1$ and post-sonic shock curves $\wideparen{BG}$ and  $\wideparen{DG}$.}
\label{Fig13}
\end{center}
\end{figure}

We first assume that $\wideparen{\mathrm{BG}}$ states in the interior of $\Sigma^1$.
Then the front side states $(u_f, v_f, \tau_f)=(u_1, v_1, \tau_1)(\xi, \eta)$ on $\wideparen{\mathrm{BG}}$.
We also assume
\begin{equation}\label{72202}
\tau_1^e<\tau_1<\tau_2^i\quad \mbox{on}\quad \wideparen{\mathrm{BG}}\setminus \mathrm{B}.
\end{equation}
Then by Proposition \ref{51003} we have
\begin{equation}\label{62603}
\tau_b=s_{po}(\tau_1)\quad \mbox{and}\quad
m^2=-p'(s_{po}(\tau_1))>-p'(\tau_1)\quad \mbox{on}\quad \wideparen{\mathrm{BG}}\setminus \mathrm{B}.
\end{equation}
This implies
\begin{equation}\label{6810}
N_b=c_b:=\sqrt{-\tau_b^2p'(\tau_b)}\quad \mbox{and}\quad N_1>c_1:=\sqrt{-\tau_1^2p'(\tau_1)}\quad \mbox{on}\quad \wideparen{\mathrm{BG}}\setminus \mathrm{B}.
\end{equation}

We still denote by $\chi$ the inclination angle of the shock curve $\wideparen{\mathrm{BG}}$.
By (\ref{6810}) and (\ref{61112}) we have
\begin{equation}\label{61113}
\chi=\sigma_b+A_b=\alpha_b\quad\mbox{and}\quad \chi>\sigma_1+A_1=\alpha_1  \quad \mbox{on}\quad \wideparen{\mathrm{BG}}.
\end{equation}


We differentiate $m^2=-p'(s_{po}(\tau_1))$ along $\wideparen{\mathrm{BG}}$ to obtain
\begin{equation}\label{61105}
\bar{\partial}_s m=\frac{-p''(s_{po}(\tau_1))s_{po}'(\tau_1)\bar{\partial}_s \tau_1}{2m }\quad \mbox{on}\quad \wideparen{\mathrm{BG}}.
\end{equation}


As in (\ref{250102}), we have
\begin{equation}\label{61115}
\bar{\partial}_s \chi=\frac{\bar{\partial}_s m}{\rho_1L_1}-\frac{N_1}{\rho_1L_1q_1^2}(q_1^2+\tau_1^2 p'(\tau_1))\bar{\partial}_s\rho_1+\frac{N_1}{q_1^2}+\bar{\partial}_s\sigma_1\quad \mbox{on}\quad \wideparen{\mathrm{BG}}.
\end{equation}

In what follows we are going to discuss the existence of the post-sonic shock curve $\wideparen{\mathrm{BG}}$.
We denote by $\xi=\xi_{+}(\eta)$, $0\leq \eta\leq \eta_{_\mathrm{B}}$ the $C_{+}$ characteristic curve $\wideparen{\mathrm{BG}_{1}}$ and by
$\xi=\xi_{s}(\eta)$, $0\leq \eta\leq \eta_{_\mathrm{B}}$ the post-sonic shock curve $\wideparen{\mathrm{BG}}$.
We have
\begin{equation}\label{61102}
\begin{aligned}
&\xi_{s}'(\eta)=\cot(\chi(\xi_s(\eta), \eta)), \quad \xi_{+}'(\eta)=\cot(\alpha_1(\xi_+(\eta), \eta)), \quad\\&
\xi_{s}(\eta_{_\mathrm{B}})=\xi_{+}(\eta_{_\mathrm{B}}), \quad  \xi_{s}'(\eta_{_\mathrm{B}})=\xi_{+}'(\eta_{_B})=\cot(\pi+\theta).
\end{aligned}
\end{equation}

Actually, in order to obtain $\xi_s(\eta)$, we only need to consider (\ref{61115}) with data
\begin{equation}\label{61114}
\chi(\mathrm{B})=\alpha_1(\mathrm{B})=\pi+\theta.
\end{equation}
Since the the initial value problem (\ref{61115}, \ref{61114}) use the data in $\Sigma^1$, in order to prove the existence of $\wideparen{\mathrm{BG}}$ we need to prove
\begin{equation}\label{61101}
\xi_{s}(\eta)>\xi_{+}(\eta), \quad 0\leq \eta<\eta_{_\mathrm{B}}.
\end{equation}

From $\cot(\alpha_1(\xi_{+}(\eta),\eta))=\xi_{+}'(\eta)$ we have
\begin{equation}\label{61103}
\xi_{+}''(\eta)=-\frac{\bar{\partial}_{+}\alpha_1(\xi_{+}(\eta),\eta)}{\sin^3\alpha_1(\xi_{+}(\eta),\eta)},
\end{equation}
where $\bar{\partial}_{+}\alpha_1(\xi_{+}(\eta),\eta)=\cos(\alpha_1(\xi_{+}(\eta),\eta))\partial_{\xi}\alpha_1(\xi_{+}(\eta),\eta)+
\sin(\alpha_1(\xi_{+}(\eta),\eta))\partial_{\eta}\alpha_1(\xi_{+}(\eta),\eta)$.

From $\cot(\chi(\xi_{s}(\eta),\eta))=\xi_{s}'(\eta)$ we have
\begin{equation}\label{61104}
\xi_{s}''(\eta)=-\frac{\bar{\partial}_{s}\chi(\xi_{s}(\eta),\eta)}{\sin^3\chi(\xi_{s}(\eta),\eta)}.
\end{equation}

From (\ref{61115}), (\ref{7}), (\ref{10}), (\ref{72201}),(\ref{61114}), (\ref{102002}), and (\ref{250402}) we have that at the point $\mathrm{B}$,
\begin{equation}\label{61108}
\begin{aligned}
\bar{\partial}_s \chi&=\frac{\bar{\partial}_s m}{\rho_1L_1}-\frac{N_1}{\rho_1L_1q_1^2}(q_1^2+\tau_1^2 p'(\tau_1))\bar{\partial}_s\rho_1+\frac{N_1}{q_1^2}+\bar{\partial}_s\sigma_1\\&=
-\frac{N_1}{\rho_1L_1q_1^2}(q_1^2+\tau_1^2 p'(\tau_1))\bar{\partial}_{+}\rho_1+\frac{N_1}{q_1^2}+\bar{\partial}_{+}\Big(\frac{\alpha_1+\beta_1}{2}\Big)
\\&=-\frac{\sin 2A_1}{2\rho_1}\bar{\partial}_{+}\rho_1+\frac{N_1}{q_1^2}+\frac{\bar{\partial}_{+}\alpha_1}{2}-\frac{p''(\tau_1)\tan A_1}{4c_1^2\rho_1^4}\bar{\partial}_{+}\rho_1-\frac{\sin^2 A_1}{c_1}
\\&=-\frac{\sin 2A_1}{2\rho_1}\bar{\partial}_{+}\rho_1+\frac{\bar{\partial}_{+}\alpha_1}{2}-\frac{p''(\tau_1)\tan A_1}{4c_1^2\rho_1^4}\bar{\partial}_{+}\rho_1\\&>-\frac{\sin 2A_1}{2\rho_1}\bar{\partial}_{+}\rho_1+\frac{\bar{\partial}_{+}\alpha_1}{2}
\end{aligned}
\end{equation}
and
$$
\begin{aligned}
\bar{\partial}_+ \alpha_1&=-\frac{p''(\tau_1)}{4c_1^2\rho_1^4}\left(-\frac{4p'(\tau_1)+\tau_1 p''(\tau_1)}{\tau_1 p''(\tau_1)}-\tan^2A_1\right)\sin 2A_1\bar{\partial}_{+}\rho_1\\&<
-\frac{p''(\tau_1)}{4c_1^2\rho_1^4}\left(-\frac{4p'(\tau_1)}{\tau_1 p''(\tau_1)}\right)\sin 2A_1\bar{\partial}_{+}\rho_1\\&=-\frac{\sin 2A_1}{\rho_1}\bar{\partial}_{+}\rho_1.
\end{aligned}
$$
This implies
\begin{equation}\label{61106}
\bar{\partial}_s \chi>\bar{\partial}_+ \alpha_1\quad \mbox{at}\quad \mathrm{B}.
\end{equation}

Combining (\ref{61102}) and (\ref{61106}) we know that there exists a sufficiently small $\varepsilon>0$ such that $\xi_{s}(\eta)>\xi_{+}(\eta)$ for $\eta_{_B}-\varepsilon<\eta<\eta_{_B}$.
Furthermore, by the  inequality of (\ref{61113}) we can obtain
$$
\chi(\xi_s(\eta), \eta)>\alpha_1(\xi_s(\eta), \eta)
$$
when $(\xi_s(\eta), \eta)$ lies in the interior region of $\Sigma^1$.
Thus, by an argument of continuity we get (\ref{61101}).

A direct computation yields
$$
(\cos\chi, \sin\chi)=\frac{\sin (\beta_1-\chi)}{\sin (\beta_1-\alpha_1)}(\cos\alpha_1, \sin\alpha_1)-
\frac{\sin (\alpha_1-\chi)}{\sin (\beta_1-\alpha_1)}(\cos\beta_1, \sin\beta_1).
$$
Thus, we have
\begin{equation}\label{82003}
\bar{\partial}_{s}\tau_1=\frac{\sin (\beta_1-\chi)}{\sin (\beta_1-\alpha_1)}\bar{\partial}_{+}\tau_1-\frac{\sin (\alpha_1-\chi)}{\sin (\beta_1-\alpha_1)}\bar{\partial}_{-}\tau_1.
\end{equation}

From (\ref{61114}) and (\ref{61115}) we have
\begin{equation}\label{82004}
\|\chi-(\pi+\theta)\|_{0; \wideparen{\mathrm{BG}}}\rightarrow 0 \quad \mbox{as}\quad \tau_1^e-\tau_0\rightarrow 0.
\end{equation}
Therefore, by (\ref{6811}), (\ref{82003}) and (\ref{82004}) we know that
\begin{equation}\label{82008}
\bar{\partial}_{s}\tau_1>0\quad \mbox{on}\quad \wideparen{\mathrm{BG}},
\end{equation}
as $\tau_1^e-\tau_0$ is sufficiently small.
Combining this with $\tau_1(\mathrm{B})=\tau_1^e$ and (\ref{6811}), we immediately obtain that
(\ref{72202}) holds for $\tau_1^e-\tau_0$ is sufficiently small.
This completes the proof for the existence of the post-sonic shock $\wideparen{\mathrm{BG}}$.
The back side states $(u_b, v_b, \tau_b)(\xi, \eta)$ on $\wideparen{\mathrm{BG}}$ can then be determined by (\ref{RH1}), (\ref{62603}), and (\ref{6810}).

By the symmetry, the post-sonic shock $\wideparen{\mathrm{DG}}$ can be represented by $\xi=\xi_s(-\eta)$, $\eta_{_\mathrm{D}}<\eta<0$.
Moreover, the backside states of $\wideparen{\mathrm{DG}}$ are
$$
 (u, v, \tau)=(u_b, -v_b, \tau_b)(\xi, -\eta)\quad \mbox{and}\quad (\alpha, \beta)=(2\pi-\beta_b, 2\pi-\alpha_b)(\xi, -\eta)  \quad \mbox{for}\quad  (\xi, \eta)\in \wideparen{\mathrm{DG}}.
$$

Using (\ref{7}), (\ref{10}), (\ref{72201}), (\ref{102002}), (\ref{250102}), and (\ref{250402}) we have
\begin{equation}\label{250407}
\begin{aligned}
\bar{\partial}_s \chi&=\frac{\bar{\partial}_s m}{\rho_1L_1}-\frac{N_1}{\rho_1L_1q_1^2}(q_1^2+\tau_1^2 p'(\tau_1))\bar{\partial}_s\rho_1+\frac{N_1}{q_1^2}+\bar{\partial}_s\sigma_1\\&=
-\frac{N_1}{\rho_1L_1q_1^2}(q_1^2-c_1^2)\bar{\partial}_{+}\rho_1+\frac{N_1}{q_1^2}
+\bar{\partial}_{+}\Big(\frac{\alpha_1+\beta_1}{2}\Big)
\\&=-\frac{\sin 2A_1}{2\rho_1}\bar{\partial}_{+}\rho_1-\frac{p''(\tau_1)}{4c_1^2\rho_1^4}
\Big(\underbrace{\big(\varpi(\tau_1)-\tan^2A_1\big)\sin A_1\cos A_1+\tan A_1}_{>0}\Big)\bar{\partial}_{+}\rho_1\\&>-\frac{\sin 2A_1}{2\rho_1}\bar{\partial}_{+}\rho_1>0\quad\mbox{at}\quad \mathrm{B}.
\end{aligned}
\end{equation}
Using (\ref{6811}), (\ref{250407}), and (\ref{250402}) we know that when $\tau_1^e-\tau_0$ is sufficiently small,
\begin{equation}\label{61109}
\bar{\partial}_s \chi>\frac{\tau_1^e\sin2\theta}{4}\mathcal{L}_1\quad \mbox{on}\quad \wideparen{\mathrm{BG}}.
\end{equation}
This immediately implies that when $\tau_1^e-\tau_0$ is sufficiently small $\wideparen{\mathrm{BG}}$ is convex.

Following the idea of Li and Zheng \cite{Li3},
we shall use the method of hodograph transformation to construct a solution near the backside of the post-sonic shock curve $\wideparen{\mathrm{BG}}$. In order to establish the global one-to-one inversion of the hodograph transformation, we need to establish some important monotonicity conditions about the inversion. The following estimates  are crucial to deduce the monotonicity conditions.

\begin{lem}\label{61302}
When $\tau_1^e-\tau_0$ is sufficiently small, there hold the following estimates:
\begin{equation}\label{61111}
\cos\alpha_b \bar{\partial}_s u_b+\sin\alpha_b \bar{\partial}_s v_b>0\quad \mbox{on}\quad \wideparen{\mathrm{BG}};
\end{equation}
\begin{equation}\label{61305}
\cos\beta_b \bar{\partial}_s u_b+\sin\beta_b \bar{\partial}_s v_b<0\quad \mbox{on}\quad \wideparen{\mathrm{BG}}.
\end{equation}
\end{lem}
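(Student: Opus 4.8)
The plan is to reduce both estimates in Lemma~\ref{61302} to the single fact, already available from (\ref{61109}), that $\bar{\partial}_s\chi$ is bounded below by a fixed positive constant on $\wideparen{\mathrm{BG}}$, after showing that all the remaining contributions become negligible as $\tau_1^e-\tau_0\to0$. The first step is to rewrite the two quantities. Applying (\ref{50701})--(\ref{50702}) to the backside state, and using that on $\wideparen{\mathrm{BG}}$ one has $\chi=\alpha_b$ and hence $\sigma_b=\chi-A_b$ (since $N_b=c_b$, see (\ref{61113})), together with $q_b\sin A_b=c_b$ from (\ref{210cqo}), a short trigonometric computation gives
$$\cos\alpha_b\,\bar{\partial}_s u_b+\sin\alpha_b\,\bar{\partial}_s v_b=\cos A_b\,\bar{\partial}_s q_b+c_b\,\bar{\partial}_s\chi-c_b\,\bar{\partial}_s A_b+1=c_b\,\bar{\partial}_s\chi+E,$$
$$\cos\beta_b\,\bar{\partial}_s u_b+\sin\beta_b\,\bar{\partial}_s v_b=\cos A_b\,\bar{\partial}_s q_b-c_b\,\bar{\partial}_s\chi+c_b\,\bar{\partial}_s A_b+\cos 2A_b=-c_b\,\bar{\partial}_s\chi+E',$$
where $E:=\cos A_b\,\bar{\partial}_s q_b-c_b\,\bar{\partial}_s A_b+1$ and $E':=\cos A_b\,\bar{\partial}_s q_b+c_b\,\bar{\partial}_s A_b+\cos 2A_b$.

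The next step is to evaluate $E$ and $E'$. Since $L_b=q_b\cos(\chi-\sigma_b)=q_b\cos A_b$ holds exactly on $\wideparen{\mathrm{BG}}$, the pseudo-Bernoulli identity (\ref{42803a}) applied to the backside state gives $\bar{\partial}_s q_b=-\tau_b q_b\sin^2 A_b\,\bar{\partial}_s\rho_b-\cos A_b$; differentiating $\sin A_b=c(\tau_b)/q_b$ along $s$ and eliminating $\bar{\partial}_s q_b$ then yields $c_b\,\bar{\partial}_s A_b=\sin^2 A_b+(\text{bounded})\cdot\bar{\partial}_s\rho_b$. Substituting these into $E$ and $E'$ and using $\cos^2 A_b+\sin^2 A_b=1$ and $\cos 2A_b=\cos^2 A_b-\sin^2 A_b$, the purely algebraic parts cancel identically, and one is left with $E$ and $E'$ equal to explicit multiples of $\bar{\partial}_s\rho_b$, for instance
$$E=-\frac{\tau_b^3 p''(\tau_b)\tan A_b}{2\sqrt{-p'(\tau_b)}}\,\bar{\partial}_s\rho_b,$$
whose coefficients stay bounded on $\wideparen{\mathrm{BG}}$ (recall $\tau_b\to\tau_2^e$ with $p''(\tau_2^e)>0$, and $A_b$ stays in $(0,\tfrac{\pi}{2})$). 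Thus both $E$ and $E'$ are controlled by $\bar{\partial}_s\rho_b$.

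It then remains to show that $\bar{\partial}_s\rho_b\to0$ uniformly on $\wideparen{\mathrm{BG}}$ as $\tau_1^e-\tau_0\to0$, which is the crux of the argument. Since $\tau_b=s_{po}(\tau_1)$ on $\wideparen{\mathrm{BG}}\setminus\mathrm{B}$ by (\ref{62603}), we have $\bar{\partial}_s\tau_b=s_{po}'(\tau_1)\,\bar{\partial}_s\tau_1$, where $\bar{\partial}_s\tau_1$ is uniformly bounded on $\wideparen{\mathrm{BG}}$ by (\ref{82003}), (\ref{6811}) and the fact that $\sin(\beta_1-\alpha_1)$ stays bounded away from $0$ there, while $s_{po}'(\tau_1)\to0$ uniformly because $\tau_1\to\tau_1^e$ uniformly on $\Sigma^1$ by (\ref{6811}) and $s_{po}'(\tau_1^e)=0$ (the vanishing of the derivative of the post-sonic branch at the double-sonic state, established in the proof of Proposition~\ref{5602}). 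Hence $E\to0$ and $E'\to0$ uniformly on $\wideparen{\mathrm{BG}}$. Combining this with $c_b\to c_2^e>0$ uniformly and with (\ref{61109}), which supplies a fixed positive lower bound for $\bar{\partial}_s\chi$, we conclude that for $\tau_1^e-\tau_0$ sufficiently small $c_b\,\bar{\partial}_s\chi+E>0$ and $-c_b\,\bar{\partial}_s\chi+E'<0$, i.e.\ (\ref{61111}) and (\ref{61305}). The main obstacle is precisely this uniform smallness of $\bar{\partial}_s\rho_b$: one must use both the tangency $s_{po}'(\tau_1^e)=0$ and the non-degeneracy of the change-of-basis coefficients in (\ref{82003}) as one approaches $\mathrm{B}$, which is why it matters that $\alpha_1$ and $\beta_1$ stay close to $\pi+\theta$ and $\pi-\theta$ throughout $\Sigma^1$.
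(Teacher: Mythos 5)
Your proposal is correct and follows essentially the same route as the paper: project $(\bar{\partial}_s u_b,\bar{\partial}_s v_b)$ onto the characteristic directions, use $\chi=\alpha_b$, the pseudo-Bernoulli law and the Rankine--Hugoniot relations to reduce everything to $c_b\bar{\partial}_s\chi$ plus a remainder proportional to $\bar{\partial}_s\rho_b$ (your explicit formula for $E$ agrees exactly with the paper's term $-c_b\bar{\partial}_s m/(\rho_b L_b)$), and then invoke the positive lower bound (\ref{61109}) on $\bar{\partial}_s\chi$. The only (harmless) difference is in closing the second estimate: the paper keeps the extra term $-2\tau_b q_b\cos A_b\sin^2A_b\,\bar{\partial}_s\rho_b$ and uses the sign $\bar{\partial}_s\rho_b>0$ from (\ref{61308}), whereas you show the whole remainder vanishes uniformly via $s_{po}'(\tau_1^e)=0$ and the boundedness of $\bar{\partial}_s\tau_1$ — both facts the paper itself establishes.
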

\begin{proof}
{\bf 1.} Let $U_b=u_b-\xi$ and $V_b=v_b-\eta$. Then by (\ref{61113}) we have
\begin{equation}\label{42801}
\begin{aligned}
&\cos\alpha_b \bar{\partial}_s u_b+\sin\alpha_b \bar{\partial}_s v_b\\~=~&
\cos\alpha_b \bar{\partial}_s (U_b+\xi)+\sin\alpha_b \bar{\partial}_s (V_b+\eta)\\~=~&
\cos\alpha_b \bar{\partial}_s (q_b\cos\sigma_b)+\sin\alpha_b \bar{\partial}_s (q_b\sin\sigma_b)+\cos\alpha_b\cos\chi+\sin\alpha_b\sin\chi\\~=~&
\cos A_b \bar{\partial}_s q_b+q_b\sin A_b \bar{\partial}_s \sigma_b+1\quad \mbox{on}\quad \wideparen{BG}.
\end{aligned}
\end{equation}

From the pseudo-Bernoulli law (\ref{5802}) and recalling the first equality in (\ref{6810}), we have
\begin{equation}\label{42803}
\bar{\partial}_s q_b=\frac{\tau_b^3 p'(\tau_b)\bar{\partial}_s \rho_b}{q_b}-\frac{L_b}{q_b}=-\tau_bq_b\sin^2 A_b\bar{\partial}_s \rho_b-\cos A_b\quad \mbox{on}\quad \wideparen{\mathrm{BG}}.
\end{equation}

From (\ref{42804a}) one has
\begin{equation}\label{42804}
\bar{\partial}_s \sigma_b=\bar{\partial}_s \chi-\frac{\bar{\partial}_s m}{\rho_bL_b}+\frac{N_b}{\rho_bL_bq_b^2}(q_b^2+\tau_b^2 p'(\tau_b))\bar{\partial}_s\rho_b-\frac{N_b}{q_b^2}\quad \mbox{on}\quad \wideparen{\mathrm{BG}}.
\end{equation}

Inserting (\ref{42803}) and  (\ref{42804}) into  (\ref{42801}) and recalling the first equality in (\ref{6810}), we get
$$
\begin{aligned}
&\cos\alpha_b \bar{\partial}_s u_b+\sin\alpha_b \bar{\partial}_s v_b\\=~&
\cos A_b\left(-\tau_bq_b\sin^2 A_b\bar{\partial}_s \rho_b-\cos A_b\right)
+q_b\sin A_b\left(\bar{\partial}_s \chi-\frac{\bar{\partial}_s m}{\rho_bL_b}+\frac{\sin A_b\cos A_b}{\rho_b}\bar{\partial}_s\rho_b-\frac{N_b}{q_b^2}\right)+1\\=~&
q_b\sin A_b \left(\bar{\partial}_s \chi-\frac{\bar{\partial}_s m }{\rho_bL_b}\right) \quad \mbox{on}\quad \wideparen{\mathrm{BG}}.
\end{aligned}
$$
Therefore, by (\ref{250402}) and (\ref{61109}) we know that when $\tau_1^e-\tau_0$ is sufficiently small, the estimate (\ref{61111}) holds.

{\bf 2.} From (\ref{61109}) we have
\begin{equation}\label{82006}
\begin{aligned}
&\cos\beta_b \bar{\partial}_s u_b+\sin\beta_b \bar{\partial}_s v_b\\~=~&
\cos\beta_b \bar{\partial}_s (U_b+\xi)+\sin\beta_b \bar{\partial}_s (V_b+\eta)\\~=~&
\cos\beta_b \bar{\partial}_s (q_b\cos\sigma_b)+\sin\beta_b \bar{\partial}_s (q_b\sin\sigma_b)+\cos\beta_b\cos\chi+\sin\beta_b\sin\chi\\~=~&
\cos A_b \bar{\partial}_s q_b-q_b\sin A_b \bar{\partial}_s \sigma_b+\cos\beta_b\cos\chi+\sin\beta_b\sin\chi\\~=~&
\cos A_b \bar{\partial}_s q_b-q_b\sin A_b \bar{\partial}_s \sigma_b+\cos(2A_b)\quad \mbox{on}\quad \wideparen{\mathrm{BG}}.
\end{aligned}
\end{equation}
Inserting (\ref{42803}) and (\ref{42804}) into (\ref{82006}), we get
$$
\begin{aligned}
&\cos\beta_b \bar{\partial}_s u_b+\sin\beta_b \bar{\partial}_s v_b\\=~&
\cos A_b\left(-\tau_bq_b\sin^2 A_b\bar{\partial}_s \rho_b-\cos A_b\right)
-q_b\sin A_b\left(\bar{\partial}_s \chi-\frac{\bar{\partial}_s m}{\rho_bL_b}+\frac{\sin A_b\cos A_b}{\rho_b}\bar{\partial}_s\rho_b-\frac{N_b}{q_b^2}\right)\\&+\cos(2A_b)\\=~&
-q_b\sin A_b \left(\bar{\partial}_s \chi-\frac{\bar{\partial}_s m }{\rho_bL_b}\right)-
2\tau_bq_b\cos A_b\sin^2 A_b\bar{\partial}_s \rho_b\quad \mbox{on}\quad \wideparen{\mathrm{BG}}.
\end{aligned}
$$

Recalling $\tau_b=s_{po}(\tau_1)$ and (\ref{82008}), we know that
when $\tau_1^e-\tau_0$ is sufficiently small,
\begin{equation}\label{61308}
\bar{\partial}_s \rho_b=-\frac{1}{\tau_b^2}s_{po}'(\tau_1)\bar{\partial}_s \tau_1>0 \quad \mbox{on}\quad \wideparen{\mathrm{BG}}.
\end{equation}
Therefore, when $\tau_1^e-\tau_0$ is sufficiently small, the estimate (\ref{61305}) holds.
This completes the proof.
\end{proof}

 The estimates (\ref{61111}) and (\ref{61305}) can be also geometrically described in Figure \ref{Fig14}.

\begin{figure}[htbp]
\begin{center}
\includegraphics[scale=0.5]{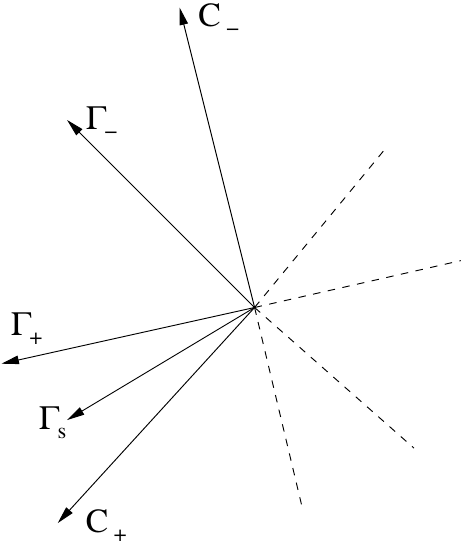}
\caption{\footnotesize Relations between the characteristic directions $(\cos\alpha_b, \sin\alpha_b)$, $(\cos\beta_b, \sin\beta_b)$, and $(\bar{\partial}_s u_b, \bar{\partial}_s v_b)$ on $\wideparen{\mathrm{BG}}$.}
\label{Fig14}
\end{center}
\end{figure}

\begin{lem}\label{lem43}
There holds
\begin{equation}\label{72203}
\lim\limits_{\tau_0\rightarrow \tau_1^e}\Big\|\big(\alpha_b-(\pi+\theta),~ \beta_b-(\pi-\theta-2\sigma_*),~u_b-u_*\sin\theta, ~v_b+u_*\cos\theta, ~\tau_b-\tau_2^e\big)\Big\|_{0; \wideparen{\mathrm{BG}}}~=~ 0,
\end{equation}
where the constants $u_*$ and $\sigma_*$ are defined in (\ref{62903}).
\end{lem}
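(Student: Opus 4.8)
The plan is to send $\tau_0\to\tau_1^e$ and read off each of the five limits in (\ref{72203}) from the uniform convergence of the front-side state (already available from (\ref{6811}) and (\ref{82004})), together with the Rankine--Hugoniot relations (\ref{RH1}) and the double-sonic identity (\ref{42301}). First I record the front side on $\wideparen{\mathrm{BG}}$: by (\ref{6811}), $\tau_1\to\tau_1^e$, $\alpha_1\to\pi+\theta$, $\beta_1\to\pi-\theta$ uniformly, hence $\sigma_1\to\pi$, $A_1\to\theta$, $c_1\to c_1^e$, $q_1=c_1/\sin A_1\to c_1^e\csc\theta$, and $(U_1,V_1)=(q_1\cos\sigma_1,q_1\sin\sigma_1)\to(-c_1^e\csc\theta,\,0)$ uniformly on $\wideparen{\mathrm{BG}}$. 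By (\ref{82004}), $\chi\to\pi+\theta$ uniformly on $\wideparen{\mathrm{BG}}$; since the shock is post-sonic, (\ref{61113}) gives $\alpha_b=\chi$, which yields the first component of (\ref{72203}). For $\tau_b$, recall $\tau_b=s_{po}(\tau_1)$ on $\wideparen{\mathrm{BG}}\setminus\mathrm{B}$ with $\tau_1\in(\tau_1^e,\tau_2^i)$ by (\ref{72202}); since $s_{po}$ is continuous at $\tau_1^e$ with $s_{po}(\tau_1^e)=\tau_2^e$ (Proposition~\ref{51003}), we get $\tau_b\to\tau_2^e$ uniformly, the last component of (\ref{72203}).

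Second, I show that the $(\xi,\eta)$-coordinates on $\wideparen{\mathrm{BG}}$ converge to the fixed point $(c_1^e\csc\theta,\,0)$. Since $\hat{\tau}_r(\hat{\xi}_0)=\tau_0\to\tau_1^e=\hat{\tau}_r(\hat{\xi}_1)$, we have $\hat{\xi}_1\to\hat{\xi}_0=c_0\to c_1^e$, and continuity of the coefficients in the ODE defining $\wideparen{\mathrm{PB}}$ gives $\mathrm{B}=(\xi_{-}^{r},\eta_{-}^{r})(\hat{\xi}_1)\to(c_1^e\csc\theta,\,0)$, in particular $\eta_{_\mathrm{B}}\to 0$. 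On $\wideparen{\mathrm{BG}}$ the coordinate $\eta$ lies between $0$ (at $\mathrm{G}$) and $\eta_{_\mathrm{B}}$, so $\eta\to 0$ uniformly; writing $\wideparen{\mathrm{BG}}$ as $\xi=\xi_s(\eta)$ with $\xi_s'(\eta)=\cot\chi$ uniformly bounded, $|\xi_s(\eta)-\xi_{_\mathrm{B}}|\le C\,\eta_{_\mathrm{B}}\to 0$, hence $\xi\to c_1^e\csc\theta$ uniformly.

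Finally I assemble $u_b,v_b,\beta_b$ from (\ref{RH1}). From the limits above, $N_f=U_f\sin\chi-V_f\cos\chi\to c_1^e$ and $L_f=U_f\cos\chi+V_f\sin\chi\to c_1^e\cot\theta$; since $L_b=L_f$ and $N_b=(\rho_f/\rho_b)N_f=(\tau_b/\tau_f)N_f$, the identity $p'(\tau_1^e)=p'(\tau_2^e)$ in (\ref{42301}) gives $N_b\to(\tau_2^e/\tau_1^e)c_1^e=c_2^e$ and $L_b\to c_1^e\cot\theta$. Inverting $N_b=U_b\sin\chi-V_b\cos\chi$, $L_b=U_b\cos\chi+V_b\sin\chi$ and letting $\chi\to\pi+\theta$ yields $U_b\to-\csc\theta\,(c_2^e\sin^2\theta+c_1^e\cos^2\theta)$ and $V_b\to(c_2^e-c_1^e)\cos\theta$; adding the coordinate limits and simplifying with $u_*=c_1^e-c_2^e$ gives $u_b=U_b+\xi\to u_*\sin\theta$ and $v_b=V_b+\eta\to-u_*\cos\theta$. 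For $\beta_b$, from $q_b^2=N_b^2+L_b^2\to(c_2^e)^2+(c_1^e)^2\cot^2\theta=q_*^2$ (the last equality by the definition of $q_*$ in (\ref{62903})) and $A_b=\arcsin(c_b/q_b)=\arcsin(N_b/q_b)\to\arcsin(c_2^e/q_*)=\theta+\sigma_*$ (by (\ref{62904})), we obtain $\beta_b=\alpha_b-2A_b\to\pi-\theta-2\sigma_*$, completing the proof. The only real work is the bookkeeping of uniformity along the (shrinking) curve $\wideparen{\mathrm{BG}}$ and the algebraic verification that the limiting Rankine--Hugoniot data coincide with the constants defined in (\ref{62903}); both reduce to continuity near $\tau_1^e$ of the data defining $\wideparen{\mathrm{PB}}$ and of $s_{po}$, and to the double-sonic identity of Proposition~\ref{5602}.
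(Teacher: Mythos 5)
Your proof is correct, and it reaches (\ref{72203}) by a genuinely different mechanism than the paper. The paper's proof works in two steps: it first evaluates the limit only at the endpoint $\mathrm{B}$, where the back state is given explicitly by the planar fan--shock--fan data of Section 1.5 (so $(u_b,v_b)(\mathrm{B})=(\hat{u}_2\sin\theta,-\hat{u}_2\cos\theta)$ with $\hat{u}_2\to u_*$, whence $\sigma_b(\mathrm{B})\to\pi-\sigma_*$ and $\beta_b(\mathrm{B})\to\pi-\theta-2\sigma_*$); it then propagates this to the whole arc by showing that $(\bar{\partial}_s\tau_b,\bar{\partial}_s\alpha_b,\bar{\partial}_s\beta_b)$ is uniformly bounded on $\wideparen{\mathrm{BG}}$ (via (\ref{61105}), (\ref{61115}), (\ref{42804}), (\ref{61308})), using implicitly that the arc collapses. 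You instead bypass the tangential-derivative estimates entirely: you take the uniform front-side limits from (\ref{6811}), the uniform limit $\chi\to\pi+\theta$ from (\ref{82004}), and the collapse of the coordinates to $(c_1^e\csc\theta,0)$, and then solve the Rankine--Hugoniot relations pointwise at every point of the shock, verifying algebraically that the limiting $(N_b,L_b,U_b,V_b,q_b,A_b)$ reproduce the constants of (\ref{62903}) -- including the identity $q_*^2=(c_2^e)^2+(c_1^e)^2\cot^2\theta$ and the double-sonic relation $p'(\tau_1^e)=p'(\tau_2^e)$ to get $N_b\to c_2^e$. What the paper's route buys is brevity (it reuses derivative bounds already needed elsewhere in Section 4.3); what yours buys is independence from those bounds and an explicit check that the limiting Rankine--Hugoniot data really coincide with the definitions in (\ref{62903}), which the paper leaves to the reader via ``it is easy to see.'' Both arguments rest on the same two geometric facts ($\mathrm{B}\to(c_1^e\csc\theta,0)$ and $\wideparen{\mathrm{BG}}$ shrinking to that point), which you make explicit and the paper does not.
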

\begin{proof}
It is easy to see that
$$
\mathrm{B}\rightarrow (c_1^e\csc\theta, 0)~\mbox{and}~
(u_b, v_b)(\mathrm{B})=(\hat{u}_2\sin\theta, -\hat{u}_2\cos\theta)\rightarrow (u_*\sin\theta, -u_*\cos\theta)\quad \mbox{as}\quad \tau_1^e-\tau_0\rightarrow0,
$$
where $\hat{u}_2$ is defined in Section 1.5.
Thus, we have
$$
\sigma_b(\mathrm{B})\rightarrow \pi-\sigma_* \quad \mbox{as}\quad \tau_1^e-\tau_0\rightarrow0.
$$
Combining this with $\alpha_b(\mathrm{B})=\pi+\theta$, we have
$$
\beta_b(\mathrm{B})\rightarrow 2(\pi-\sigma_*)-(\pi+\theta)=\pi-\theta-2\sigma_* \quad \mbox{as}\quad \tau_1^e-\tau_0\rightarrow0.
$$

From (\ref{6811}), (\ref{61113}), (\ref{61105}), (\ref{61115}),  (\ref{42804}), and (\ref{61308}) we know that $(\bar{\partial}_s\tau_b, \bar{\partial}_s\alpha_b, \bar{\partial}_s\beta_b)$ is uniformly bounded on $\wideparen{\mathrm{BG}}$ with respect to $\tau_1^e-\tau_0$.
Therefore, we obtain (\ref{72203}).
\end{proof}

From (\ref{61113}) we know that $\wideparen{\mathrm{BG}}$ and $C_{+}$ characteristic have the same direction at each point on $\wideparen{\mathrm{BG}}$. While, by (\ref{61305}) and the first equation of (\ref{form}) we know that  $\wideparen{\mathrm{BG}}$ is not a characteristic.
In what follows, we are going to use the hodograph transformation method to find a solution near the backside of the post-sonic shock $\wideparen{\mathrm{BG}}$.
We shall show  that $\wideparen{\mathrm{BG}}$ is an envelope of the $C_{+}$ characteristic curves of the flow behind it and the directional derivatives of the unknown functions in the  $C_{-}$ direction are infinity on  $\wideparen{\mathrm{BG}}$. 

By Lemmas \ref{61302} and \ref{lem43} we know that when $\tau_1^e-\tau_0$ is sufficiently small, the map
$(u, v)=(u_b, v_b)(\xi, \eta)$, $(\xi, \eta)\in \wideparen{\mathrm{BG}}$
has an inverse map
$$
(\xi, \eta)=(\xi_b, \eta_b)(u, v), \quad(u, v)\in \wideparen{\mathrm{B_2G_2}},
$$
where
$\wideparen{\mathrm{B_2G_2}}:=\big\{(u, v)|(u, v)=(u_b, v_b)(\xi, \eta), (\xi, \eta)\in \wideparen{\mathrm{BG}}\big\}$, $\mathrm{B}_2=(u_b, v_b)(\mathrm{B})$, and $\mathrm{G}_2=(u_b, v_b)(\mathrm{G})$.
We now consider (\ref{52305}) with data
\begin{equation}\label{6401}
(\alpha, \beta, \tau)=(\alpha_h, \beta_h, \tau_h)(u, v)\quad \mbox{on}\quad \wideparen{\mathrm{B_2 G_2}},
\end{equation}
where $(\alpha_h, \beta_h, \tau_h)(u, v)=(\alpha_b, \beta_b, \tau_b)\big(\xi_b(u, v), \eta_b(u,v)\big)$.

From Lemma \ref{61302} we also know that
the $\Gamma_{\pm}$ characteristic directions $(-\sin\beta, \cos\beta)$ and $(\sin\alpha, -\cos\alpha)$ on
$\wideparen{\mathrm{B_2G_2}}$ point to the same side of $\wideparen{\mathrm{B_2G_2}}$; see Figure \ref{Fig14}.
So, the curve $\wideparen{\mathrm{B_2G_2}}$ is a space-like non-characteristic curve
and
the
problem (\ref{52305}, \ref{6401}) is actually a Cauchy problem.

For $\delta>0$ we let
$$
\Gamma_{\delta}:=\big\{(u, v)\big| (u+\delta, v)\in \wideparen{\mathrm{B_2 G_2}}\big\}.
$$
\begin{lem}
There exists a small $\delta>0$ such that the Cauchy problem (\ref{52305}, \ref{6401}) admits a local solution in a domain $\Sigma_2^h(\delta)$ closed by $\Gamma_{\delta}$, a forward $\Gamma_{+}$ characteristic curve issued from $\mathrm{B}_2$, and a forward $\Gamma_{-}$ characteristic curve issued from $\mathrm{G}_2$; see Figure \ref{Fig15} (left).
\end{lem}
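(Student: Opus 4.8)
Observe first that (\ref{52305}) is already in diagonal (characteristic) form: the first equation transports $\alpha$ along $\Gamma_{+}$, the second transports $\beta$ along $\Gamma_{-}$, the third transports $\tau$ along the $\Lambda_0=0$ direction $\partial_u$, and every right-hand side is a function of $(\alpha,\beta,\tau)$ only, not of their derivatives. The plan is therefore to reduce the statement to the classical local existence theory for semilinear strictly hyperbolic systems in diagonal form with $C^1$ data on a non-characteristic arc --- the characteristic-iteration argument of Li and Yu \cite{Li-Yu} applies once the structural hypotheses are verified --- and then to pick $\delta$ small enough that the arc $\wideparen{\mathrm{B_2G_2}}$, its forward characteristics, and the resulting solution all stay in the region where those hypotheses hold.

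\textbf{Step 1 (structural conditions).} By Lemma \ref{lem43}, for $\tau_1^e-\tau_0$ small the data $(\alpha_h,\beta_h,\tau_h)$ on $\wideparen{\mathrm{B_2G_2}}$ are uniformly $C^0$-close to the constant state $(\pi+\theta,\ \pi-\theta-2\sigma_*,\ \tau_2^e)$; moreover they are $C^1$ on $\wideparen{\mathrm{B_2G_2}}$, since the shock $\wideparen{\mathrm{BG}}$ (obtained from the ODE (\ref{61115})), the Rankine--Hugoniot relations (\ref{RH1}), Proposition \ref{51003}, and the $C^1$ inverse hodograph map of Lemma \ref{61302} all furnish $C^1$ quantities. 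At the reference state $\tau_2^e>\tau_2^i$, so $p''(\tau_2^e)>0$ and $p'(\tau_2^e)<0$ by property ($\mathbf{P}$); the pseudo-Mach angle $A=\tfrac{\alpha-\beta}{2}$ and the sound speed $c=c(\tau)$ lie in $(0,\tfrac{\pi}{2})$ and $(0,\infty)$; and the flow behind the post-sonic shock is supersonic. Hence the coefficients $-\frac{\tau p''(\tau)}{4cp'(\tau)}(\varpi(\tau)-\tan^2 A)\sin 2A$ and $\frac{\tau\cos\sigma}{c\sin A}$ of (\ref{52305}) are $C^\infty$ on a fixed neighbourhood $\mathcal N$ of the reference state, and the eigenvalues $\Lambda_0=0$, $\Lambda_\pm=\frac{UV\pm c\sqrt{U^2+V^2-c^2}}{c^2-V^2}$ are real, finite and pairwise distinct there: $\Lambda_+\ne\Lambda_-$ by supersonicity, while $c^2-V^2\ne0$ and $\Lambda_\pm\ne0$ follow from a direct evaluation at the reference state using the formulas (\ref{62903}), where assumption ($\mathbf{A}$2) (which gives $0<\theta+2\sigma_*<\tfrac{\pi}{2}$) is exactly what prevents these quantities from vanishing. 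So (\ref{52305}) is strictly hyperbolic with smooth diagonal structure on $\mathcal N$.

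\textbf{Step 2 (non-characteristic arc).} Lemma \ref{61302} provides (\ref{61111}) and (\ref{61305}) at every point of $\wideparen{\mathrm{BG}}$, with a uniform gap by compactness. Geometrically these say that at each point of $\wideparen{\mathrm{B_2G_2}}$ the $\Gamma_{+}$-direction $(-\sin\beta,\cos\beta)$ and the $\Gamma_{-}$-direction $(\sin\alpha,-\cos\alpha)$ lie strictly on the same side of the tangent $(\bar{\partial}_s u_b,\bar{\partial}_s v_b)$; in particular the tangent is transverse to $\Gamma_{+}$ and $\Gamma_{-}$. It is also transverse to the $\Lambda_0$-direction $\partial_u$: if $\bar{\partial}_s v_b=0$ then (\ref{61111}) and (\ref{61305}) would force $\cos\alpha_b$ and $\cos\beta_b$ to have opposite signs, whereas at the reference state $\cos(\pi+\theta)$ and $\cos(\pi-\theta-2\sigma_*)$ are both negative. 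Thus $\wideparen{\mathrm{B_2G_2}}$ is a uniformly non-characteristic, space-like $C^1$ arc, and so --- by continuity --- are its translates $\Gamma_{\delta}$ for all small $\delta$.

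\textbf{Step 3 (local existence, choice of $\delta$, and the main obstacle).} With Steps 1--2 the Cauchy problem (\ref{52305},\ref{6401}) is solved by the standard characteristic iteration: from a point $(u,v)$ in a one-sided neighbourhood of $\wideparen{\mathrm{B_2G_2}}$ follow the backward $\Gamma_{+}$-, $\Gamma_{-}$- and $\Lambda_0$-characteristics to their base points on $\wideparen{\mathrm{B_2G_2}}$ (which are $C^1$ in $(u,v)$ by the transversality of Step 2), integrate the corresponding scalar equation of (\ref{52305}), and iterate; the $C^\infty$ coefficients of Step 1 make this a contraction in $C^0$, hence in $C^1$, on any sufficiently thin neighbourhood. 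One then fixes $\delta>0$ so small that the values of the iterates stay inside $\mathcal N$, that the forward $\Gamma_{+}$-characteristic from $\mathrm B_2$ and the forward $\Gamma_{-}$-characteristic from $\mathrm G_2$ stay in $\mathcal N$ until they reach $\Gamma_{\delta}$ (so that together with $\Gamma_{\delta}$ they bound a curved domain $\Sigma_2^h(\delta)$ of the asserted shape), and that the iteration converges on $\Sigma_2^h(\delta)$. The only genuine difficulty is precisely this domain bookkeeping --- ensuring that the solution and all the characteristics it generates never leave the neighbourhood $\mathcal N$ on which strict hyperbolicity, $p''>0$, $0<A<\tfrac{\pi}{2}$ and $c^2\ne V^2$ hold; this is what forces $\delta$ to be small, and it is handled by the openness of these conditions together with the uniform non-characteristicity of Step 2. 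The linear degeneracy of (\ref{52305}), not needed for this local statement, will only be exploited later to propagate the solution over the full region $\Sigma_2$.
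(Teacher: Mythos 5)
Your proposal is correct and follows essentially the same route as the paper: the paper's own proof is the one-line remark that ``the local existence can be obtained by the classical characteristic method,'' resting on the observation (made just before the lemma, via Lemma \ref{61302} and the estimates (\ref{61111}), (\ref{61305})) that $\wideparen{\mathrm{B_2G_2}}$ is a space-like non-characteristic arc for the diagonal semilinear system (\ref{52305}). You have simply supplied the standard details the paper omits, including a correct check of transversality to the $\Lambda_0$-family and of the role of assumption ($\mathbf{A}2$) in keeping $\Lambda_{+}$ finite and nonzero.
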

\begin{proof}
The local existence can be obtained by the classical characteristic method; we omit the details.
\end{proof}

\begin{figure}[htbp]
\begin{center}
\includegraphics[scale=0.65]{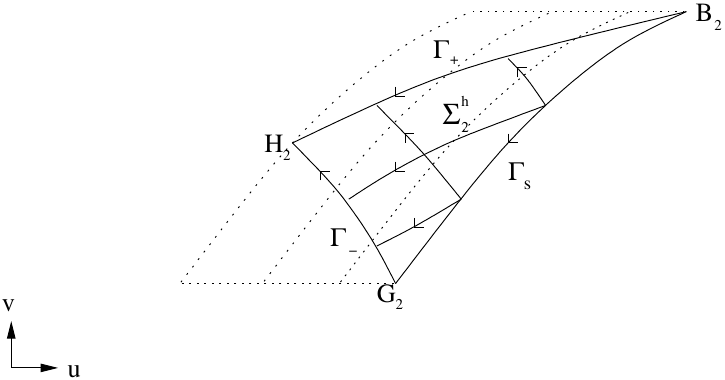}\qquad\quad\includegraphics[scale=0.32]{BD.pdf}
\caption{\footnotesize Solution in the hodograph plane and in the $(\xi, \eta)$-plane.}
\label{Fig15}
\end{center}
\end{figure}
 Next, we will extend the local solution to the whole determinate region.

\begin{lem}
(Uniform $C^1$ norm estimate) Consider the Cauchy problem (\ref{52305}, \ref{6401}). Assume $\tau_1^e-\tau_0$ is sufficiently small. Assume as well that there is a $C^1$ solution in $\Sigma_2^h(\delta)$. Then the $C^1$ norm of $\alpha$ and $\beta$ has a uniform bound, which only depends on the $C^0$ and $C^1$ norms of the Cauchy data (\ref{6401}).  That is, there is a constant $\mathcal{D}>0$, depending only the Cauchy data, but not on $\delta$, such that
$$
\|(\alpha, \beta, c)\|_{1; \Sigma_2^h(\delta)}\leq \mathcal{D}.
$$
\end{lem}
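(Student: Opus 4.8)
The plan is to run a continuity (bootstrap) argument that exploits the linear degeneracy of system (\ref{52305}). Linear degeneracy means exactly that, in the characteristic decompositions (\ref{52502}), the ``crossed'' derivatives $\hat\partial_-\alpha$ along $\Gamma_+$ and $\hat\partial_+\beta$ along $\Gamma_-$ obey \emph{linear} inhomogeneous transport equations $\hat\partial_+(\hat\partial_-\alpha)+\mathcal W\hat\partial_-\alpha=\mathcal Q$ and $\hat\partial_-(\hat\partial_+\beta)+\mathcal W\hat\partial_+\beta=-\mathcal Q$, while the remaining first derivatives of $\alpha,\beta,c,\tau$ are given \emph{explicitly} by (\ref{52305}) and (\ref{52501}) (note $\hat\partial_\pm\tau=\tau/c$ follows from (\ref{52501}) and $\mathrm dc/\mathrm d\tau=-c/(\tau\kappa)$). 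Hence, once one knows that the solution stays uniformly hyperbolic, i.e. $A=\tfrac{\alpha-\beta}{2}$ lies in a compact subinterval of $(0,\pi/2)$, with $\tau$ and $c$ in a compact subinterval of $(1,+\infty)$, all coefficients $\mathcal W(A,c)$, $\mathcal Q(A,c)$ and the right-hand sides above are bounded by fixed constants, and a Gr\"onwall estimate along the characteristics — whose length is at most $\mathrm{diam}\,\Sigma_2^h(\delta)$ — closes the $C^1$ bound.

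Concretely, I would first record that $\Sigma_2^h(\delta)$ lies in a bounded subset of the $(u,v)$-plane whose diameter is bounded uniformly in $\delta$ and, when $\tau_1^e-\tau_0$ is small, is itself small, being the hodograph image of the thin determinate region behind the nearly straight shock $\wideparen{\mathrm{BG}}$. By Lemma \ref{lem43} the Cauchy data (\ref{6401}) has $\tau$ near $\tau_2^e$, $c$ near $c_2^e$, $A$ near a fixed value in $(0,\pi/2)$, and, by the proof of Lemma \ref{lem43}, uniformly bounded $C^1$ norm. Integrating $\hat\partial_\pm c=-1/\kappa$ and $\hat\partial_\pm\tau=\tau/c$ along $\Gamma_\pm$ starting from $\wideparen{\mathrm{B_2G_2}}$ keeps $\tau$, $c$ in a fixed compact subinterval of $(1,+\infty)$ on which $\kappa,\mu,\varpi,p',p''$ are bounded with $p'<0<p''$ and $\kappa$ bounded away from $0$. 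Then $\hat\partial_\pm c$ is directly bounded; $\hat\partial_+\alpha$ and $\hat\partial_-\beta$ are directly bounded by (\ref{52305}); and $\hat\partial_-\alpha$, $\hat\partial_+\beta$ are bounded by Gr\"onwall applied to the linear transport equations in (\ref{52502}), in terms of $\|(\alpha,\beta)\|_{1;\wideparen{\mathrm{B_2G_2}}}$. Since the $\Gamma_\pm$ directions $(-\sin\beta,\cos\beta)$ and $(\sin\alpha,-\cos\alpha)$ are transversal with $|\sin 2A|$ bounded away from $0$, control of the derivatives in both characteristic directions yields control of the full gradients, i.e. $\|(\alpha,\beta,c)\|_{1;\Sigma_2^h(\delta)}\le\mathcal D$ with $\mathcal D$ depending only on the $C^0$ and $C^1$ norms of the data (\ref{6401}) and the fixed gas-dynamic and geometric constants — not on $\delta$.

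The one genuinely delicate point — and the step I expect to be the main obstacle — is justifying a priori that the solution remains uniformly hyperbolic, since every estimate above is conditional on $A$ staying in a fixed compact subinterval of $(0,\pi/2)$. I would handle this by a continuity argument: fix a closed interval $[A_\flat,A_\sharp]\subset(0,\pi/2)$ strictly containing the (small) range of $A$ on $\wideparen{\mathrm{B_2G_2}}$, let $\Sigma'\subset\Sigma_2^h(\delta)$ be the maximal connected subregion containing the data curve on which $A\in[A_\flat,A_\sharp]$, apply the conditional $C^1$ bound on $\Sigma'$, and integrate that gradient bound over $\Sigma'$ to conclude that $\alpha$ and $\beta$ — hence $A$ — can deviate from their values on $\wideparen{\mathrm{B_2G_2}}$ by at most $\mathcal D\cdot\mathrm{diam}\,\Sigma_2^h(\delta)$; when $\tau_1^e-\tau_0$ is small enough this is smaller than the gap between $[A_\flat,A_\sharp]$ and the data range, so $A$ stays in the open interval $(A_\flat,A_\sharp)$ throughout $\Sigma'$, forcing $\Sigma'=\Sigma_2^h(\delta)$. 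This is where the smallness of $\tau_1^e-\tau_0$ is essential (it makes both the determinate region and the oscillation of the data small), and it is also where one must verify that $\mathrm{diam}\,\Sigma_2^h(\delta)$ is controlled uniformly in $\delta$, for which the transversality lower bound on $|\sin 2A|$ — which governs how far the forward $\Gamma_\pm$ characteristics from $\mathrm{B}_2$ and $\mathrm{G}_2$ travel before meeting — is used once more.
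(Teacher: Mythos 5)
Your proposal is correct and follows essentially the same route as the paper: exploit the linear degeneracy of (\ref{52305}) so that $\hat{\partial}_{+}\alpha$, $\hat{\partial}_{-}\beta$, $\hat{\partial}_{\pm}c$, $\hat{\partial}_{\pm}\tau$ are explicit in the solution values, integrate along $\Gamma_{\pm}$ from $\wideparen{\mathrm{B_2G_2}}$ using (\ref{72203}) and ($\mathbf{A}2$) to keep $A$ in a compact subinterval of $(0,\pi/2)$ for $\tau_1^e-\tau_0$ small, and then control the crossed derivatives $\hat{\partial}_{-}\alpha$, $\hat{\partial}_{+}\beta$ by the linear transport equations (\ref{52502}) in terms of the $C^1$ norm of the data. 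Your explicit continuity/bootstrap argument for uniform hyperbolicity and the remark on the uniform-in-$\delta$ diameter bound merely spell out steps the paper leaves implicit.
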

\begin{proof}
System (\ref{52305}) is actually a linearly degenerate hyperbolic system. So, the derivatives of solution will not blow up unless $A=0$ or $A=\frac{\pi}{2}$.  Integrating (\ref{52305}) along $\Gamma_{\pm}$ characteristic curves issuing from $\wideparen{\mathrm{B_2G_2}}$ and  recalling (\ref{72203}) and assumption ($\mathbf{A}2$), we know that
for any small $\varepsilon>0$ there exists an $\varrho>0$ independent of $\delta$ such that when $0<\tau_1^e-\tau_0<\varrho$, the solution satisfies $$
\|(\alpha-(\pi+\theta),~ \beta-(\pi-\theta-2\sigma_*),~\tau-\tau_2^e)\|_{1; \Sigma_2^h(\delta)}<\varepsilon.
$$
Actually, from (\ref{52305}) one can see that  $\|(\alpha-(\pi+\theta),~ \beta-(\pi-\theta-2\sigma_*),~\tau-\tau_2^e)\|_{1; \Sigma_2^h(\delta)}$ is independent of the derivatives of the Cauchy data.
So, one can obtained the existence of $\varrho$.
The derivatives of the solution can be controlled by  (\ref{52305}), (\ref{52502}), and the  $C^1$ norms of the Cauchy data (\ref{6401}).
This completes the proof.
\end{proof}

By the classical extension method, we have the following existence.
\begin{lem}
(Global existence in the hodograph plane) Assume that $\tau_1^e-\tau_0$ is sufficiently small. Then the Cauchy problem (\ref{52305}, \ref{6401}) admits a global $C^1$ solution $(\alpha, \beta, \tau)=(\alpha_2^h, \beta_2^h, \tau_2^h)(u,v)$ on a closed curved triangle domain $\Sigma_2^h$ closed by $\wideparen{\mathrm{B_2 G_2}}$, $\wideparen{\mathrm{B_2 H_2}}$, and $\wideparen{\mathrm{G_2 H_2}}$, where $\wideparen{\mathrm{B_2 H_2}}$ is a forward $\Gamma_{+}$ characteristic curve issued from $\mathrm{B}_2$ and $\wideparen{\mathrm{G_2 H_2}}$ is a forward $\Gamma_{-}$ characteristic curve issued from $\mathrm{G}_2$; see Figure \ref{Fig15}. Moreover, the solution satisfies
\begin{equation}\label{72205}
\big\|(\alpha-(\pi+\theta),~ \beta-(\pi-\theta-2\sigma_*),~\tau-\tau_2^e)\big\|_{0, \Sigma_2^h}\rightarrow0\quad \mbox{as}\quad \tau_1^e-\tau_0\rightarrow 0.
\end{equation}
\end{lem}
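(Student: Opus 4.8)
The plan is to run a standard continuation argument for the linearly degenerate hyperbolic system (\ref{52305}), taking as starting point the local solution supplied by the preceding lemmas. The region $\Sigma_2^h$ is nothing but the determinate domain of the Cauchy data carried by the space-like non-characteristic curve $\wideparen{\mathrm{B_2G_2}}$: it is the curved triangle bounded by $\wideparen{\mathrm{B_2G_2}}$, the forward $\Gamma_+$ characteristic $\wideparen{\mathrm{B_2H_2}}$ issued from $\mathrm{B}_2$, and the forward $\Gamma_-$ characteristic $\wideparen{\mathrm{G_2H_2}}$ issued from $\mathrm{G}_2$; these two characteristics meet at a well-defined $\mathrm{H}_2$ because $\Lambda_+\neq\Lambda_-$ wherever the flow is hyperbolic. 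First I would let $\delta\to 0$ in the local solution on $\Sigma_2^h(\delta)$ and, using the uniform-in-$\delta$ $C^1$ bound from the previous lemma together with the fact that (\ref{52305}) is linearly degenerate in the sense of Lax (so no gradient catastrophe can arise from smooth data), extend the solution step by step. The only mechanism that can stop the extension before $\Sigma_2^h$ is exhausted is the breakdown of uniform hyperbolicity, i.e.\ the pseudo-Mach angle $A=(\alpha-\beta)/2$ approaching $0$ or $\tfrac{\pi}{2}$, or $c$ approaching $0$; so the heart of the matter is an a priori confinement of $(\alpha,\beta,\tau)$.

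To obtain that confinement I would integrate the first two equations of (\ref{52305}) along $\Gamma_+$ and $\Gamma_-$ characteristics respectively, starting from the Cauchy data (\ref{6401}). By Lemma \ref{lem43} the data obey (\ref{72203}), so on $\wideparen{\mathrm{B_2G_2}}$ we have $\alpha$ near $\pi+\theta$, $\beta$ near $\pi-\theta-2\sigma_*$, $\tau$ near $\tau_2^e$, whence $A$ near $\theta+\sigma_*$; assumption $(\mathbf{A}2)$, $\theta+2\sigma_*<\tfrac{\pi}{2}$, places the limiting angle $\theta+\sigma_*$ strictly inside $(0,\tfrac{\pi}{2})$ with a fixed margin. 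The right-hand sides of (\ref{52305}) are smooth functions of $(A,\tau)$ that stay bounded on a neighbourhood of $(\theta+\sigma_*,\tau_2^e)$, and the characteristic slopes $\Lambda_\pm$ are nearly constant and mutually transverse there, so the characteristics sweeping out $\Sigma_2^h$ have controlled length and — as quantified in the next paragraph — shrink as $\tau_1^e-\tau_0\to 0$. A bootstrap argument (assume $A$ in the safe range, bound the sources, integrate, recover a smaller bound once $\tau_1^e-\tau_0$ is small enough) then keeps $(\alpha,\beta,\tau)$ in an arbitrarily small $C^0$-neighbourhood of $(\pi+\theta,\pi-\theta-2\sigma_*,\tau_2^e)$ throughout; in particular $A$ stays in a compact subinterval of $(0,\tfrac{\pi}{2})$ and $c$ is bounded below, so (\ref{52305}) is uniformly hyperbolic on $\Sigma_2^h$. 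Combined with the $C^1$ bound, the classical extension method then produces the global $C^1$ solution on the full closed triangle $\Sigma_2^h$ with the stated boundary $\wideparen{\mathrm{B_2G_2}}\cup\wideparen{\mathrm{B_2H_2}}\cup\wideparen{\mathrm{G_2H_2}}$.

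For the limit (\ref{72205}) I would use that the data curve itself degenerates: by (\ref{72203}), every point of $\wideparen{\mathrm{BG}}$ has $(u_b,v_b)\to(u_*\sin\theta,-u_*\cos\theta)$ as $\tau_1^e-\tau_0\to 0$, so its image $\wideparen{\mathrm{B_2G_2}}$ in the $(u,v)$-plane collapses to the single point $(u_*\sin\theta,-u_*\cos\theta)$, and the data $(\alpha_h,\beta_h,\tau_h)$ converge uniformly to the constant state $(\pi+\theta,\pi-\theta-2\sigma_*,\tau_2^e)$. Since hyperbolicity is uniform, the determinate triangle $\Sigma_2^h$ has diameter tending to $0$, and integrating the uniformly bounded source terms of (\ref{52305}) over this shrinking domain yields (\ref{72205}).

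The hard part will be the confinement step — keeping $A$ strictly below $\tfrac{\pi}{2}$ on all of $\Sigma_2^h$. Both the hodograph transformation and the hyperbolicity of (\ref{52305}) degenerate at $A=\tfrac{\pi}{2}$, and the source terms in (\ref{52305}) and in the derived system (\ref{52502}), which carry the factor $\varpi(\tau)-\tan^2A$ together with powers of $\tan A$, blow up there. This is exactly what assumption $(\mathbf{A}2)$ is designed to provide: a fixed gap between the limiting angle $\theta+\sigma_*$ and $\tfrac{\pi}{2}$, so that the small perturbation of $A$ accumulated along the (short) characteristics stays safely inside the hyperbolic range; making this quantitative — choosing how small $\tau_1^e-\tau_0$ must be in terms of that gap and the characteristic lengths, and closing the bootstrap — is the technical core of the proof.
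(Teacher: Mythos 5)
Your proposal is correct and follows essentially the same route as the paper: the paper likewise obtains the global solution on $\Sigma_2^h$ by the classical extension method, resting on the preceding uniform $C^1$ estimate, which is itself proved by integrating (\ref{52305}) along $\Gamma_{\pm}$ characteristics from $\wideparen{\mathrm{B_2G_2}}$, invoking linear degeneracy so that derivatives cannot blow up unless $A\to 0$ or $A\to\tfrac{\pi}{2}$, and using (\ref{72203}) together with assumption $(\mathbf{A}2)$ to confine $(\alpha,\beta,\tau)$ near $(\pi+\theta,\pi-\theta-2\sigma_*,\tau_2^e)$. The shrinking of the data curve $\wideparen{\mathrm{B_2G_2}}$ to a point and hence of the determinate triangle, which you make explicit, is exactly the mechanism behind (\ref{72205}) in the paper as well.
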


We now establish the global one-to-one inversion of the hodograph transformation.

\begin{lem}\label{102502}
For the $C^1$ solution of  the Cauchy problem (\ref{52305}, \ref{6401}),
there hold
\begin{equation}\label{61316}
\mathcal{Z}_{+}<0\quad \mbox{and}\quad \mathcal{Z}_{-}>0\quad \mbox{in}\quad \Sigma_2^h\setminus\wideparen{\mathrm{B_2G_2}},
\end{equation}
where the variables $\mathcal{Z}_{\pm}$ are defined in (\ref{62701}).
\end{lem}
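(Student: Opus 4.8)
The plan is to derive both signs from the coupled characteristic equations (\ref{322}) for $\mathcal{Z}_{\pm}$ by a propagation argument along the $\Gamma_{\pm}$ characteristics in $\Sigma_2^h$, after first pinning down the boundary behaviour of $\mathcal{Z}_{\pm}$ on the Cauchy curve $\wideparen{\mathrm{B_2G_2}}$. The two facts that make this work are: the coupling coefficient $\frac{\tau p''(\tau)}{4cp'(\tau)}(\tan^2A+1)$ is strictly \emph{negative} in $\Sigma_2^h$, and on $\wideparen{\mathrm{B_2G_2}}$ one has $\mathcal{Z}_-\equiv 0$ while $\mathcal{Z}_+<0$.

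For the boundary values I would parametrize $\wideparen{\mathrm{BG}}$ by physical arclength $s$, so that by (\ref{61113}) the shock tangent is $(\cos\alpha_b,\sin\alpha_b)$ while its hodograph image $(u_b(s),v_b(s))$ has tangent $(\bar{\partial}_s u_b,\bar{\partial}_s v_b)$. Expressing the derivatives $\xi_u,\xi_v,\eta_u,\eta_v$ of the inverse hodograph map on $\wideparen{\mathrm{B_2G_2}}$ through $P:=\hat{\partial}_+\xi$ and $Q:=\hat{\partial}_-\xi$ by means of (\ref{317}) and (\ref{38}), the tangency identities $\xi_u\bar{\partial}_s u_b+\xi_v\bar{\partial}_s v_b=\cos\alpha_b$ and $\eta_u\bar{\partial}_s u_b+\eta_v\bar{\partial}_s v_b=\sin\alpha_b$ become the $2\times 2$ linear system
\[
P\,X_{+}+Q\,X_{-}=\sin 2A_b\cos\alpha_b,\qquad
P\tan\alpha_b\,X_{+}+Q\tan\beta_b\,X_{-}=\sin 2A_b\sin\alpha_b,
\]
where $X_{+}:=\cos\alpha_b\,\bar{\partial}_s u_b+\sin\alpha_b\,\bar{\partial}_s v_b$ and $X_{-}:=\cos\beta_b\,\bar{\partial}_s u_b+\sin\beta_b\,\bar{\partial}_s v_b$. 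By Lemma \ref{61302} one has $X_{+}>0$ and $X_{-}<0$; since also $\tan\alpha_b\neq\tan\beta_b$ and $\cos\alpha_b,\cos\beta_b\neq 0$ (these hold because, by (\ref{72203}) and assumption ($\mathbf{A}2$), $A_b$ stays in a fixed compact subinterval of $(0,\tfrac{\pi}{2})$ and $\alpha_b,\beta_b$ stay near $\pi+\theta$ and $\pi-\theta-2\sigma_*$ when $\tau_1^e-\tau_0$ is small), the system is nonsingular and forces $Q=0$ and $P=\sin 2A_b\cos\alpha_b/X_{+}$. Recalling from (\ref{61312}), (\ref{61311}), (\ref{61319}) that $\hat{\partial}_+\xi=-\frac{c\cos\alpha}{2\sin^2A}\mathcal{Z}_{+}$ and $\hat{\partial}_-\xi=\frac{c\cos\beta}{2\sin^2A}\mathcal{Z}_{-}$, we conclude $\mathcal{Z}_{-}=0$ and $\mathcal{Z}_{+}=-\frac{2\sin^2A_b\,\sin 2A_b}{c_b X_{+}}<0$ on $\wideparen{\mathrm{B_2G_2}}$.

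For the propagation, note first that by (\ref{72205}) the value of $\tau$ stays close to $\tau_2^e>\tau_2^i$ throughout $\Sigma_2^h$, so Property (P) gives $p''(\tau)>0$ there; together with $p'<0$, $c>0$ this makes $K:=\frac{\tau p''(\tau)}{4cp'(\tau)}(\tan^2A+1)$ strictly negative in $\Sigma_2^h$ (and $A$ stays bounded away from $0$ and $\tfrac{\pi}{2}$, so all coefficients are finite). Rewriting (\ref{322}) with the integrating factor $\exp(\int\mathcal{W})$ along the relevant characteristic gives $\hat{\partial}_{+}\big(e^{\int\mathcal{W}}\mathcal{Z}_{-}\big)=e^{\int\mathcal{W}}K\,\mathcal{Z}_{+}$ along $\Gamma_{+}$ and $\hat{\partial}_{-}\big(e^{\int\mathcal{W}}\mathcal{Z}_{+}\big)=e^{\int\mathcal{W}}K\,\mathcal{Z}_{-}$ along $\Gamma_{-}$. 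Starting from $\mathcal{Z}_{+}<0$, $\mathcal{Z}_{-}=0$ on $\wideparen{\mathrm{B_2G_2}}$: along each $\Gamma_{+}$ characteristic issuing from $\wideparen{\mathrm{B_2G_2}}$ the derivative of $e^{\int\mathcal{W}}\mathcal{Z}_{-}$ at the starting point is $K\mathcal{Z}_{+}>0$, so $\mathcal{Z}_{-}$ becomes positive immediately; then along $\Gamma_{-}$ characteristics the derivative of $e^{\int\mathcal{W}}\mathcal{Z}_{+}$ is $K\mathcal{Z}_{-}<0$, so $\mathcal{Z}_{+}$ stays negative. Hence $\{\mathcal{Z}_{+}<0,\ \mathcal{Z}_{-}>0\}$ is an invariant region for the characteristic flow emanating from $\wideparen{\mathrm{B_2G_2}}$; applying this on the sub-triangle cut off by the two backward characteristics through an arbitrary $P\in\Sigma_2^h$ (both of which reach $\wideparen{\mathrm{B_2G_2}}$, since $\Sigma_2^h$ is a characteristic triangle with $\wideparen{\mathrm{B_2G_2}}$ as one side) yields (\ref{61316}).

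The main obstacle is the first step. It is precisely the \emph{strict} inequalities of Lemma \ref{61302} — which encode that the post-sonic shock $\wideparen{\mathrm{BG}}$, although tangent at every point to a $C_{+}$ direction, is \emph{not} a $C_{+}$ characteristic of the flow behind it (equivalently, it is the envelope of that family) — that make the $2\times 2$ system above nonsingular and pin $Q=\hat{\partial}_-\xi$ to $0$, hence $\mathcal{Z}_{-}|_{\wideparen{\mathrm{B_2G_2}}}=0$; were $\wideparen{\mathrm{BG}}$ a genuine $C_{+}$ characteristic we would have $X_{-}=0$ and the argument would collapse. Once the boundary values and the sign of $K$ are in hand, the propagation is a routine comparison argument for the $2\times 2$ system (\ref{322}).
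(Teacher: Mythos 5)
Your proposal is correct and follows essentially the same route as the paper: both arguments pin down the boundary values $\mathcal{Z}_{-}=0$ and $\mathcal{Z}_{+}<0$ on $\wideparen{\mathrm{B_2G_2}}$ by combining the tangency of the shock image with the decomposition (\ref{61309})/(\ref{317}) and the strict inequalities of Lemma \ref{61302} (your $2\times 2$ linear system is just a repackaging of the paper's substitution of (\ref{38}) into $\hat{\partial}_{_{\Gamma_s}}\eta=\tan\alpha_2^h\,\hat{\partial}_{_{\Gamma_s}}\xi$, and your formula for $\mathcal{Z}_{+}$ reproduces (\ref{61314})), and then propagate the signs by integrating (\ref{322}) along the forward $\Gamma_{\pm}$ characteristics using the negativity of the coupling coefficient. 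No gap; the argument matches the paper's proof in both structure and substance.
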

\begin{proof}
We define  the directional derive
$$
\hat{\partial}_{_{\Gamma_s}}:=\Gamma_s^1\partial_u+\Gamma_s^2\partial_v\quad \mbox{along}\quad \wideparen{\mathrm{B_2G_2}},
$$
where $(\Gamma_s^1, \Gamma_s^2)=(\bar{\partial}_s u_b, \bar{\partial}_s v_b)$.
Then by (\ref{317}) we have
\begin{equation}\label{61309}
\hat{\partial}_{_{\Gamma_s}}=\frac{(\Gamma_s^1\cos\alpha+\Gamma_s^2\sin\alpha)\hat{\partial}_{+}}{\sin 2A}+
\frac{(\Gamma_s^1\cos\beta+\Gamma_s^2\sin\beta)\hat{\partial}_{-}}{\sin 2A}.
\end{equation}

Since $\chi=\alpha_b$ on $\wideparen{\mathrm{BG}}$, we have
$$
\hat{\partial}_{_{\Gamma_s}}\eta=(\tan\alpha_{2}^{h})\hat{\partial}_{_{\Gamma_s}}\xi \quad \mbox{on}\quad \wideparen{\mathrm{B_2G_2}}.
$$
Thus, by (\ref{61309}) we obtain
$$
\begin{aligned}
&(\Gamma_s^1\cos\alpha_{2}^{h}+\Gamma_s^2\sin\alpha_{2}^{h})\hat{\partial}_{+}\eta+
(\Gamma_s^1\cos\beta_{2}^{h}+\Gamma_s^2\sin\beta_{2}^{h})\hat{\partial}_{-}\eta\\[2pt]~=~&\Big(
(\Gamma_s^1\cos\alpha_{2}^{h}+\Gamma_s^2\sin\alpha_{2}^{h})\hat{\partial}_{+}\xi+
(\Gamma_s^1\cos\beta_{2}^{h}+\Gamma_s^2\sin\beta_{2}^{h})\hat{\partial}_{-}\xi\Big)\tan\alpha_{2}^{h}\quad \mbox{on}\quad \wideparen{\mathrm{B_2G_2}}.
\end{aligned}
$$
Inserting (\ref{38}) into this we get
$$
(\Gamma_s^1\cos\beta_{2}^{h}+\Gamma_s^2\sin\beta_{2}^{h})(\tan\beta_{2}^{h}-\tan\alpha_{2}^{h})\hat{\partial}_{-}\xi=0  \quad \mbox{on}\quad \wideparen{\mathrm{B_2G_2}}.
$$
Thus, by (\ref{61305}) we have
\begin{equation}\label{61310}
\hat{\partial}_{-}\xi=0 \quad \mbox{on}\quad \wideparen{\mathrm{B_2G_2}}.
\end{equation}
Combining this with (\ref{61312}), (\ref{61319}), and (\ref{62701}), we obtain
\begin{equation}\label{82201}
\mathcal{Z}_{-}=0\quad \mbox{on}\quad \wideparen{\mathrm{B_2G_2}}.
\end{equation}

A direct computation yields
$$
\hat{\partial}_{_{\Gamma_s}}\xi=\xi_u\bar{\partial}_s u+\xi_v\bar{\partial}_s v=\bar{\partial}_s\xi=\cos\alpha_{2}^{h}\quad \mbox{on}\quad \wideparen{\mathrm{B_2G_2}}.
$$
So, by (\ref{61312}), (\ref{61311}), (\ref{61111}), (\ref{61309}), and (\ref{61310}) we have
\begin{equation}\label{61314}
\mathcal{Z}_{+}=-\frac{2\sin^2A_{2}^{h}\sin (2A_{2}^{h})}{c_{2}^{h}(\Gamma_s^1\cos\alpha_{2}^{h}+\Gamma_s^2\sin\alpha_{2}^{h})}<0\quad \mbox{on}\quad \wideparen{\mathrm{B_2G_2}},
\end{equation}
where $A_{2}^{h}=\frac{\alpha_{2}^{h}-\beta_{2}^{h}}{2}$.

From (\ref{72205}) we know that when $\tau_1^e-\tau_0$ is sufficiently small,
$$
\frac{\tau_{2}^{h} p''(\tau_{2}^{h})}{4c(\tau_{2}^{h})p'(\tau_{2}^{h})}<0
\quad \mbox{on}\quad \Sigma_2^h.
$$
Therefore,
by integrating (\ref{322}) along the forward $\Gamma_{\pm}$ characteristic curves issued from $\wideparen{\mathrm{B_2G_2}}$ and recalling (\ref{82201}) and (\ref{61314}), we obtain (\ref{61316}).
This completes the proof.
\end{proof}

Next  we shall use the sign preserving property (\ref{61316}) to prove that the hodograph transformation is one-to-one.

From (\ref{61312})--(\ref{61319}), (\ref{72205}) and (\ref{61316}) we have that when $\tau_1^e-\tau_0$ is small,
\begin{equation}\label{61321}
\hat{\partial}_{+}\xi=-\frac{c\cos\alpha}{2\sin^2A}\mathcal{Z}_{+}<0\quad \mbox{and}\quad
\hat{\partial}_{-}\xi=\frac{c\cos\beta}{2\sin^2A}\mathcal{Z}_{-}<0 \quad \mbox{in}\quad \Sigma_2^h\setminus\wideparen{\mathrm{B_2G_2}}.
\end{equation}
Combining this with (\ref{38}) we have
\begin{equation}\label{61322}
\hat{\partial}_{+}\eta<0\quad \mbox{and}\quad
\hat{\partial}_{-}\eta>0 \quad \mbox{in}\quad \Sigma_2^h\setminus\wideparen{\mathrm{B_2G_2}}.
\end{equation}
For any two points in $\Sigma_2^h\setminus\wideparen{\mathrm{B_2G_2}}$, there exist two characteristic curves with different type connecting the two points. By (\ref{61321}) and (\ref{61322}) we know that either $\xi$ or $\eta$ is monotone along the connecting path. Thus, no two points from $(u, v)$ maps to one point in the $(\xi, \eta)$-plane.
So, the map
$$
(\xi, \eta)=\big(\xi_2^{h}, \eta_2^{h}\big)(u, v),\quad (u, v)\in \Sigma_2^h
$$
has an inverse map
$$
(u, v)=(u_2, v_2)(\xi, \eta), \quad  (\xi, \eta)\in \Sigma_2,
$$
where
$$
\xi_2^{h}(u, v):=u-\frac{c(\tau_2^{h}(u, v))\cos(\sigma_2^{h}(u, v))}{\sin (A_2^{h}(u, v))},\quad
\eta_2^{h}(u, v):=v-\frac{c(\tau_2^{h}(u, v))\sin(\sigma_2^{h}(u, v))}{\sin (A_2^{h}(u, v))},
$$
and
$$
\Sigma_2:=\Big\{(\xi, \eta) ~\big|~ (\xi, \eta)=(\xi_2^{h}(u, v), \eta_2^{h}(u, v)), (u, v)\in \Sigma_2^h \Big\}.
$$

Let
$\tau_2(\xi, \eta)=\tau_2^h(u_2(\xi, \eta), v_2(\xi, \eta)).$
We have $(u_2, v_2, \tau_2)(\xi, \eta)\in C^{1}(\Sigma_2\setminus\wideparen{\mathrm{BG}})\cap C^0(\Sigma_2)$.
Let
\begin{equation}\label{82202}
(u, v, \tau)=(u_2, v_2, \tau_2)(\xi, \eta), \quad (\xi, \eta)\in \Sigma_2.
\end{equation}
Then,
the $(u, v, \tau)$ defined in (\ref{82202}) is a solution of (\ref{42501}) in $\Sigma_2\setminus\wideparen{\mathrm{BG}}$, and
$(u, v, \tau)=(u_b, v_b, c_b)$ on $\wideparen{\mathrm{BG}}$.
The domain $\Sigma_2$ is closed by $\wideparen{\mathrm{BG}}$, $\wideparen{\mathrm{BH}}$, and $\wideparen{\mathrm{GH}}$, where $\wideparen{\mathrm{BH}}$ is a $C_{+}$ characteristic curve issued from $\mathrm{B}$ and $\wideparen{\mathrm{GH}}$ is a $C_{-}$ characteristic curve issued from $G$; see Figure \ref{Fig15}(right).
From (\ref{72205}) we also have that the solution satisfies
\begin{equation}\label{72801}
\big\|\big(\alpha-(\pi+\theta),~ \beta-(\pi-\theta-2\sigma_*),~\tau-\tau_2^e\big)\big\|_{0; \Sigma_2}\rightarrow0
\quad \mbox{as}\quad \tau_1^e-\tau_0\rightarrow 0.
\end{equation}
From (\ref{6807}), (\ref{6808}), and (\ref{61316}) we have
\begin{equation}\label{102509}
\bar{\partial}_{+}\rho\mid_{\wideparen{\mathrm{BH}}}~<~0\quad \mbox{and}\quad
\bar{\partial}_{-}\rho\mid_{\wideparen{\mathrm{GH}}}~<~0.
\end{equation}
From (\ref{6808}) and (\ref{82201}) we can see that the solution also satisfies $\bar{\partial}_{-}\rho=-\infty$ on $\wideparen{\mathrm{BH}}$.

We define
$$
\wideparen{\mathrm{D_3G_3}}:=\big\{(u,v)~\big|~ (u, -v)\in \wideparen{\mathrm{B_2G_2}}\big\},\quad \Sigma_3^h:=\big\{(u, v)~\big|~ (u, -v)\in \Sigma_2^h\big\},
$$
and
$$
(\alpha_3^h, \beta_3^h, \tau_3^h)(u, v):=\big(2\pi-\beta_2^h(u, -v), 2\pi-\alpha_2^h(u, -v), \tau_2^h(u, -v)\big),\quad (u, v)\in \Sigma_3^h.
$$
Then $(\alpha, \beta, \tau)=(\alpha_3^h, \beta_3^h, \tau_3^h)(u, v)$ satisfies (\ref{52305}) in $\Sigma_3^h$ and
$$
(\alpha, \beta, \tau)=\big(2\pi-\beta_h(u, -v), 2\pi-\alpha_h(u, -v), \tau_h(u, -v)\big)\quad \mbox{for}\quad (u, v)\in \wideparen{\mathrm{D_3 G_3}},
$$

Let
$\Sigma_3:=\big\{(\xi, \eta)\mid (\xi, -\eta)\in \Sigma_2\big\}$ and $(u_3, v_3, \tau_3)(\xi, \eta)=(u_2, -v_2, \tau_2)(\xi, -\eta)$, $(\xi, \eta)\in \Sigma_3$.
Let
\begin{equation}\label{82203}
(u, v, \tau)=(u_3, v_3, \tau_3)(\xi, \eta), \quad (\xi, \eta)\in \Sigma_3.
\end{equation}
Then by the symmetry we know that the $(u, v, \tau)$ defined in (\ref{82203}) is a solution of (\ref{42501}) in $\Sigma_3\setminus\wideparen{\mathrm{DG}}$ and
$(u, v, \tau)=(u_b, -v_b, \tau_b)(\xi,-\eta)$ on $\wideparen{\mathrm{DG}}$.
The domain $\Sigma_3$ is closed by $\wideparen{\mathrm{DG}}$, $\wideparen{\mathrm{GI}}$, and $\wideparen{\mathrm{DI}}$, where $\wideparen{\mathrm{GI}}$ is a forward $C_{+}$ characteristic curve issued from $\mathrm{G}$ and $\wideparen{\mathrm{DI}}$ is a forward $C_{-}$ characteristic curve issued from $\mathrm{D}$; see Figure \ref{Fig10}.
By the symmetry, we also have
\begin{equation}\label{102510}
\bar{\partial}_{+}\rho\mid_{\wideparen{\mathrm{GI}}}~<~0\quad \mbox{and}\quad
\bar{\partial}_{-}\rho\mid_{\wideparen{\mathrm{DI}}}~<~0.
\end{equation}
By the symmetry we also have
\begin{equation}\label{72802}
\big\|\big(\alpha-(\pi+\theta+2\sigma_*),~ \beta-(\pi-\theta),~\tau-\tau_2^e\big)\big\|_{0; \Sigma_3}\rightarrow0
\quad \mbox{as}\quad \tau_1^e-\tau_0\rightarrow 0.
\end{equation}


From (\ref{72801}) and (\ref{72802}) we also have that when $\tau_1^e-\tau_0$ is sufficiently small, $\tau>\tau_2^i$ on $\Sigma_2\cup \Sigma_3$.

\subsubsection{\bf Non-existence of a transonic shock from $\mathrm{B}$}
In this part we shall show that
${\it S}_{_\mathrm{B}}$ can not be a transonic shock. We are going to prove this by the method of contradiction.

Suppose ${\it S}_{_\mathrm{B}}$ is a smooth transonic shock.
Then ${\it S}_{_\mathrm{B}}\setminus\{\mathrm{B}\}\in \Sigma^1$ and for any point on ${\it S}_{_\mathrm{B}}\setminus\mathrm{B}$, the backward $C_{+}$ characteristic curve issued from this point stays in the angular domain between  $\wideparen{\mathrm{BE}}$ and ${\it S}_{_\mathrm{B}}$ until it intersects $\wideparen{\mathrm{BE}}$ at some point; see Figure \ref{Fig0}.
Suppose furthermore the flow between ${\it S}_{_\mathrm{B}}$ and $\wideparen{\mathrm{BE}}$ is smooth.

As in (\ref{61109}) we know that when $\tau_1^e-\tau_0$ is sufficiently small,
\begin{equation}\label{250108}
\bar{\partial}_s \chi>\frac{\tau_1^e\sin2\theta}{4}\mathcal{L}_1\quad \mbox{at}\quad \mathrm{B}.
\end{equation}

As in (\ref{82006}), one has
\begin{equation}\label{42801a}
\begin{aligned}
&\cos\beta_b \bar{\partial}_s u_b+\sin\beta_b \bar{\partial}_s v_b\\~=~&
\cos A_b \bar{\partial}_s q_b-q_b\sin A_b \bar{\partial}_s \sigma_b+\cos\beta_b\cos\chi+\sin\beta_b\sin\chi\quad \mbox{on}\quad {\it S}_{_\mathrm{B}}.
\end{aligned}
\end{equation}
Inserting (\ref{42803a}) and  (\ref{42804a}) into  (\ref{42801a}), we get
\begin{equation}\label{250405}
\begin{aligned}
&\cos\beta_b \bar{\partial}_s u_b+\sin\beta_b \bar{\partial}_s v_b\\=~&
\cos A_b\left(-\tau_bq_b\sin^2 A_b\bar{\partial}_s \rho_b-\frac{L_b}{q_b}\right)
-q_b\sin A_b\left(\bar{\partial}_s \chi-\frac{\bar{\partial}_s m}{\rho_bL_b}+\frac{N_b\cos^2 A_b}{\rho_bL_b}\bar{\partial}_s\rho_b-\frac{N_b}{q_b^2}\right)\\&+\cos\beta_b\cos\chi+\sin\beta_b\sin\chi\\=~&
-q_b\sin A_b \left(\bar{\partial}_s \chi-\frac{\bar{\partial}_s m }{\rho_bL_b}\right)-
 \frac{q_b\sin A_b\cos A_b}{\rho_b L_b}\left(N_b\cos A_b+L_b\sin A_b\right)\bar{\partial}_s \rho_b\\&+\cos\beta_b\cos\chi+\sin\beta_b\sin\chi-\frac{L_b\cos A_b-N_b\sin A_b}{q_b} \quad \mbox{on}\quad {\it S}_{_\mathrm{B}}.
\end{aligned}
\end{equation}

\begin{figure}[htbp]
\begin{center}
\includegraphics[scale=0.39]{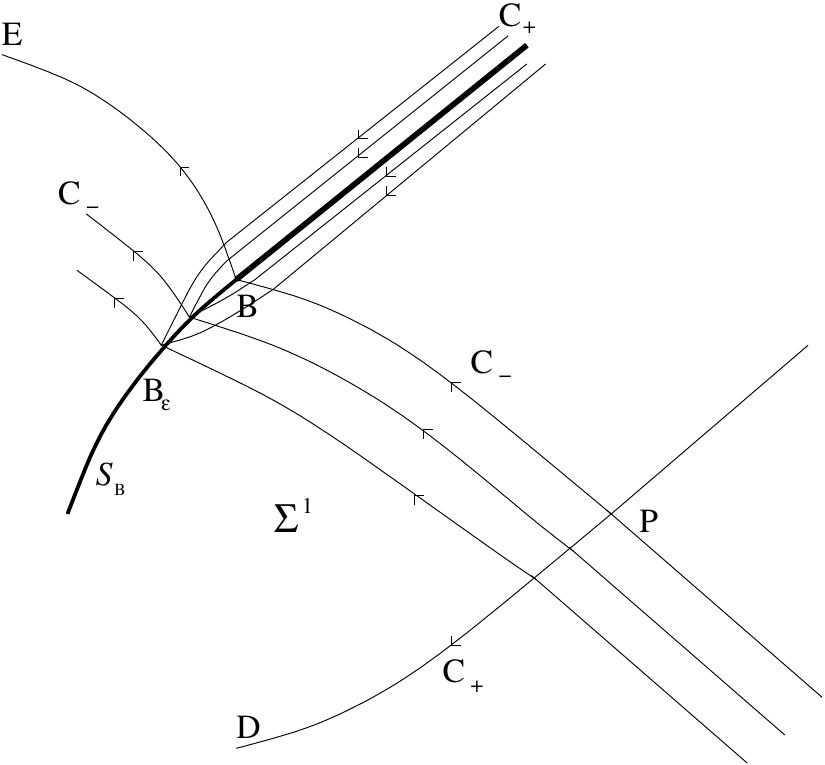}\qquad \qquad \qquad \includegraphics[scale=0.43]{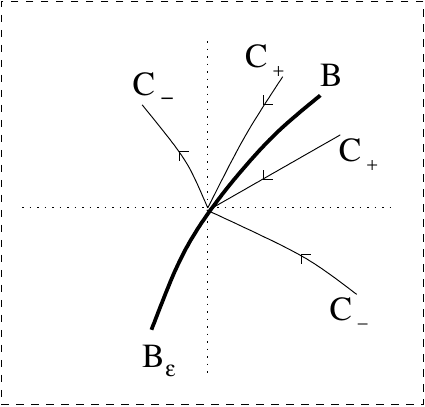}
\caption{\footnotesize An impossible transonic shock from $\mathrm{B}$.}
\label{Fig0}
\end{center}
\end{figure}

For a given small $\varepsilon>0$, we let $\mathrm{B}_{\varepsilon}$ be the point on ${\it S}_{_\mathrm{B}}$ such that $\dist(\mathrm{B}, \mathrm{B}_{\varepsilon})=\varepsilon$.
When $\varepsilon$ is sufficiently small, we have $\tau_1^e\leq \tau_1<\tau_1^i$ on  $\wideparen{\mathrm{BB_\varepsilon }}$. Since the shock curve $\wideparen{\mathrm{BB_\varepsilon }}$ is assumed to be transonic, we use Propositions \ref{pre} and \ref{tran}
to obtain
$\tau_b<\tau_2^e$ on $\wideparen{\mathrm{BB_\varepsilon}}$.
This implies
\begin{equation}\label{250406}
\bar{\partial}_{s}\rho_b\geq 0 \quad \mbox{at}\quad \mathrm{B}.
\end{equation}

Combining (\ref{250401}), (\ref{250108}), (\ref{250405}), and (\ref{250406}) we have
\begin{equation}\label{250105}
\begin{aligned}
&\cos\beta_b \bar{\partial}_s u_b+\sin\beta_b \bar{\partial}_s v_b\\=~&
-q_b\sin A_b \bar{\partial}_s \chi-
 \frac{q_b\sin A_b\cos A_b}{\rho_b L_b}\left(N_b\cos A_b+L_b\sin A_b\right)\bar{\partial}_s \rho_b\\<~&-N_b\bar{\partial}_s \chi<-\frac{c(\tau_2^e)\tau_1^e\sin2\theta}{3}\mathcal{L}_1\quad \mbox{at}\quad \mathrm{B}.
\end{aligned}
\end{equation}

Integrating the first equation of (\ref{81104}) along $C_{+}$ characteristic curves issued from $\wideparen{\mathrm{BE}}$ and recalling (\ref{62501}), (\ref{11}), and (\ref{72804}), we know that when $\varepsilon>0$ is sufficiently small, $\bar{\partial}_- u_b$ and $\bar{\partial}_- v_b$ are bounded on $\wideparen{\mathrm{B_\varepsilon B}}\setminus \mathrm{B}$.
So, by (\ref{250401}) and
(\ref{250105}) we know that when $\varepsilon>0$ is sufficiently small,
\begin{equation}\label{250403}
\begin{aligned}
&\cos\beta_b \bar{\partial}_+ u_b+\sin\beta_b \bar{\partial}_+ v_b
\\~=~&\frac{\sin (2A_b)}{\sin(\chi-\beta_b)}(\cos\beta_b \bar{\partial}_s u_b+\sin\beta_b \bar{\partial}_s v_b)\\&-
\frac{\sin (\alpha_b-\chi)}{\sin(\chi-\beta_b)}(\cos\beta_b \bar{\partial}_- u_b+\sin\beta_b \bar{\partial}_- v_b)\neq 0\quad \mbox{on}\quad \wideparen{\mathrm{B_\varepsilon B}}\setminus \mathrm{B}.
\end{aligned}
\end{equation}

Since $\wideparen{\mathrm{BB_\varepsilon}}$ is a non-characteristic curve of the flow behind it.
By the existence and uniqueness of classical solutions for the Cauchy problems of strictly hyperbolic systems, the flow behind $\wideparen{\mathrm{BB_\varepsilon}}$ is smooth and satisfies the characteristic equations (\ref{form}). While, by (\ref{250403}) we know that the equation $\bar{\partial}_{+}u+\lambda_{-}\bar{\partial}_{+}v
  =0$ does not hold on $\wideparen{\mathrm{B_\varepsilon B}}$. This leads to a contraction. So, the shock wave from $\mathrm{B}$ can not be a transonic shock.

\subsubsection{\bf Non-existence of a pre-sonic shock from $\mathrm{B}$}
In this part we shall show that
${\it S}_{_\mathrm{B}}$ can not be a pre-sonic shock.

Suppose that ${\it S}_{_\mathrm{B}}$ is pre-sonic.
 Then ${\it S}_{_\mathrm{B}}$  lies outside the domain $\Sigma^1$, and for any point on ${\it S}_{_\mathrm{B}}\setminus\mathrm{B}$, the backward $C_{+}$ characteristic curve issued from this point stays in the angular domain between  $\wideparen{\mathrm{BE}}$ and ${\it S}_{_\mathrm{B}}$ until it intersects $\wideparen{\mathrm{BE}}$ at some point; see Figure \ref{Fig00}.
Suppose furthermore that the flow between ${\it S}_{_\mathrm{B}}$ and $\wideparen{\mathrm{BE}}$ is smooth.

\begin{figure}[htbp]
\begin{center}
\includegraphics[scale=0.38]{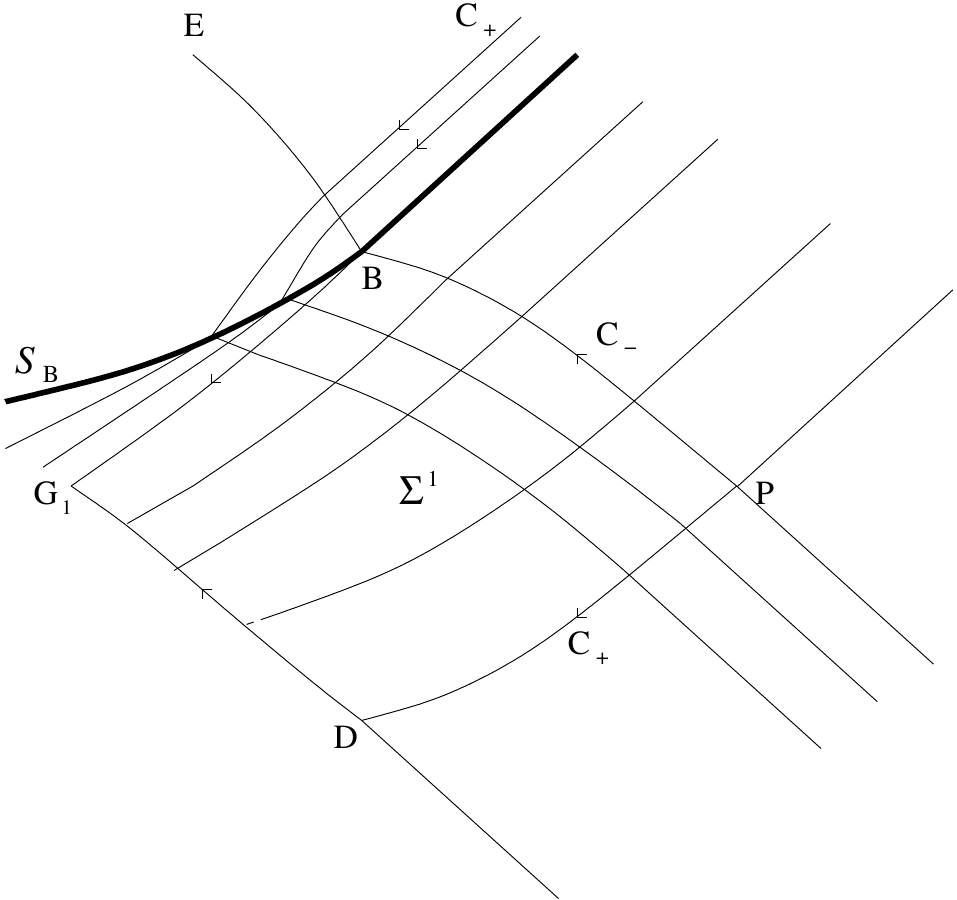}
\caption{\footnotesize An impossible pre-sonic shock from $\mathrm{B}$.}
\label{Fig00}
\end{center}
\end{figure}
We denote by
$\wideparen{\mathrm{P_hD_h}}$ and $\wideparen{\mathrm{P_hB_h}}$ the images of $\wideparen{\mathrm{PD}}$ and $\wideparen{\mathrm{PB}}$ on the $(u, v)$-plane, respectively.
Obviously, we have
$$
\mathcal{Z}_{+}\mid_{\wideparen{\mathrm{P_hD_h}}}~=~\frac{\tau p''(\tau)\tan A}{2cp'(\tau)}<0\quad\mbox{and}\quad \mathcal{Z}_{-}\mid_{\wideparen{\mathrm{P_hB_h}}}~=~-\frac{\tau p''(\tau)\tan A}{2cp'(\tau)}>0.
$$
Integrating $\hat{\partial}_{-}\mathcal{Z}_{+}+\mathcal{W}\mathcal{Z}_{+}=\displaystyle\frac{\tau p''(\tau)}{4cp'(\tau)}(\tan^2 A+1)\mathcal{Z}_{-}$ along $\wideparen{\mathrm{P_hB_h}}$ from $\mathrm{P_h}$ to $\mathrm{B_h}$,  we get $\mathcal{Z}_{+}(\mathrm{B_h})<0$.
Here, $\mathrm{P_h}=(0, 0)$ and $\mathrm{B_h}=(\sin\theta, -\cos\theta)\hat{u}_{r}(\hat{\xi}_1)$, where the function $\hat{u}_{r}(\hat{\xi})$ is defined in Sec. 1.4.
As in (\ref{61314}), we have
$$
\mathcal{Z}_{+}(\mathrm{B_h})=-\frac{2\sin^2 A_f \sin (2A_f)}{c_1^e (\cos\alpha_f\partial_su_f+\sin\alpha_f\partial_sv_f)}\Bigg|_{\mathrm{B}}.
$$
Thus, one has
$$
\cos\alpha_f\partial_su_f+\sin\alpha_f\partial_sv_f>0\quad \mbox{at}\quad \mathrm{B}.
$$

We compute the directional derivatives of the front side states of ${\it S}_{_\mathrm{B}}$ and use (\ref{102002}) and (\ref{250402}) to get
$$
\begin{aligned}
0<&\cos\alpha_f \bar{\partial}_s u_f+\sin\alpha_f \bar{\partial}_s v_f\\=~&
\cos A_f\left(-\tau_fq_f\sin^2 A_f\bar{\partial}_s \rho_f-\cos A_f\right)\\&
+q_f\sin A_f\left(\bar{\partial}_s \chi-\frac{\bar{\partial}_s m}{\rho_fL_f}+\frac{\sin A_f\cos A_f}{\rho_f}\bar{\partial}_s\rho_f-\frac{N_f}{q_f^2}\right)+1\\=~&
q_f\sin A_f \bar{\partial}_s \chi\quad \mbox{at}\quad \mathrm{B}.
\end{aligned}
$$

Since the shock is assumed to be pre-sonic, by the Liu's extended condition we have $\bar{\partial}_{+}\rho_b\geq 0$ at $\mathrm{B}$.
Then, we compute  the directional derivatives of the backside states of ${\it S}_{_\mathrm{B}}$ to get
$$
\begin{aligned}
\bar{\partial}_s \chi&=\frac{\bar{\partial}_s m}{\rho_bL_b}-\frac{N_b}{\rho_bL_bq_b^2}(q_b^2+\tau_b^2 p'(\tau_b))\bar{\partial}_s\rho_b+\frac{N_b}{q_b^2}+\bar{\partial}_s\sigma_b\\&=
-\frac{N_b}{\rho_bL_bq_b^2}(q_b^2+\tau_b^2 p'(\tau_b))\bar{\partial}_{+}\rho_b+\frac{N_b}{q_b^2}+\bar{\partial}_{+}\Big(\frac{\alpha_b+\beta_b}{2}\Big)
\\&=-\left(\frac{\sin 2A_b}{2\rho_b}+ \frac{p''}{4c_b^2\rho_b^4}(1+\tan^2A_b)\sin2A_b\right)\bar{\partial}_{+}\rho_b\leq0\quad \mbox{at}\quad \mathrm{B}.
\end{aligned}
$$
This leads to a contradiction. So, the shock ${\it S}_{_\mathrm{B}}$ can not be pre-sonic.

Similarly, one can prove that  the shock ${\it S}_{_\mathrm{B}}$ can not be double-sonic.

\subsection{Discontinuous Goursat-type problem}
\subsubsection{\bf Discontinuous Goursat-type problem}
In order to obtain the flow between $\wideparen{\mathrm{GH}}$ and $\wideparen{\mathrm{GI}}$,
we consider (\ref{42501}) with data
\begin{equation}\label{61602}
(u, v, \tau)=\left\{
               \begin{array}{ll}
                 (u_2, v_2, \tau_2)(\xi, \eta), & \hbox{$(\xi, \eta)\in\wideparen{\mathrm{GH}}$;} \\[3pt]
                 (u_3, v_3, \tau_3)(\xi, \eta), & \hbox{$(\xi, \eta)\in\wideparen{\mathrm{GI}}$;}
               \end{array}
             \right.
\end{equation}
see Figure \ref{Fig16} (left).

\begin{figure}[htbp]
\begin{center}
\includegraphics[scale=0.46]{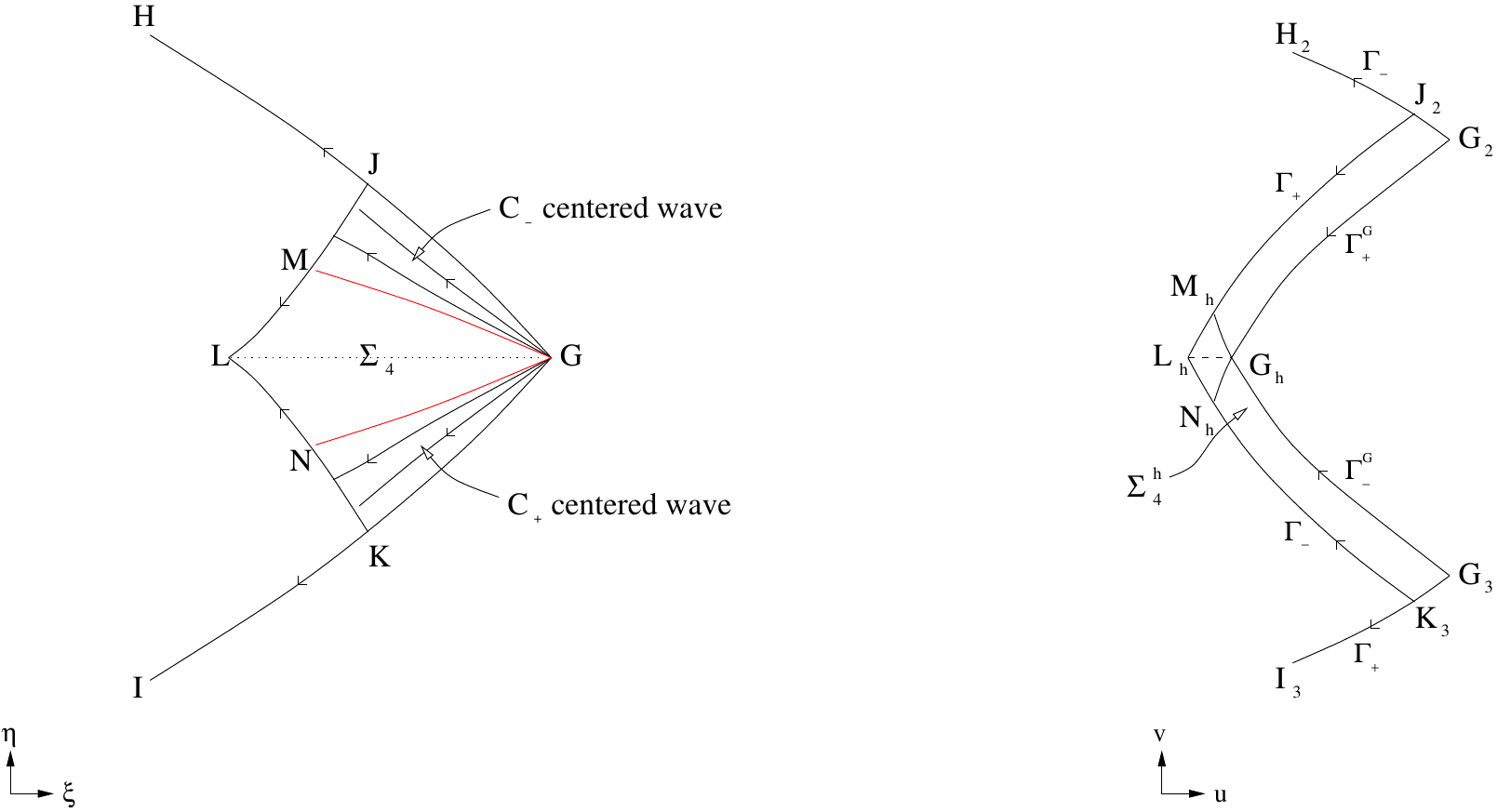}
\caption{\footnotesize The DGP (\ref{42501}, \ref{61602}). Left: $(\xi,\eta)$-plane; right: hodograph plane.}
\label{Fig16}
\end{center}
\end{figure}

Let $\xi_{_\mathrm{G}}=\xi_s(0)$, where the function $\xi_{s}(\eta)$ is defined in (\ref{61102}).
Then by the result of Section 4.3.1 we know that the coordinate of the point $\mathrm{G}$ is $(\xi_{_\mathrm{G}}, 0)$.
For the convenience of the following discussion,
we define the following constants:
\begin{equation}\label{61601}
(u_g, v_g, \tau_g):=(u_2, v_2, \tau_2)(\xi_{_G}, 0), \quad
q_{g}:=\sqrt{\big(u_g-\xi_{_G}\big)^{2}+v_g^{2}},\quad \sigma_{g}:=\arctan\left|\frac{v_g}{u_g-\xi_{_G}}\right|.
\end{equation}
From (\ref{72801}) one sees that $\tau_g>\tau_2^i$ as $\tau_1^e-\tau_0$ is sufficiently small.

By the definition of the function $\xi_s(\eta)$, one knows $\xi_{_\mathrm{G}}\rightarrow c_1^e\csc\theta$ as $\tau_1^e-\tau_0\rightarrow 0$.
Meanwhile, in view of (\ref{72203}) and the definition of $u_*$, we know that
$v_g>0$ and $u_g<\xi_{_\mathrm{G}}$ as $\tau_1^e-\tau_0$ is sufficiently small.
This implies
\begin{equation}\label{82303}
\sigma_2(\mathrm{G})=\pi-\sigma_{g}\quad \mbox{and}\quad
\sigma_3(\mathrm{G})=\pi+\sigma_{g}.
\end{equation}
From (\ref{72203}) we also have
\begin{equation}\label{72803}
(q_g, \tau_g, \sigma_{g})\rightarrow (q_*, \tau_2^e, \sigma_*)\quad \mbox{as}\quad \tau_1^e-\tau_0\rightarrow 0.
\end{equation}

From $v_2(\mathrm{G})=-v_3(\mathrm{G})$ and $v_2(\mathrm{G})>0$, one sees that
the problem (\ref{42501}, \ref{61602}) is a discontinuous Goursat-type boundary value problem (DGP).
This problem can be also seen as a generalized Riemann problem proposed in Li and Yu \cite{Li-Yu}.
Actually, due to $(u_2, v_2, \tau_2)(\mathrm{G})=(u_3, -v_3, \tau_3)(\mathrm{G})$ and $v_2(\mathrm{G})>0$, there will be two centered waves issued from the point $\mathrm{G}$ and the problem (\ref{42501}, \ref{61602}) admits a local shock-free piecewise smooth solution.

\subsubsection{\bf Centered waves}
For the DGP (\ref{42501}, \ref{61602}), there is a multi-valued singularity at the point $\mathrm{G}$. So, we need to use the concept of centered waves for first order hyperbolic systems (see Li and Yu \cite{Li-Yu}, pp. 188-190).
We present the definition of centered waves for the system (\ref{42501}).
\begin{defn}\label{defn2}
  Let $\Delta(t)$ be an angular domain with curved boundaries:
\begin{equation}
\Delta(t):=\big\{(\xi,\eta)\mid \xi_{_\mathrm{G}}-t\leq \xi\leq \xi_{_\mathrm{G}},~ \eta_{2}(\xi)\leq \eta\leq \eta_{1}(\xi)\big\},
\end{equation}
where
$\eta_{1}(\xi_{_G})=\eta_{2}(\xi_{_\mathrm{G}})=0$
and
$\eta_{1}'(\xi_{_G})<\eta_{2}'(\xi_{_\mathrm{G}})$; see Figure \ref{Fig17}.

A function $(u, v, \tau)(\xi,\eta)$ is called a $C_{+}$  centered wave solution of (\ref{42501}) with $\mathrm{G}$ as the center point if the following properties are satisfied:
\begin{enumerate}
  \item $(u, v, \tau)$ can be implicitly determined by the functions
$\eta=j(\xi,\nu)$ and
$(u, v, \tau)=(\hat{u},\hat{v}, \hat{\tau})(\xi,\nu)$
defined on a rectangular domain
$T(t):=\{(\xi,\nu)\mid \xi_{_\mathrm{G}}-t\leq \xi\leq\xi_{_\mathrm{G}}, \nu_1\leq \nu\leq \nu_2\}$ where $\nu_1=\eta_{1}'(\xi_{_\mathrm{G}})$ and $\nu_2=\eta_{2}'(\xi_{_\mathrm{G}})$.
Moreover, $j$ and $(\hat{u}, \hat{v}, \hat{\tau})$ belong to $C^{1}(T(t))$, and for any $(\xi,\nu)\in T(t)\setminus\{\xi=\xi_{_\mathrm{G}}\}$ there holds
$j_{\nu}(\xi,\nu)<0$.
\item The function $(u, v, \tau)(\xi,\eta)$ defined above satisfies (\ref{42501}) on $\Delta(t)\setminus \mathrm{G}$.
\item For any fixed $\nu\in[\nu_1,\nu_2]$, $\eta=j(\xi,\nu)$ gives a $C_{+}$ characteristic curve issued from $\mathrm{G}$ with the slope $\nu$ at $\mathrm{G}$, i.e.,
\begin{equation}\label{pgz1}
j_{\xi}(\xi, \nu)=\hat{\lambda}_{+}(\xi, \nu),\quad
    j(\xi_{_\mathrm{G}},\nu)=0,
\quad
j_{\xi}(\xi_{_\mathrm{G}},\nu)=\nu.
\end{equation}
Here, $\hat{\lambda}_{\pm}(\xi, \nu)=\lambda_{\pm}\big(\hat{u}(\xi, \nu)-\xi, \hat{v}(\xi, \nu)-j(\xi, \nu), \hat{c}(\xi, \nu)\big)$ and $\hat{c}(\xi, \nu)=c(\hat{\tau}(\xi, \nu))$.
\item $\nu=\nu_1$ and $\nu=\nu_2$ correspond to $\eta=\eta_1(\xi)$ and $\eta=\eta_2(\xi)$, respectively.
\end{enumerate}
Let $(\widetilde{u}_{+},\widetilde{v}_{+}, \widetilde{\tau}_{+})(\nu)=(\hat{u},\hat{v}, \hat{\tau})(\xi_{_\mathrm{G}},\nu)$, $\nu_1\leq \nu\leq \nu_2$.
 Then, $(u, v, \tau)=(\widetilde{u}_{+},\widetilde{v}_{+}, \widetilde{\tau}_{+})(\nu)$ is called the principal part of this $C_{+}$ centered wave and $\nu_2-\nu_1$ the amplitude of the centered wave.
\end{defn}

\begin{figure}[htbp]
\begin{center}
\includegraphics[scale=0.55]{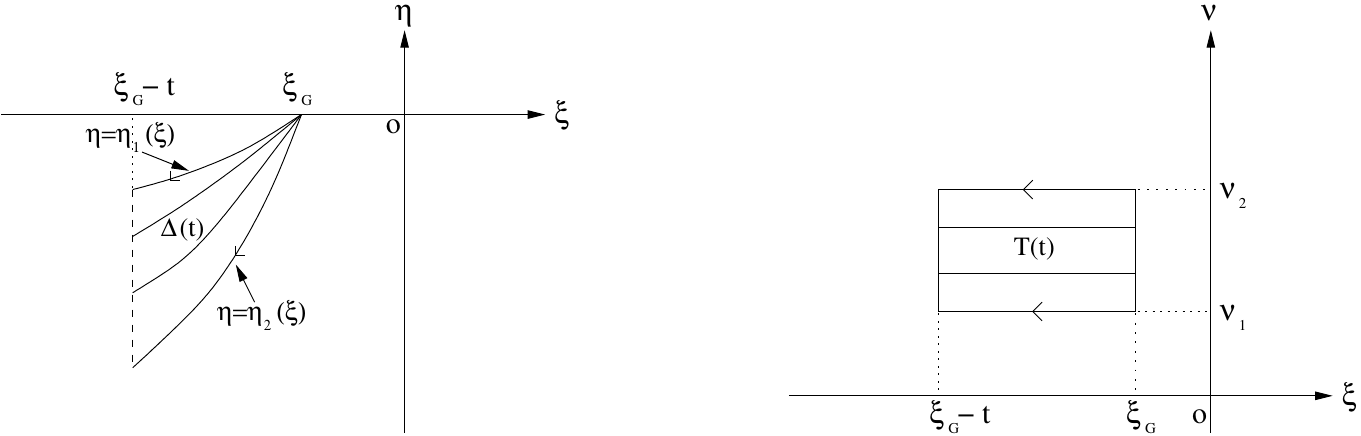}
\caption{\footnotesize A $C_{+}$ type centered wave.}
\label{Fig17}
\end{center}
\end{figure}

By the symmetry, we can define a $C_{-}$ centered wave of (\ref{42501}) with $\mathrm{G}$ as the center point.

\subsubsection{\bf Principal parts of the centered waves}
In order to use the hodograph transformation method to construct a local solution to the DGP (\ref{42501}, \ref{61602}), we need to construct the principal parts of the $C_{\pm}$ centered waves issued from the point $\mathrm{G}$.

In view of the values of $\alpha$ and $\beta$ on $\wideparen{\mathrm{GH}}\cup\wideparen{\mathrm{GI}}$, we
set
\begin{equation}\label{1802}
\hat{\alpha}(\xi, \nu)=\pi+\arctan\big(\hat{\lambda}_{+}(\xi, \nu)\big),\quad
\hat{\beta}(\xi, \nu)=\pi+\arctan\big(\hat{\lambda}_{-}(\xi, \nu)\big).
\end{equation}
Thus,
from (\ref{pgz1}) we  have that for the $C_{+}$ centered wave defined above,
\begin{equation}
\frac{\partial j(\xi,\nu)}{\partial \nu}=\int_{\xi_{_\mathrm{G}}}^{\xi}\sec^{2}(\hat{\alpha}
(\xi,\nu))\frac{\partial \hat{\alpha}(\xi, \nu)}{\partial \nu} d\xi.
\end{equation}

Introducing the transformation $\xi=\xi$, $\eta=j(\xi, \nu)$,  we have
\begin{equation}\label{transd}
\frac{\partial}{\partial \xi}= \frac{\partial}{\partial \xi}-\frac{\partial j}{\partial \xi} \Big(\frac{\partial j}{\partial \nu}\Big)^{-1}\frac{\partial}{\partial \nu}\quad\mbox{and} \quad
\frac{\partial}{\partial \eta}= \Big(\frac{\partial j}{\partial \nu}\Big)^{-1}\frac{\partial}{\partial \nu}.
\end{equation}
Inserting this into (\ref{42501}) and (\ref{5802}) and letting $\xi\rightarrow\xi_{_\mathrm{G}}$, we know that the principal part of the $C_{+}$ centered wave satisfies
\begin{equation}\label{principal}
\left\{
  \begin{array}{ll}
   \displaystyle -\nu\widetilde{\rho}_{+}\frac{{\rm d} \widetilde{U}_{+}}{{\rm d} \nu}-
\nu\widetilde{U}_{+}\frac{{\rm d} \widetilde{\rho}_{+}}{{\rm d} \nu} +\widetilde{\rho}_{+}\frac{{\rm d} \widetilde{V}_{+}}{{\rm d} \nu}+
\widetilde{V}_{+}\frac{{\rm d} \widetilde{\rho}_{+}}{{\rm d} \nu}=0,\\[6pt]
 \displaystyle \frac{{\rm d} \widetilde{U}_{+}}{{\rm d}\nu}+\nu\frac{{\rm d} \widetilde{V}_{+}}{{\rm d}\nu}=0,
  \end{array}
\right.
\end{equation}
and
\begin{equation}\label{principal1}
 \frac{\widetilde{U}_{+}^2+\widetilde{V}_{+}^2}{2}+h(\widetilde{\tau}_{+})+\varphi(\mathrm{G})=0,
\end{equation}
where $\widetilde{U}_{+}=\widetilde{u}_{+}(\nu)-\xi_{_\mathrm{G}}$, $\widetilde{V}_{+}=\widetilde{v}_{+}(\nu)$, and $\widetilde{c}_{+}(\nu)=c(\widetilde{\tau}_{+}(\nu))$.

Inserting (\ref{transd}) into (\ref{82301})--(\ref{8}) and letting $\xi\rightarrow\xi_{_\mathrm{G}}$, we also have
\begin{equation}\label{8304}
\widetilde{c}_{+}\frac{{\rm d}  \widetilde{\alpha}_{+}}{{\rm d} \nu}=\frac{p''(\widetilde{\tau}_{+})}{2\widetilde{c}_{+}\widetilde{\rho}_{+}^4}\tan \widetilde{A}_{+}\frac{{\rm d}  \widetilde{\rho}_{+}}{{\rm d} \nu}
\end{equation}
and
\begin{equation}\label{8303}
\widetilde{c}_{+}\frac{{\rm d}  \widetilde{\beta}_{+}}{{\rm d} \nu}=\frac{p''(\widetilde{\tau}_{+})}{4\widetilde{c}_{+}\widetilde{\rho}_{+}^4}
\big(\varpi(\widetilde{\tau}_{+})-\tan^2\widetilde{A}_{+}\big)\sin2 \widetilde{A}_{+}\frac{{\rm d}  \widetilde{\rho}_{+}}{{\rm d} \nu},
\end{equation}
where $(\widetilde{\alpha}_{+},\widetilde{\beta}_{+})(\nu)=(\hat{\alpha}, \hat{\beta})(\xi_{_\mathrm{G}},\nu)$ and $\widetilde{A}_{+}=\frac{\widetilde{\alpha}_{+}-\widetilde{\beta}_{+}}{2}$.

From (\ref{pgz1}) and (\ref{1802}) we have
\begin{equation}\label{82501}
\widetilde{\alpha}_{+}(\nu)=\pi+\arctan\nu.
\end{equation}
  Thus, by (\ref{8304}) we have
\begin{equation}\label{120601}
 \frac{{\rm d}\widetilde{\rho}_{+}}{{\rm d} \nu}>0\quad \mbox{if}\quad \widetilde{\tau}_{+}>\tau_2^i.
\end{equation}
In addition, we have
\begin{equation}\label{82502}
\widetilde{c}_{+}\frac{{\rm d}  \widetilde{\sigma}_{+}}{{\rm d} \nu}=\frac{p''(\widetilde{\tau}_{+})}{8\widetilde{c}_{+}\widetilde{\rho}_{+}^4}
\Big[\big(\varpi(\widetilde{\tau}_{+})-\tan^2\widetilde{A}_{+}\big)\sin2 \widetilde{A}_{+}+2\tan \widetilde{A}_{+}\Big]\frac{{\rm d}  \widetilde{\rho}_{+}}{{\rm d} \nu}>0 \quad \mbox{if}\quad \widetilde{\tau}_{+}>\tau_2^i,
\end{equation}
where $\widetilde{\sigma}_{+}=\frac{\widetilde{\alpha}_{+}+\widetilde{\beta}_{+}}{2}$.

Let $(\widetilde{u}_{-},\widetilde{v}_{-}, \widetilde{\tau}_{-})(\nu)$ be the principal part of a $C_{-}$ centered wave
with $\mathrm{G}$ as the centered point.
Similarly, we have
\begin{equation}\label{82306}
\left\{
  \begin{array}{ll}
   \displaystyle -\nu\widetilde{\rho}_{-}\frac{{\rm d} \widetilde{U}_{-}}{{\rm d} \nu}-
\nu\widetilde{U}_{-}\frac{{\rm d} \widetilde{\rho}_{-}}{{\rm d} \nu} +\widetilde{\rho}_{-}\frac{{\rm d} \widetilde{V}_{-}}{{\rm d} \nu}+
\widetilde{V}_{-}\frac{{\rm d} \widetilde{\rho}_{-}}{{\rm d} \nu}=0,\\[6pt]
 \displaystyle \frac{{\rm d} \widetilde{U}_{-}}{{\rm d}\nu}+\nu\frac{{\rm d} \widetilde{V}_{-}}{{\rm d}\nu}=0,
  \end{array}
\right.
\end{equation}
and
\begin{equation}\label{82307}
 \frac{\widetilde{U}_{-}^2+\widetilde{V}_{-}^2}{2}+h(\widetilde{\tau}_{-})+\varphi(\mathrm{G})=0,
\end{equation}
where $\widetilde{U}_{-}=\widetilde{u}_{-}(\nu)-\xi_{_G}$, $\widetilde{V}_{-}=\widetilde{v}_{-}(\nu)$, and $\widetilde{c}_{-}(\nu)=c(\widetilde{\tau}_{-}(\nu))$.
Moreover, from (\ref{7}) and (\ref{10}) we have
\begin{equation}\label{8306}
\widetilde{c}_{-}\frac{{\rm d}  \widetilde{\beta}_{-}}{{\rm d} \nu}=-\frac{p''(\widetilde{\tau}_{-})}{2\widetilde{c}_{-}\widetilde{\rho}_{-}^4}\tan \widetilde{A}_{-}\frac{{\rm d}  \widetilde{\rho}_{-}}{{\rm d} \nu}
\end{equation}
and
\begin{equation}\label{8305}
\widetilde{c}_{-}\frac{{\rm d}  \widetilde{\alpha}_{-}}{{\rm d} \nu}=-\frac{p''(\widetilde{\tau}_{-})}{4\widetilde{c}_{-}\widetilde{\rho}_{-}^4}
\big(\varpi(\widetilde{\tau}_{-})-\tan^2\widetilde{A}_{-}\big)\sin2 \widetilde{A}_{-}\frac{{\rm d}  \widetilde{\rho}_{-}}{{\rm d} \nu},
\end{equation}
where  $\widetilde{A}_{-}=\frac{\widetilde{\alpha}_{-}-\widetilde{\beta}_{-}}{2}$.
As in (\ref{82501}) we have $\widetilde{\beta}_{-}(\nu)=\pi+\arctan\nu$.
So, by (\ref{8306}) we have
\begin{equation}\label{010901}
 \frac{{\rm d}\widetilde{\rho}_{-}}{{\rm d} \nu}<0\quad \mbox{if}\quad \widetilde{\tau}_{-}>\tau_2^i.
\end{equation}

From (\ref{823301}), (\ref{52204}), and the second equation of (\ref{principal}) we know that
the images of the principal part of a $C_{+}$  centered wave in the $(u, v)$-plane is a $\Gamma_{-}$ characteristic curve.
Similarly, the images of the principal part of a $C_{-}$  centered wave in the $(u, v)$-plane is a $\Gamma_{+}$ characteristic curve.  The $\Gamma_{\pm}$ curves are actually the epicycloids of the 2D isentropic irrotational steady flow for polytropic gases; see Courant and Friedrichs \cite{CaF}.
In what follows, we are going to construct the principal parts of the $C_{\pm}$ centered waves of the DGP (\ref{42501}, \ref{61602}).

Since the outer border of the $C_{+}$ centered wave to the DGP (\ref{42501}, \ref{61602}) is $\wideparen{\mathrm{GI}}$, we
set
 \begin{equation}\label{82904}
\nu_2=\tan(\alpha_3(\mathrm{G}))
\end{equation}
and
\begin{equation}\label{82304}
(\widetilde{u}_{+},\widetilde{v}_{+}, \widetilde{\rho}_{+})(\nu_2)=(u_g, -v_g, \rho_g),
\end{equation}
where $\rho_g=1/\tau_g$.
In order to obtain the principal part of the $C_{+}$ centered wave to the  problem (\ref{42501}, \ref{61602}), we  consider (\ref{principal})--(\ref{principal1}) with  initial data (\ref{82304}).

\begin{lem}
When $\tau_1^e-\tau_0$ is sufficiently small, there exists a unique $\nu_1\in (0, \nu_2)$ such that the initial value problem  (\ref{principal})--(\ref{principal1}), (\ref{82304}) admits a solution on $[\nu_1, \nu_2]$ and the solution satisfies $\widetilde{v}_{+}(\nu_1)=0$,
$\widetilde{\rho}_{+}(\nu_1)>0$, and $\widetilde{v}_{+}'(\nu)<0$ and $\widetilde{\rho}_{+}'(\nu)>0$ for $\nu\in (\nu_1, \nu_2]$.
\end{lem}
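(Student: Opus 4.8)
The plan is to reduce the initial value problem (\ref{principal})--(\ref{principal1}), (\ref{82304}) to a scalar ODE for $\widetilde{\rho}_{+}$ and then to follow the solution as $\nu$ decreases from $\nu_2$. By the pseudo-Bernoulli relation (\ref{principal1}) and (\ref{210cqo}), the quantities $q_{+}:=\sqrt{\widetilde{U}_{+}^{2}+\widetilde{V}_{+}^{2}}$, $\widetilde{c}_{+}$ and $\widetilde{A}_{+}$ become functions of $\widetilde{\tau}_{+}$ alone as long as the flow stays supersonic, and (\ref{82501}) gives $\widetilde{\alpha}_{+}=\pi+\arctan\nu$; hence (\ref{8304}) takes the form
\[
\frac{{\rm d}\widetilde{\rho}_{+}}{{\rm d}\nu}
=\frac{2\widetilde{c}_{+}^{2}\widetilde{\rho}_{+}^{4}}{p''(\widetilde{\tau}_{+})\tan\widetilde{A}_{+}\,(1+\nu^{2})},
\qquad \widetilde{\rho}_{+}(\nu_2)=\rho_g ,
\]
while the other quantities are recovered algebraically by $\widetilde{\sigma}_{+}=\pi+\arctan\nu-\widetilde{A}_{+}$, $\widetilde{u}_{+}=\xi_{_\mathrm{G}}+q_{+}\cos\widetilde{\sigma}_{+}$, $\widetilde{v}_{+}=q_{+}\sin\widetilde{\sigma}_{+}$ and $\widetilde{\tau}_{+}=1/\widetilde{\rho}_{+}$; one checks directly that the reconstructed state satisfies (\ref{principal}) and the companion relation (\ref{8303}).

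First I would verify that the Cauchy data lies in the good regime: by (\ref{72801}) and (\ref{72803}) (together with $\tau_g>\tau_2^i$ for $\tau_1^e-\tau_0$ small), we have $\widetilde{\tau}_{+}(\nu_2)=\tau_g>\tau_2^i$ and the state is supersonic, so $\widetilde{A}_{+}(\nu_2)\in(0,\tfrac{\pi}{2})$; moreover $\nu_2=\tan(\alpha_3(\mathrm{G}))$ (see (\ref{82904})) tends to $\tan(\theta+2\sigma_*)$ as $\tau_1^e-\tau_0\to0$ by (\ref{82303}), (\ref{72803}) and (\ref{62904}), so assumption $(\mathbf{A}2)$ yields $\nu_2>0$ and $\widetilde{\alpha}_{+}(\nu_2)\in(\pi,\tfrac{3\pi}{2})$. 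Thus the right-hand side of the scalar ODE is smooth and positive near $(\rho_g,\nu_2)$, giving a local $C^1$ solution with $\widetilde{\rho}_{+}'>0$, i.e. $\widetilde{\tau}_{+}$ increasing as $\nu$ decreases. Continuing downward in $\nu$, as long as $\widetilde{\rho}_{+}>0$ one has: $\widetilde{\tau}_{+}>\tau_g>\tau_2^i$, so $p''(\widetilde{\tau}_{+})>0$ and (\ref{120601}) keeps $\widetilde{\rho}_{+}'>0$; the quantity $q_{+}^{2}-\widetilde{c}_{+}^{2}$ is increasing in $\widetilde{\tau}_{+}$ because $\frac{{\rm d}}{{\rm d}\widetilde{\tau}}(q_{+}^{2}-\widetilde{c}_{+}^{2})=\widetilde{\tau}^{2}p''(\widetilde{\tau})>0$, so the flow stays supersonic and $\widetilde{A}_{+}\in(0,\tfrac{\pi}{2})$; and by (\ref{82502}) the angle $\widetilde{\sigma}_{+}$ is increasing in $\nu$, while $\widetilde{\alpha}_{+}=\pi+\arctan\nu\in(\pi,\tfrac{3\pi}{2})$ keeps $\cos\widetilde{\alpha}_{+}<0$. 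Differentiating (\ref{principal1}) and inserting the second relation of (\ref{principal}) gives $(\widetilde{V}_{+}-\nu\widetilde{U}_{+})\widetilde{V}_{+}'=-\widetilde{\tau}_{+}p'(\widetilde{\tau}_{+})\widetilde{\tau}_{+}'<0$ (as $\widetilde{\tau}_{+}'<0$, $p'<0$), and since $\widetilde{V}_{+}-\nu\widetilde{U}_{+}=-q_{+}\sin\widetilde{A}_{+}/\cos\widetilde{\alpha}_{+}>0$ one concludes $\widetilde{v}_{+}'(\nu)=\widetilde{V}_{+}'(\nu)<0$ on the whole interval of existence.

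It remains to locate $\nu_1$. Since $\widetilde{v}_{+}(\nu_2)=-v_g<0$ and $\widetilde{v}_{+}$ is strictly decreasing in $\nu$, it increases as $\nu$ decreases; as $\widetilde{v}_{+}=q_{+}\sin\widetilde{\sigma}_{+}$, the condition $\widetilde{v}_{+}=0$ is $\widetilde{\sigma}_{+}=\pi$, and along the solution $\widetilde{\sigma}_{+}$ decreases from $\widetilde{\sigma}_{+}(\nu_2)=\pi+\sigma_g$. Using (\ref{82502}), the relation ${\rm d}q_{+}/{\rm d}\widetilde{\tau}_{+}=-\widetilde{\tau}_{+}p'(\widetilde{\tau}_{+})/q_{+}$ from (\ref{principal1}), and the identity $q_{+}'\cos\widetilde{A}_{+}+q_{+}\widetilde{\sigma}_{+}'\sin\widetilde{A}_{+}=0$ forced by the second relation of (\ref{principal}), one computes $\bigl|{\rm d}\widetilde{\sigma}_{+}/{\rm d}q_{+}\bigr|=\sqrt{q_{+}^{2}-\widetilde{c}_{+}^{2}}/(q_{+}\widetilde{c}_{+})$; hence the total decrease of $\widetilde{\sigma}_{+}$ before $\widetilde{\tau}_{+}\to+\infty$, i.e. before $q_{+}\to q_{_{lim}}$ (with $q_*$ replaced by $q_g$ in (\ref{62903})), equals $\int_{q_g}^{q_{_{lim}}}\sqrt{q^{2}-c^{2}(\bar{\tau}(q))}/\bigl(q\,c(\bar{\tau}(q))\bigr)\,{\rm d}q$, which by (\ref{72803}) tends to $\sigma_{\infty}$ as $\tau_1^e-\tau_0\to0$. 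Since the hypotheses of Theorem \ref{thm2} force $\sigma_*<\sigma_{\infty}$ (so that $A_{d}$ in (\ref{101801}) is defined), we get $\sigma_g$ strictly below this total for $\tau_1^e-\tau_0$ small; consequently $\widetilde{\sigma}_{+}$ passes through $\pi$ at some $\nu_1$ before $\widetilde{\rho}_{+}$ reaches $0$ and before $\nu$ reaches $0$, so $\widetilde{\rho}_{+}(\nu_1)>0$, and from $\widetilde{\alpha}_{+}(\nu_1)=\widetilde{\sigma}_{+}(\nu_1)+\widetilde{A}_{+}(\nu_1)=\pi+\widetilde{A}_{+}(\nu_1)$ we get $\nu_1=\tan\widetilde{A}_{+}(\nu_1)\in(0,\nu_2)$. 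The solution is $C^1$ on $[\nu_1,\nu_2]$ because $\widetilde{\tau}_{+}$, hence every coefficient, stays bounded there; uniqueness of $\nu_1$ follows from $\widetilde{v}_{+}'<0$; and the asserted signs $\widetilde{v}_{+}(\nu_1)=0$, $\widetilde{\rho}_{+}(\nu_1)>0$, $\widetilde{v}_{+}'<0$, $\widetilde{\rho}_{+}'>0$ on $(\nu_1,\nu_2]$ are exactly what has been established. The main obstacle is this last step --- recognizing the total turning of the pseudo-flow angle along the (epicycloid) solution curve as the integral defining $\sigma_{\infty}$ and closing the argument via the standing hypothesis $\sigma_*<\sigma_{\infty}$; everything before it is a routine ODE continuation and sign analysis.
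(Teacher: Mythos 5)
Your proposal is correct and follows essentially the same route as the paper: you derive the same monotonicity facts ($\widetilde{\rho}_{+}'>0$, $\widetilde{\sigma}_{+}'>0$, $\widetilde{v}_{+}'<0$, supersonicity preserved) from the characteristic relations, and your identity $q_{+}'\cos\widetilde{A}_{+}+q_{+}\widetilde{\sigma}_{+}'\sin\widetilde{A}_{+}=0$ is exactly the differential form of the Riemann invariant $R_{+}\equiv\pi+\sigma_g$ that the paper integrates explicitly. The decisive step — bounding the total turning of $\widetilde{\sigma}_{+}$ by the integral $\sigma_0\to\sigma_{\infty}$ and invoking $\sigma_*<\sigma_{\infty}$ to place $\nu_1=\tan A_m$ in $(0,\nu_2)$ — coincides with the paper's construction of $q_m$, $\tau_m$, and $A_m$.
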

\begin{proof}
From (\ref{8303}) and (\ref{8304}) we can see that the initial value problem  (\ref{principal})--(\ref{principal1}), (\ref{82304}) admits a local solution.
The system (\ref{principal}) has a solution provided that $\widetilde{\rho}_{+}>0$ and $\widetilde{A}_{+}<\frac{\pi}{2}$.
In what follows we only discuss the solution for $\nu<\nu_2$.

We introduce the Riemann invariants:
$$
R_{+}(\widetilde{U}, \widetilde{V}):=\underbrace{\pi+\arctan\Big(\frac{\widetilde{V}}{\widetilde{U}}\Big)}_{\widetilde{\sigma}}+ \int_{q_{g}}^{\widetilde{q}}\frac{\sqrt{q^2-c^2(\check{\tau}(q))}}{qc(\check{\tau}(q))}dq,
$$
where $\widetilde{q}=\sqrt{\widetilde{U}^2+\widetilde{V}^2}$ and $\check{\tau}(q)$ is determined by
\begin{equation}\label{102501}
\frac{1}{2}q^2+\int_{\tau_{g}}^{\check{\tau}(q)}\tau p'(\tau)~{\rm d}\tau=\frac{1}{2}q_{g}^2.
\end{equation}
By the second equation of (\ref{principal}), we have
$$
\begin{array}{rcl}
&&\displaystyle\frac{{\rm d}}{{\rm d} \nu} R_{+}\big(\widetilde{U}_{+}(\nu), \widetilde{V}_{+}(\nu)\big)
\\[8pt]&=&\displaystyle\frac{\widetilde{U}_{+}\frac{{\rm d}\widetilde{V}_{+}}{{\rm d} \nu}-\widetilde{V}_{+}\frac{{\rm d}\widetilde{U}_{+}}{{\rm d} \nu}}{\widetilde{q}_{+}^2}
+\frac{\sqrt{\widetilde{q}_{+}^2-\widetilde{c}_{+}^2}}{\widetilde{q}_{+}\widetilde{c}_{+}}\cdot\frac{\widetilde{U}_{+}\frac{{\rm d}\widetilde{U}_{+}}{{\rm d} \nu}+\widetilde{V}_{+}\frac{{\rm d}\widetilde{V}_{+}}{{\rm d} \nu}}{\widetilde{q}_{+}}
\\[8pt]&=&\displaystyle\frac{1}{\widetilde{q}_{+}}\left(\cos\widetilde{\sigma}_{+}\frac{{\rm d}\widetilde{V}_{+}}{{\rm d} \nu}-\sin\widetilde{\sigma}_{+}\frac{{\rm d}\widetilde{U}_{+}}{{\rm d} \nu}+\cot\widetilde{A}_{+}\Big(
\cos\widetilde{\sigma}_{+}\frac{{\rm d}\widetilde{U}_{+}}{{\rm d} \nu}+\sin\widetilde{\sigma}_{+}\frac{{\rm d}\widetilde{V}_{+}}{{\rm d} \nu}\Big)\right)
\\[8pt]&=&\displaystyle\frac{1}{\widetilde{q}_{+}\sin\widetilde{A}_{+}}\bigg(\cos\widetilde{\alpha}_{+}\frac{{\rm d}\widetilde{U}_{+}}{{\rm d} \nu}+\sin\widetilde{\alpha}_{+}\frac{{\rm d}\widetilde{V}_{+}}{{\rm d} \nu}\bigg)~=~0
\end{array}
$$
where $\widetilde{q}_{+}=\sqrt{\widetilde{U}_{+}^2+\widetilde{V}_{+}^2}$.

In addition, by (\ref{82303}) we have
$$
R_{+}(u_g-\xi_{_\mathrm{G}}, -v_g)=\sigma_3(\mathrm{G})=\widetilde{\sigma}_{+}(\nu_2)=\pi+\sigma_g.
$$
Thus, we have
\begin{equation}\label{82901}
R_{+}\big(\widetilde{U}_{+}(\nu), \widetilde{V}_{+}(\nu)\big)\equiv\pi+\sigma_g.
\end{equation}
Combining this with (\ref{principal1}), (\ref{120601}), and (\ref{82502}), we have
\begin{equation}\label{82903}
\widetilde{q}_{+}'(\nu)<0,\quad \widetilde{\sigma}_{+}'(\nu)>0,\quad  \mbox{and}\quad \widetilde{\rho}_{+}'(\nu)>0\quad  \mbox{for}\quad \nu<\nu_2.
\end{equation}

From (\ref{8304})--(\ref{82501}) we have
\begin{equation}\label{102502a}
\frac{{\rm d}  \widetilde{A}_{+}}{{\rm d} \nu}=
\big(1-\varpi(\widetilde{\tau}_{+})\cos^2 \widetilde{A}_{+}+\sin^2\widetilde{A}_{+}\big)\frac{1}{2(1+\nu^2)}.
\end{equation}
Combining this with (\ref{82903}) and $\tau_g>\tau_2^i$, we know that the solution of the initial value problem  (\ref{principal})--(\ref{principal1}), (\ref{82304}) satisfies
\begin{equation}\label{102410}
\widetilde{A}_{+}(\nu)\leq\max\left\{\widetilde{A}_{+}(\nu_2), ~\arccos \sqrt{\sup\limits_{\tau\in (\tau_g, +\infty)}\varpi(\tau)+1}\right\}<\frac{\pi}{2}\quad \mbox{for}\quad \nu<\nu_2.
\end{equation}

From (\ref{82502}) and (\ref{82903}) we have that if $\pi<\widetilde{\sigma}_{+}(\nu)<\frac{3\pi}{2}$ for $\nu_1<\nu<\nu_2$ where $\nu_1>0$ is a constant yet to be determined, then
\begin{equation}\label{82902}
\left\{
  \begin{array}{ll}
 \widetilde{u}_{+}'(\nu)~=~\underbrace{\widetilde{q}_{+}'(\nu)\cos\tilde{\sigma}_{+}(\nu)}_{>0}-
\underbrace{\widetilde{q}_{+}(\nu)\sin\tilde{\sigma}_{+}(\nu)\tilde{\sigma}_{+}'(\nu)}_{<0}>0 & \hbox{for~ $\nu_1<\nu<\nu_2$,} \\[12pt]
    \widetilde{v}_{+}'(\nu)=\displaystyle-\frac{\widetilde{u}_{+}'(\nu)}{\nu}<0 & \hbox{for~ $\nu_1<\nu<\nu_2$.}
  \end{array}
\right.
\end{equation}

Let
\begin{equation}\label{theta0}
q_{\infty}=\bigg(q_{g}^{2}-2\int_{\tau_{g}}^{+\infty}\tau p_{\tau}(\tau)~{\rm d}\tau\bigg)^{1/2}\quad \mbox{and}\quad \sigma_0=\int_{q_g}^{q_{\infty}}\frac{\sqrt{q^{2}-c^{2}(\check{\tau}(q))}}{qc(\check{\tau}(q))}~ {\rm d}q.
\end{equation}
By (\ref{72803}) we know that
$$
\sigma_0\rightarrow \sigma_{\infty}\quad \mbox{as}\quad \tau_1^e-\tau_0\rightarrow 0.
$$
So, by $\sigma_*<\sigma_{\infty}$ we know that when $\tau_1^e-\tau_0$ is sufficiently small,
there exists a $q_m>q_g$ such that
$$
\pi+\int_{q_{g}}^{q_m}\frac{\sqrt{q^{2}-c^2(\check{\tau}(q) )}}{qc(\check{\tau}(q))} ~{\rm d}q=\pi+\sigma_g.
$$

Let
$$
\tau_m:=\check{\tau}(q_{m})\quad\mbox{and}\quad
 A_{m}:=\arcsin\bigg(\frac{c(\tau_{m})}{q_{m}}\bigg).
$$
Then we have
$\lambda_{\pm}(-q_m, 0, c(\tau_{m}))=\pm\tan A_{m}$.
Now we let
\begin{equation}\label{010903}
\nu_1:=\tan A_{m}.
\end{equation}

Then by (\ref{82903})--(\ref{82902}) we know that the initial value problem  (\ref{principal})--(\ref{principal1}), (\ref{82304}) admits a solution on $[\nu_1, \nu_2]$. Moreover, the solution satisfies $\widetilde{v}_{+}(\nu_1)=0$,
$\widetilde{\rho}_{+}(\nu_1)=\rho_m:=1/\tau_m$, and $\widetilde{v}_{+}(\nu)<0$ for $\nu\in (\nu_1, \nu_2]$.
This completes the proof.
\end{proof}

In order to obtain the principal part of the $C_{-}$ centered wave to the DGP (\ref{42501}, \ref{61602}), we consider (\ref{82306})-(\ref{82307}) with data
\begin{equation}\label{82305}
(\widetilde{u}_{-},\widetilde{v}_{-}, \widetilde{\rho}_{-})(-\nu_2)=(u_g, v_g, \rho_g).
\end{equation}
By the symmetry, the initial value problem  (\ref{82306}, \ref{82307}), (\ref{82305}) admits a solution on $[-\nu_2, -\nu_1]$ and the solution satisfies $$(\widetilde{u}_{-}, \widetilde{v}_{-}, \widetilde{\rho}_{-})(-\nu)=(\widetilde{u}_{+}, -\widetilde{v}_{+}, \widetilde{\rho}_{+})(\nu)\quad \mbox{for}\quad \nu\in [\nu_1, \nu_2].$$

Let $\Gamma_{-}^{\mathrm{G}}$ be the curve $(u, v)=(\widetilde{u}_{+},\widetilde{v}_{+})(\nu)$, $\nu_1\leq \nu\leq\nu_2$ and $\Gamma_{+}^{\mathrm{G}}$  the curve $(u, v)=(\widetilde{u}_{-},\widetilde{v}_{-})(\nu)$, $-\nu_2\leq\nu\leq-\nu_1$ in the hodograph $(u, v)$-plane.



From (\ref{82903}) we have that the function $\rho=\widetilde{\rho}_{+}(\nu)$, $\nu\in[\nu_1,\nu_2]$ has an inverse function $\nu=\widetilde{\rho}_{+}^{-1}(\rho)$, $\rho\in[\rho_m, \rho_g]$.
Let $(\check{\alpha}_{+}, \check{\beta}_{+}, \check{\sigma}_{+}, \check{q}_{+})(\tau)=(\widetilde{\alpha}_{+}, \widetilde{\beta}_{+}, \widetilde{\sigma}_{+}, \widetilde{q}_{+})(\widetilde{\rho}_{+}^{-1}(\frac{1}{\tau}))$. Then by (\ref{8304}) and (\ref{8303}) we have
\begin{equation}\label{61902}
\left\{
  \begin{array}{ll}
\displaystyle \frac{{\rm d}  \check{\alpha}_{+}}{{\rm d}  \tau}=-\frac{\tau^2p''(\tau)}{2c^2(\tau)}\tan \check{A}_{+},  \\[10pt]
  \displaystyle  \frac{{\rm d}  \check{\beta}_{+}}{{\rm d}  \tau }=-\frac{\tau^2p''(\tau)}{4 c^2(\tau)
  }\big(\varpi(\tau)-\tan^2\check{A}_{+}\big)\sin2 \check{A}_{+}
  \end{array}
\right.
\end{equation}
where $\check{A}_{+}=\frac{\check{\alpha}_{+}-\check{\beta}_{+}}{2}$.
From (\ref{82904}) and (\ref{010903}) we know
\begin{equation}\label{61803}
(\check{\alpha}_{+},\check{\beta}_{+})(\tau_{g})=(\alpha_3(\mathrm{G}), \beta_3(\mathrm{G}))
\end{equation}
and
\begin{equation}\label{102501a}
   (\check{\alpha}_{+},\check{\beta}_{+})(\tau_m)=(\pi+A_m, \pi-A_m).
\end{equation}

From (\ref{8306}) we  have that the function $\rho=\widetilde{\rho}_{-}(\nu)$ has an inverse function $\nu=\widetilde{\rho}_{-}^{-1}(\rho)$.
Let  $(\check{\alpha}_{-}(\tau), \check{\beta}_{-}(\tau))=\big(\widetilde{\alpha}_{-}(\widetilde{\rho}_{-}^{-1}(\frac{1}{\tau})), \widetilde{\beta}_{-}(\widetilde{\rho}_{-}^{-1}(\frac{1}{\tau}))\big)$. Then by (\ref{8306})--(\ref{8305}) we have
\begin{equation}\label{120602}
\left\{
  \begin{array}{ll}
\displaystyle \frac{{\rm d}  \check{\beta}_{-}}{{\rm d} \tau}=\frac{\tau^2p''(\tau)}{2c^2(\tau)}\tan \check{A}_{-},  \\[8pt]
  \displaystyle \frac{{\rm d}  \check{\alpha}_{-}}{{\rm d}  \tau}=\frac{\tau^2p''(\tau)}{4c^2(\tau)
}\big(\varpi(\tau)-\tan^2\check{A}_{-}\big)\sin2 \check{A}_{-}
  \end{array}
\right.
\end{equation}
where $\check{A}_{-}=\frac{\check{\alpha}_{-}-\check{\beta}_{-}}{2}$.
Similarly, we have
\begin{equation}\label{120603}
(\check{\alpha}_{-},\check{\beta}_{-})(\tau_{g})=(\alpha_2(\mathrm{G}), \beta_2(\mathrm{G}))
\end{equation}
and
\begin{equation}\label{61401}
   (\check{\alpha}_{-},\check{\beta}_{-})(\tau_m)=(\pi+A_m, \pi-A_m).
\end{equation}

From (\ref{72803}) one can see
\begin{equation}\label{82401}
A_m\rightarrow A_d\quad \mbox{as}\quad \tau_1^e-\tau_0\rightarrow 0
\end{equation}
where the constant $A_d$ is defined in (\ref{101801}).

\subsubsection{\bf Some important estimates for the principal parts}
For the needs of the following discussion about the hyperbolicity of the system (\ref{42501}), we shall derive some important estimates for the principal parts of the $C_{\pm}$ centered waves of the DGP (\ref{42501}, \ref{61602}).

Under assumptions ($\mathbf{A}2$) and ($\mathbf{A}3$), there exists a $\psi\in \big(\max\{\theta+2\sigma_*, \bar{A}(\tau_2^e)\}, \frac{\pi}{2}\big)$ such that
\begin{equation}\label{618033}
\psi+A_d>2\bar{A}(\tau_2^e)\quad \mbox{and} \quad \psi+\theta>2\bar{A}(\tau_2^e).
\end{equation}

For $s\geq 0$, we define
\begin{equation}
\Upsilon(s):=\Big\{(\alpha, \beta)\mid \alpha^{l}(s)<\alpha<\alpha^{r},~\beta^{l}<\beta<\beta^{r}(s),~\alpha-\beta>A_{min}\Big\},
\end{equation}
where
\begin{equation}\label{102503}
A_{min}:=\min\limits_{\tau\in [\tau_g, \tau_m]}\arcsin\bigg(\frac{c(\tau)}{q_m}\bigg)>0,
\end{equation}
\begin{equation}\label{102504}
\alpha^{l}(s):=\pi-\psi+2\bar{A}(\tau_g+s), \quad \beta^{l}:=\pi-\psi,\quad \alpha^{r}:=\pi+\psi, \quad \beta^{r}(s):=\pi+\psi-2\bar{A}(\tau_g+s).
\end{equation}
By assumption ($\mathbf{A}1$) and (\ref{72803}) we know that if $\tau_1^e-\tau_0$ is sufficiently small, then $\psi>\bar{A}(\tau_g)$ and
$\varpi'(\tau)<0$ for $\tau\in (\tau_g, +\infty)$.
Consequently,
\begin{equation}\label{32402aaa}
\bar{A}'(\tau_g+s)<0\quad \mbox{for}\quad s>0; \quad
\Upsilon(s_1)\subset\Upsilon(s_2)\quad \mbox{for~any}\quad 0\leq s_1<s_2.
\end{equation}

\begin{lem}\label{62901}
Suppose that assumptions ($\mathbf{A}1$)--($\mathbf{A}3$) hold.
Then, when $\tau_1^e-\tau_0$ is sufficiently small, there hold
$$
(\check{\alpha}_{-},\check{\beta}_{-})(\tau)\in \Upsilon(\tau-\tau_g)\quad \mbox{and}\quad (\check{\alpha}_{+},\check{\beta}_{+})(\tau)\in \Upsilon(\tau-\tau_g)\quad \mbox{for}\quad \tau\in [\tau_g, \tau_m].
$$
\end{lem}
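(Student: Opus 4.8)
The plan is to establish both inclusions by an argument of continuity (an invariant‑region/barrier argument) applied to the linearly degenerate ODE systems governing the principal parts — system (\ref{61902}) for the curve $(\check{\alpha}_{+},\check{\beta}_{+})(\tau)$ and system (\ref{120602}) for $(\check{\alpha}_{-},\check{\beta}_{-})(\tau)$, both parametrised by $\tau\in[\tau_g,\tau_m]$. First I would record the endpoint data: by (\ref{61803}), (\ref{120603}), (\ref{82303}), (\ref{72801}) and (\ref{72802}) one has
\[
(\check{\alpha}_{+},\check{\beta}_{+})(\tau_g)\longrightarrow(\pi+\theta+2\sigma_*,\ \pi-\theta),\qquad
(\check{\alpha}_{-},\check{\beta}_{-})(\tau_g)\longrightarrow(\pi+\theta,\ \pi-\theta-2\sigma_*)
\]
as $\tau_1^e-\tau_0\to0$, while by (\ref{102501a}), (\ref{61401}) and (\ref{82401}) both curves terminate at $(\pi+A_m,\pi-A_m)$ with $A_m\to A_d$. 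Since $\psi\in\big(\max\{\theta+2\sigma_*,\bar{A}(\tau_2^e)\},\tfrac{\pi}{2}\big)$ satisfies (\ref{618033}), since $\bar{A}$ is nonincreasing on $[\tau_2^e,+\infty)$ by (\ref{32402aaa}) (a consequence of ($\mathbf{A}1$)), and since $\tau_2^e<\tau_g$ when $\tau_1^e-\tau_0$ is small, a direct comparison of these limiting values with the walls $\alpha^{l},\alpha^{r},\beta^{l},\beta^{r}$ of (\ref{102504}) shows that for $\tau_1^e-\tau_0$ small both $(\check{\alpha}_{\pm},\check{\beta}_{\pm})(\tau_g)$ lie strictly inside $\Upsilon(0)$; the diagonal bound at $\tau_g$ holds because the limiting value $2(\theta+\sigma_*)$ of $\check{\alpha}_{-}-\check{\beta}_{-}$ there exceeds $A_{min}$ (cf. (\ref{102503})).

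The second step uses that $[\tau_g,\tau_m]\subset(\tau_2^i,+\infty)$ (since $\tau_g>\tau_2^i$), so property ($\mathbf{P}$) gives $p''>0$ on this interval; then the first lines of (\ref{61902}) and (\ref{120602}) show that $\check{\alpha}_{+}$ is strictly decreasing and $\check{\beta}_{-}$ is strictly increasing in $\tau$. Hence $\check{\alpha}_{+}(\tau)\in[\pi+A_m,\,\check{\alpha}_{+}(\tau_g)]$ and $\check{\beta}_{-}(\tau)\in[\check{\beta}_{-}(\tau_g),\,\pi-A_m]$ on the whole interval, and combining these with the $\psi$‑inequalities used above and with $\bar{A}$ nonincreasing, one checks that the ``monotone'' constraints $\pi-\psi+2\bar{A}(\tau)<\check{\alpha}_{+}(\tau)<\pi+\psi$ (for the $C_{+}$ curve) and $\pi-\psi<\check{\beta}_{-}(\tau)<\pi+\psi-2\bar{A}(\tau)$ (for the $C_{-}$ curve) hold automatically once $\tau_1^e-\tau_0$ is small. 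Moreover, along the $C_{+}$ centered wave $\check{q}_{+}(\tau)$ increases from $q_g$ to $q_m$ (by (\ref{82903}) and $\check{\alpha}_{+}$ decreasing), so $\sin\check{A}_{+}(\tau)=c(\tau)/\check{q}_{+}(\tau)\ge c(\tau)/q_m\ge\sin A_{min}$, whence $\check{\alpha}_{+}(\tau)-\check{\beta}_{+}(\tau)=2\check{A}_{+}(\tau)\ge 2A_{min}>A_{min}$; the $C_{-}$ case is identical by the symmetry $(\widetilde{u}_{-},\widetilde{v}_{-},\widetilde{\rho}_{-})(-\nu)=(\widetilde{u}_{+},-\widetilde{v}_{+},\widetilde{\rho}_{+})(\nu)$.

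It then remains to control the two walls involving the non‑monotone component, namely to show $\pi-\psi<\check{\beta}_{+}(\tau)<\pi+\psi-2\bar{A}(\tau)$ for the $C_{+}$ curve (and symmetrically $\pi-\psi+2\bar{A}(\tau)<\check{\alpha}_{-}(\tau)<\pi+\psi$ for the $C_{-}$ curve). The crucial point is that on each of these walls the sign of $\varpi(\tau)-\tan^2\check{A}_{\pm}(\tau)$ is forced. Indeed, at a $\tau$ with $\check{\beta}_{+}(\tau)=\pi-\psi$ the monotone bound gives $\check{A}_{+}(\tau)=\tfrac12\big(\check{\alpha}_{+}(\tau)-(\pi-\psi)\big)\ge\tfrac12(A_m+\psi)>\bar{A}(\tau)$ for $\tau_1^e-\tau_0$ small, so $\varpi(\tau)-\tan^2\check{A}_{+}(\tau)<0$ and the first line of (\ref{61902}) yields $\frac{{\rm d}\check{\beta}_{+}}{{\rm d}\tau}(\tau)>0$; thus $\check{\beta}_{+}$ can cross $\pi-\psi$ only upward, and as $\check{\beta}_{+}(\tau_g)>\pi-\psi$ it stays above $\pi-\psi$. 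At a $\tau$ with $\check{\beta}_{+}(\tau)+2\bar{A}(\tau)=\pi+\psi$, using $\check{\alpha}_{+}(\tau)<\pi+\psi$ we get $\check{A}_{+}(\tau)=\bar{A}(\tau)+\tfrac12\big(\check{\alpha}_{+}(\tau)-\pi-\psi\big)<\bar{A}(\tau)$, so $\frac{{\rm d}\check{\beta}_{+}}{{\rm d}\tau}(\tau)<0$; since also $\bar{A}'(\tau)\le0$ by ($\mathbf{A}1$), the quantity $\check{\beta}_{+}+2\bar{A}$ is strictly decreasing at such a $\tau$, hence crosses $\pi+\psi$ only downward, and since it starts below $\pi+\psi$ it stays below. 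The $C_{-}$ curve is handled verbatim with the roles of $\alpha$ and $\beta$ (and of $\theta$ and $\theta+2\sigma_*$) interchanged, using system (\ref{120602}): at a touch of $\check{\alpha}_{-}=\pi+\psi$ one gets $\check{A}_{-}>\bar{A}$ and $\frac{{\rm d}\check{\alpha}_{-}}{{\rm d}\tau}<0$, and at a touch of $\check{\alpha}_{-}-2\bar{A}=\pi-\psi$ one gets $\check{A}_{-}<\bar{A}$ and $\frac{{\rm d}}{{\rm d}\tau}\big(\check{\alpha}_{-}-2\bar{A}\big)>0$. Assembling the pieces establishes both inclusions.

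The main obstacle will be the pair of moving walls $\alpha^{l}(\tau-\tau_g)$ and $\beta^{r}(\tau-\tau_g)$: since the wall itself drifts in $\tau$ at rate $\pm2\bar{A}'(\tau)$, monotonicity of the characteristic angles alone is not enough there, and one must instead show that the angle's drift is ``aligned'' with the wall's motion — which is exactly where assumption ($\mathbf{A}1$) (i.e.\ $\varpi'\le0$, hence $\bar{A}'\le0$) enters in an essential way. A secondary, bookkeeping, difficulty is that all the strict inequalities above among $\psi,\theta,\sigma_*,A_d,A_m,\bar{A}(\tau_2^e),\bar{A}(\tau_g)$ hold a priori only in the limit $\tau_1^e-\tau_0\to0$; one must verify that each survives for $\tau_1^e-\tau_0$ small but positive, which is precisely why that smallness is assumed.
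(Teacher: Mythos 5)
Your proposal is correct and follows essentially the same route as the paper's proof: verify the endpoint lies in $\Upsilon(0)$ using (\ref{61901})-type inequalities, bound the monotone components $\check{\alpha}_{+}$ (resp. $\check{\beta}_{-}$) between $\pi+A_m$ and the value at $\mathrm{G}$ via the first equations of (\ref{61902})/(\ref{120602}), obtain the diagonal bound from $\sin\check{A}_{\pm}=c/\check{q}_{\pm}$ with $\check{q}_{\pm}\le q_m$, and run a barrier/continuity argument on the remaining walls using the sign of $\varpi-\tan^2\check{A}_{\pm}$ together with $\bar{A}'\le 0$ from ($\mathbf{A}1$) for the moving walls. (Only a trivial slip: the derivative of $\check{\beta}_{+}$ comes from the \emph{second} line of (\ref{61902}), not the first.)
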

\begin{proof}
From (\ref{72803}) and (\ref{82401}) we know that when $\tau_1^e-\tau_0$ is sufficiently small,
\begin{equation}\label{61901}
\psi>\theta+2\sigma_g, \quad \psi+A_m>2\bar{A}(\tau_g), \quad \mbox{and}\quad  \psi+\theta>2\bar{A}(\tau_g).
\end{equation}
Then by (\ref{72802}) and (\ref{61803}) we have that when $\tau_1^e-\tau_0$ is sufficiently small,
\begin{equation}\label{61903}
(\check{\alpha}_{+},\check{\beta}_{+})(\tau_g)\in \Upsilon(0).
\end{equation}
So,  there exists a small $\varepsilon>0$ such that
$(\check{\alpha}_{+},\check{\beta}_{+})(\tau)\in \Upsilon(\tau-\tau_g)$ for $\tau\in [\tau_g, \tau_g+\varepsilon)$.
In order to use the argument of continuity to prove the desired estimate,
we prove the following assertion:
 \begin{itemize}
   \item  For any fixed $s\in(0, \tau_m-\tau_g)$, if $(\check{\alpha}_{+},\check{\beta}_{+})(\tau)\in \Upsilon(\tau-\tau_g)$ for $\tau\in [\tau_g, \tau_g+s)$ then
$(\check{\alpha}_{+},\check{\beta}_{+})(\tau_g+s)\in \Upsilon(s)$.
 \end{itemize}
The proof of the assertion proceeds in three steps.

\noindent
{\it Step 1.}
It is obvious that
 $$(\check{\alpha}_{+}-\check{\beta}_{+})(\tau_g+s)=2\arcsin\left( \frac{c(\tau_g+s)}{\check{q}_{+}(\tau_g+s)}\right)\geq 2A_{min}>A_{min}.$$

\noindent
{\it Step 2.}
By the first equation of (\ref{61902}), (\ref{61803}), and (\ref{62501}) we have
$$
\pi+A_m\leq \check{\alpha}_{+}(\tau_g+s)\leq \alpha_3(\mathrm{G}).
$$
From (\ref{72802}), (\ref{72803}), and (\ref{61901}) we know that when $\tau_1^e-\tau_0$ is small,
there holds $\alpha_3(G)<\pi+\psi$.
By (\ref{32402aaa}) and (\ref{61901}), we have $\pi+A_m>\pi-\psi+2\bar{A}(\tau_g+s)$.
Thus, we have
\begin{equation}\label{82905}
\pi-\psi+2\bar{A}(\tau_g+s)< \check{\alpha}_{+}(\tau_g+s)<\pi+\psi.
\end{equation}

\noindent
{\it Step 3.} We now prove $\pi-\psi<\check{\beta}_{+}(\tau_g+s)<\pi+\psi-2\bar{A}(\tau_g+s)$.

Suppose $\check{\beta}_{+}(\tau_g+s)=\pi+\psi-2\bar{A}(\tau_g+s)$. Then by the assumption of the assertion and (\ref{32402aaa}) we have
$
\check{\beta}_{+}'(\tau_g+s)\geq 0
$. While, by the second equation of (\ref{61902}) and the right inequality of (\ref{82905}) we have
 $$
\frac{{\rm d}  \check{\beta}_{+}}{{\rm d}  \tau }=-\frac{\tau^2p''(\tau)}{4 c^2(\tau)
  }\big(\tan^2 \bar{A}-\tan^2\check{A}_{+}\big)\sin2 \check{A}_{+}<0\quad \mbox{at}\quad \tau=\tau_g+s,
$$
where we have used $$\check{A}_{+}(\tau_g+s)=\frac{\check{\alpha}_{+}(\tau_g+s)-\check{\beta}_{+}(\tau_g+s)}{2}
=\frac{\check{\alpha}_{+}(\tau_g+s)-(\pi+\psi-2\bar{A}(\tau_g+s))}{2}<\bar{A}(\tau_g+s).$$
This leads to a contradiction. So, we have
$\check{\beta}_{+}(\tau_g+s)<\pi+\psi-2\bar{A}(\tau_g+s)$.

 Suppose $\check{\beta}_{+}(\tau_g+s)=\pi-\psi$. Then by the assumption of the assertion and (\ref{32402aaa})  we have
$
\check{\beta}_{+}'(\tau_g+s)\leq 0
$. While, by the second equation of (\ref{61902}) and the left inequality of (\ref{82905}) we have
 $$
\frac{{\rm d}  \check{\beta}_{+}}{{\rm d}  \tau }=-\frac{\tau^2p''(\tau)}{4 c^2(\tau)
  }\big(\tan^2 \bar{A}-\tan^2\check{A}_{+}\big)\sin2 \check{A}_{+}>0\quad \mbox{at}\quad \tau=\tau_g+s,
$$
where we have used $$\check{A}_{+}(\tau_g+s)=\frac{\check{\alpha}_{+}(\tau_g+s)-\check{\beta}_{+}(\tau_g+s)}{2}
=\frac{\check{\alpha}_{+}(\tau_g+s)-(\pi-\psi)}{2}>\bar{A}(\tau_g+s).$$  This leads to a contradiction. So, we have $\check{\beta}_{+}(\tau_g+s)>\pi-\psi$.
This completes the prove of the assertion.

Therefore, by (\ref{32402aaa}), (\ref{61903}), and the argument of continuity we obtain the estimate for $(\check{\alpha}_{+},\check{\beta}_{+})(\tau)$. The prove for the other is similar; we omit the details.

This completes the proof.
\end{proof}

\subsubsection{\bf Local solution to the DGP}
We shall use the hodograph transformation method to find a local classical solution to the problem (\ref{42501}, \ref{61602}).

We define
$$
\bar{\tau}(u, v)=\check{\tau}\Big(\sqrt{(u-\xi_{_G})^2+v^2}\Big),\quad \bar{c}(u, v)=c(\bar{\tau}(u, v)).
$$
$$
\bar{\alpha}(u, v)=\pi+\arctan(\lambda_{+}(u-\xi_{_G}, v, \bar{c}(u, v))),\quad \mbox{and}
\quad
\bar{\beta}(u, v)=\pi+\arctan(\lambda_{-}(u-\xi_{_G}, v, \bar{c}(u, v))).
$$
Here, the function $\check{\tau}(q)$ is defined in (\ref{102501}).
Then we have
\begin{equation}\label{83001}
(\widetilde{\alpha}_{\pm},\widetilde{\beta}_{\pm}, \widetilde{\tau}_{\pm})(\nu)=(\bar{\alpha}, \bar{\beta}, \bar{\tau})(\widetilde{u}_{\pm}(\nu),\widetilde{v}_{\pm}(\nu)), \quad \nu_1\leq \pm \nu\leq \nu_2.
\end{equation}

We consider (\ref{52305}) with data
\begin{equation}\label{61701}
 (\alpha, \beta, \tau)=\left\{
  \begin{array}{ll}
(\bar{\alpha}, \bar{\beta}, \bar{\tau})(u, v), & \hbox{$(u, v)\in\Gamma_{-}^{G}\cup\Gamma_{+}^{G}$;} \\[3pt]
   ({\alpha}_{2}^{h}, {\beta}_{2}^{h}, {\tau}_{2}^{h})(u, v), & \hbox{$(u, v)\in\wideparen{\mathrm{G_2H_2}}$;} \\[3pt]
   ({\alpha}_{3}^{h}, {\beta}_{3}^{h}, {\tau}_{3}^{h})(u, v), & \hbox{$(u, v)\in\wideparen{\mathrm{G_3I_3}}$,}
  \end{array}
\right.
\end{equation}
where $\wideparen{\mathrm{G_3I_3}}=\big\{(u, v)\mid (u, -v)\in \wideparen{\mathrm{G_2H_2}}\big\}$.

Problem (\ref{52305}, \ref{61701}) is a characteristic boundary value problem and the corresponding characteristic equations hold on these characteristic boundaries.

\begin{lem}\label{72907}
For small $\epsilon>0$, we let $\mathrm{J}_2=(u_\epsilon, v_\epsilon)$ be a point on $\wideparen{\mathrm{G_2H_2}}$ such that $| u_\epsilon-u_g|+|v_\epsilon-v_g|=\epsilon$ and let $K_3=(u_\epsilon, -v_\epsilon)$. Then when $\epsilon$ is sufficiently small the characteristic boundary value problem  (\ref{52305}, \ref{61701}) admits a classical solution $(\alpha, \beta, \tau)=(\alpha_4^h, \beta_4^h, \tau_4^h)(u, v)$ on a region $\Sigma_4^h$ closed by $\Gamma_{+}^{G}$, $\Gamma_{-}^{\mathrm{G}}$, $\wideparen{\mathrm{G_2J_2}}$, $\wideparen{\mathrm{G_3K_3}}$, $\wideparen{\mathrm{J_2L_h}}$, and $\wideparen{\mathrm{K_3L_h}}$,
where  $\wideparen{\mathrm{J_2L_h}}$ is a forward $\Gamma_{+}$ characteristic curve issued from $\mathrm{J}_2$ and $\wideparen{\mathrm{K_3L_h}}$  is a forward $\Gamma_{-}$ characteristic curve issued from $\mathrm{K}_3$; see Figure \ref{Fig16} (right). Moreover, when $\epsilon$ is sufficiently small  the solution satisfies
\begin{equation}\label{61904}
(\alpha, \beta)\in \Upsilon(\tau-\tau_g).
\end{equation}
\end{lem}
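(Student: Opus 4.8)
The plan is to treat \eqref{52305}, \eqref{61701} as a characteristic (Goursat-type) boundary value problem in the hodograph plane — one that, crucially, carries no interior singularity, because passing to the $(u,v)$-plane has ``opened up'' the centered wave at $\mathrm{G}$ into the smooth boundary arcs $\Gamma_{\pm}^{\mathrm G}$. First I would produce a local $C^1$ solution near the data curves by the classical characteristic iteration of Li and Yu \cite{Li-Yu}: since \eqref{52305} is linearly degenerate, its coefficients are smooth functions of $(\alpha,\beta,\tau)$ as long as the pseudo-Mach angle $A=(\alpha-\beta)/2$ stays in a compact subinterval of $(0,\tfrac\pi2)$ and $\tau$ stays in a compact subinterval of $(\tau_2^i,+\infty)$; near the data this holds by Lemma~\ref{62901} together with \eqref{72801}--\eqref{72802} and the smallness of $\tau_1^e-\tau_0$ and $\epsilon$. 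The same facts show the solution continues as long as $A$ is bounded away from $0,\tfrac\pi2$ and the $C^1$ norm stays finite, so everything reduces to the a priori enclosure \eqref{61904}.

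To prove \eqref{61904} I would first note that $\tau$ is monotone along characteristics: from \eqref{52501} and $\tfrac{{\rm d}c}{{\rm d}\tau}=-c/(\tau\kappa)$ one gets $\hat{\partial}_{\pm}\tau=\tau/c>0$, so $\tau$ increases into $\Sigma_4^h$ from the data curves, whence $\tau\ge\tau_g$ throughout (on $\Gamma_{\pm}^{\mathrm G}$ one has $\tau\in[\tau_g,\tau_m]$, and along $\wideparen{\mathrm{G_2J_2}}$, $\wideparen{\mathrm{G_3K_3}}$ one has $\tau\ge\tau_g$ because $\tau$ increases from $\mathrm{G_2}$, $\mathrm{G_3}$ along the $\Gamma$-characteristics $\wideparen{\mathrm{G_2H_2}}$, $\wideparen{\mathrm{G_3I_3}}$). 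Hence $\Upsilon(\tau-\tau_g)$ is well defined and $p''>0$, $p'<0$ hold everywhere in $\Sigma_4^h$. The data lies in $\Upsilon(\tau-\tau_g)$: on $\Gamma_{\pm}^{\mathrm G}$ this is Lemma~\ref{62901}, and on the short arcs $\wideparen{\mathrm{G_2J_2}}$, $\wideparen{\mathrm{G_3K_3}}$ it follows from \eqref{72801}--\eqref{72802}, \eqref{61901} (in particular $\psi>\theta+2\sigma_g$) together with the continuity of $(\alpha,\beta,\tau)$, once $\epsilon$ and $\tau_1^e-\tau_0$ are small.

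The enclosure is then propagated inward by a first-contact argument, organized by the exhaustion of $\Sigma_4^h$ by the level sets of $\tau$ (which foliate it since $\hat{\partial}_{\pm}\tau>0$). Suppose $(\alpha,\beta)$ leaves $\Upsilon(\tau-\tau_g)$; at the first contact with one of its five faces one derives a contradiction. Recalling $\varpi(\tau)=\tan^2\bar{A}(\tau)$, the relations \eqref{52305} give that $\hat{\partial}_{+}\alpha$ has the sign of $\bar{A}(\tau)-A$ and $\hat{\partial}_{-}\beta$ the opposite sign. On the fixed face $\alpha=\alpha^r=\pi+\psi$, at a contact from inside one still has $\beta\le\beta^r(\tau-\tau_g)$, hence $A\ge\bar{A}(\tau)$, hence $\hat{\partial}_{+}\alpha\le0$, so $\alpha$ cannot reach $\alpha^r$ from below along $\Gamma_{+}$; the face $\beta=\beta^l$ is symmetric. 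On the moving face $\alpha=\alpha^l(\tau-\tau_g)=\pi-\psi+2\bar{A}(\tau)$, at a contact from inside $\beta>\beta^l$ forces $A<\bar{A}(\tau)$, so $\tfrac{{\rm d}\alpha}{{\rm d}\tau}\big|_{\Gamma_{+}}=\tfrac c\tau\hat{\partial}_{+}\alpha>0$ while $\tfrac{{\rm d}}{{\rm d}\tau}\alpha^l(\tau-\tau_g)=2\bar{A}'(\tau)\le0$ by assumption ($\mathbf{A}1$), so $\alpha-\alpha^l$ is increasing in $\tau$ along $\Gamma_{+}$ and was negative just before, contradicting first contact; the face $\beta=\beta^r(\tau-\tau_g)$ is symmetric, using $\Gamma_{-}$. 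The face $\alpha-\beta=A_{min}$ is handled by smallness of $\epsilon$: on the data $\alpha-\beta\ge2A_{min}$ with a definite gap, and within $\overline{\Upsilon}$ the $C^1$ norm is already bounded in terms of the data, so the oscillation of $\alpha-\beta$ over the $O(\epsilon)$-sized region $\Sigma_4^h$ is $O(\epsilon)$ and this face is never reached first. This establishes \eqref{61904}; with it the range of $A$ is compact, the characteristic decompositions \eqref{52502}, \eqref{322} integrated along $\Gamma_{\pm}$ give a uniform $C^1$ bound depending only on the data, and the classical extension theorem of \cite{Li-Yu} yields the solution on the whole curved hexagon $\Sigma_4^h$ bounded by $\Gamma_{+}^{\mathrm G}$, $\Gamma_{-}^{\mathrm G}$, $\wideparen{\mathrm{G_2J_2}}$, $\wideparen{\mathrm{G_3K_3}}$, $\wideparen{\mathrm{J_2L_h}}$ and $\wideparen{\mathrm{K_3L_h}}$, provided $\epsilon$ is small enough that $\Sigma_4^h$ stays inside the neighbourhood where the data estimates hold.

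I expect the main obstacle to be the first-contact step: making it rigorous requires the careful $\tau$-exhaustion above, simultaneous bookkeeping of all five faces, and — most delicately — closing the interdependence between the bound on $A-\bar{A}(\tau)$ used on the moving faces and the transport estimates for $\hat{\partial}_{-}\alpha$, $\hat{\partial}_{+}\beta$ coming from \eqref{52502}, all consistently with the smallness requirements on $\epsilon$ and $\tau_1^e-\tau_0$. Assumptions ($\mathbf{A}1$)--($\mathbf{A}3$) are precisely what make this work: they guarantee a $\psi$ with $\psi>\theta+2\sigma_*$, $\psi+A_{d}>2\bar{A}(\tau_2^e)$ and $\psi+\theta>2\bar{A}(\tau_2^e)$, so that $\Upsilon$ is simultaneously wide enough in the $\alpha$- and $\beta$-directions to contain both the data from Lemma~\ref{62901} and the limiting values $(\pi+\theta,\pi-\theta-2\sigma_*)$, $(\pi+\theta+2\sigma_*,\pi-\theta)$; and, through $\varpi'\le0$ (i.e.\ $\bar{A}'\le0$), they make the moving faces of $\Upsilon(s)$ recede at a rate slow enough for the sign comparisons above to close.
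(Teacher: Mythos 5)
Your proposal is correct in substance and reaches the same conclusion, but it is considerably more elaborate than the paper's own proof, which disposes of the lemma in a few lines: local existence is quoted from the Li--Yu theory of Goursat problems (the data curves are all characteristics and the corresponding characteristic relations hold along them), and the enclosure (\ref{61904}) is asserted to follow ``directly from Lemma \ref{62901}'' --- i.e.\ from the fact that every point of $\Sigma_4^h$ lies within $O(\epsilon)$ of the data curve, on which $(\alpha,\beta)\in\Upsilon(\tau-\tau_g)$ holds with a uniform positive margin (guaranteed by (\ref{61901}) and (\ref{618033})), so a $C^0$/$C^1$ bound transports the open condition into the interior. Your first-contact invariant-region argument, modeled on the proofs of Lemmas \ref{62901} and \ref{51708}, is a legitimate alternative and is more robust, since it does not need the margin on the data or the thinness of the domain for the four $\alpha$- and $\beta$-faces; the sign computations from (\ref{52305}), the identity $\hat{\partial}_{\pm}\tau=\tau/c>0$, and the use of $\bar A'\le 0$ on the moving faces all check out and mirror the paper's treatment of $\gamma_1$--$\gamma_4$ in Lemma \ref{51708}.

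Two points need repair. First, your claim that $\Sigma_4^h$ is ``$O(\epsilon)$-sized'' with oscillation of $\alpha-\beta$ of order $O(\epsilon)$ is geometrically wrong: in the hodograph plane the centered waves open up into the fixed arcs $\Gamma_{\pm}^{\mathrm G}$, so $\Sigma_4^h$ is an $O(\epsilon)$-\emph{thin} collar of $O(1)$ length along $\Gamma_+^{\mathrm G}\cup\Gamma_-^{\mathrm G}$, over which $\alpha-\beta=2A$ varies by $O(1)$ (from $2A(\mathrm{G}_2)$ down to $2A_m$ at $\mathrm{G}_h$). What you actually need, and what is true, is that each interior point is within $O(\epsilon)$ of a data point at which $\alpha-\beta\ge 2A_{min}$, so $\alpha-\beta\ge 2A_{min}-C\epsilon>A_{min}$; state it that way. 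Second, your first-contact argument silently skips the corners of $\partial\Upsilon(s)$ (e.g.\ $\alpha=\alpha^l(s)$ and $\beta=\beta^l$ simultaneously, where $A=\bar A(\tau)$ and both transport terms vanish); the paper handles the analogous corners $\gamma_5,\gamma_6$ in Lemma \ref{51708} with a comparison-ODE argument, and you would need the same device here. Finally, note that the solution is only continuous and \emph{piecewise} $C^1$: the corner of the data at $\mathrm{G}_h$ propagates weak discontinuities along the forward $\Gamma_{\pm}$ characteristics $\wideparen{\mathrm{G_hM_h}}$ and $\wideparen{\mathrm{G_hN_h}}$, a point the paper records and you omit.
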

\begin{proof}
The existence of a continuous and piecewise $C^1$ solution in $\Sigma_4^h$ follows routinely from the local existence of classical solutions for Goursat problems (see, e.g., Li and Yu \cite{Li-Yu}). We omit the details.
The $\Gamma_{+}$ characteristic curve $\wideparen{\mathrm{J_2L_h}}$ and the forward $\Gamma_{-}$ characteristic curve issued from $\mathrm{G}_h=(\tilde{u}_{+}(\nu_1), 0)$ insect with each other at a point $\mathrm{M_{h}}$.
The $\Gamma_{-}$ characteristic curve $\wideparen{\mathrm{K_3L_h}}$ and the forward $\Gamma_{+}$ characteristic curve issued from $\mathrm{G}_h$ insect with each other at a point $\mathrm{N_{h}}$. The solution may be weakly discontinuous on the $\Gamma_{\pm}$ characteristic curve $\wideparen{\mathrm{G_hM_h}}$ and $\wideparen{\mathrm{G_hN_h}}$.
The estimate (\ref{61904}) can be obtained directly by Lemma \ref{62901}.
\end{proof}

By the result of Section 4.3 (Lemma \ref{102502}), we know
\begin{equation}\label{61702}
\mathcal{Z}_{-}\big|_{\wideparen{\mathrm{G_2J_2}}}>0\quad\mbox{and}\quad \mathcal{Z}_{+}\big|_{\wideparen{\mathrm{G_3K_3}}}<0.
\end{equation}
Since $\xi\equiv\xi_{_\mathrm{G}}$ on $\Gamma_{+}^{\mathrm{G}}$, we have $\hat{\partial}_{+}\xi=0$ on $\Gamma_{+}^{\mathrm{G}}$. This immediately implies
\begin{equation}\label{61703}
\mathcal{Z}_{+}=0\quad \mbox{on}\quad \Gamma_{+}^{\mathrm{G}}.
\end{equation}
Similarly, one has
\begin{equation}\label{61704}
\mathcal{Z}_{-}=0\quad \mbox{on}\quad \Gamma_{-}^{\mathrm{G}}.
\end{equation}
Integrating (\ref{322}) along $\Gamma_{\pm}$ characteristic curves issued from points on the characteristic boundaries and noticing (\ref{61702})--(\ref{61704}), we know that the solution satisfies
\begin{equation}\label{102506}
\mathcal{Z}_{-}>0\quad \mbox{and}\quad \mathcal{Z}_{+}<0\quad \mbox{on}\quad \Sigma_4^h\setminus (\Gamma_{+}^{\mathrm{G}}\cup \Gamma_{-}^{\mathrm{G}}).
\end{equation}
Therefore, as in the previous discussion we can establish the global one-to-one inversion of the hodograph transformation.

Let
$$\xi_4(u, v):=u-\frac{c_4^h\cos\sigma_4^h}{\sin A_4^h}, \quad \eta_4(u, v):=v-\frac{c_4^h\sin\sigma_4^h}{\sin A_4^h}, $$
and
$$
\Sigma_4:=\big\{(\xi,\eta)~\big|~ (\xi, \eta)=(\xi_4(u, v), \eta_4(u,v)), (u, v)\in \Sigma_4^h\big\},
$$
where $A_4^h=\frac{\alpha_4^h-\beta_4^h}{2}$, $\sigma_4^h=\frac{\alpha_4^h+\beta_4^h}{2}$, and $c_4^h=c(\tau_4^h)$.
Let $\mathrm{J}=(\xi_4(\mathrm{J}_2), \eta_4(\mathrm{J}_2))$,  $\mathrm{K}=(\xi_4(\mathrm{K}_3), \eta_4(\mathrm{K}_3))$, $\mathrm{L}=(\xi_4(\mathrm{L}_h), \eta_4(\mathrm{L}_h))$,
$\mathrm{M}=(\xi_4(\mathrm{M_h}), \eta_4(\mathrm{M_h}))$, $\mathrm{N}=(\xi_4(\mathrm{N_h}), \eta_4(\mathrm{N_h}))$,
 $$
\wideparen{\mathrm{GM}}=\big\{(\xi,\eta)~\big|~ (\xi, \eta)=(\xi_4(u, v), \eta_4(u,v)), (u, v)\in \wideparen{\mathrm{G_hM_h}}\big\},
$$
 $$
\wideparen{\mathrm{GN}}=\big\{(\xi,\eta)~\big|~ (\xi, \eta)=(\xi_4(u, v), \eta_4(u,v)), (u, v)\in \wideparen{\mathrm{G_hN_h}}\big\},
$$
 $$
\wideparen{\mathrm{JL}}=\big\{(\xi,\eta)~\big|~ (\xi, \eta)=(\xi_4(u, v), \eta_4(u,v)), (u, v)\in \wideparen{\mathrm{J_2L_h}}\big\},
$$
and
 $$
\wideparen{\mathrm{KL}}=\big\{(\xi,\eta)~\big|~ (\xi, \eta)=(\xi_4(u, v), \eta_4(u,v)), (u, v)\in \wideparen{\mathrm{K_3L_h}}\big\}.
$$
Then we have the following local existence.
\begin{lem}
 The DGP (\ref{42501}, \ref{61602}) has a continuous and piecewise $C^1$ solution in  $\Sigma_4$ which is a curved quadrilateral domain bounded by characteristic curves $\wideparen{\mathrm{GJ}}$, $\wideparen{\mathrm{GK}}$, $\wideparen{\mathrm{JL}}$, and $\wideparen{\mathrm{KL}}$; see Figure \ref{Fig16} (left). The solution may be weakly discontinuous on $\wideparen{\mathrm{GM}}$ and $\wideparen{\mathrm{GN}}$.
\end{lem}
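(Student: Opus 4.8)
The plan is to transport the hodograph-plane solution constructed in Lemma \ref{72907} back to the physical $(\xi,\eta)$-plane, following exactly the scheme used to pass from $\Sigma_2^h$ to $\Sigma_2$ after Lemma \ref{102502}. First I would record the nondegeneracy ingredients on $\Sigma_4^h$: the estimate (\ref{61904}), via Lemma \ref{62901}, gives $0<A_4^h<\frac{\pi}{2}$ and $c_4^h>0$ throughout $\Sigma_4^h$, so $\mu$, $\kappa$, $\varpi$ are all finite there; combined with the sign-preserving property (\ref{102506}) for $\mathcal{Z}_{\pm}$ and the identities (\ref{61312})--(\ref{61319}), (\ref{62701}), one gets $\hat{\partial}_{+}\xi<0$ and $\hat{\partial}_{-}\xi<0$ in $\Sigma_4^h\setminus(\Gamma_{+}^{\mathrm{G}}\cup\Gamma_{-}^{\mathrm{G}})$, hence by (\ref{38}) also $\hat{\partial}_{+}\eta<0$ and $\hat{\partial}_{-}\eta>0$ there, while the Jacobian identity (\ref{6805}) shows $j^{-1}(u,v;\xi,\eta)\neq 0$ off the two centered-wave rays. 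Thus $(u,v)\mapsto(\xi_4,\eta_4)(u,v)$ is a local diffeomorphism away from $\Gamma_{+}^{\mathrm{G}}\cup\Gamma_{-}^{\mathrm{G}}$.

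Next I would upgrade this to a global one-to-one inversion. Any two points of $\Sigma_4^h\setminus(\Gamma_{+}^{\mathrm{G}}\cup\Gamma_{-}^{\mathrm{G}})$ can be joined by a path consisting of $\Gamma_{+}$ and $\Gamma_{-}$ characteristic arcs, and along each such arc either $\xi$ or $\eta$ is strictly monotone by the signs above, so distinct points have distinct images. The boundary rays are handled separately: $\xi\equiv\xi_{_{\mathrm G}}$ on $\Gamma_{+}^{\mathrm{G}}$ and $\eta$ is monotone along $\Gamma_{-}^{\mathrm{G}}$, while the backside arcs $\wideparen{\mathrm{G_2J_2}}$ and $\wideparen{\mathrm{G_3K_3}}$ are space-like (this was the content of Lemmas \ref{61302}, \ref{lem43} transported to $\mathrm{G}$), so injectivity extends up to $\partial\Sigma_4^h$. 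Consequently the inverse $(u,v)=(u_4,v_4)(\xi,\eta)$ is well defined on $\Sigma_4:=(\xi_4,\eta_4)(\Sigma_4^h)$, and setting $\tau_4(\xi,\eta)=\tau_4^h(u_4,v_4)$ we obtain $(u_4,v_4,\tau_4)\in C^{1}(\Sigma_4\setminus(\wideparen{\mathrm{GM}}\cup\wideparen{\mathrm{GN}}))\cap C^{0}(\Sigma_4)$.

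It then remains to verify that this pull-back is the desired solution. Since $(\alpha_4^h,\beta_4^h,\tau_4^h)$ solves (\ref{52305}) and the inversion is $C^1$ off the weak-discontinuity curves, the routine computation of Section 3 shows $(u_4,v_4,\tau_4)$ satisfies (\ref{42501}) on $\Sigma_4\setminus(\wideparen{\mathrm{GM}}\cup\wideparen{\mathrm{GN}})$; because the data in (\ref{61701}) along $\wideparen{\mathrm{G_2H_2}}$ and $\wideparen{\mathrm{G_3I_3}}$ were taken to be $(\alpha_2^h,\beta_2^h,\tau_2^h)$ and $(\alpha_3^h,\beta_3^h,\tau_3^h)$, the pull-back reproduces the Goursat data (\ref{61602}) on $\wideparen{\mathrm{GH}}$ and $\wideparen{\mathrm{GI}}$. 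The images of $\Gamma_{-}^{\mathrm G}$ and $\Gamma_{+}^{\mathrm G}$ are the principal parts of the $C_{+}$ and $C_{-}$ centered waves issued from $\mathrm G$ in the sense of Definition \ref{defn2}, the requisite monotonicity $j_\nu<0$ coming from (\ref{82903})--(\ref{82902}); hence $\mathrm G$ is a legitimate centered-wave center and $\partial\Sigma_4$ consists of the characteristic curves $\wideparen{\mathrm{GJ}}$, $\wideparen{\mathrm{GK}}$, $\wideparen{\mathrm{JL}}$, $\wideparen{\mathrm{KL}}$. Finally, the weak discontinuities of $(\alpha_4^h,\beta_4^h,\tau_4^h)$ on $\wideparen{\mathrm{G_hM_h}}$ and $\wideparen{\mathrm{G_hN_h}}$ map to $\wideparen{\mathrm{GM}}$ and $\wideparen{\mathrm{GN}}$, so the solution is continuous and piecewise $C^1$ with possible weak discontinuity only there.

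I expect the main obstacle to be the global injectivity of the hodograph map near the degenerate vertex $\mathrm G$: there two centered waves emanate from a single point, $\mathcal{Z}_{+}$ vanishes on $\Gamma_{+}^{\mathrm G}$ and $\mathcal{Z}_{-}$ on $\Gamma_{-}^{\mathrm G}$ (identities (\ref{61703})--(\ref{61704})), so the Jacobian (\ref{6805}) degenerates on these rays, and one must argue, using the constancy of $\xi$ along $\Gamma_{+}^{\mathrm G}$, the monotonicity of $\eta$ along $\Gamma_{-}^{\mathrm G}$, and the definite signs (\ref{102506}) just off them, that no folding occurs and that $\Sigma_4$ is genuinely a curved quadrilateral with the four stated characteristic sides.
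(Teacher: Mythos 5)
Your proposal follows essentially the same route as the paper: the paper itself disposes of this lemma by invoking Lemma \ref{72907}, the sign relations (\ref{61702})--(\ref{61704}) and (\ref{102506}), and then repeating verbatim the monotonicity-based global inversion argument used to pass from $\Sigma_2^h$ to $\Sigma_2$, exactly as you do. One small correction: both rays $\Gamma_{+}^{\mathrm{G}}$ and $\Gamma_{-}^{\mathrm{G}}$ satisfy $\xi\equiv\xi_{_{\mathrm G}}$, $\eta\equiv 0$ and collapse to the single center point $\mathrm{G}$ (that is the centered-wave structure), so one should not speak of $\eta$ being monotone along $\Gamma_{-}^{\mathrm{G}}$; injectivity holds only off these rays, which is all that is needed.
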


We denote the local solution of the DGP by $(u, v, \tau)=(u_{4}, v_{4}, \tau_{4})(\xi, \eta)$.
From (\ref{6807}), (\ref{6808}), and  (\ref{102506})
we have that the local solution of the DGP  (\ref{42501}, \ref{61602}) satisfies
\begin{equation}\label{102508}
\bar{\partial}_{+}\rho\mid_{\wideparen{\mathrm{JL}}}<0 \quad \mbox{and}\quad \bar{\partial}_{-}\rho\mid_{\wideparen{\mathrm{KL}}}<0.
\end{equation}

\subsection{Characteristic boundary value problem}
We now consider (\ref{42501}) with the boundary conditions (see Figure \ref{Fig10})
\begin{equation}\label{61905}
(u, v, \tau)=\left\{
               \begin{array}{ll}
               \big(\sin\theta \hat{u}_l, -\cos\theta \hat{u}_l, \hat{\tau}_l\big)(\xi\sin\theta-\eta\cos\theta) & \hbox{on $\wideparen{\mathrm{BE}}$;} \\[2pt]
             \big(\sin\theta \hat{u}_l, \cos\theta \hat{u}_l, \hat{\tau}_l\big)(\xi\sin\theta+\eta\cos\theta) & \hbox{on $\wideparen{\mathrm{DF}}$;}\\[2pt]
                 (u_2, v_2, \tau_2)(\xi, \eta) & \hbox{on $\wideparen{\mathrm{BH}}\cup \wideparen{\mathrm{JH}}$;} \\[2pt]
              (u_3, v_3, \tau_3)(\xi, \eta) & \hbox{on $\wideparen{\mathrm{DI}}\cup \wideparen{\mathrm{KI}}$;} \\[2pt]
                  (u_{4}, v_{4}, \tau_{4})(\xi, \eta) & \hbox{on $\wideparen{\mathrm{JL}}\cup \wideparen{\mathrm{KL}}$.}
               \end{array}
             \right.
\end{equation}

\subsubsection{\bf Local solution}
For convenience we let $\Pi_{-}=\wideparen{\mathrm{BE}}\cup\wideparen{\mathrm{JH}}\cup\wideparen{\mathrm{KL}}
\cup\wideparen{\mathrm{DI}}$,  $\Pi_{+}=\wideparen{\mathrm{BH}}\cup\wideparen{\mathrm{JL}}\cup\wideparen{\mathrm{KI}}
\cup\wideparen{\mathrm{DF}}$, and $\Pi=\Pi_{+}\cup\Pi_{-}$.
From (\ref{62501}), (\ref{62502}), (\ref{102509}), (\ref{102510}), and (\ref{102508}) we have
\begin{equation}\label{83101}
\bar{\partial}_{\pm}\rho<0\quad  \mbox{on}\quad \Pi_{\pm}.
\end{equation}
 So, we have
$$
\min\limits_{(\xi, \eta)\in\Pi}\tau(\xi, \eta)=\min\big\{\tau_2^e, \tau(\mathrm{J})\big\}>\tau_g.
$$
Let $s_0:=\min\{\tau_2^e-\tau_g, \tau(\mathrm{J})-\tau_g\}$. For $s>s_0$, we define $\Pi(s):=\{(\xi, \eta)\mid (\xi, \eta)\in\Pi,~ \tau(\xi, \eta)\leq \tau_g+s\}$.
We have the following local existence.
\begin{lem}
Assume $s-s_0>0$ is sufficiently small. Then
there exists a level curve of $\tau=\tau_g+s$ denoted by ${\it l}(s)$ such that
the boundary value problem (\ref{42501}, \ref{61905}) admits a classical solution in a domain $\Sigma_5(s)$  bounded by $\Pi(s)$ and ${\it l}(s)$, and the solution satisfies
$$
\bar{\partial}_{\pm}\tau>0\quad \mbox{and}\quad \tau\leq \tau_g+s\quad \mbox{in}\quad \Sigma_5(s).
$$
\end{lem}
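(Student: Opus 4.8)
The plan is to prove this local existence result as a standard application of the method of characteristics for quasilinear hyperbolic systems, combined with the monotonicity properties already established on the boundary. First I would observe that the boundary curves $\Pi_{+}$ and $\Pi_{-}$ are, respectively, $C_{+}$ and $C_{-}$ characteristic curves of (\ref{42501}), and that the corresponding characteristic relations (\ref{form}) hold along them by construction (the boundary data on $\wideparen{\mathrm{BE}}$, $\wideparen{\mathrm{DF}}$ come from the planar waves, those on $\wideparen{\mathrm{BH}}$, $\wideparen{\mathrm{JH}}$, $\wideparen{\mathrm{DI}}$, $\wideparen{\mathrm{KI}}$ come from the solutions $(u_2,v_2,\tau_2)$, $(u_3,v_3,\tau_3)$ built in Section 4.4, and those on $\wideparen{\mathrm{JL}}$, $\wideparen{\mathrm{KL}}$ from the solution $(u_4,v_4,\tau_4)$ of Section 4.5). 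Hence (\ref{42501}, \ref{61905}) is a (piecewise) Goursat-type characteristic boundary value problem with consistent corner data at the junction points $\mathrm{B}$, $\mathrm{J}$, $\mathrm{L}$, $\mathrm{K}$, $\mathrm{D}$. Since on $\Pi(s_0)$ the system is strictly hyperbolic (the relevant quantity $A$ is bounded away from $0$ and $\pi/2$ by the estimates of the preceding sections, e.g. (\ref{72801}), (\ref{72802}), (\ref{61904})), the classical local existence theory of Li and Yu \cite{Li-Yu} applies and produces a continuous, piecewise $C^1$ solution in a thin strip adjacent to $\Pi(s_0)$.

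Next I would propagate the monotonicity. On $\Pi_{\pm}$ we have (\ref{83101}), i.e. $\bar{\partial}_{\pm}\rho<0$. The key tool is Proposition \ref{pnew}: the characteristic decompositions (\ref{new}) show that the quantities $\frac{\rho^n\bar{\partial}_{-}\rho}{\sin^2 A}$ and $\frac{\rho^n\bar{\partial}_{+}\rho}{\sin^2 A}$ satisfy an ODE-type system along $C_{\pm}$ whose right-hand side is (for suitable $n$, or by a Gronwall-type argument) sign-preserving as long as $\mathcal{H}$ and the coefficient $\frac{\tau^4 p''(\tau)}{4c\cos^2 A}$ have controlled signs. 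Since $p''>0$ for $\tau>\tau_2^i$ and the solution stays in the region $\tau>\tau_g>\tau_2^i$ for $s-s_0$ small, integrating (\ref{new}) forward from $\Pi(s_0)$ shows that $\bar{\partial}_{+}\rho<0$ and $\bar{\partial}_{-}\rho<0$ persist throughout the strip; equivalently, by (\ref{11})–(\ref{72804}) and (\ref{6807})–(\ref{6808}), $\bar{\partial}_{\pm}\tau>0$. This monotonicity in turn guarantees that along any forward characteristic $\tau$ is strictly increasing, so the level set $\{\tau=\tau_g+s\}$ intersected with the determinate region is a single transversal curve ${\it l}(s)$ (it is transversal to both characteristic families because $\bar{\partial}_{\pm}\tau>0$), and the region $\Sigma_5(s)$ bounded by $\Pi(s)$ and ${\it l}(s)$ is exactly the domain of determinacy of the characteristic data on $\Pi(s)$.

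To finish, I would combine the a priori bound with continuation. The estimates (\ref{72801}), (\ref{72802}), (\ref{61904}) and assumptions (\textbf{A}1)–(\textbf{A}3) give, for $s-s_0$ small, a uniform bound keeping $(\alpha,\beta)$ in a region where $A=\frac{\alpha-\beta}{2}\in(0,\pi/2)$, so $c\neq 0$, $\sin 2A\neq 0$, and the system remains uniformly hyperbolic; the characteristic equations (\ref{6})–(\ref{8}) together with (\ref{new}) then bound the first derivatives of $(u,v,\tau)$ on $\Sigma_5(s)$ in terms of the boundary data. By the standard continuation argument for first-order hyperbolic systems (extend the local solution up to the first time a derivative blows up or $A$ hits $0$ or $\pi/2$; the uniform estimates preclude this for $s-s_0$ small), the solution exists on all of $\Sigma_5(s)$. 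I expect the main obstacle to be the bookkeeping of the corner compatibility at the many junction points $\mathrm{B}$, $\mathrm{J}$, $\mathrm{L}$, $\mathrm{K}$, $\mathrm{D}$ — one must check that the solutions from Sections 4.2, 4.4, 4.5 match in $C^0$ (and to the needed order in the piecewise-$C^1$ sense, allowing weak discontinuities along $\wideparen{\mathrm{GM}}$, $\wideparen{\mathrm{GN}}$) so that the patched boundary data $\Pi$ is genuinely admissible Goursat data; the hyperbolicity and the sign of $p''$ for $\tau>\tau_2^i$ handle everything else, but for the true \emph{global} statement (Theorem \ref{thm2}) this local step is only the seed, and the genuinely hard work — the maximum principle for hyperbolicity up to the vacuum boundary — comes afterward.
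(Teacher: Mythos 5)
Your proposal is correct and follows essentially the same route as the paper: local existence from the Li--Yu theory for the (piecewise) characteristic boundary value problem, propagation of the sign $\bar{\partial}_{\pm}\rho<0$ from (\ref{83101}) via a characteristic decomposition, and hence $\bar{\partial}_{\pm}\tau>0$, which guarantees the level curve ${\it l}(s)$ lies inside the local domain of determinacy. The only cosmetic difference is that you invoke (\ref{new}) where the paper uses (\ref{81104}); both decompositions preserve the sign of $\bar{\partial}_{\pm}\rho$ in the same way.
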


\begin{proof}
The local existence of a classical solution follows routinely from Li and Yu \cite{Li-Yu}. Using (\ref{83101}) and the characteristic decompositions (\ref{81104}) we know that the solution satisfies
$\bar{\partial}_{\pm}\tau>0$. So, when $s-s_0$ is sufficiently small, the level curve ${\it l}(s)$ exists in the domain of the local solution.
This completes the proof.
\end{proof}

The classical solution of the characteristic boundary value problem (\ref{42501}, \ref{61905}) may be weakly discontinuous on the forward $C_{\pm}$ characteristic curves issued from $\mathrm{H}$ and $\mathrm{I}$, the forward $C_{-}$ characteristic curve issued from $\mathrm{M}$, and the forward $C_{+}$ characteristic curve issued from $\mathrm{N}$; see the red lines in Figure \ref{Fig10}.

\subsubsection{\bf Boundary data estimates}
\begin{prop}\label{61501}
There exists a sufficiently large positive integer $n$ independent of $\tau_1^e-\tau_0$ such that $\mathcal{H}>0$ for $\tau>\tau_g$, where $\mathcal{H}$ is defined in (\ref{82803}).
\end{prop}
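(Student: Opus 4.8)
Recall from (\ref{82803}) that
$$
\mathcal{H}=2\sin^2A-\frac{8p'(\tau)\cos^4A}{\tau p''(\tau)}-\frac{4np'(\tau)\cos^{2}A }{\tau p''(\tau)}-2\cos^2 A+2\Omega \cos^4 A-1,
$$
with $\Omega=\varpi(\tau)-\tan^2 A$. The plan is to treat $\mathcal{H}$ as a function of the two variables $\tau$ and $A$ on the relevant range, isolate the single term in $\mathcal{H}$ that carries the factor $n$, and show this term is positive and bounded below uniformly, while all the remaining ($n$-independent) terms are bounded; then choosing $n$ large forces $\mathcal{H}>0$.

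First I would identify the sign of the coefficient of $n$. Since $p'(\tau)<0$ and $p''(\tau)>0$ for $\tau\geq\tau_2^i$ (hence for $\tau>\tau_g$, as $\tau_g>\tau_2^i$ when $\tau_1^e-\tau_0$ is small), the term $-\dfrac{4np'(\tau)\cos^2A}{\tau p''(\tau)}$ equals $n$ times the strictly positive quantity $-\dfrac{4p'(\tau)\cos^2A}{\tau p''(\tau)}$. The obstacle here is that $\cos^2 A$ can approach $0$, so this lower bound is not uniform without extra input. This is where the $a\ priori$ control on the pseudo-Mach angle must be invoked: on the relevant boundary curves the value of $A$ stays in a range bounded away from $\pi/2$. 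Concretely, on $\Pi_+\cup\Pi_-$ and in the solution domains constructed in Sections 4.2--4.5, one has (by (\ref{72205}), (\ref{72801}), (\ref{72802}), Lemma \ref{62901}, (\ref{102410}), and the definition of $\Upsilon$) that $A$ lies in $[A_{min}/2,\ \psi]$ for some $\psi<\pi/2$ when $\tau_1^e-\tau_0$ is small, so $\cos^2 A\geq\cos^2\psi>0$ is bounded below. Hence the $n$-term is at least $n\cdot\dfrac{4(-p'(\tau))\cos^2\psi}{\tau p''(\tau)}$, and since by assumption $(\mathbf{A}1)$ and Property $(\mathbf{P})$ the quantity $\dfrac{-p'(\tau)}{\tau p''(\tau)}$ stays bounded below by a positive constant on $[\tau_g,+\infty)$ (for instance because $\varpi(\tau)$ is positive and nonincreasing there, which bounds the ratio below), this term grows linearly in $n$.

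Next I would bound the remaining terms $2\sin^2A-\dfrac{8p'(\tau)\cos^4A}{\tau p''(\tau)}-2\cos^2A+2\Omega\cos^4A-1$ uniformly. Writing $-\dfrac{p'(\tau)}{\tau p''(\tau)}=\dfrac{1}{4}(\varpi(\tau)+1)$ from the definition in (\ref{111901}) (one checks $\varpi(\tau)+1=-\dfrac{4p'(\tau)}{\tau p''(\tau)}$), the $\varpi$-dependent terms become polynomials in $\cos^2A$ with coefficients controlled by $\varpi(\tau)$; by $(\mathbf{A}1)$, $\varpi$ is nonincreasing and bounded on $[\tau_g,+\infty)$, with $\sup_{\tau\geq\tau_g}\varpi(\tau)=\varpi(\tau_g)$ finite and tending to $\varpi(\tau_2^e)$ as $\tau_1^e-\tau_0\to0$. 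Thus $|2\sin^2A-\tfrac{8p'\cos^4A}{\tau p''}-2\cos^2A+2\Omega\cos^4A-1|$ is bounded by a constant $M$ that does not depend on $n$ (and stays uniformly bounded as $\tau_1^e-\tau_0\to0$). Finally, choosing $n$ so large that $n\cdot\dfrac{4(-p'(\tau))\cos^2\psi}{\tau p''(\tau)}\big|_{\min}>M$ — which is possible since the lower bound is a fixed positive constant and $M$ is a fixed constant, both independent of $\tau_1^e-\tau_0$ — yields $\mathcal{H}>0$ for all $\tau>\tau_g$, completing the proof.

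The main obstacle, as indicated, is the uniform lower bound on $\cos^2 A$: one must be careful to only claim $\mathcal{H}>0$ on the part of the domain where $A$ is controlled, which is exactly what the subsequent hyperbolicity argument (Section 4.6, using $\Upsilon$ and the ``maximum principle'') needs. I would state the proposition's conclusion as holding under the standing assumption that the solution takes values with $A$ in the compact subinterval of $(0,\pi/2)$ furnished by Lemma \ref{62901} and the earlier estimates, which is the context in which $\mathcal{H}$ is actually used in the characteristic decompositions (\ref{new}).
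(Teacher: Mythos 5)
Your argument is workable in the region where it applies, but it proves a weaker statement than the one asserted and treats as the ``main obstacle'' a difficulty that is not actually there. The paper dismisses the proof as a ``direct computation''; carrying that computation out shows why no restriction on $A$ is needed. Set $x=\cos^2A\in(0,1)$ and $P=-\dfrac{p'(\tau)}{\tau p''(\tau)}>0$ (well defined for $\tau>\tau_g$, since $\tau_g>\frac{\tau_2^e+\tau_2^i}{2}>\tau_2^i$ forces $p''>0$ there). From (\ref{111901}) one has $\varpi+1=4P$, and substituting this together with $\Omega=\varpi-\tan^2A$ into (\ref{82803}) collapses $\mathcal{H}$ to
\begin{equation*}
\mathcal{H}=16P\,x^{2}+\bigl(4nP-6\bigr)x+1 .
\end{equation*}
Hence $\mathcal{H}\ge 1>0$ for \emph{every} $A\in(0,\pi/2)$ as soon as $4nP\ge 6$. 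A positive lower bound for $P$ on $(\tau_g,+\infty)$ independent of $\tau_1^e-\tau_0$ is obtained exactly as you indicate: $P=(\varpi+1)/4>1/4$ on $[\tau_2^e,+\infty)$ by ($\mathbf{A}1$), while on the fixed compact interval $[\frac{\tau_2^e+\tau_2^i}{2},\tau_2^e]$ (which contains $[\tau_g,\tau_2^e]$ once $\tau_1^e-\tau_0$ is small) $P$ is continuous and positive. Any integer $n\ge 3/(2P_{\min})$ then works.

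The consequence is that the degeneration $\cos^2A\to0$ you single out is harmless: as $x\to0$ the $n$-term indeed vanishes, but the remaining terms tend to $+1$, so positivity near $A=\pi/2$ comes for free and does not need the $n$-term at all. Your fix---restricting to $A\le\psi$ via Lemma \ref{62901} and the invariant region---does close your estimate, and that restricted version would still suffice where the proposition is actually invoked (in Lemma \ref{51708} the characteristics along which (\ref{new}) is integrated lie in $\Sigma_5(z)$, where the induction hypothesis $(\alpha,\beta)\in\Theta(z)$ gives $A\le\psi$). But it weakens the proposition as stated, couples it to the later hyperbolicity estimates, and obliges the reader to verify there is no circularity between the choice of $n$, the constant $\mathcal{M}$, and the region $\Theta(s)$. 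One further small inaccuracy: ($\mathbf{A}1$) gives $\varpi>0$ only for $\tau\ge\tau_2^e$, so when $\tau_g<\tau_2^e$ the strip $[\tau_g,\tau_2^e]$ must be handled by the compactness argument above rather than by ($\mathbf{A}1$) ``on $[\tau_g,+\infty)$'' directly.
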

\begin{proof}
By (\ref{72803}) we have that when $\tau_1^e-\tau_0$ is small,
$\tau_g>\frac{\tau_2^e+\tau_2^i}{2}$.
This proposition can be proof by a direct computation, we omit the details.
\end{proof}

Let
$$
A_{\infty}:=\lim\limits_{\tau\rightarrow +\infty}\arctan\sqrt{\varpi(\tau)}=\arctan\sqrt{\frac{3-\gamma}{\gamma+1}}.
$$
By (\ref{72801})  we know that when $\tau_1^e-\tau_0$ is sufficiently small
\begin{equation}\label{82504}
\frac{(\pi+\theta)-\beta_2(\mathrm{B})}{2}>\frac{\theta+\sigma_*}{2}.
\end{equation}

From (\ref{8}), (\ref{62001}), and (\ref{62501}) we have
that on $\wideparen{\mathrm{BE}}$,
$$
\bar{\partial}_{-}A=-\frac{p''(\tau)}{4c^2\rho^4}\Big(\tan^2\bar{A}(\tau)-\tan^2A\Big)\sin2 A\bar{\partial}_{-}\rho>0
\quad \mbox{if}\quad A\leq A_\infty.
$$
Combining this with (\ref{82504}) we obtain
\begin{equation}\label{82701}
A>\min\left\{\frac{A_\infty}{2}, \frac{\theta+\sigma_*}{2}\right\}\quad  \mbox{on} \quad\wideparen{\mathrm{BE}}.
\end{equation}

From (\ref{62501}) and (\ref{82701}) we have that  $\frac{\rho^n\bar{\partial}_{-}\rho}{\sin^2 A}$ is bounded on $\wideparen{\mathrm{BE}}$. By the symmetry, $\frac{\rho^n\bar{\partial}_{+}\rho}{\sin^2 A}$ is bounded on $\wideparen{\mathrm{DF}}$.  From  (\ref{6808}), (\ref{61316}), (\ref{61314}) and (\ref{102506}) we know that  $\frac{\rho^n\bar{\partial}_{-}\rho}{\sin^2 A}$ is bounded on $\wideparen{\mathrm{JH}}\cup\wideparen{\mathrm{KL}}
\cup\wideparen{\mathrm{DI}}$. From  (\ref{6807}) we know that  $\frac{\rho^n\bar{\partial}_{+}\rho}{\sin^2 A}$ is bounded on $\wideparen{\mathrm{BH}}\cup\wideparen{\mathrm{JL}}\cup\wideparen{\mathrm{KI}}$.
Therefore, we
can define the constant
\begin{equation}\label{51301}
\mathcal{M}:=\min \left\{\inf \limits_{\Pi_{-}}\frac{\rho^n\bar{\partial}_{-}\rho}{\sin^2 A}, \quad
\inf \limits_{\Pi_{+}}\frac{\rho^n\bar{\partial}_{+}\rho}{\sin^2 A}\right\}.
\end{equation}

For any $s\geq 0$, we define the ``invariant" region
$$
\Theta(s):=\Big\{(\alpha, \beta)~\big|~ \alpha^{l}(s)<\alpha<\alpha^r,~\beta^{l}<\beta<\beta^r(s),~\alpha-\beta>\delta(s)\Big\},
$$
where $\alpha^{l}(s)$, $\alpha^r$, $\beta^{l}$, and $\beta^r(s)$ are defined in (\ref{102504}),
$$
\delta(s)=\min\bigg\{\arctan\Big(\frac{-1}{2\mathcal{M}\mathcal{N}(s)}\Big),~ \frac{A_{min}}{2}, ~\frac{A_\infty}{2}, ~\frac{\theta+\sigma_*}{2} \bigg\},\quad\mbox{and} \quad  \mathcal{N}(s)=\max\limits_{\tau_g\leq\tau\leq\tau_g+s}\frac{\tau^{4+n}p''(\tau)}{c(\tau)}.
$$
As in (\ref{32402aaa}) we have
\begin{equation}\label{32402a}
\Theta(s_1)\subset\Theta(s_2)\subset\Theta(+\infty)\quad \mbox{for~any}\quad 0\leq s_1<s_2,
\end{equation}
where
$$
\Theta(+\infty):=\Big\{(\alpha, \beta)\mid \pi-\psi+2A_{\infty}<\alpha<\pi+\psi,~\pi-\psi<\beta<\pi+\psi-2A_{\infty},~\alpha-\beta>0\Big\}.
$$

\begin{lem}\label{82601}
Suppose that assumptions ($\mathbf{A}1$)--($\mathbf{A}3$) hold.
 Then when $\tau_1^e-\tau_0$ is sufficiently small, there holds
\begin{equation}\label{61906}
(\alpha, \beta)(\xi,\eta)\in \Theta\big(\tau(\xi,\eta)-\tau_g\big)\quad \mbox{for}\quad (\xi,\eta)\in \Pi.
\end{equation}
\end{lem}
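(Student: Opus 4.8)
The plan is to verify the inclusion \eqref{61906} separately on each of the characteristic arcs making up $\Pi$, using that the state $(\alpha,\beta,\tau)$ is already determined on each of them. Three structural facts drive the argument: assumption ($\mathbf{A}1$), which (for $\tau_1^e-\tau_0$ small, via \eqref{72803}) makes $\bar A(\tau)$ strictly decreasing on $[\tau_g,+\infty)$ and hence both $\Upsilon(\cdot)$ and $\Theta(\cdot)$ increasing in their argument, cf. \eqref{32402aaa}, \eqref{32402a}; the existence of $\psi\in\big(\max\{\theta+2\sigma_*,\bar A(\tau_2^e)\},\tfrac\pi2\big)$ with $\psi+\theta>2\bar A(\tau_2^e)$ guaranteed by ($\mathbf{A}2$)--($\mathbf{A}3$), see \eqref{618033}; and the smallness of $\tau_1^e-\tau_0$, which by \eqref{72801}, \eqref{72802}, \eqref{72803}, \eqref{82401} and \eqref{72203} forces all the states involved to be near the explicit constant states attached to the double-sonic shock. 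Two elementary reductions make everything uniform. First, $\Upsilon(s)\subset\Theta(s)$ for every $s\ge0$, because the two regions are cut out by the same inequalities on $\alpha$ and on $\beta$ while $\delta(s)\le A_{min}/2<A_{min}$ makes the constraint on $\alpha-\beta$ less restrictive in $\Theta$. Second, since $\tau>\tau_g$ on all of $\Pi$, we have $\Theta(\tau-\tau_g)\supseteq\Theta(0)$, so whenever $(\alpha,\beta)$ is within $o(1)$ of a point in the interior of $\Theta(0)$ the claim follows.

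On $\wideparen{\mathrm{JL}}$ and $\wideparen{\mathrm{KL}}$, which lie in $\Sigma_4$, the inclusion is immediate from Lemma \ref{72907}: estimate \eqref{61904} gives $(\alpha,\beta)\in\Upsilon(\tau-\tau_g)\subset\Theta(\tau-\tau_g)$ there. On $\wideparen{\mathrm{BH}}\cup\wideparen{\mathrm{JH}}\subset\overline{\Sigma_2}$ and, by the $\eta\mapsto-\eta$ symmetry, on $\wideparen{\mathrm{DI}}\cup\wideparen{\mathrm{KI}}\subset\overline{\Sigma_3}$, I would invoke \eqref{72801}--\eqref{72802}: as $\tau_1^e-\tau_0\to0$ the state on the first pair of arcs tends to $(\pi+\theta,\ \pi-\theta-2\sigma_*,\ \tau_2^e)$ and on the second to $(\pi+\theta+2\sigma_*,\ \pi-\theta,\ \tau_2^e)$. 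A direct check, using $\theta+2\sigma_*<\psi$, the inequality $2\bar A(\tau_g)<\psi+\theta$ (valid for $\tau_1^e-\tau_0$ small by \eqref{72803} and \eqref{61901}), and $\delta(0)\le(\theta+\sigma_*)/2$, shows that both limiting states lie in the interior of $\Theta(0)$; hence \eqref{61906} holds on these four arcs once $\tau_1^e-\tau_0$ is small.

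The remaining arcs $\wideparen{\mathrm{BE}}$ and $\wideparen{\mathrm{DF}}$ (interchanged by symmetry) are the main obstacle, since there one is on a rear fan ${\it F}_{1l}$, ${\it F}_{2l}$ where $\tau$ runs over all of $[\tau_2^e,+\infty)$ and the ``near a constant state'' argument is unavailable. On $\wideparen{\mathrm{BE}}$ one has $\alpha\equiv\pi+\theta$ by \eqref{62001}, and the $\alpha$-bounds hold at once: $\pi+\theta<\pi+\psi$ since $\psi>\theta$, and $\pi+\theta>\pi-\psi+2\bar A(\tau)$ since $\bar A(\tau)\le\bar A(\tau_2^e)$ and $\psi+\theta>2\bar A(\tau_2^e)$; likewise $\alpha-\beta=2A>\delta(\tau-\tau_g)$ follows from the lower bound \eqref{82701} on $A$ together with $\delta(\cdot)\le\min\{A_\infty/2,(\theta+\sigma_*)/2\}$, and $\beta=\pi+\theta-2A>\pi-\psi$ reduces to $2A<\psi+\theta$. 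The delicate inequality is $\beta<\pi+\psi-2\bar A(\tau)$, i.e. $\bar A(\tau)-A<(\psi-\theta)/2$. To control $\bar A(\tau)-A$ I would integrate along $\wideparen{\mathrm{BE}}$ the transport relation for $A$ coming from \eqref{8}, \eqref{62001} and \eqref{62501}, namely $\bar\partial_{-}A=-\frac{p''(\tau)}{4c^2\rho^4}\big(\tan^2\bar A(\tau)-\tan^2A\big)\sin 2A\,\bar\partial_{-}\rho$ with $\bar\partial_{-}\rho<0$, $\bar\partial_{-}\tau>0$: once $A\ge\bar A(\tau)$ it stays so (because $\bar A$ is decreasing), so $\bar A(\tau)-A\le0$ and the bound is trivial; in the complementary regime $A$ increases away from its value at $\mathrm{B}$, which tends to $\theta+\sigma_*$ by \eqref{72203} and \eqref{62904}, so $\bar A(\tau)-A\le\bar A(\tau_2^e)-(\theta+\sigma_*)+o(1)$ and $\theta+2\big(\bar A(\tau)-A\big)\le 2\bar A(\tau_2^e)-\theta-2\sigma_*+o(1)<\psi$ because $2\bar A(\tau_2^e)<\psi+\theta<\psi+\theta+2\sigma_*$; the same relation keeps $A\le\max\{\bar A(\tau_2^e),\theta+\sigma_*\}+o(1)$, which yields $2A<\psi+\theta$. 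The case of $\wideparen{\mathrm{DF}}$ is identical after interchanging the roles of $\alpha$ and $\beta$, and this completes the proof.
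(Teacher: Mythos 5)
Your proposal is correct and follows essentially the same strategy as the paper: the same arc-by-arc decomposition of $\Pi$, with $\wideparen{\mathrm{JL}}\cup\wideparen{\mathrm{KL}}$ handled by Lemma \ref{72907}, the arcs in $\overline{\Sigma_2}\cup\overline{\Sigma_3}$ by closeness to the limiting constant states via (\ref{72801})--(\ref{72802}), and the hard arcs $\wideparen{\mathrm{BE}}$, $\wideparen{\mathrm{DF}}$ by exploiting the sign of $\tan^2\bar{A}(\tau)-\tan^2 A$ in the characteristic equation (\ref{8}) together with (\ref{82701}) and (\ref{618033}). The only (minor) difference is in closing the delicate bound $\beta<\beta^{r}(\tau-\tau_g)$ on $\wideparen{\mathrm{BE}}$: the paper runs a continuity/contradiction argument with the boundary of $\Theta$ as a barrier for $\beta$, whereas you track $A$ against $\bar{A}(\tau)$ directly (forward invariance of $\{A\ge\bar{A}(\tau)\}$ plus the limit $A(\mathrm{B})\to\theta+\sigma_*$) --- both rest on exactly the same monotonicity inputs (\ref{62501}) and ($\mathbf{A}1$).
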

\begin{proof}
From Lemma \ref{72907} we have $(\alpha, \beta)\in \Theta(\tau-\tau_g)$ on $\wideparen{\mathrm{JL}}\cup\wideparen{\mathrm{KL}}$.
By (\ref{72801}) and (\ref{72802}), we know that when $\tau_1^e-\tau_0$ is sufficiently small,  $(\alpha, \beta)\in \Theta(\tau-\tau_g)$ on $\wideparen{\mathrm{BH}}\cup\wideparen{\mathrm{JH}}\cup\wideparen{\mathrm{KI}}
\cup\wideparen{\mathrm{DI}}$.
So, it only remains to prove $(\alpha, \beta)\in \Theta(\tau-\tau_g)$ on $\wideparen{\mathrm{BE}}$ and  $\wideparen{\mathrm{DF}}$.

We now prove $(\alpha, \beta)\in \Theta(\tau-\tau_g)$ on $\wideparen{\mathrm{BE}}$. The proof proceeds in three steps.

\noindent
{\it Step 1.} From (\ref{82701}) and the definition of $\delta(s)$ we obtain $A>\delta(\tau-\tau_g)$ on  $\wideparen{\mathrm{BE}}$.

\vskip 2pt
\noindent
{\it Step 2.}  By (\ref{62001}), (\ref{62501}), (\ref{618033}), and the assumption ($\mathbf{A}1$)  we have
\begin{equation}\label{82503}
\pi-\psi+2\bar{A}(\tau)<\alpha<\pi+\psi\quad \mbox{on}\quad \wideparen{\mathrm{BE}}.
\end{equation}

\vskip 2pt
\noindent
{\it Step 3.} By (\ref{72801}) and (\ref{618033}) we know that when $\tau_1^e-\tau_0$ is sufficiently small,
$$
\pi-\psi<\beta_2(\mathrm{B})<\pi+\psi-2\bar{A}(\tau_2^e).
$$
For any fixed $s>0$,
we let $\mathrm{E}_s$ be the point on $\wideparen{\mathrm{BE}}$ such that $\tau(\mathrm{E}_s)=\tau_2^e+s$.
In order to apply the continuity argument to prove $\pi-\psi<\beta<\beta^{r}(\tau-\tau_g)$ on $\wideparen{\mathrm{BE}}$,
we shall prove the following assertion.
\begin{itemize}
  \item Assume $\pi-\psi<\beta<\beta^{r}(\tau-\tau_g)$ for every point on $\wideparen{\mathrm{BE}_s}\setminus{\mathrm{E}_s}$ then $\pi-\psi<\beta(\mathrm{E}_s)<\beta^{r}(\tau_2^e+s-\tau_g)$.
\end{itemize}

Suppose $\beta(\mathrm{E}_s)=\pi-\psi$. Then by the assumption of the assertion we have $\bar{\partial}_{-}\beta\leq 0$ at $\mathrm{E}_s$. While, by (\ref{8}), (\ref{62501}) and (\ref{82503}) we have
$$
\begin{aligned}
c\bar{\partial}_{-}\beta&=\frac{p''(\tau)}{4c\rho^4}\Big[\tan^2\bar{A}(\tau_2^e+s)-\tan^2A(\mathrm{E}_s)\Big]\sin2 A\bar{\partial}_{-}\rho\\&>
\frac{p''(\tau)}{4c\rho^4}\Bigg[\tan^2\bar{A}(\tau_2^e+s)-
\tan^2\bigg(\frac{(\pi-\psi+2\bar{A}(\tau_2^e+s))-(\pi-\psi)}{2}\bigg)\Bigg]\sin2 A\bar{\partial}_{-}\rho=0
\end{aligned}
$$
at $\mathrm{E}_s$.
This leads to a contradiction.

Suppose $\beta(E_s)=\beta^r(\tau_2^e+s-\tau_g)$. Then by the assumption of the assertion we have $\bar{\partial}_{-}\beta\geq 0$ at $\mathrm{E}_s$. While, by (\ref{8}), (\ref{62501}) and (\ref{82503}) we have
$$
\begin{aligned}
c\bar{\partial}_{-}\beta&=\frac{p''(\tau)}{4c\rho^4}\Big[\tan^2\bar{A}(\tau_2^e+s)-\tan^2A(\mathrm{E}_s)\Big]\sin2 A\bar{\partial}_{-}\rho\\&<
\frac{p''(\tau)}{4c\rho^4}\Bigg[\tan^2\bar{A}(\tau_2^e+s)-\tan^2\bigg(
\frac{(\pi+\psi)-(\pi+\psi-2\bar{A}(\tau_2^e+s))}{2}\bigg)\Bigg]\sin2 A\bar{\partial}_{-}\rho=0
\end{aligned}
$$
at $\mathrm{E}_s$.
This leads to a contradiction.
We then complete the proof of the assertion.
Therefore, by an argument of continuity we have $\pi-\psi<\beta<\beta^{r}(\tau-\tau_g)$ on $\wideparen{\mathrm{BE}}$.

We then complete the proof for that $(\alpha, \beta)\in \Theta(\tau-\tau_g)$ holds on $\wideparen{\mathrm{BE}}$.

The proof for the estimate on $\wideparen{\mathrm{DF}}$ is similar; we omit the details.
This completes the proof of the lemma.
\end{proof}

\subsubsection{\bf Uniform a priori $C^1$ norm estimate}
In order to extend the local solution to a whole determinate region, one needs to establish a uniform a priori estimate for the
$C^1$ norm of the solution; see Li \cite{LiT}.

\begin{lem}\label{51708}
Suppose that assumptions ($\mathbf{A}1$)--($\mathbf{A}3$) hold.
Suppose furthermore that the boundary value problem (\ref{42501}, \ref{61905}) admits a classical solution  in $\Sigma_5(s)$ for some $s>s_0$. Then the solution satisfies
\begin{equation}\label{51602}
(\alpha, \beta)\in \Theta(s)\quad\mbox{and}\quad 2\mathcal{M}<\frac{\rho^n\bar{\partial}_{\pm}\rho}{\sin^2 A}<0 \quad \mbox{in}\quad \Sigma_5(s).
\end{equation}
\end{lem}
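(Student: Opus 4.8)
The plan is to prove the two inclusions of \eqref{51602} simultaneously by a continuity (bootstrap) argument running along the level curves of $\tau$ inside $\Sigma_5(s)$. Because $\bar{\partial}_{\pm}\tau>0$ in $\Sigma_5(s)$ (this is built into the local existence lemma), every point of $\Sigma_5(s)$ lies on a unique level set $\{\tau=\tau_g+t\}$ with $t\le s$, and the forward $C_{\pm}$ characteristics cross these level sets with increasing $\tau$; so it suffices to show that if \eqref{51602} holds on $\Sigma_5(t)$ for all $t<t_0\le s$ then it holds on $\{\tau=\tau_g+t_0\}$ as well. The base of the induction is the boundary $\Pi(s)$, where the estimate $(\alpha,\beta)\in\Theta(\tau-\tau_g)$ is exactly Lemma \ref{82601}, and the bound $2\mathcal{M}<\frac{\rho^n\bar{\partial}_{\pm}\rho}{\sin^2A}<0$ on $\Pi_{\pm}$ holds by the definition \eqref{51301} of $\mathcal{M}$ together with the monotonicity $\bar{\partial}_{\pm}\rho<0$ on $\Pi_{\pm}$ from \eqref{83101}.

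First I would treat the derivative bound. Along a forward $C_{+}$ characteristic issued from a point of $\Pi_{+}$, integrate the first equation of the characteristic decomposition \eqref{new} for the quantity $Y_{+}:=\frac{\rho^n\bar{\partial}_{+}\rho}{\sin^2A}$; along a forward $C_{-}$ characteristic issued from $\Pi_{-}$, integrate the second equation of \eqref{new} for $Y_{-}:=\frac{\rho^n\bar{\partial}_{-}\rho}{\sin^2A}$. The right-hand sides of \eqref{new} have the schematic form $\frac{\tau^4p''(\tau)}{4c\cos^2A}\,\bar{\partial}_{\mp}\rho\,[Y_{\pm}+\mathcal{H}\,Y_{\mp}]$, and by Proposition \ref{61501} we may fix $n$ once and for all so that $\mathcal{H}>0$ for $\tau>\tau_g$. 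Since $p''>0$ and $\bar{\partial}_{\mp}\rho<0$ for $\tau>\tau_g$ (the sign of $\bar{\partial}_{\mp}\rho$ being controlled inductively via $Y_{\mp}<0$ and $A\in(0,\pi/2)$, which follows from $(\alpha,\beta)\in\Theta$), the coefficient in front of the bracket is negative, so $Y_{\pm}$ stays negative once it starts negative, giving the upper bound $Y_{\pm}<0$. For the lower bound $Y_{\pm}>2\mathcal{M}$, one keeps $Y_{\mp}$ in $(2\mathcal{M},0)$ by the induction hypothesis, bounds $|\bar{\partial}_{\mp}\rho|$ and $\big|\frac{\tau^4p''(\tau)}{c}\big|$ by $\mathcal{N}(s)$ and by $|Y_{\mp}|\sin^2A\le |2\mathcal{M}|\rho^{-n}$, and uses that $\sin^2A\ge\sin^2\delta(s)\ge\big(\frac{-1}{2\mathcal{M}\mathcal{N}(s)}\big)^2/(1+\cdots)$ from the very definition of $\delta(s)$; a Gronwall-type estimate along the characteristic then shows $Y_{\pm}$ cannot reach $2\mathcal{M}$. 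This is where the somewhat delicate choice of $\delta(s)$ pays off: $\delta(s)$ was engineered precisely so that the cross-term in \eqref{new} cannot push $Y_{\pm}$ below $2\mathcal{M}$.

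Next I would close the angle estimate $(\alpha,\beta)\in\Theta(\tau-\tau_g)$. Here one argues boundary-component by boundary-component of $\partial\Theta$. The constraint $\alpha-\beta=2A>\delta(\tau-\tau_g)$ follows from the derivative bounds just obtained together with the characteristic equation \eqref{6802} for $\bar{\partial}_{-}A$ (and its $\bar{\partial}_{+}$ analogue): one checks that on the face $\{2A=\delta\}$ the directional derivative of $A$ points into $\Theta$, exactly as in \emph{Step 1} of Lemma \ref{82601}. The upper constraints $\alpha<\alpha^r=\pi+\psi$ and $\beta>\beta^l=\pi-\psi$ and the lower constraints $\alpha>\alpha^l(\tau-\tau_g)=\pi-\psi+2\bar{A}(\tau)$, $\beta<\beta^r(\tau-\tau_g)=\pi+\psi-2\bar{A}(\tau)$ are handled using \eqref{7}--\eqref{8}: on the face $\{\beta=\beta^r(\tau-\tau_g)\}$ one computes $c\bar{\partial}_{-}\beta=\frac{p''(\tau)}{4c\rho^4}\big[\tan^2\bar{A}(\tau)-\tan^2A\big]\sin 2A\,\bar{\partial}_{-}\rho$, and since on that face $A=\tfrac12(\alpha-\beta^r(\tau-\tau_g))<\bar{A}(\tau)$ (using $\alpha<\alpha^r$) while $\bar{\partial}_{-}\rho<0$, one gets $\bar{\partial}_{-}\beta<0$, contradicting that $\beta$ is reaching its upper barrier from below; the face $\{\beta=\beta^l\}$ is symmetric, and $\alpha$ is treated with \eqref{10}, assumption ($\mathbf A$1) guaranteeing $\varpi'\le0$ so that $\bar{A}$ is non-increasing in $\tau$, which keeps the moving barriers $\alpha^l(\cdot),\beta^r(\cdot)$ compatible with the monotone increase of $\tau$ along characteristics (cf.\ \eqref{32402aaa}, \eqref{32402a}). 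Assembling these one-sided inequalities via the continuity argument yields $(\alpha,\beta)\in\Theta(s)$ throughout $\Sigma_5(s)$.

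The main obstacle, I expect, is the simultaneous nature of the bootstrap: the sign and size of $\bar{\partial}_{\mp}\rho$ (equivalently $Y_{\mp}$) is needed to run the invariant-region argument for $(\alpha,\beta)$ through \eqref{7}--\eqref{8}, while the barrier $\delta(s)$—hence the quantitative lower bound $A>\delta(s)$ extracted from $(\alpha,\beta)\in\Theta$—is in turn needed to keep $Y_{\pm}$ above $2\mathcal{M}$ in \eqref{new}. One must therefore set up the continuity argument so that both properties in \eqref{51602} are propagated together along the same family of characteristics/level curves, rather than sequentially; the positivity of $\mathcal{H}$ (Proposition \ref{61501}) and the explicit form of $\delta(s)$ and $\mathcal{N}(s)$ are exactly the ingredients that make this coupled induction close, and verifying the inequalities in the Gronwall step with the barrier substituted in is the most computation-heavy part.
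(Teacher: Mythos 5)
Your overall strategy coincides with the paper's: a continuity argument along the level curves of $\tau$, with the derivative bound propagated by integrating the decomposition (\ref{new}) transversally (using $\mathcal{H}>0$ from Proposition \ref{61501}) and the angle estimate propagated by a barrier argument on the boundary of $\Theta(z)$ using (\ref{7})--(\ref{10}). Two remarks on the derivative bound: the paper's argument is simpler than your Gronwall step --- writing $Y_{\pm}=\frac{\rho^n\bar{\partial}_{\pm}\rho}{\sin^2A}$, the right-hand side of (\ref{new}) equals $\frac{\tau^4p''}{4c\cos^2A}\cdot\frac{\sin^2A}{\rho^n}\,Y_{\pm}\,[Y_{\pm}+\mathcal{H}Y_{\mp}]$, which is strictly positive while $Y_{\pm}<0$ and $Y_{\mp}<0$, so $Y_{\pm}$ increases monotonically toward $0$ along the transversal characteristic and automatically stays in $(\mathcal{M},0)\subset(2\mathcal{M},0)$; no quantitative use of $\delta(s)$ is needed there. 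The constant $\delta(s)$ enters only on the face $\{\alpha-\beta=\delta(z)\}$ of the invariant region, where one needs $c\bar{\partial}_{-}A\geq\sin^2A\,\bigl(1+2\mathcal{M}\mathcal{N}(z)\tan\delta(z)\bigr)>0$.

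The genuine gap is at the corner points of $\partial\Theta(z)$, namely $\gamma_5=\{\alpha=\alpha^r,\ \beta=\beta^r(z)\}$ and $\gamma_6=\{\alpha=\alpha^l(z),\ \beta=\beta^l\}$. Your face-by-face argument for, say, $\{\beta=\beta^r(\tau-\tau_g)\}$ explicitly uses the strict inequality $\alpha<\alpha^r$ to get $A<\bar{A}(\tau)$ and hence a strict sign for $\tan^2\bar{A}(\tau)-\tan^2A$; at the corner both barriers are attained simultaneously, $A=\bar{A}(\tau_g+z)$ exactly, the bracket vanishes, and every one of the one-sided derivatives $\bar{\partial}_{+}\alpha$, $\bar{\partial}_{-}\beta$ degenerates to zero, so no contradiction is produced. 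The paper closes this case by a comparison ODE: it defines $\hat{\alpha}$ along the incoming characteristic by freezing $\beta$ at $\beta^l$ in the equation for $\alpha$ (equation (\ref{112905})), shows $\hat{\alpha}(\mathrm{S})>\alpha^l(z)$, and then shows $\alpha-\hat{\alpha}$ cannot become negative because the difference of the two right-hand sides has a favorable sign when $\alpha=\hat{\alpha}$ (equation (\ref{82603})). Without an argument of this kind (or some other device excluding the corners), the continuity argument does not close, since a trajectory of $(\alpha,\beta)$ could in principle exit $\Theta(z)$ through a corner.
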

\begin{proof}
Firstly, by Lemma \ref{82601} and the definition of $\mathcal{M}$ we know that $(\alpha, \beta)\in\Theta(s_0)$
and $2\mathcal{M}<\frac{\rho^n\bar{\partial}_{\pm}\rho}{\sin^2 A}<0$ in $\Sigma_5(s_0)$.
So, there exists a sufficiently small $\varepsilon>0$ such that
$(\alpha, \beta)\in\Theta(s_0+\varepsilon)$
and $2\mathcal{M}<\frac{\rho^n\bar{\partial}_{\pm}\rho}{\sin^2 A}<0$ in $\Sigma_5(s_0+\varepsilon)$.
In view of (\ref{32402a}), in order to use the argument of continuity to prove this lemma
it suffices to prove the following assertion.
\begin{itemize}
  \item For any fixed $z\in(z_0, s]$, if the estimates
$(\alpha, \beta)\in \Theta(z)$ and $2\mathcal{M}<\frac{\rho^n\bar{\partial}_{+}\rho}{\sin^2 A}<0$ hold for every point in $\Sigma_5(z)\setminus{\it l}(z)$ then
they also hold on ${\it l}(z)$.
\end{itemize}

For any point $\mathrm{S}\in {\it l}(z)$ the backward $C_{+}$ and $C_{-}$ characteristic curves issued from $\mathrm{S}$ remain in $\Sigma_5(z)$ until they meet $\Pi_{-}$ and $\Pi_{+}$ at some points $\mathrm{S}_{+}$ and $\mathrm{S}_{-}$, respectively; see Figure \ref{Fig18}. (Remark: if $\mathrm{S}\in\Pi_{-}$ then $\mathrm{S}_{+}=\mathrm{S}$; if $\mathrm{S}\in\Pi_{+}$ then $\mathrm{S}_{-}=\mathrm{S}$.)

\begin{figure}[htbp]
\begin{center}
\includegraphics[scale=0.66]{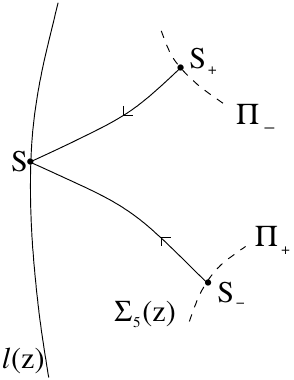}
\caption{\footnotesize Backward $C_{\pm}$ characteristic curves issued from $S$.}
\label{Fig18}
\end{center}
\end{figure}

 Integrating (\ref{new}) along the $C_{\pm}$ characteristic curves from $\mathrm{S}_{\pm}$ to $\mathrm{S}$ and recalling Proposition \ref{61501} and the definition of $\mathcal{M}$, we have
\begin{equation}\label{82602}
2\mathcal{M}<\left(\frac{\rho^n\bar{\partial}_{\pm}\rho}{\sin^2 A}\right)(\mathrm{S})<0.
\end{equation}

 If $\mathrm{S}\in\Pi$, then by Lemma \ref{82601} we have $(\alpha, \beta)(\mathrm{S})\in\Theta(z)$. It only remains to consider the case of $\mathrm{S}\notin\Pi$. When $\mathrm{S}\notin\Pi$, one has $\mathrm{S}_{+}\neq \mathrm{S}$ and $\mathrm{S}_{-}\neq \mathrm{S}$. This implies that the characteristic curves $\wideparen{\mathrm{S}_{+}\mathrm{S}}$ and $\wideparen{\mathrm{S_{-}S}}$ exist. We next prove  $(\alpha, \beta)(\mathrm{S})\notin \partial\Theta(z)$, where $\partial\Theta(z)$ denotes the boundary of $\Theta(z)$.
For convenience, we divide $\partial\Theta(z)$ into the following seven parts (see Figure \ref{Fig19}):
\begin{itemize}
  \item $\gamma_1=\big\{(\alpha, \beta)|\alpha=\alpha^r, \beta^l\leq\beta<\beta^r(z)\big\}$;
  \item $\gamma_2=\big\{(\alpha, \beta)|\beta=\beta^l, \alpha^l(z)<\alpha<\alpha^r\big\}$;
  \item $\gamma_3=\big\{(\alpha, \beta)|\alpha=\alpha^l(z), \beta^l<\beta<\beta^r(z), \alpha-\beta>\delta(z)\big\}$;
  \item $\gamma_4=\big\{(\alpha, \beta)|\beta=\beta^r(z), \alpha^l(z)<\alpha<\alpha^r, \alpha-\beta>\delta(z)\big\}$;
  \item $\gamma_5=\big\{(\alpha, \beta)|\alpha=\alpha^r, \beta=\beta^r(z)\big\}$;
  \item $\gamma_6=\big\{(\alpha, \beta)|\alpha=\alpha^l(z), \beta=\beta^l\big\}$;
  \item $\gamma_7=\big\{(\alpha, \beta)|\alpha^l(z)\leq\alpha\leq \alpha^r, \beta^l\leq\beta\leq\beta^r(z), \alpha-\beta=\delta(z)\big\}$.
\end{itemize}
We shall use the method of contradiction to prove $(\alpha, \beta)(S)\notin \partial\Theta(z)$.

\begin{figure}[htbp]
\begin{center}
\includegraphics[scale=0.52]{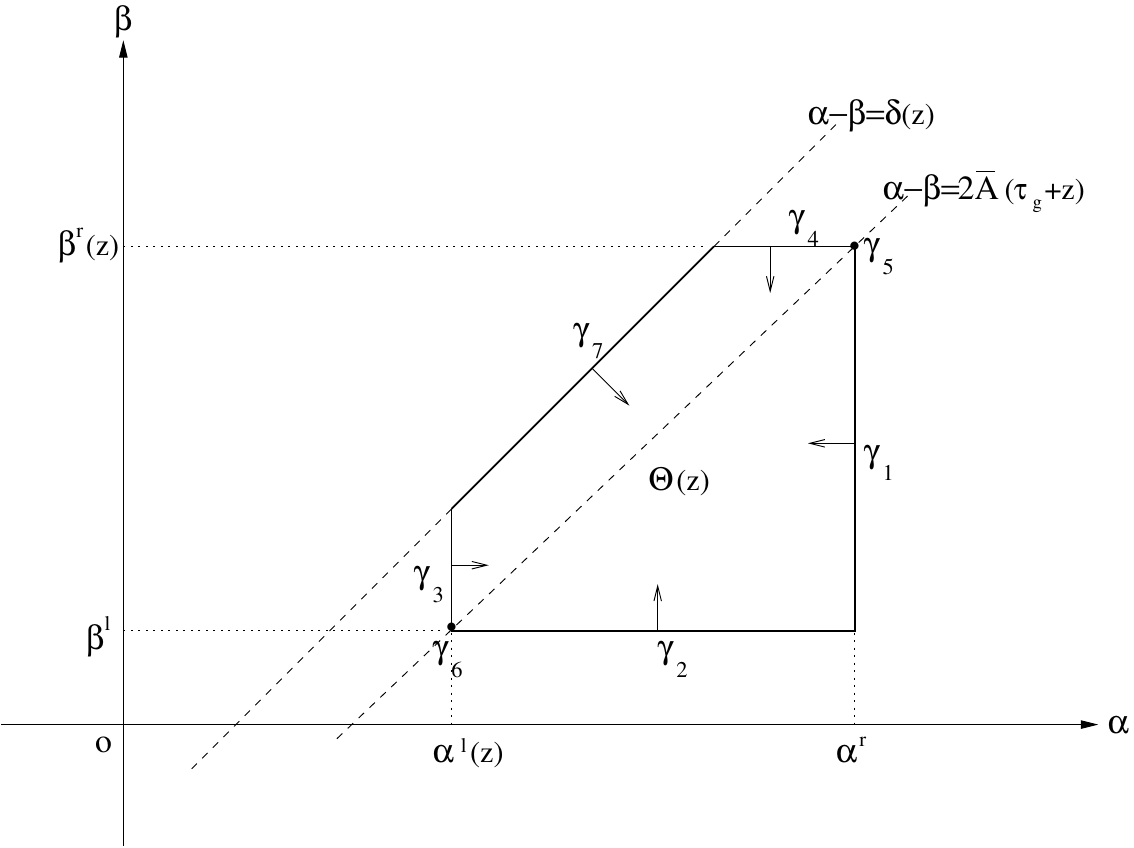}
\caption{\footnotesize Invariant region of $(\alpha, \beta)$.}
\label{Fig19}
\end{center}
\end{figure}

Suppose $(\alpha, \beta)(\mathrm{S})\in\gamma_{1}$. Then
by the assumption that $(\alpha, \beta)\in \Theta(z)$ in $\Sigma_5(z)\setminus{\it l}(z)$ one has
$\bar{\partial}_{+}\alpha(\mathrm{S})\geq0$.
 While, by (\ref{10}) and (\ref{82602}) we have
$$
c\bar{\partial}_{+}\alpha=-\frac{p''(\tau)}{4c\rho^4}\big(\underbrace{\tan^2\bar{A}(\tau_g+z)-\tan^2A}_{<0}\big)\sin2 A\bar{\partial}_{+}\rho<0\quad\mbox{at}\quad \mathrm{S}.
$$
This leads to a contradiction. So, we have $(\alpha, \beta)(S)\notin\gamma_{1}$. Similarly, using (\ref{8}) one can prove $(\alpha, \beta)(S)\notin\gamma_{2}$.

Suppose $(\alpha, \beta)(S)\in\gamma_{3}$.  Then by the assumption that $(\alpha, \beta)\in \Theta(z)$ in $\Sigma_5(z)\setminus{\it l}(z)$ one has
$\bar{\partial}_{+}\alpha(S)\leq 0$.
While, by (\ref{10}) and (\ref{82602}) we have
$$
c\bar{\partial}_{+}\alpha=-\frac{p''(\tau)}{4c\rho^4}\big(\underbrace{\tan^2\bar{A}(\tau_g+z)-\tan^2A}_{>0}\big)\sin2 A\bar{\partial}_{+}\rho>0\quad\mbox{at}\quad\mathrm{ S}.
$$
This leads to a contradiction. So, we have $(\alpha, \beta)(\mathrm{S})\notin\gamma_{3}$. Similarly, one has $(\alpha, \beta)(\mathrm{S})\notin\gamma_{4}$.

Suppose $(\alpha, \beta)(\mathrm{S})\in \gamma_{6}$.
We let $\hat{\alpha}$ be a function defined on $\wideparen{\mathrm{S_{+}S}}$ such that
\begin{equation}\label{112905}
\left\{
  \begin{array}{ll}
   c\bar{\partial}_{+}\hat{\alpha}=\displaystyle-\frac{p''(\tau)}{4c\rho^4}\left[\tan^2 \bar{A}(\tau_g+z)-\tan^2\Big(\frac{\hat{\alpha}
   -\beta^l}{2}\Big)\right]\sin2 A\bar{\partial}_{+}\rho\quad\mbox{along}\quad \wideparen{\mathrm{S_{+}S}}, \\[8pt]
    \hat{\alpha}(S_{+})=\alpha(S_{+}).
  \end{array}
\right.
\end{equation}
Then, by $\alpha(\mathrm{S}_{+})>\alpha^l(z)$ we have
\begin{equation}\label{82604}
\hat{\alpha}(S)>\alpha^l(z).
\end{equation}

From (\ref{10}) and (\ref{112905}) we have
\begin{equation}\label{82603}
\begin{array}{rcl}
  && c\bar{\partial}_{+}(\alpha-\hat{\alpha})=\underbrace{-\frac{p''(\tau)}{4c\rho^4}[\tan^2\bar{A}-\tan^2 \bar{A}(\tau_g+z)]\sin2 A\bar{\partial}_{+}\rho}_{>0, ~\mbox{since}~ \bar{A}>\bar{A}(\tau_g+z)~\mbox{along}~\wideparen{S_{+}S}. }\\[32pt]&&\qquad\qquad\qquad\displaystyle+\frac{p''(\tau)}
{4c\rho^4}\Big[\tan^2\Big(\frac{\alpha-\beta}{2}\Big)-\tan^2\Big(\frac{\hat{\alpha}
-\beta^l}{2}\Big)\Big]\sin2 A\bar{\partial}_{+}\rho
\quad\mbox{along}\quad \wideparen{\mathrm{S_{+}S}}.
  \end{array}
\end{equation}
By the assumption of the assertion we have $(\alpha,\beta)(\xi,\eta)\in\Theta(z)$ for $(\xi,\eta)\in \wideparen{\mathrm{S_{+}S}}\setminus \mathrm{S}$. Thus,
 $$\tan^2\Big(\frac{\alpha-\beta}{2}\Big)-\tan^2\Big(\frac{\hat{\alpha}
-\beta^l}{2}\Big)<0\quad \mbox{if}\quad \alpha=\hat{\alpha}.$$
Therefore, by integrating (\ref{82603}) along $\wideparen{\mathrm{S_{+}S}}$ from $\mathrm{S}_{+}$ to $\mathrm{S}$ and noticing (\ref{82604}) we have $$\alpha(\mathrm{S})>\hat{\alpha}(\mathrm{S})>\alpha^l(z).$$ This leads to a contradiction. So, we have $(\alpha, \beta)(\mathrm{S})\notin \gamma_{6}$.
 Similarly, we can prove that $(\alpha, \beta)(S)\in \gamma_{5}$ is also impossible.

Suppose $(\alpha, \beta)(\mathrm{S})\in\gamma_{7}$.  Then
by the assumption that $(\alpha, \beta)(\xi, \eta)\in \Theta(z)$ for all $(\xi, \eta)\in \Sigma_5(z)\setminus{\it l}(z)$ we have $\bar{\partial}_{-}A\leq 0$ at $\mathrm{S}$.
While, by (\ref{82301}) and (\ref{8}) we have
$$
\begin{aligned}
c\bar{\partial}_{-} A=c\bar{\partial}_{-}\Big(\frac{\alpha-\beta}{2}\Big)&=\sin^2  A \left(1+\frac{p''(\tau)\tan A}{4c\rho^{4+n}}\frac{\rho^n\bar{\partial}_{-}\rho}{\sin^2 A}-\frac{p''(\tau)\Omega\sin^2 A}{8c\rho^{4+n}}\frac{\rho^n\bar{\partial}_{-}\rho}{\sin^2 A}\right)\\&>
\sin^2  A \left(1+\frac{p''(\tau)\tan A}{4c\rho^{4+n}}\frac{\rho^n\bar{\partial}_{-}\rho}{\sin^2 A}+\frac{p''(\tau)\tan A\sin^2 A}{4c\rho^{4+n}}\frac{\rho^n\bar{\partial}_{-}\rho}{\sin^2 A}\right)\\&>
\sin^2  A \Big(1+2\mathcal{M}\mathcal{N}(z)\tan\delta(z)\Big)> 0\quad\mbox{at}\quad \mathrm{S}.
\end{aligned}
$$
This leads to a contradiction.
Then, one gets $(\alpha, \beta)(\mathrm{S})\notin\gamma_{7}$.

This completes the proof of the assertion.

Therefore, by the argument of continuity we know that the solution satisfies (\ref{51602}). This completes the proof of the lemma.
\end{proof}

The estimates in (\ref{51602}) are not enough to ensure a uniform $C^1$ norm estimate of the solution.
In order to establish a uniform $C^1$ norm estimate of the solution, we want to find a uniform bound for  $\frac{\bar{\partial}_{\pm}c}{\sin^2 A}$ and
 a non-zero lower bound for $A$, which are also crucial for the regularity of the vacuum boundary of $\Sigma$.

A direct computation yields that for the polytropic van der Waals gas (\ref{van}),
$$\kappa\rightarrow \frac{2}{\gamma-1}, \quad \mu\rightarrow \frac{\gamma-1}{\gamma+1}, \quad \mbox{and}\quad \frac{c}{\kappa}\cdot\frac{{\rm d}\kappa}{{\rm d}c}\rightarrow 0\quad \mbox{as}\quad  \tau\rightarrow +\infty.$$
So,
there exists a sufficiently large $\tau_*>0$ and a sufficiently small $\varepsilon_*>0$ such that
\begin{equation}\label{51703}
\kappa>\frac{1}{\gamma-1},\quad
0<\mu<\frac{2(\gamma-1)}{\gamma+1}, \quad\mbox{and}\quad\frac{1-\frac{\kappa\sin^2 2A}{\kappa+1}}{2\mu\cos^2 A}-\frac{c}{\kappa}\cdot\frac{{\rm d}\kappa}{{\rm d}c}>0\quad \mbox{for}\quad \tau\geq \tau_*;
\end{equation}
\begin{equation}\label{51704}
\frac{1}{\mu\cos^{2} A}-4\kappa\sin^{2} A-2-\frac{c}{\kappa}\cdot\frac{{\rm d}\kappa}{{\rm d}c}>0\quad \mbox{for}\quad \tau\geq \tau_*\quad \mbox{and}\quad 0<A\leq \varepsilon_*.
\end{equation}
We then define the following constants:
$$
\begin{aligned}
&s_*:=\tau_*-\tau_g,\quad \mathcal{K}:=\frac{2\tau_{*}^{2+n}p'(\tau_*)+\tau_*^{3+n}p''(\tau_*)}{2\sqrt{-p'(\tau_*)}},
\\& \mathcal{C}_1:=\min\left\{\frac{-8(\gamma-1)}{(\gamma+1)\tan A_*}, ~2\mathcal{K}\mathcal{M}\right\}, \quad\mbox{and}\quad  \mathcal{C}_2=\frac{2\mathcal{C}_1}{\sin^2 \varepsilon_*},
\end{aligned}
$$
where $A_*=\min\big\{\frac{A_\infty}{2}, \frac{\theta+\sigma_*}{2}\big\}$.
\vskip 1pt

Let $\delta_*<\min\{\delta(s_*), \varepsilon_*\}$ be a sufficiently small positive constant such that
\begin{equation}\label{51601}
2+\frac{\sin A}{\mu}\Big(\frac{\sin^2 A}{\cos A}+\frac{1}{\cos A}-\varpi(\tau)\cos A\Big)\mathcal{C}_2>0\quad \mbox{for}\quad 0<A\leq \frac{\delta_*}{2}\quad\mbox{and} \quad\tau\geq\tau_*.
\end{equation}


\begin{lem}\label{61510}
Assume that  the boundary value problem (\ref{42501}, \ref{61905})  admits a classical solution in $\Sigma_5(s)$ for some $s>s_*$. Then the solution satisfies
\begin{equation}\label{61509}
\alpha-\beta>\delta_*, \quad \mathcal{C}_1<\bar{\partial}_{\pm}c<0\quad\mbox{and}\quad \mathcal{C}_2<\frac{\bar{\partial}_{\pm}c}{\sin^2 A}<0 \quad \mbox{in}\quad \Sigma_5(s)\setminus\Sigma_5(s_*).
\end{equation}
\end{lem}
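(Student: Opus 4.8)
The plan is to run a continuity (invariant–region) argument in the level parameter, in the spirit of the proof of Lemma~\ref{51708}, now for the four quantities $\bar{\partial}_{\pm}c$ and $\bar{\partial}_{\pm}c/\sin^2A$ together with the characteristic gap $\alpha-\beta$. The region $\Sigma_5(s)\setminus\Sigma_5(s_*)$ is bounded by $l(s_*)$, $l(s)$, and the part of $\Pi$ with $\tau_*\le\tau\le\tau_g+s$; since $\tau_*$ is large while (for $\tau_1^e-\tau_0$ small) $\tau-\tau_2^e$ is small on the hodograph–built pieces of $\Pi$, this last part lies on $\wideparen{\mathrm{BE}}\cup\wideparen{\mathrm{DF}}$. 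First I would establish the ``input'' bounds on $l(s_*)\cup\big(\Pi\cap\{\tau\ge\tau_*\}\big)$. Applying Lemma~\ref{51708} with parameter $s_*$ (legitimate since $\Sigma_5(s_*)\subset\Sigma_5(s)$) yields $(\alpha,\beta)\in\Theta(s_*)$ and $2\mathcal M<\rho^n\bar{\partial}_{\pm}\rho/\sin^2A<0$ in $\Sigma_5(s_*)$. On $l(s_*)$ I then use $\mathrm dc/\mathrm d\tau=-c/(\tau\kappa)$, which gives the identity $\bar{\partial}_{\pm}c/\sin^2A=\big(c\tau^{1+n}/\kappa\big)\cdot\big(\rho^n\bar{\partial}_{\pm}\rho/\sin^2A\big)$ with $\big(c\tau^{1+n}/\kappa\big)\big|_{\tau=\tau_*}=\mathcal K>0$, so $2\mathcal K\mathcal M<\bar{\partial}_{\pm}c/\sin^2A<0$; together with $\mathcal C_1\le2\mathcal K\mathcal M$, $\mathcal C_2<\mathcal C_1<0$, and $\sin^2A\le1$ this gives $\bar{\partial}_{\pm}c\in(\mathcal C_1,0)$ and $\bar{\partial}_{\pm}c/\sin^2A\in(\mathcal C_2,0)$, while $\alpha-\beta>\delta(s_*)>\delta_*$. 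On $\wideparen{\mathrm{BE}}\cup\wideparen{\mathrm{DF}}$ I instead use the explicit formulas \eqref{62501}, \eqref{62502}, which translate into $\bar{\partial}_{\mp}c/\sin^2A=-4\cot A\cdot\dfrac{2p'(\tau)+\tau p''(\tau)}{2\tau p''(\tau)}$; for $\tau\ge\tau_*$ with $\tau_*$ large this is negative and pinched near $-\tfrac{2(\gamma-1)}{\gamma+1}\cot A$, hence, using the lower bound $A>A_*$ of \eqref{82701}, lies above $\tfrac{-8(\gamma-1)}{(\gamma+1)\tan A_*}\ge\mathcal C_1>\mathcal C_2$, and $\bar{\partial}_{\mp}c\in(\mathcal C_1,0)$, $\alpha-\beta=2A>\delta_*$.

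Next I would propagate these bounds by continuity: for $z\in(s_*,s]$ assume \eqref{61509} holds throughout $\Sigma_5(z)\setminus\Sigma_5(s_*)$ strictly below $l(z)$ and show it holds on $l(z)$. Fix $\mathrm S\in l(z)$; since $\bar{\partial}_{\pm}\tau>0$ makes $\tau$ decrease along backward characteristics, the backward $C_+$ and $C_-$ characteristics from $\mathrm S$ remain in $\Sigma_5(z)\setminus\Sigma_5(s_*)$ (so $\tau\ge\tau_*$ along them) until they reach the input boundary, where the bounds just established hold. Integrating \eqref{cd} along these characteristics controls $\bar{\partial}_{\pm}c$: by $\mu>0$ and the third inequality of \eqref{51703} (whose numerator $1-\tfrac{\kappa\sin^2 2A}{\kappa+1}$ stays $\ge\tfrac1{\kappa+1}>0$ for every $A$), the coefficient of $\bar{\partial}_{+}c$ in the first line of \eqref{cd} is positive, and since $\mathcal C_1$ is taken negative enough (through $2\mathcal K\mathcal M$ and $\tfrac{-8(\gamma-1)}{(\gamma+1)\tan A_*}$) that $\sin 2A+\tfrac{\mathcal C_1}{2\mu\cos^2A}<0$, at a would-be first point where $\bar{\partial}_{-}c=\mathcal C_1$ the bracket in \eqref{cd} is negative, forcing $\bar{\partial}_{+}\bar{\partial}_{-}c>0$ — a contradiction; boundedness of $\bar{\partial}_{+}\log|\bar{\partial}_{-}c|$ keeps $\bar{\partial}_{-}c<0$ strict. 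To control $\bar{\partial}_{\pm}c/\sin^2A$ I split on $A$: where $A>\varepsilon_*$ the bound $\bar{\partial}_{\pm}c>\mathcal C_1$ already forces $\bar{\partial}_{\pm}c/\sin^2A>\mathcal C_1/\sin^2A\ge\mathcal C_1/\sin^2\varepsilon_*>\mathcal C_2$; and at a would-be first point where $\bar{\partial}_{-}c/\sin^2A=\mathcal C_2$ one must have $A\le\varepsilon_*$ (otherwise $\bar{\partial}_{-}c=\sin^2A\,\mathcal C_2\le\sin^2\varepsilon_*\,\mathcal C_2=2\mathcal C_1<\mathcal C_1$, contradicting $\bar{\partial}_{-}c>\mathcal C_1$), so \eqref{51704} applies, and the sign analysis of \eqref{cd1} — with $P=\bar{\partial}_{-}c/\sin^2A$, $Q=\bar{\partial}_{+}c/\sin^2A$, the term $\tfrac1{2\mu\cos^2A}(P-Q)\le0$ and the remaining term a positive multiple of $Q<0$ — gives $\bar{\partial}_{+}(\bar{\partial}_{-}c/\sin^2A)>0$, again a contradiction. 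The ``$+$''/``$-$'' interchanged statements are symmetric.

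The gap bound $\alpha-\beta>\delta_*$ is propagated in the same invariant–region step: at a would-be first point where $\alpha-\beta=\delta_*$, so $A=\delta_*/2\le\varepsilon_*$ and $\tau\ge\tau_*$, the identity \eqref{6802} and its $C_+$ counterpart (built from \eqref{7}, \eqref{10}) express $c\,\bar{\partial}_{\pm}A$ as an affine function of $\bar{\partial}_{\pm}c/\sin^2A$; by \eqref{51601} and the bound $\bar{\partial}_{\pm}c/\sin^2A>\mathcal C_2$ just obtained, both $\bar{\partial}_{+}A>0$ and $\bar{\partial}_{-}A>0$ there, so $A$ cannot drop below $\delta_*/2$. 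Since \eqref{61509} holds on a neighbourhood of the input boundary by continuity of the $C^1$ solution, the argument of continuity closes and \eqref{61509} holds in all of $\Sigma_5(s)\setminus\Sigma_5(s_*)$.

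The main obstacle is the three-way coupling of the estimates: the gap bound needs the $\bar{\partial}c/\sin^2A$ bound (via \eqref{6802}–\eqref{51601}), the $\bar{\partial}c/\sin^2A$ bound is only directly propagated by \eqref{cd1} under the smallness $A\le\varepsilon_*$ imposed by \eqref{51704}, and the complementary regime $A>\varepsilon_*$ must be reached through \eqref{cd} for $\bar{\partial}c$ and the comparison $\mathcal C_2=2\mathcal C_1/\sin^2\varepsilon_*$ — while everything must stay consistent with the boundary data handed down by Lemma~\ref{51708} and by the explicit planar–wave formulas. This is exactly why the constants are chosen in the order $n$ (Proposition~\ref{61501} and the $l(s_*)$ conversion), then $\tau_*$ and $\varepsilon_*$ (so that \eqref{51703}, \eqref{51704} hold), then $\mathcal K,\mathcal C_1,\mathcal C_2$, and finally $\delta_*$ (so that \eqref{51601} holds), with each constant large/small enough that the previously established input values sit strictly inside the new thresholds; once this bookkeeping is fixed, the propagation is routine sign analysis of \eqref{cd}, \eqref{cd1}, and \eqref{6802}.
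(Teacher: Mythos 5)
Your proposal is correct and follows essentially the same route as the paper's proof: the same input bounds on ${\it l}(s_*)$ (via the conversion factor whose value at $\tau=\tau_*$ is $\mathcal{K}$) and on $\wideparen{\mathrm{E_*E}}\cup\wideparen{\mathrm{F_*F}}$ (via the planar-wave identity $\bar{\partial}_{\mp}\alpha=0$ together with $\mu<\tfrac{2(\gamma-1)}{\gamma+1}$ and $A>A_*$), followed by the same continuity/first-touching-point propagation using (\ref{cd}) for $\bar{\partial}_{\pm}c$, the reduction to $A\le\varepsilon_*$ plus (\ref{cd1}) and (\ref{51704}) for $\bar{\partial}_{\pm}c/\sin^2A$, and (\ref{6802}) with (\ref{51601}) for the gap $\alpha-\beta$. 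The only differences are cosmetic (you note both $\bar{\partial}_{+}A>0$ and $\bar{\partial}_{-}A>0$ where one suffices, and your boundary formula on $\wideparen{\mathrm{BE}}$ carries a harmless constant discrepancy with the paper's (\ref{72903}) that does not affect the bound).
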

\begin{proof}
Firstly, from Lemma \ref{51708} we have
\begin{equation}\label{72906}
\alpha-\beta>\delta_*\quad \mbox{on}\quad {\it l}(s_*)
\end{equation}
By a direct computation we have
$$
\frac{\bar{\partial}_{\pm}c}{\sin ^2A}=\frac{\rho^n\bar{\partial}_{\pm}\rho}{\sin ^2A}\cdot\frac{2\tau^3p'(\tau)+\tau^4p''(\tau)}{2c\rho^n}.
$$
So, by (\ref{51602}) we have
\begin{equation}\label{72902}
2\mathcal{K}\mathcal{M}<\frac{\bar{\partial}_{\pm}c}{\sin ^2A}<0\quad \mbox{on}\quad {\it l}(s_*).
\end{equation}

Let $\mathrm{E}_{*}$ and $\mathrm{F}_{*}$ be the point on $\wideparen{\mathrm{BE}}$ and $\wideparen{\mathrm{DF}}$ such that $\tau(\mathrm{E}_{*})=\tau(\mathrm{F}_{*})=\tau_*$.
Let $\mathrm{E}_{s}$ and $\mathrm{F}_{s}$ be the point on $\wideparen{\mathrm{BE}}$ and $\wideparen{\mathrm{DF}}$ such that $\tau(\mathrm{E}_{s})=\tau(\mathrm{F}_{s})=\tau_g+s$.

From (\ref{82301}) and (\ref{62001}) we have
$$
\frac{\bar{\partial}_{-}c}{2\sin^2 A}=\frac{-2\mu}{\tan A}\quad \mbox{on}\quad \wideparen{\mathrm{E_{*}E}}.
$$
Thus, by (\ref{51703}) and (\ref{82701}) one has
\begin{equation}\label{72903}
-\frac{8(\gamma-1)}{(\gamma+1)\tan A_*}<\frac{\bar{\partial}_{-}c}{\sin^2 A}<0\quad \mbox{on}\quad \wideparen{\mathrm{E_{*}E}},
\end{equation}
By the symmetry we have
\begin{equation}\label{72904}
-\frac{8(\gamma-1)}{(\gamma+1)\tan A_*}<\frac{\bar{\partial}_{+}c}{\sin^2 A}<0\quad \mbox{on}\quad \wideparen{\mathrm{F_{*}F}}.
\end{equation}

Let $\mathrm{Y}$ be an arbitrary point in $\Sigma_5(s)\setminus\Sigma_5(s_*)$. The backward $C_{+}$ and $C_{-}$ characteristic curves intersect $\wideparen{\mathrm{E_*E_s}}\cup{\it l}(s_*)$ and $\wideparen{\mathrm{F}_*\mathrm{F}_s}\cup{\it l}(s_*)$ at some points $\mathrm{Y}_{+}$ and $\mathrm{Y}_{-}$, respectively. We denote by $\Sigma_{_\mathrm{Y}}$ a closed domain bounded by $\wideparen{\mathrm{Y_{-}Y}}$, $\wideparen{\mathrm{Y_{+}Y}}$, and a portion of $\wideparen{\mathrm{E_*E}_s}\cup\wideparen{\mathrm{F_*F}_s}\cup{\it l}(s_*)$.
In order to use the continuity argument to prove (\ref{61509}), we shall prove the following assertion:
assume that the inequalities in (\ref{61509}) hold for every point in $\Sigma_{_\mathrm{Y}}\backslash\{\mathrm{Y}\}$, then they also hold at $\mathrm{Y}$.

\begin{figure}[htbp]
\begin{center}
\includegraphics[scale=0.5]{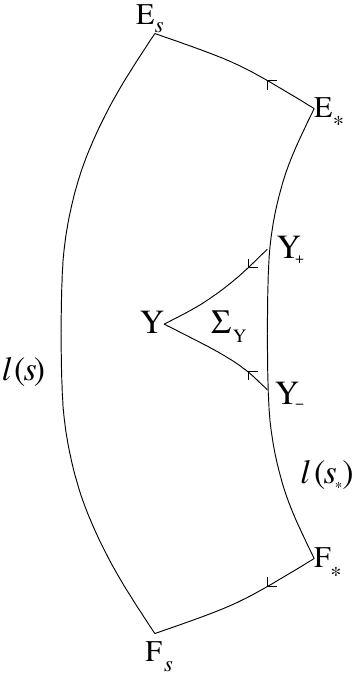}
\caption{\footnotesize Backward $C_{\pm}$ characteristic curves through $Y$ and domain $\Sigma_{_Y}$.}
\label{Fig20}
\end{center}
\end{figure}

Suppose $(\bar{\partial}_{-}c)(\mathrm{Y})=\mathcal{C}_1$. Then by (\ref{72903}) we know $\mathrm{Y}\notin\wideparen{\mathrm{E_{*}E}}$. This implies that the characteristic curve $\wideparen{\mathrm{Y_{+}Y}}$ exists. Hence, by the assumption of the assertion  one has $\bar{\partial}_{+}\bar{\partial}_{-}c\leq 0$ at $\mathrm{Y}$.
While, by the first equation of (\ref{cd}) and (\ref{51703}) one has
$$
\begin{aligned}
c\bar{\partial}_{+}\bar{\partial}_{-}c~=~&\bar{\partial}_{-}c\left(\sin2 A+\frac{ \bar{\partial}_{-}c}{2\mu\cos^2 A}
+
\Big(\frac{1-\frac{\kappa\sin^2 2A}{\kappa+1}}{2\mu\cos^2 A}-\frac{c}{\kappa}\cdot\frac{{\rm d}\kappa}{{\rm d}c}\Big)\bar{\partial}_{+}c\right)\\~>~&\bar{\partial}_{-}c\left(\sin2 A+\frac{ \bar{\partial}_{-}c}{2\mu\cos^2 A}\right)>0\quad \mbox{at}\quad \mathrm{Y}.
\end{aligned}
$$
This leads to a contradiction. So, we get $(\bar{\partial}_{-}c)(\mathrm{Y})>\mathcal{C}_1$. Similarly, one has
$(\bar{\partial}_{+}c)(\mathrm{Y})>\mathcal{C}_1$

Suppose $\big(\frac{\bar{\partial}_{-}c}{\sin^{2} A}\big)(\mathrm{Y})=\mathcal{C}_2$.
Then by $(\bar{\partial}_{-}c)(\mathrm{Y})>\mathcal{C}_1$ one has $A(\mathrm{Y})<\varepsilon_*$.
Moreover, by the assumption of the assertion one has $\bar{\partial}_{+}\left(\frac{\bar{\partial}_{-}c}{\sin^{2} A}\right)\leq 0$ at $\mathrm{Y}$.
While, by the first equation of (\ref{cd1}) and (\ref{51704}) we have
\begin{equation}\label{7205}
\begin{aligned}
 &c\bar{\partial}_{+}\left(\frac{\bar{\partial}_{-}c}{\sin^{2} A}\right)\\=&~\bar{\partial}_{-}c\Bigg(\!\frac{1}{2\mu\cos^{2} A}\bigg(\underbrace{\frac{\bar{\partial}_{-}c}{\sin^{2} A}-\frac{\bar{\partial}_{+}c}{\sin^{2} A}}_{\leq 0}\bigg)
+\bigg(\underbrace{\frac{1}{\mu\cos^{2} A}-4\kappa\sin^{2} A-2-\frac{c}{\kappa}\cdot\frac{{\rm d}\kappa}{{\rm d}c}}_{>0}\bigg)
\frac{\bar{\partial}_{+}c}{\sin^{2} A}\!\Bigg)\\
>&~0\quad \mbox{at}\quad \mathrm{Y}.
\end{aligned}
\end{equation}
This leads to a contradiction. So we get $\big(\frac{\bar{\partial}_{-}c}{\sin^{2} A}\big)(\mathrm{Y})>\mathcal{C}_2$. Similarly, we have $\big(\frac{\bar{\partial}_{+}c}{\sin^{2} A}\big)(\mathrm{Y})>\mathcal{C}_2$.

Suppose $(\alpha-\beta)(\mathrm{Y})=\delta_*$.
Then by (\ref{82701}) we know $\mathrm{Y}\notin \wideparen{\mathrm{E_{*}E}_s}\cup \wideparen{\mathrm{F_{*}F}_s}$.
Thus, the characteristic curves $\wideparen{\mathrm{Y_{+}Y}}$ and $\wideparen{\mathrm{Y_{-}Y}}$ exist.
Hence, by the assumption of the assertion and $(\alpha-\beta)(\mathrm{Y})=\delta_*$ we have $\partial_{-}A\leq 0$ at $\mathrm{Y}$. While,
 by (\ref{82301}), (\ref{8}), and (\ref{51601}) we have
$$
\begin{aligned}
c\bar{\partial}_{-}A&=\frac{\sin^2 A}{2}\left(2+\frac{\sin A}{\mu}\left(\frac{\sin^2 A}{\cos A}+\frac{1}{\cos A}-\varpi(\tau)\cos A\right)\frac{\bar{\partial}_{-}c}{\sin^2 A}\right)\\[2pt]&\geq
\frac{\sin^2 A}{2}\left(2+\frac{\sin A}{\mu}\left(\frac{\sin^2 A}{\cos A}+\frac{1}{\cos A}-\varpi(\tau)\cos A\right)\mathcal{C}_2\right)~>~0\quad\mbox{at}\quad \mathrm{Y}.
\end{aligned}
$$
This leads to a contradiction. So, we get $(\alpha-\beta)(\mathrm{Y})>\delta_*$.



We then complete the proof of the assertion.
Therefore, by an argument of continuity and recalling (\ref{82701}) and (\ref{72906})--(\ref{72902}) we obtain (\ref{61509}). This completes the proof.
\end{proof}

\begin{lem}\label{lem3}
(Uniform a priori $C^1$ estimate)
Suppose that assumptions ($\mathbf{A}1$)--($\mathbf{A}3$) hold.
Suppose furthermore that the boundary value problem (\ref{42501}, \ref{61905})  admits a classical solution on $\Sigma_{5}(s)$ for some $s>s_0$. Then there exist positive constants $\mathcal{T}$ and $\mathcal{R}$  independent of $s$ such that the solution satisfies
$$
|(\nabla u, \nabla v, \nabla \rho)|<\mathcal{T}\quad \mbox{and}\quad |(u,v)|<\mathcal{R}\quad \mbox{in}\quad \Sigma_{5}(s).
$$
\end{lem}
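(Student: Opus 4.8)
The plan is to derive both bounds directly from the pointwise estimates already established in Lemmas~\ref{51708} and~\ref{61510}, using the characteristic identities of Section~2, after splitting $\Sigma_5(s)$ into the fixed compact core $\Sigma_5(s_*)$ (which does not depend on $s$) and the tail $\Sigma_5(s)\setminus\Sigma_5(s_*)$ that reaches toward the vacuum boundary.

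First I would collect the geometric bounds. From $(\alpha,\beta)\in\Theta(\tau-\tau_g)\subset\Theta(+\infty)$ (Lemma~\ref{51708}) one has $\alpha<\pi+\psi$ and $\beta>\pi-\psi$, hence $A=\tfrac{\alpha-\beta}{2}<\psi<\tfrac\pi2$ throughout $\Sigma_5(s)$; combining $\alpha-\beta>\delta_*$ on the tail (Lemma~\ref{61510}) with $\alpha-\beta>\delta(s_*)\ge\delta_*$ on $\Sigma_5(s_*)$ (Lemma~\ref{51708}) yields $A>\tfrac{\delta_*}{2}$ everywhere. Therefore $\sin2A$ is bounded below by a positive constant and $\sin A\le\sin\psi<1$. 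Since $\tau\ge\tau_g$ and $\gamma\in(1,2)$ in~(\ref{van}), the functions $c(\tau),\kappa(\tau),\mu(\tau),\varpi(\tau)$ are bounded on $[\tau_g,+\infty)$ and $\tau c(\tau)\to+\infty$ as $\tau\to+\infty$; in particular $q=c/\sin A$ is bounded above.

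Next I would bound the characteristic derivatives. On the tail Lemma~\ref{61510} gives $\mathcal C_1<\bar\partial_\pm c<0$ directly, and then $\bar\partial_\pm\rho=\tfrac{\kappa}{\tau c}\,\bar\partial_\pm c$ is bounded because $\tfrac{\kappa}{\tau c}$ is bounded on $[\tau_*,+\infty)$. On the fixed compact region $\Sigma_5(s_*)$, where $\tau\in[\tau_g,\tau_*]$ and $A$ is bounded away from $0$ and $\tfrac\pi2$, Lemma~\ref{51708} bounds $\rho^n\bar\partial_\pm\rho/\sin^2A$, hence $\bar\partial_\pm\rho$, hence $\bar\partial_\pm c=-\tau^2\tfrac{{\rm d}c}{{\rm d}\tau}\,\bar\partial_\pm\rho$, by constants depending only on $s_*$ and the fixed data, so uniform in $s$; when $s_0<s\le s_*$ the whole of $\Sigma_5(s)$ is of this type. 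From~(\ref{11})--(\ref{72804}), $\bar\partial_\pm u$ and $\bar\partial_\pm v$ are then bounded, each being $\pm\kappa\sin(\cdot)\,\bar\partial_\pm c$ or $\mp\kappa\cos(\cdot)\,\bar\partial_\pm c$.

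Finally, the conclusion: using~(\ref{61513}) to write $\partial_\xi$ and $\partial_\eta$ as linear combinations of $\bar\partial_+$ and $\bar\partial_-$ with bounded coefficients (the denominator being $\sin2A$) gives $|(\nabla u,\nabla v,\nabla\rho)|<\mathcal T$ with $\mathcal T$ independent of $s$. For the bound on $(u,v)$, given $(\xi,\eta)\in\Sigma_5(s)$ I would trace the backward $C_+$ characteristic to its foot $Y_+\in\Pi_-$ (as in the proof of Lemma~\ref{51708}); since $\bar\partial_+\tau>0$ this arc may be parametrised by $\tau$, and along it $\tfrac{{\rm d}u}{{\rm d}\tau}=\bar\partial_+u/\bar\partial_+\tau=\kappa\sin\beta\,\tfrac{{\rm d}c}{{\rm d}\tau}=-\sin\beta\sqrt{-p'(\tau)}$, so $|u(\xi,\eta)-u(Y_+)|\le\int_{\tau_g}^{+\infty}\sqrt{-p'(\tau)}\,{\rm d}\tau$, which is finite precisely because $\gamma>1$; since $u|_\Pi$ is bounded by the $s$-independent boundary data in~(\ref{61905}), $|u|<\mathcal R$, and the bound on $v$ is identical with the $C_-$ characteristic and $\bar\partial_-v=\kappa\cos\alpha\,\bar\partial_-c$. (As a byproduct $|(\xi,\eta)|=|(u-U,v-V)|\le\mathcal R+q$ is bounded, so $\Sigma_5(s)$ lies in a fixed compact set.) The only step requiring real care is the tail $\tau\to+\infty$, where $c$, $\sin A$ and $\rho$ degenerate simultaneously: there Lemma~\ref{61510} rather than Lemma~\ref{51708} is indispensable, and the integrability of $\sqrt{-p'(\tau)}$ at infinity — equivalently the finiteness of the rarefaction-to-vacuum velocity jump — is exactly what forces $\gamma\in(1,2)$ into the argument.
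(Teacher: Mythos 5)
Your proposal is correct and follows essentially the same route as the paper: it combines the invariant-region and derivative bounds of Lemmas \ref{51708} and \ref{61510}, splits $\Sigma_5(s)$ at the fixed level $\tau=\tau_*$, converts $\bar{\partial}_{\pm}$-bounds to gradient bounds via (\ref{61513}) using $\delta_*/2<A<\psi$, and bounds $(u,v)$ through (\ref{11}) and (\ref{72804}). The only difference is that you spell out the velocity bound explicitly (integrating $|{\rm d}u/{\rm d}\tau|\leq\sqrt{-p'(\tau)}$ along a backward characteristic and using the convergence of $\int^{+\infty}\sqrt{-p'(\tau)}\,{\rm d}\tau$ for $1<\gamma<2$), a detail the paper compresses into one sentence.
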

\begin{proof}
By Lemma \ref{51708} we know that there is a constant $\mathcal{T}_1$ such that
$$
| \nabla \rho|<\mathcal{T}_1\quad \mbox{for}\quad \tau\leq\tau_*\quad \mbox{in}\quad \Sigma_{5}(s).
$$

By (\ref{51703}) we know that $\max\limits_{\tau\geq\tau_*}\left|\frac{2c}{2\tau^3p'(\tau)+\tau^4p''(\tau)}\right|$ is finite.
So,
by Lemma \ref{51708}  we have
$$
| \nabla \rho|<\left|\frac{\bar{\partial}_{\pm}\rho}{\sin ^2A}\right|=\left|\frac{\bar{\partial}_{\pm}c}{\sin ^2A}\cdot\frac{2c}{2\tau^3p'(\tau)+\tau^4p''(\tau)}\right|\leq \mathcal{C}_2\cdot\max\limits_{\tau\geq\tau_*}\left|\frac{2c}{2\tau^3p'(\tau)+\tau^4p''(\tau)}\right|\quad \mbox{for}\quad \tau>\tau_*.
$$
Therefore, by (\ref{11}), (\ref{72804}), and (\ref{61513}) we know that there is a constant $\mathcal{T}$ independent of $s$ such that the solution satisfies
$$
\big|(\nabla u, \nabla v, \nabla \rho)\big|<\mathcal{T} \quad\mbox{in}\quad \Sigma_5(s).
$$

Using  (\ref{11}) and (\ref{72804}) and recalling $\bar{\partial}_{\pm}\rho<0$ we know that there exists a $\mathcal{R}>0$ such that
\begin{equation}\label{82702}
|(u,v)|<\mathcal{R} \quad\mbox{in}\quad \Sigma_5(s).
\end{equation}
This completes the proof.
\end{proof}


\subsubsection{\bf Global existence of a classical solution}

\begin{lem}
Under assumptions ($\mathbf{A}1$)--($\mathbf{A}3$),
the boundary value problem (\ref{42501}, \ref{61905}) admits a global classical solution in a bounded region closed by $\Pi$ and a vacuum boundary connecting $\mathrm{D}$ and $\mathrm{E}$.
Moreover, the vacuum boundary can be represented by a Lipschitz continuous function $\xi=\mathcal{V}(\eta)$ defined on $[\hat{u}_l(\hat{\xi}_2)\cos\theta, -\hat{u}_l(\hat{\xi}_2)\cos\theta]$, which satisfies
\begin{equation}\label{198903}
\sup \limits _{\eta_1\neq\eta_2}\frac{|\mathcal{V}(\eta_1)-\mathcal{V}(\eta_2)|}{|\eta_1-\eta_1|}~\leq~\tan(\psi-A_{\infty}).
\end{equation}
\end{lem}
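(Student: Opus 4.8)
The plan is to promote the classical solution on $\Sigma_5(s)$ furnished by the preceding (local existence) lemma to a solution on the whole determinate region by the standard extension argument of Li and Yu \cite{Li-Yu}, and then to read off the location and regularity of the free boundary from the uniform a priori estimates already available.

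First I would run the continuation. Let $s^{\ast}\in(s_0,+\infty]$ be the supremum of the $s$ for which the problem (\ref{42501}, \ref{61905}) has a classical solution on $\Sigma_5(s)$. For every $s<s^{\ast}$, Lemmas \ref{51708}, \ref{61510} and \ref{lem3} give bounds that do not depend on $s$: the characteristic angles remain in the invariant region, $(\alpha,\beta)(\xi,\eta)\in\Theta(\tau(\xi,\eta)-\tau_g)\subset\Theta(+\infty)$; one has $\alpha-\beta>\delta_{\ast}>0$, so that the pseudo-Mach angle $A=(\alpha-\beta)/2$ stays away from $0$ and $\tfrac\pi2$ and (\ref{42501}) is uniformly hyperbolic on $\Sigma_5(s)$; and finally $|(\nabla u,\nabla v,\nabla\rho)|<\mathcal T$ and $|(u,v)|<\mathcal R$. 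Since the data on $\Pi$ are fixed and the only obstructions to extending a classical solution across $l(s)$ are the degenerations $A\to0$, $A\to\tfrac\pi2$, and the blow-up of the $C^1$ norm, the extension argument yields $s^{\ast}=+\infty$; thus a classical solution exists on $\Sigma:=\bigcup_{s>s_0}\Sigma_5(s)$, which is bounded because $|(u,v)|<\mathcal R$. As $\bar\partial_{\pm}\tau>0$ in every $\Sigma_5(s)$, the level curves $l(s)=\{\tau=\tau_g+s\}$ are nested and foliate $\Sigma$, so $\tau<+\infty$, i.e.\ $\rho>0$, at each interior point.

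Next I would identify the free boundary as $\wideparen{\mathrm{EF}}:=\partial\Sigma\setminus\Pi=\lim_{s\to\infty}l(s)$, on which $\tau=+\infty$ and hence $\rho=0$; its endpoints are the two free endpoints of $\Pi$, namely the end $\mathrm E$ of $\wideparen{\mathrm{BE}}$ and the end $\mathrm F$ of $\wideparen{\mathrm{DF}}$, because $\hat\tau_l(\hat\xi)\to+\infty$ as $\hat\xi\to\hat\xi_2$, so that $\mathrm E$ and $\mathrm F$ are themselves vacuum points with $\eta_{_\mathrm{F}}=\hat u_l(\hat\xi_2)\cos\theta$ and $\eta_{_\mathrm{E}}=-\hat u_l(\hat\xi_2)\cos\theta$. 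To see that $\wideparen{\mathrm{EF}}$ is a Lipschitz graph $\xi=\mathcal V(\eta)$ on $[\eta_{_\mathrm{F}},\eta_{_\mathrm{E}}]$ I would work with the curves $l(s)$: each is a level set of $\tau$, so its tangent is orthogonal to $\nabla\tau$; since $\bar\partial_{+}\tau=(\cos\alpha,\sin\alpha)\cdot\nabla\tau$ and $\bar\partial_{-}\tau=(\cos\beta,\sin\beta)\cdot\nabla\tau$ are both positive, $\nabla\tau$ lies in the wedge of half-opening $\tfrac\pi2-A$ about the pseudo-flow direction $(\cos\sigma,\sin\sigma)$, $\sigma=(\alpha+\beta)/2$, so by (\ref{61513}) the tangent of $l(s)$ makes an angle in $(\alpha,\beta+\pi)$ with the $\xi$-axis and, in particular, $\tau_\xi$ does not vanish, making $l(s)$ a graph $\xi=\mathcal V_s(\eta)$. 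Then the invariant-region inequalities $\pi-\psi+2\bar A(\tau)<\alpha<\pi+\psi$ and $\pi-\psi<\beta<\pi+\psi-2\bar A(\tau)$, together with the fact that ($\mathbf{A}1$) makes $\varpi$ nonincreasing so that $\bar A(\tau)\searrow A_{\infty}$, bound the slope of $l(s)$ by $\tan(\psi-\bar A(\tau_g+s))+o(1)$; letting $s\to\infty$ and using the uniform $C^1$ bound of Lemma \ref{lem3} for equicontinuity up to the boundary, the curves $l(s)$ converge to a single rectifiable arc through $\mathrm E$ and $\mathrm F$, the limit being the graph of a Lipschitz $\mathcal V$ with constant at most $\tan(\psi-A_{\infty})$. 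Since $A_{\infty}\le\bar A(\tau_2^e)<\psi<\tfrac\pi2$ forces $0<\psi-A_{\infty}<\tfrac\pi2-A_{\infty}$, this constant is $\le\cot A_{\infty}=\sqrt{(\gamma+1)/(3-\gamma)}$, which is (\ref{198903}) and the bound of the Main Theorem.

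The main obstacle is this last step. The continuation argument, the boundedness of $\Sigma$, and the positivity of $\rho$ in the interior are routine once the uniform estimates of the preceding lemmas are in place; but the slope estimate for $\wideparen{\mathrm{EF}}$ has to be carried out with the sharp, $\tau$-dependent invariant region $\Theta(\tau-\tau_g)$ — whose floor $\pi-\psi+2\bar A(\tau)$ for $\alpha$ and ceiling $\pi+\psi-2\bar A(\tau)$ for $\beta$ both tighten as $\tau\to+\infty$ — rather than the cruder $\Theta(+\infty)$, and one must control how $A$ relaxes toward $\bar A(\tau)$ near vacuum in order to keep the tangent direction of $l(s)$ strictly off the $\xi$-axis; in addition, the passage to the limit requires establishing uniform (Hausdorff) convergence of the nested level curves together with persistence of the pointwise slope bound.
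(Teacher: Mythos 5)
Your continuation skeleton (extend across the non-characteristic level curves $l(s)$ using the uniform estimates of Lemmas \ref{51708}--\ref{lem3}, take $\Sigma_5=\bigcup_s\Sigma_5(s)$, identify the vacuum boundary as the limit of the $l(s)$) matches the paper. The gap is in the step you yourself flag as the main obstacle: the Lipschitz bound (\ref{198903}). Your physical-plane argument only says that the tangent of $l(s)$ is perpendicular to $\nabla\tau$, and the conditions $\bar\partial_{\pm}\tau>0$ confine $\nabla\tau$ to a cone of half-opening $\tfrac{\pi}{2}-A$ about $(\cos\sigma,\sin\sigma)$; hence the tangent of $l(s)$ is only localized in a cone of width $\pi-2A$. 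Near the vacuum $A$ tends to $\bar A(\tau)\to A_\infty$, which is nowhere near $\tfrac{\pi}{2}$, so this cone has width close to $\pi-2A_\infty$ and is far too wide to force $|d\xi/d\eta|\le\tan(\psi-A_\infty)$; the exact direction of $\nabla\tau$ inside the cone depends on the uncontrolled ratio $\bar\partial_{+}\tau/\bar\partial_{-}\tau$, so for finite $s$ the level curves need not even be graphs $\xi=\mathcal V_s(\eta)$ (your claim that $\tau_\xi\neq0$ does not follow either). The asserted bound ``$\tan(\psi-\bar A(\tau_g+s))+o(1)$'' is therefore not established, and no amount of sharpening the invariant region fixes this, because the indeterminacy is in the direction of $\nabla\tau$, not in $(\alpha,\beta)$.

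The paper closes this gap by passing to the hodograph plane. Since $h=h(\tau)$ is constant along $l(s)$ and the pseudo-Bernoulli law gives $h_u=-U$, $h_v=-V$, one gets the \emph{exact} slope $\frac{du}{dv}=-\frac{V}{U}=-\tan\bigl(\frac{\alpha+\beta}{2}\bigr)$ for the image of $l(s)$ in the $(u,v)$-plane (after checking $v_\xi u_\eta-u_\xi v_\eta=-c\tau\sin 2A\,\bar\partial_+\rho\,\bar\partial_-\rho\neq0$ so that the image is a regular curve). The invariant region then gives $\sigma\in(\pi-\psi+A_\infty,\pi+\psi-A_\infty)$, hence $|\mathcal V_s'(v)|\le\tan(\psi-A_\infty)$ uniformly, and Arzel\`a--Ascoli produces a Lipschitz limit $u=\mathcal V(v)$. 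The transfer back to the physical plane is then free: $q=c/\sin A\le c(\tau_g+s)/\sin(\delta_*/2)\to0$, so on the vacuum boundary $(u,v)=(\xi,\eta)$ and the same function $\mathcal V$ represents $\wideparen{\mathrm{EF}}$ as $\xi=\mathcal V(\eta)$ with the same constant. If you want to keep your presentation, you must insert exactly this hodograph computation (or an equivalent exact evaluation of the tangent of $l(s)$); the perpendicularity-cone argument alone cannot deliver (\ref{198903}).
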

\begin{proof}
Since $\bar{\partial}_{\pm}\tau>0$, each level curve ${\it l}(s)$ has nowhere a characteristic direction.
We take level curves of $\tau$ as the ``Cauchy support" and solved a
mixed initial and characteristic boundary value problem in each extension step; see Lai \cite{Lai1}.
So, by Lemma \ref{lem3} we know that for any large $s>s_0$, the local classical solution of  (\ref{42501}, \ref{61905}) can be extended to $\Sigma_5(s)$. Let $$\Sigma_5:=\bigcup\limits_{s>s_0}\Sigma_5(s).$$ Then the problem (\ref{42501}, \ref{61905}) admits a solution on $\Sigma_5$.

By Lemmas \ref{51708} and \ref{61510} we know that the solution satisfies
\begin{equation}\label{61512}
(\alpha, \beta)\in\Big\{(\alpha, \beta)\mid \pi-\psi+2A_{\infty}<\alpha<\pi+\psi,~\pi-\psi<\beta<\pi+\psi-2A_{\infty},~\alpha-\beta>\delta_*\Big\}.
\end{equation}
This implies that the solution satisfies
 $\frac{\delta_*}{2}<A<\psi$.
Combining this with  $q=\frac{c}{\sin A}$ and (\ref{82702}), we know that the domains $\Sigma_5(s)$ have a uniform bound with respect to $s>s_0$.

Using the pseudo-Bernoulli law (\ref{5802}), we have
\begin{equation}
h_{\xi}=-(u-\xi)u_{\xi}-(v-\eta)v_{\xi}\quad \mbox{and}\quad   h_{\eta}=-(u-\xi)u_{\eta}-(v-\eta)v_{\eta}.
\end{equation}
Thus, along each level curve ${\it l}(s)$,
\begin{equation}\label{62409}
-h_{\eta}u_{\xi}+h_{\xi}u_{\eta}=-(v-\eta)(v_{\xi}u_{\eta}-u_{\xi}v_{\eta})
\end{equation}
and
\begin{equation}\label{62410}
-h_{\eta}v_{\xi}+h_{\xi}v_{\eta}=(u-\xi)(v_{\xi}u_{\eta}-u_{\xi}v_{\eta}).
\end{equation}

Using (\ref{11}), (\ref{72804}), and (\ref{61513}), we have
$$
u_{\xi}=
-\frac{\sin^2\beta\bar{\partial}_{+}\rho+\sin^2\alpha\bar{\partial}_{-}\rho}{\sin2A}\cdot c\tau,\quad
v_{\xi}
=\frac{\sin\beta\cos\beta\bar{\partial}_{+}\rho+\sin\alpha\cos\alpha\bar{\partial}_{-}\rho}{\sin2A}\cdot c\tau,
$$
$$
v_{\eta}=-
\frac{\cos^2\beta\bar{\partial}_{+}\rho+\cos^2\alpha\bar{\partial}_{-}\rho}{\kappa\sin2A}\cdot c\tau,
\quad
u_{\eta}=
\frac{\sin\beta\cos\beta\bar{\partial}_{+}\rho+\sin\alpha\cos\alpha\bar{\partial}_{-}\rho}{\kappa\sin2A}\cdot c\tau.
$$
Thus by Lemma \ref{51708} we have
$$
v_{\xi}u_{\eta}-u_{\xi}v_{\eta}=-c\tau \sin 2A \bar{\partial}_{+}\rho\bar{\partial}_{-}\rho\neq0.
$$
This implies that the images of each level curve ${\it l}(s)$ in the $(u, v)$-plane is a smooth curve.
Moreover, by (\ref{62409}) and (\ref{62410}) we have that along each level curve ${\it l}(s)$,
\begin{equation}\label{2401}
\frac{{\rm d}u}{{\rm d}v}=-\frac{v-\eta}{u-\xi}=-\tan\Big(\frac{\alpha+\beta}{2}\Big).
\end{equation}
We denote by $u=\mathcal{V}_{s}(v)$ the images of  ${\it l}(s)$ in the $(u, v)$-plane. Thus, by (\ref{61512}) and (\ref{2401}) we have
$$
|\mathcal{V}_{s}'(v)|\leq \tan(\psi-A_{\infty}).
$$
Therefore, by  the Arzela-Ascoli theorem  we know that there exists a Lipschite continuous function $u=\mathcal{V}(v)$, $v\in[\hat{u}_l(\hat{\xi}_2)\cos\theta, -\hat{u}_l(\hat{\xi}_2)\cos\theta]$ such that the image of the vacuum boundary in the $(u, v)$-plane can be represented by this function.

From $q=\frac{c}{\sin A}$ and $\frac{\delta_*}{2}<A<\psi$ we know that on the vacuum boundary $q=0$, i.e., $u=\xi$ and $v=\eta$.
So, the vacuum boundary in the physical plane can be represented by $\xi=\mathcal{V}(\eta)$, $[\hat{u}_l(\hat{\xi}_2)\cos\theta, -\hat{u}_l(\hat{\xi}_2)\cos\theta]$.
This completes the proof.
\end{proof}

We denote the solution of the problem (\ref{42501}, \ref{61905}) by $(u, v, \tau)=(u_5, v_5, \tau_5)(\xi, \eta)$.
\subsection{Global solution to the interaction of the fan-shock-fan composite waves}
Let
\begin{equation}\label{82703}
(u, v, \tau)=\left\{
               \begin{array}{ll}
                 (u_1, v_1, \tau_1)(\xi, \eta), & \hbox{$(\xi, \eta)\in \Sigma_1$;} \\[2pt]
                    (u_2, v_2, \tau_2)(\xi, \eta), & \hbox{$(\xi, \eta)\in \Sigma_2$;} \\[2pt]
                   (u_3, v_3, \tau_3)(\xi, \eta), & \hbox{$(\xi, \eta)\in \Sigma_3$;} \\[2pt]
                (u_4, v_4, \tau_4)(\xi, \eta), & \hbox{$(\xi, \eta)\in \Sigma_4$;} \\[2pt]
             (u_5, v_5, \tau_5)(\xi, \eta), & \hbox{$(\xi, \eta)\in \Sigma_5$.}
               \end{array}
             \right.
\end{equation}
Then the $(u, v, \tau)(\xi, \eta)$ defined in (\ref{82703}) is a piecewise smooth solution to the boundary value problem (\ref{42501}, \ref{426011}).
The discontinuity curves include post-sonic shock curves and weak discontinuity curves.
 This completes the proof of Theorem \ref{thm2}.

\vskip 6pt

\section{Conclusion and further remarks}
2D Riemann problems refer to Cauchy problems with special initial data that are
constant along each ray from the origin.
It is well known that the 2D Riemann problem is an important problem in nonlinear hyperbolic conservation laws. Solving  2D Riemann problems would illuminate the physical structure and interaction of waves, and would serve to improve numerical algorithms; see, e.g., Menikoff and Plohr
 \cite{MP}.
In order to solve 2D Riemann problems, one needs to solve various types of  wave interactions (see \cite{ZhangZheng, Zheng1}).
In this paper, we construct a global solution to the interaction of fan-shock-fan composite waves.
 The techniques and ideas developed here may be applied for solving some other 2D Riemann problems of the compressible Euler equations with non-convex equation of state.

The 1D self-similar double-sonic shock for the Euler equations for van der Waals gases was constructed in Cramer and Sen \cite{Cra}. 
The post-sonic shocks, pre-sonic shocks, and double-sonic shocks also frequently appear in 2D steady supersonic flows of Bethe-Zel'dovich-Thompson fluids; see \cite{VKG1,VKG2}.
In the combustion theory, the detonation wave propagation in a static unburnt fluid
after a sudden explosion must be a Chapman–Jouguet detonation which is actually post-sonic; see Landau and Lifschitz \cite{Lan} (Section 130).  The methods developed in this paper may also be applied for studying the stabilities of these sonic-type shocks.

In this paper,
in order to overcome the difficulty cased by the singularity characteristic boundary, we apply the hodograph transformation method to transfer
the 2D self-similar potential flow equations to a linearly degenerate hyperbolic system.
By solving a Cauchy problem for the linearly degenerate hyperbolic system, we obtain the flow near the backside of the post-sonic shocks $\wideparen{\mbox{BG}}$ and $\wideparen{\mbox{DG}}$.
In the real case, the flow behind the post-sonic shocks $\wideparen{\mbox{BG}}$ and $\wideparen{\mbox{DG}}$ is non-isentropic and rotational and is governed by the 2D self-similar full Euler system.
The hodograph transformation method does not work for the full Euler system.
Moreover, for the 2D self-similar potential flow equations, we can use (\ref{10}), (\ref{8}), and (\ref{81104}) to establish
some ``maximum principles" for $\alpha$, $\beta$, and $\bar{\partial}_{\pm}\rho$. These ``maximum principles"
are crucial for the hyperbolicity principle and the a priori $C^1$ norm estimates of the solution.
However, for the 2D self-similar full Euler system, one does not have similar ``maximum principles" since the corresponding characteristic equations would contain nonhomogeneous parts.
So, in order to solve the interaction of the 2D fan-shock-fan composite waves for the full Euler system, new techniques are needed to overcome the difficulties caused by the singular characteristic boundaries, the hyperbolicity principle of the system, and the a priori $C^1$ norm estimate for the solution.

\vskip 4pt
\section{Appendix: van der Waals gas equation of state}
In this appendix we shall show that the assumption ($\mathbf{A}$1) can be satisfied in some cases.
\begin{prop}\label{92001}
Consider the polytropic van der Waals gas (\ref{van}).
When $\mathcal{S}\in (\frac{8}{27}, \frac{81}{256})$ and $\gamma$ is close to $1$, the following conclusions hold:
\begin{itemize}
  \item the isentrope $p=p(\tau)$ has only two inflection points $\tau_1^{i}$ and $\tau_2^i$ and $\tau_1^i<\frac{4}{2-\gamma}<\tau_2^i$;
  \item $p'(\tau)<0$ for $1<\tau<+\infty$;
  \item   $\varpi(\tau)>0$ and $\varpi'(\tau)<0$ for $\tau>\tau_2^i$.
\end{itemize}
\end{prop}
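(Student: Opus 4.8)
The plan is to reduce each of the three assertions to an elementary sign analysis of explicit rational functions, exploiting that the thresholds $\tfrac{8}{27}$ and $\tfrac{81}{256}$ are exactly the relevant extrema in the limit $\gamma\to 1^{+}$. Throughout we use
\begin{align*}
p'&=-\gamma\mathcal{S}(\tau-1)^{-\gamma-1}+2\tau^{-3},\qquad p''=\gamma(\gamma+1)\mathcal{S}(\tau-1)^{-\gamma-2}-6\tau^{-4},\\
p'''&=-\gamma(\gamma+1)(\gamma+2)\mathcal{S}(\tau-1)^{-\gamma-3}+24\tau^{-5}.
\end{align*}
The equation $p''(\tau)=0$ is equivalent to $\mathcal{S}=G(\tau):=\frac{6(\tau-1)^{\gamma+2}}{\gamma(\gamma+1)\tau^{4}}$; since $1<\gamma<2$, one has $G(1)=0$, $G(+\infty)=0$, and $(\log G)'=\frac{\gamma+2}{\tau-1}-\frac{4}{\tau}$ vanishes only at $\tau^{*}:=\frac{4}{2-\gamma}$, so $G$ increases on $(1,\tau^{*})$ and decreases on $(\tau^{*},+\infty)$. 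Hence $\mathcal{S}=G(\tau)$ has exactly two roots $\tau_{1}^{i}<\tau^{*}<\tau_{2}^{i}$ whenever $0<\mathcal{S}<G(\tau^{*})$, and this also pins down the sign pattern of $p''$ required for Property~(P). Since $G(\tau^{*})\to G(4)|_{\gamma=1}=\tfrac{81}{256}$ as $\gamma\to1$, the hypothesis $\mathcal{S}<\tfrac{81}{256}$ gives $\mathcal{S}<G(\tau^{*})$ for $\gamma$ close enough to $1$, and then $\tau_{1}^{i}<\frac{4}{2-\gamma}<\tau_{2}^{i}$. The same argument applied to $p'(\tau)=0\iff \mathcal{S}=H(\tau):=\frac{2(\tau-1)^{\gamma+1}}{\gamma\tau^{3}}$ — unimodal with maximum at $\frac{3}{2-\gamma}$ and $H\!\left(\frac{3}{2-\gamma}\right)\to\tfrac{8}{27}$ — shows that $\mathcal{S}>\tfrac{8}{27}$ forces $\mathcal{S}>\sup_{\tau>1}H(\tau)$ for $\gamma$ near $1$, hence $p'<0$ on $(1,+\infty)$ (observe $p'\to-\infty$ as $\tau\to1^{+}$).

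For the statement on $\varpi$, note from its definition in (\ref{111901}) that $\varpi=-\frac{4p'}{\tau p''}-1$, so on $(\tau_{2}^{i},+\infty)$, where $p''>0$,
\[
\varpi'(\tau)=-\frac{4N(\tau)}{\tau^{2}(p'')^{2}},\qquad N(\tau):=\tau(p'')^{2}-p'p''-\tau p'p'''.
\]
Moreover, since $(\tau-1)^{-\gamma-k}$ dominates $\tau^{-2-k}$ as $\tau\to+\infty$ (because $\gamma<2$), a short asymptotic computation gives $\lim_{\tau\to+\infty}\varpi(\tau)=\frac{3-\gamma}{\gamma+1}>0$. Therefore it suffices to prove $N(\tau)>0$ for $\tau>\tau_{2}^{i}$: this gives $\varpi'<0$ there, and a function that is decreasing with positive limit $\frac{3-\gamma}{\gamma+1}$ is itself positive.

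The positivity of $N$ is the crux. Expanding $N$ and collecting terms according to their power of $\mathcal{S}$, the purely power-law contributions (all of order $\tau^{-7}$) cancel identically for every $\gamma$; the $\mathcal{S}^{2}$-part collapses to $-\gamma^{2}(\gamma+1)\mathcal{S}^{2}(\tau-1)^{-2\gamma-4}$; and the mixed $\mathcal{S}^{1}$-part reduces to $\gamma\mathcal{S}\,\tau^{-4}(\tau-1)^{-\gamma-3}Q_{\gamma}(\tau)$ with $Q_{\gamma}(\tau)=2(\gamma-2)^{2}\tau^{2}+(14\gamma-22)\tau+18$. Hence
\[
N(\tau)=\gamma\mathcal{S}\,(\tau-1)^{-2\gamma-4}\!\left[\frac{(\tau-1)^{\gamma+1}Q_{\gamma}(\tau)}{\tau^{4}}-\gamma(\gamma+1)\mathcal{S}\right],
\]
so $N>0\iff \gamma(\gamma+1)\mathcal{S}<W_{\gamma}(\tau):=\frac{(\tau-1)^{\gamma+1}Q_{\gamma}(\tau)}{\tau^{4}}$. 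For $\gamma=1$ this is $\mathcal{S}<\Psi(\tau):=\frac{(\tau-1)^{2}(\tau^{2}-4\tau+9)}{\tau^{4}}$, and an elementary study of $\Psi$ on $(1,+\infty)$ — its only critical points are $\tau=2$ and $\tau=3$, it is increasing on $(3,+\infty)$, and $\Psi(4)=\tfrac{81}{256}$ — gives $\min_{\tau\ge4}\Psi=\Psi(4)=\tfrac{81}{256}$. Combined with $\tau_{2}^{i}>\tau^{*}=\frac{4}{2-\gamma}\ge4$ from the first step, we get $\Psi(\tau)\ge\Psi(\tau_{2}^{i})>\tfrac{81}{256}>\mathcal{S}$ for $\tau>\tau_{2}^{i}$, i.e.\ $N>0$; here $Q_{1}$ has discriminant $64-144<0$, so $Q_{1}>0$, which is exactly what makes the mixed term help rather than hurt.

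For $\gamma$ slightly above $1$ the three sign conditions persist by a continuity/compactness argument: $Q_{\gamma}>0$ (its discriminant $(14\gamma-22)^{2}-144(\gamma-2)^{2}$ equals $-80$ at $\gamma=1$ and stays negative near $\gamma=1$), and on $[4,+\infty)$ one has $W_{\gamma}(\tau)=(\tau-1)^{\gamma-1}\cdot\frac{(\tau-1)^{2}Q_{\gamma}(\tau)}{\tau^{4}}\ge\frac{Q_{\gamma}(\tau)}{Q_{1}(\tau)}\,2\Psi(\tau)\ge\bigl(1-O(\gamma-1)\bigr)2\Psi(4)$, using $(\tau-1)^{\gamma-1}\ge1$, the monotonicity of $\Psi$ on $[3,+\infty)$, and a uniform bound $Q_{\gamma}/Q_{1}\ge1-40(\gamma-1)$ on $[4,+\infty)$; since $\gamma(\gamma+1)\mathcal{S}\to2\mathcal{S}<\tfrac{81}{128}$ and $\tau_{2}^{i}>4$, this yields $W_{\gamma}>\gamma(\gamma+1)\mathcal{S}$, hence $N>0$, on $(\tau_{2}^{i},+\infty)$. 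I expect the main obstacle to be precisely this positivity of $N$: at leading order as $\tau\to+\infty$ the dominant terms cancel in both the $\mathcal{S}^{2}$-part and the mixed part, so one must carry the algebra far enough to obtain the factorization above and to recognise that the threshold $\tfrac{81}{256}=\Psi(4)=G(4)|_{\gamma=1}$ is exactly the minimum of $\Psi$ over the range $\tau\ge\tau_{2}^{i}$ supplied by the inflection-point analysis; verifying that all of this survives the perturbation to $\gamma\downarrow1$ uniformly in $\tau$ is routine but requires a little care near $\tau=+\infty$.
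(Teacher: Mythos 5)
Your proof is correct, and for the third (and only hard) bullet it takes a genuinely different route from the paper. For the first two bullets the substance is the same: the thresholds $\tfrac{8}{27}$ and $\tfrac{81}{256}$ are the $\gamma\to1$ limits of the critical values $\sup_{\tau>1}\frac{2(\tau-1)^{\gamma+1}}{\gamma\tau^{3}}$ and $\sup_{\tau>1}\frac{6(\tau-1)^{\gamma+2}}{\gamma(\gamma+1)\tau^{4}}$, attained at $\frac{3}{2-\gamma}$ and $\frac{4}{2-\gamma}$; the paper simply records these as $s_{1}(\gamma),s_{2}(\gamma)$ and asserts the conclusions ``by direct computation'', so your unimodality argument fills in the omitted details. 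For $\varpi$ the paper treats the two signs independently: it proves the numerator $\vartheta_{1}$ of $\varpi$ is positive for $\tau>4$ by a separate estimate (which needs the extra side conditions $\mathcal{S}>4^{\gamma-2}$ and $0\le\gamma-1<\tfrac{10}{37}$), and it proves the numerator $\vartheta_{2}$ of $\varpi'$ is negative by majorizing it with an auxiliary $\hat{\vartheta}_{2}$ (exponents $\gamma+k$ replaced by $k+1$) and checking the sign of the quartic $(1-\mathcal{S})\tau^{4}-6\tau^{3}+18\tau^{2}-22\tau+9$ for $\tau\ge4$ at $\gamma=1$, with $\tau=4$, $\mathcal{S}=\tfrac{81}{256}$ as the borderline case. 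You instead (i) get $\varpi>0$ for free from $\varpi'<0$ together with $\lim_{\tau\to\infty}\varpi=\frac{3-\gamma}{\gamma+1}>0$, which removes the paper's extra hypotheses for that step, and (ii) factor $N=\tau(p'')^{2}-p'p''-\tau p'p'''$ cleanly, reducing everything to the single inequality $\gamma(\gamma+1)\mathcal{S}<W_{\gamma}(\tau)$ and exposing why $\tfrac{81}{256}=\Psi(4)=\min_{\tau\ge4}\Psi$ is the sharp constant. I verified the algebra: the $\mathcal{S}$-free terms cancel, the $\mathcal{S}^{2}$ block is $-\gamma^{2}(\gamma+1)\mathcal{S}^{2}(\tau-1)^{-2\gamma-4}$, the quadratic is $Q_{\gamma}(\tau)=2(\gamma-2)^{2}\tau^{2}+(14\gamma-22)\tau+18$, the critical points of $\Psi$ are exactly $\tau=2,3$, and your factorization is equivalent to the paper's numerator via $\vartheta_{2}=-4\tau^{6}(\tau-1)^{2\gamma+4}N$. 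The one point requiring the same care in both proofs is that ``$\gamma$ close to $1$'' must be allowed to depend on $\mathcal{S}$, since your lower bound for $W_{\gamma}$ and the bound $\gamma(\gamma+1)\mathcal{S}$ both degenerate to $\tfrac{81}{128}$ as $\gamma\to1$; you invoke the strict inequality $2\mathcal{S}<\tfrac{81}{128}$ correctly, which matches how the paper reads its hypothesis.
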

\begin{proof}
By computation, we have
$$
p'(\tau)=\frac{-\gamma \mathcal{S}\tau^3+2(\tau-1)^{\gamma+1}}{\tau^{3}(\tau-1)^{\gamma+1}}\quad
\mbox{and}\quad
p''(\tau)=\frac{\gamma (\gamma+1)\mathcal{S}\tau^4-6(\tau-1)^{\gamma+2}}{\tau^4(\tau-1)^{\gamma+2}}.
$$

We define the following variables:
$$
s_0(\gamma):=\frac{\gamma^{\gamma}(2-\gamma)^{2-\gamma}}{4},\quad
s_1(\gamma):=\frac{2(1+\gamma)^{1+\gamma}(2-\gamma)^{2-\gamma}}{27\gamma},\quad
s_2(\gamma):=\frac{6(2+\gamma)^{2+\gamma}(2-\gamma)^{2-\gamma}}{256\gamma(\gamma+1)}.
$$
From $s_0(1)=\frac{1}{4}$, $s_1(1)=\frac{8}{27}$, and $s_2(1)=\frac{81}{256}$ we know that when $\gamma$ is close to $1$ there holds
$s_0(\gamma)<s_1(\gamma)<s_2(\gamma)$.

By a direct computation we have the following conclusions:
 when $\mathcal{S}>s_0(\gamma)$ one has
$p(\tau)>0$ for $\tau>1$;
when $\mathcal{S}>s_1(\gamma)$ one has
$p'(\tau)<0$ for $\tau>1$;
when $\mathcal{S}>s_2(\gamma)$ one has $p''(\tau)>0$ for $\tau>1$;
 when $s_1(\gamma)<\mathcal{S}<s_2(\gamma)$ the isentrope $p=p(\tau)$ has  only two inflection points $\tau_1^i$ and $\tau_2^i$, where $\tau_1^i<\frac{4}{2-\gamma}<\tau_2^i$.

A direct computation yields
$$
\varpi(\tau)=\frac{\vartheta_1(\gamma, \mathcal{S}, \tau)}
{\gamma (\gamma+1)\mathcal{S}\tau^4-6(\tau-1)^{\gamma+2}}
\quad
\mbox{and}
\quad
\varpi'(\tau)=\frac{\vartheta_2(\gamma, \mathcal{S}, \tau)}
{\big(\gamma (\gamma+1)\mathcal{S}\tau^4-6(\tau-1)^{\gamma+2}\big)^2},
$$
where
$$
\vartheta_1(\gamma, \mathcal{S}, \tau)=\gamma \mathcal{S}\tau^3\big((3-\gamma)\tau-4\big)-2(\tau-1)^{\gamma+2}
$$
and
$$
\begin{array}{rcl}
&\vartheta_2(\gamma, \mathcal{S}, \tau)=
-2\gamma (\gamma+1)(\gamma+2)\mathcal{S}\tau^4(\tau-1)^{\gamma+1}+4\gamma ^2(\gamma+1)\mathcal{S}^2\tau^6
+\gamma (14\gamma-10)\mathcal{S}\tau^3(\tau-1)^{\gamma+2}\\[8pt]&\qquad\qquad+6\gamma (\gamma+2)\mathcal{S}\tau^3(\tau-1)^{\gamma+1}
\big((3-\gamma)\tau-4\big)-18\gamma \mathcal{S}\tau^2(\tau-1)^{\gamma+2}
\big((3-\gamma)\tau-4\big).
\end{array}
$$

The signs of $\vartheta_1(\gamma, \mathcal{S}, \tau)$ and $\vartheta_2(\gamma, \mathcal{S}, \tau)$ for $\tau>\tau_2^i$ are difficult to determine. In what follows, we are going to estimate the signs of $\vartheta_1(\gamma, \mathcal{S}, \tau)$ and $\vartheta_2(\gamma, \mathcal{S}, \tau)$ for some special cases.

When $0\leq \gamma-1<\frac{10}{37}$ and $\mathcal{S}>\frac{1}{4^{2-\gamma}}$, we have
$$
\begin{array}{rcl}
\vartheta_1(\gamma, \mathcal{S}, \tau)&=&
\Big(\frac{\gamma (3-\gamma)\mathcal{S}\tau^3}{(\tau-1)^{\gamma-1}}\big(\tau-\frac{4}{3-\gamma}\big)
-2(\tau-1)^{3}\Big)(\tau-1)^{\gamma-1}\\[8pt]
&>&\Big(\gamma (3-\gamma)s\tau^{4-\gamma}\big(\tau-\frac{4}{3-\gamma}\big)
-2(\tau-1)^{3}\Big)(\tau-1)^{\gamma-1}
\\[8pt]&>&2(\tau-1)^{\gamma-1}\Big((3-\frac{4}{3-\gamma})\tau^2-3\tau+1\Big)>0\quad \mbox{for}\quad \tau>4.
\end{array}
$$
Combining this with  $\tau_2^i>4$ we have  $\varpi(\tau)>0$ for $\tau>\tau_2^i$.

Let
$$
\begin{array}{rcl}
&\hat{\vartheta}_2(\gamma, \mathcal{S}, \tau)=
-2\gamma (\gamma+1)(\gamma+2)\mathcal{S}\tau^4(\tau-1)^{2}+4\gamma ^2(\gamma+1)\mathcal{S}^2\tau^6
+\gamma (14\gamma-10)\mathcal{S}\tau^3(\tau-1)^{3}\\[8pt]&\qquad\qquad+6\gamma (\gamma+2)\mathcal{S}\tau^3(\tau-1)^{2}
\big((3-\gamma)\tau-4\big)-18\gamma \mathcal{S}\tau^2(\tau-1)^{3}
\big((3-\gamma)\tau-4\big).
\end{array}
$$
Then we have
$$
\hat{\vartheta}_2(1, \mathcal{S}, \tau)=-8\mathcal{S}\tau^2\big((1-\mathcal{S})\tau^4-6\tau^3+18\tau^2-22\tau+9\big).
$$
It is easy to check that $\hat{\vartheta}(1, \frac{81}{256}, 4)=0$ and
$$
\hat{\vartheta}_2\Big(1, \frac{81}{256}, \tau\Big)<0\quad \mbox{for}\quad \tau>4.
$$
So, when $\mathcal{S}<\frac{81}{256}$ there holds that
$$
\hat{\vartheta}_2(1, \mathcal{S}, \tau)<0\quad \mbox{for}\quad \tau\geq4.
$$

By continuity we also have that for $\mathcal{S}<\frac{81}{256}$ and $\gamma$ is sufficiently close to $1$,
$$
\hat{\vartheta}_2(\gamma, \mathcal{S}, \tau)<0\quad \mbox{for}\quad \tau\geq4.
$$
Consequently, we have
$$
\vartheta_2(\gamma, \mathcal{S}, \tau)<(\tau-1)^{\gamma-1}\hat{\vartheta}_2(\gamma, \mathcal{S}, \tau)<0\quad \mbox{for}\quad \tau>\tau_2^i.
$$

This completes the proof of the proposition.
\end{proof}

\vskip 16pt
\noindent
{\bf Acknowledgements.} The author was partially supported by NSF of China (No. 12071278) and NSF of Shanghai (No. 23ZR1422100).

\vskip 16pt
\noindent
{\bf  Declarations}
\vskip 4pt
\noindent
{\bf Conflicts of interests} ~This work does not have any conflicts of interest.
\vskip 4pt
\noindent
{\bf  Data availability}
~No data was used for the research described in the article.

\vskip 32pt

\end{document}